\newcommand{\fK}{{\mathbb K} }
\newcommand{\fR}{{\mathbb R} }
\newcommand{\fC}{{\mathbb C} }
\newcommand{\poly}{{ \mathrm{poly} } }
\newcommand{\nullbracket}{ {[\cdot\, , \cdot]} }
\newcommand{\id}{ {\mathrm{id} } }
\newcommand{\cD}{  {\mathcal{D}}  }
\newcommand{\Hom}{ {\mathrm{Hom}} }
\newcommand{\End}{ {\mathrm{End}} }
\newcommand{\rZ}{ {\mathbb{Z}} }
\newcommand{\cA}{ {\mathcal{A}} }
\newcommand{\cP}{ {\mathcal{	P}} }
\newcommand{\cF}{ {\mathcal{F}} }
\newcommand{\Ker}{ {\mathrm{Ker}} }
\newcommand{\cH}{ {\mathcal{H}} }
\newcommand{\lb}{ {\bigl\{ } }
\newcommand{\rb}{ {\bigr\} } }
\newcommand{\BInfty}{  { B_{\infty} } }
\newcommand{\frakg}{ {\mathfrak{g}} }
\newcommand{\frakl}{ {\mathfrak{l}} }
\newcommand{\Gutt}{ {\mathrm{Gutt}} }
\newcommand{\PBW}{ {\mathrm{PBW}} }
\newcommand{\ADT}{ W_{(\frakg, \frakl)} }
\newcommand{\pbw}{ {\mathrm{pbw}} }
\newcommand{\ad}{ {\mathrm{ad}} }
\newcommand{\Laurent}[1]{ { (\!(#1)\!)  } }
\newcommand{\m}{\mathrm{m}}
\newcommand{\tinywedge}{{_{_\wedge}}}
\newtheorem{definition}{Definition}[section]
\newtheorem{lemma}[definition]{Lemma}
\newtheorem{proposition}[definition]{Proposition}
\newtheorem{corollary}[definition]{Corollary}
\newtheorem{theorem}[definition]{Theorem}
\newtheorem{remark}[definition]{Remark}
\newtheorem{example}[definition]{Example}
\newtheorem{Thm}{Theorem}
\numberwithin{equation}{section}
\begin{document}

\title{Brace $B_{\infty}$ algebras associated with Hopf algebroids}

\author{Jiahao Cheng}
\address{Jiahao Cheng, College of Mathematics and Information Science, Center for Mathematical Sciences, Nanchang Hangkong University, Nanchang, 330063, P.R. China}
\email{jiahaocheng@nchu.edu.cn}

\author{Zhuo Chen}
\address{Zhuo Chen, Department of Mathematics, Tsinghua University, Beijing, 100084, P.R. China}
\email{chenzhuo@mail.tsinghua.edu.cn}

\author{Yu Qiao}
\address{Yu Qiao, School of Mathematics and Statistics, Shaanxi Normal University, Xi'an, 710119, P.R. China}
\email{yqiao@snnu.edu.cn}

\date{}

\begin{abstract}  We apply the operadic modeling of brace $B_{\infty}$ algebras, as developed by Gerstenhaber and Voronov, to the context of Hopf algebroids in the sense of Xu. Specifically, we construct a strict $B_{\infty}$ isomorphism between the type I and type II twisted brace $B_{\infty}$ algebras arising from any twistor of a Hopf algebroid. As an application of this framework, we examine two specific brace $B_{\infty}$ algebras derived from Lie algebra pairs, and reveal previously obscured relationships between them. One of these is the dg Lie algebra governing deformations of algebraic dynamical twists, while the other arises from the quantum groupoid comprised of particular invariant differential operators.
\end{abstract}

\maketitle

\vspace{-0.4cm}
\begin{small}
\hspace*{0.72cm}
\textit{Key words}: brace $B_{\infty}$ algebra, Hopf algebroid, quantum groupoid, dynamical twist

\hspace*{0.72cm}
\textit{AMS subject classification (2020)}:
17B37, 16E45, 53D55, 81R50
\end{small}

\tableofcontents

\section{Introduction}

This paper applies the Gerstenhaber-Voronov operadic modeling of brace $B_{\infty}$ algebras \cite{Gerstenhaber-Voronov-Moduli} to Hopf algebroids \cites{Xu-Quantum, Xu-QuantumCNRS}. Next, we    briefly present our ideas and results in two parts. For more details about brace $B_{\infty}$ algebras and Hopf algebroids, see Section \ref{Section:Pre}.

\subsection{The canonical brace $B_{\infty}$ algebra  associated with a Hopf algebroid}

Let $A$ be an associative algebra.
On the graded space $C(A, A)$ of Hochschild cochains of   $A$, Getzler \cite{Getzler} introduced a series of higher operations called \textbf{braces} (see also Kadeishvili \cite{Kadeishvili-first}):
\begin{eqnarray*}
\mu_{k} :
C(A\, , A) \otimes \bigl(C(A\, , A)[1] \bigr)^{\otimes k}  &\rightarrow&
C(A\, , A) \, , \quad \text{for all integers $k\geqslant 0$},\\
D \otimes D_1[1] \otimes \cdots \otimes D_k[1] &\mapsto&
D\{D_1\, , \cdots\, , D_k\}\, .
\end{eqnarray*}
Gerstenhaber and Voronov \cite{Gerstenhaber-Voronov} discovered a series of identities relating these braces to both the standard cup product and the Hochschild differential on $C(A, A)$. Abstracting these algebraic structures, they defined the concept of a \textbf{homotopy $G$ algebra}, now commonly referred to as a \textbf{Gerstenhaber-Voronov algebra} or a \textbf{brace $B_{\infty}$ algebra}.

Brace $B_{\infty}$ algebras are specific examples of the broader class of $B_{\infty}$ algebras, as initially defined in \cites{Getzler-Jones, Baues}. Furthermore, Gerstenhaber and Voronov \cite{Gerstenhaber-Voronov-Moduli} provided a construction of brace $B_{\infty}$ algebras from any operad, a concept we refer to as the \textbf{operadic modeling of brace $B_{\infty}$ algebras} (see Section \ref{Subsec:GVoperadicmodeling}, particularly Theorem \ref{Operadtobrace}). The notion of brace $B_{\infty}$ algebras proved crucial in resolving the Deligne conjecture, which posited that $C(A\, , A)$ possesses an algebra structure over the little $2$-discs operad (see \cites{Kontsevich-Soibelman-DeligneConjecture, Tamarkin-DeligneConjecture, Voronov, Getzler-Jones, Gerstenhaber-Voronov, Gerstenhaber-Voronov-Moduli, McClure-Smith}).

 Hopf algebras are   significant generalizations of groups. Consequently, a variety of more general Hopf-like structures have been explored in recent literature, including weak Hopf algebras, Hopf monads, $\times_R$-Hopf algebras, and compact quantum groups. In this article, we adopt the definition of Hopf algebroids introduced by Xu \cites{Xu-Quantum, Xu-QuantumCNRS}. Specifically, a \textbf{Hopf algebroid} over an associative $\fK$-algebra $R$ (where $\fK$ is a   field of characteristic zero) consists of the data $\cH=(\cH\, , \alpha\, , \beta\, , \cdot\, , \Delta\, , \epsilon)$, where $\cH$ is an associative $\fK$-algebra. The maps $\alpha,\beta: R \rightarrow \cH$ are the source and target maps, respectively, $\Delta\colon \cH \rightarrow \cH\otimes_{R} \cH$ denotes the coproduct, and $\epsilon\colon \cH \rightarrow R$ is the counit (see Definition \ref{Def:Hopfalgebroid}).\footnote{It is worth noting that the literature contains several inequivalent definitions of a Hopf algebroid; for a discussion of these various notions, see B\"ohm \cite{Bohm}.} The notion of a \textbf{quantum groupoid} proposed by Xu \cites{Xu-Quantum, Xu-QuantumCNRS} can be seen as a specific instance of a Hopf algebroid, unifying the concepts of quantum groups and star products.

Kadeishvili's result demonstrating that the cobar construction of a bialgebra yields a brace $B_{\infty}$ algebra \cite{Kadeishvili} serves as the motivation for this work. While specialists are aware that Kadeishvili's construction of brace $B_{\infty}$ algebras extends to Hopf algebroids, we explicitly state this generalization as a foundational result for the present paper. We call this construction \emph{the canonical brace $B_{\infty}$ algebra associated with a Hopf algebroid:}

\begin{Thm}[See Theorem \ref{MainTheorembrace}]\label{MainThmIntroduction1} Given a Hopf algebroid $\cH=(\cH\, , \alpha\, , \beta\, , \cdot\, , \Delta\, , \epsilon)$, we can construct a brace $B_{\infty}$ algebra, denoted $\BInfty(\cH)$, as follows:
\begin{equation*}
\BInfty(\cH):=
\bigl(\oplus_{n=0}^{\infty}
(\underbrace{\cH\otimes_{R}\cdots\otimes_{R}\cH)}_{n}[-n]\, , \delta_{\cH} \, , \cup\, ,
\{\mu_{k}^{\Delta}\}_{k\geqslant 0}
\bigr)\, .
\end{equation*}	
Here, $\delta_{\cH}$ is the Hochschild differential (see Equation \eqref{deltaHochschild}), $\cup$ denotes the multiplication (see Equation \eqref{CupProduct}), and $\{\mu_k^{\Delta}\}$ are the braces (see Equations \eqref{Brace1} and \eqref{Brace2}).
\end{Thm}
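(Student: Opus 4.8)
The plan is to deduce the statement from the Gerstenhaber--Voronov operadic modeling recorded in Theorem \ref{Operadtobrace}, so that the full package of brace $B_{\infty}$ identities is inherited from a single structural input: an operad equipped with a multiplication. First I would organize the collection $\cO = \{\cO(n)\}_{n \geqslant 0}$, $\cO(n) := \cH^{\otimes_{R} n}$, into a nonsymmetric operad in $R$-bimodules whose partial composition
\[
\circ_{i} \colon \cH^{\otimes_{R} n} \otimes \cH^{\otimes_{R} m} \to \cH^{\otimes_{R} (n+m-1)}
\]
splits the $i$-th tensor slot of the first argument by the iterated coproduct $\Delta^{(m-1)}$ and then recombines the resulting $m$ factors with the $m$ factors of the second argument via the multiplication of $\cH$. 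The guiding model is Kadeishvili's cobar picture for bialgebras; the braces $\{\mu_{k}^{\Delta}\}_{k \geqslant 0}$ of \eqref{Brace1}--\eqref{Brace2} are then precisely the induced braces $x\{x_{1}, \ldots, x_{k}\}$ obtained by inserting the $x_{j}$ into $k$ distinct slots of $x$ through these $\circ_{i}$, which explains the superscript $\Delta$.

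Next I would take the distinguished multiplication element to be $m = 1_{\cH} \otimes_{R} 1_{\cH} \in \cO(2)$. Inserting arguments into its two slots returns concatenation, so the cup product of \eqref{CupProduct} is recovered as $x \cup y = m\{x, y\}$, while the adjoint operator $\delta_{\cH} = [m, -]$ reproduces the cobar-type Hochschild differential of \eqref{deltaHochschild}: its principal term applies $\Delta$ to each tensor factor in turn, flanked by the two unit-boundary terms. Because $\Delta(1_{\cH}) = 1_{\cH} \otimes_{R} 1_{\cH}$ and $1_{\cH}$ is a two-sided unit, one checks immediately that $m \circ_{1} m = m \circ_{2} m = 1_{\cH}^{\otimes_{R} 3}$; equivalently $[m, m] = 0$, which forces $\delta_{\cH}^{2} = 0$ without further computation.

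With these two inputs in place I would invoke Theorem \ref{Operadtobrace} directly: the total space $\bigoplus_{n} \cO(n)[-n]$ of an operad with multiplication carries a canonical brace $B_{\infty}$ structure, so associativity of $\cup$, the higher brace relations among the $\mu_{k}^{\Delta}$, the distributivity of the braces over $\cup$, and the compatibility of $\delta_{\cH}$ with both come for free. The only remaining bookkeeping is to confirm that the operations produced by the modeling coincide, sign for sign, with the explicit formulas \eqref{deltaHochschild}, \eqref{CupProduct}, \eqref{Brace1}, and \eqref{Brace2} declared in the statement.

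I expect the crux to lie neither in the brace identities (free from Theorem \ref{Operadtobrace}) nor in the associativity of $m$ (immediate), but in verifying that $\cO$ really is an operad over the noncommutative base $R$. The sequential axiom for the $\circ_{i}$ forces one to reassociate composites that interleave a splitting by $\Delta$ with a recombination by the product; this balances only because $\Delta$ is coassociative, the product is associative, and, decisively, $\Delta$ is multiplicative in the bialgebroid sense of Definition \ref{Def:Hopfalgebroid}, which is exactly what allows one to commute the coproduct past the product. In parallel, each $\cO(n) = \cH^{\otimes_{R} n}$ is a balanced tensor product, so one must check that the splitting-and-multiplying compositions and the element $m$ descend to $\otimes_{R}$ and are suitably linear with respect to the source and target maps $\alpha, \beta$. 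Showing that the Hopf algebroid axioms are precisely strong enough to make these operations well defined over $R$ is, in my view, the technical heart of the proof.
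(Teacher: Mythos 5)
Your proposal follows essentially the same route as the paper: it builds the operad $\cP_{\cH}(n)=\cH^{\otimes_R n}$ whose compositions insert via iterated coproducts and right multiplication (the paper's Section \ref{OperadHopf}, where your "technical heart" — descent of the splitting-and-multiplying operations to the balanced tensor product $\otimes_R$ — is exactly Lemma \ref{KeyLemma}, proved by induction from the compatibility axiom \eqref{Important1}, and the operad axioms are Proposition \ref{MainTheoremOperad}, using coassociativity \eqref{CoAssociativity} and multiplicativity \eqref{CoproductProduct}), then takes $\m = 1\otimes_R 1$ as the multiplication and invokes Theorem \ref{Operadtobrace}, which is precisely the paper's proof of Theorem \ref{MainTheorembrace}. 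You correctly located both the structural shortcut (Gerstenhaber--Voronov modeling) and the genuine technical content (well-definedness over the noncommutative base $R$).
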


Our focus lies specifically on the twisted structure of the brace algebra $B_{\infty}$ outlined above. Consider a \textbf{twistor} (see Section \ref{subSec:twistors}) $\cF\in \cH\otimes_{R}\cH$ of the Hopf algebroid $\cH$, as previously defined in \cites{Xu-R-matrices, Xu-Quantum}. We then denote by $\cH_{\cF}=(\cH\, ,  \alpha_{\cF}\, , \beta_{\cF}\, , \cdot\, , \Delta_{\cF}\, , \epsilon)$ the resulting twisted Hopf algebroid, which is defined over the twisted associative algebra $R_{\cF}$. The second primary result of this paper can be formulated in terms of these objects, as follows.

\begin{Thm}
  [See Theorem \ref{TwoTypes}]\label{MainThmIntroduction2}
 There is a strict $B_{\infty}$ isomorphism
\begin{equation*}
\cF^{\sharp}: \BInfty{(\cH_{\cF})} \xlongrightarrow{\simeq} \BInfty{(\cH)}^{\cF}\, .
\end{equation*}
Here
\begin{equation*}
\BInfty(\cH_{\cF}):=
\bigl(\oplus_{n=0}^{\infty}
(\underbrace{\cH_{\cF}\otimes_{R_{\cF}}\cdots\otimes_{R_{\cF}}\cH_{\cF})}_{n}[-n]\, , \delta_{\cH_{\cF}} \, , \cup\, ,
\{\mu_{k}^{\Delta_{\cF}}\}_{k\geqslant 0}
\bigr)
\end{equation*}	
is the \textbf{type I twisted} brace $B_{\infty}$ algebra,
which is the canonical brace $B_{\infty}$ algebra associated with the twisted Hopf algebroid $\cH_{\cF}$, and
\begin{equation*}
\BInfty(\cH)^{\cF}:=\bigl(\oplus_{n=0}^{\infty}
(\underbrace{\cH\otimes_{R}\cdots\otimes_{R}\cH}_{n})[-n]\, ,
[\underline{}{\cF}\, , \,\cdot\,]_{\mathrm{G}} \, , \cup_{\cF}\, , \{\mu_{k}^{\Delta}\}_{k\geqslant 0}
\bigr)\, ,
\end{equation*}
is the \textbf{type II twisted} brace $B_{\infty}$ algebra
associated with the Hopf algebroid $\cH$ and the twistor
$\cF$ of $\cH$,
whose braces are the same as those of $\BInfty(\cH)$,
whose differential deforms from
$\delta_{\cH}=[\underline{}{1\otimes_{R} 1}\, , \, \cdot\,]_{\mathrm{G}}$ to
$[\underline{}{\cF}\, , \,\cdot\,]_{\mathrm{G}}$, and
whose multiplication deforms from $\underline{}{x}\cup \underline{}{y}$ to
$\underline{}{x}\cup_{\cF}\underline{}{y}:=(-1)^{|x|}
\underline{}{\cF}\{ \underline{}{x}\, , \underline{}{y}\}$ for all $\underline{}{x}\, , \underline{}{y}\in \BInfty(\cH)$.  	
\end{Thm}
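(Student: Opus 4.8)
The plan is to realize $\cF^{\sharp}$ as the strict $B_{\infty}$ morphism induced by a single graded-linear isomorphism of the underlying spaces, so that the differential, the cup product, and the entire tower of braces are matched simultaneously. The organizing principle is the Gerstenhaber--Voronov operadic modeling (Theorem \ref{Operadtobrace}): type II is exactly the twist of $\BInfty(\cH)$ by the Maurer--Cartan element $\cF$, whereas type I is the canonical brace $B_{\infty}$ algebra assembled from the coproduct $\Delta_{\cF}$ with distinguished degree-$2$ element $1\otimes_{R_{\cF}}1$. Thus the theorem asserts that ``twist the Hopf algebroid, then model'' agrees with ``model, then twist by $\cF$,'' and it suffices to produce a map intertwining the two operadic structures and carrying one distinguished element to the other.

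First I would write down $\cF^{\sharp}$ explicitly. On the degree-$n$ component it converts the $R_{\cF}$-balanced tensor power $\cH_{\cF}^{\otimes_{R_{\cF}}n}$ into the $R$-balanced power $\cH^{\otimes_{R}n}$ by left multiplication with an iterated twist element $\cF_{(n)}\in\cH^{\otimes_{R}n}$, defined recursively from $\cF$ by repeated application of $\Delta$, with $\cF_{(1)}=1$ and $\cF_{(2)}=\cF$. That this descends to the balanced tensor product (the twisted source and target maps $\alpha_{\cF},\beta_{\cF}$ differing from $\alpha,\beta$ precisely by $\cF$), and that it is a bijection whose inverse is furnished by the inverse of $\cF$ available from the twistor axioms, are routine checks.

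The heart of the argument is showing that $\cF^{\sharp}$ intertwines the braces, carrying $\mu_{k}^{\Delta_{\cF}}$ to $\mu_{k}^{\Delta}$. Unwinding Equations \eqref{Brace1}--\eqref{Brace2}, this is precisely the statement that $\cF^{\sharp}$ intertwines the two operadic composition structures underlying the brace $B_{\infty}$ algebras, whose partial compositions are built from iterated $\Delta_{\cF}$ and iterated $\Delta$ respectively. I would prove it by induction on the number $k$ of brace arguments and on the arities of the slots, using the twist relation between $\Delta_{\cF}$, $\Delta$, and $\cF$ to rewrite each iterated application of $\Delta_{\cF}$ as an iterated application of $\Delta$ decorated by inserted copies of $\cF$, and then collapsing those copies against the $\cF_{(n)}$ coming from $\cF^{\sharp}$ by means of the twistor (cocycle) identity. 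This combinatorial bookkeeping---tracking exactly where the inserted $\cF$'s land under the coproducts and cancelling them via the cocycle condition---is the main obstacle, and is where the twistor axioms enter in full force.

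Once the braces are matched, the differential and cup product follow formally from the Maurer--Cartan correspondence $\cF^{\sharp}(1\otimes_{R_{\cF}}1)=\cF$, which is immediate from $\cF_{(2)}=\cF$. Indeed, in the operadic modeling the differential is the Gerstenhaber bracket against the distinguished degree-$2$ element and the cup product is $\underline{x}\cup\underline{y}=(-1)^{|x|}(\text{that element})\{\underline{x},\underline{y}\}$; hence compatibility of the braces together with $\cF^{\sharp}(1\otimes_{R_{\cF}}1)=\cF$ yields $\cF^{\sharp}\circ\delta_{\cH_{\cF}}=[\cF,\,\cdot\,]_{\mathrm{G}}\circ\cF^{\sharp}$ and $\cF^{\sharp}(\underline{x}\cup\underline{y})=\cF^{\sharp}(\underline{x})\cup_{\cF}\cF^{\sharp}(\underline{y})$ with no further work. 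Being a degree-preserving bijection compatible with the differential, the multiplication, and every brace, $\cF^{\sharp}$ is a strict $B_{\infty}$ isomorphism, as claimed.
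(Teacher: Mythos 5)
Your proposal is correct and follows essentially the same route as the paper: the paper likewise defines $\cF^{\sharp}$ by left multiplication with the iterated twist element $\cF_{1 \tinywedge \cdots \tinywedge n-1, n}\boldsymbol{\cdot} \cdots \boldsymbol{\cdot}\cF_{1,2}$ (Lemma \ref{KeyM1} and Equation \eqref{SignFixed}), proves the intertwining of the operadic compositions by induction using the relation $\Delta_{\cF}(x)=(\cF^{\sharp})^{-1}(\Delta(x)\cdot \cF)$ and the twistor identity (Lemmas \ref{KeyM2}, \ref{KeyM3}, Proposition \ref{TwoOperads}), and then concludes via $\cF^{\sharp}(1\otimes_{R_{\cF}}1)=\cF$ and the functoriality of the Gerstenhaber--Voronov modeling (Proposition \ref{functoriality}). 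The only minor imprecision is your phrase about an ``inverse of $\cF$'': invertibility of the maps $\cF^{\sharp}_n$ is itself part of the twistor axioms (Definition \ref{TwistorDefinition}) rather than a consequence of an inverse element in $\cH\otimes_{R}\cH$, which is how the paper argues bijectivity.
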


 When the Hopf algebroid $H$ specializes to a bialgebra over $\fK$, a twistor  of $H$ become precisely a Drinfeld's twist \cite{Drinfeld-QuantumGroup}, say $J\in H\otimes_{\fK} H$. Consequently, applying Theorem \ref{MainThmIntroduction2} yields an isomorphism of brace $B_{\infty}$ algebras  $ J^{\sharp}:  \BInfty(H_{J})\rightarrow \BInfty(H)^{J}$. Certainly,  it underlies an isomorphism of differential graded (dg) Lie algebras, a result established by Esposito and de Kleijn in \cite[Proposition $4.9$]{Esposito-deKleijn}.

 \subsection{Two particular brace $B_{\infty}$ algebras arising from Lie algebra pairs and algebraic dynamical twists}\label{Reason}

Let $\frakg$ be a finite-dimensional Lie $\fK$-algebra and $\frakl \subset \frakg$ be a Lie subalgebra. The pair $(\frakg, \frakl)$ is referred to as a \emph{Lie algebra pair}. An element in $U(\frakg)^{\otimes 2}\otimes U(\frakl)\llbracket \hbar \rrbracket$ that is $\frakl$-invariant is called an \textbf{algebraic dynamical twist} if it satisfies both the {algebraic dynamical twist equation (ADTE)} and the {$\hbar$-adic valuation property}, as defined in \cites{Calaque-QuantizationFormal, Enriquez-Etingof1} (see Definition \ref{Def:algebraicDT}). In \cite{Calaque-QuantizationFormal}, a brace $B_{\infty}$ algebra
\begin{equation*}
\ADT:=\bigl(
\oplus_{n=0}^{\infty}
(U(\frakg)^{\otimes n}\otimes U(\frakl)\llbracket \hbar \rrbracket)^{\frakl}[-n]\, ,
\delta_{(\frakg, \frakl)}\, , \cup_{(\frakg, \frakl)}\, , \{\mu_k^{(\frakg, \frakl)}\}_{k\geqslant 0}
\bigr)	
\end{equation*}
is constructed, whose underlying dg Lie algebra governs the deformations of algebraic dynamical twists.

The brace $B_{\infty}$ algebra structure maps of $\ADT$, particularly the brace brackets $\mu_k^{(\frakg, \frakl)}$, are presented in detail in Section \ref{Subsec:AgebraADT}, and are indeed quite complex. This complexity naturally leads to the question of whether there exists a Hopf algebroid, denoted $\cH$, such that $\ADT$ can be expressed as its associated canonical brace $B_{\infty}$ algebra, i.e., $\ADT=\BInfty(\cH)$. However, the answer to this question is \emph{negative}.

In fact, if such an $\mathcal{H}$ existed, then the degree $n$ component of $B_{\infty}(\mathcal{H})$ comprises tensor products of $n$ copies of $\mathcal{H}$ over some associative $\fK$-algebra $R$. Consequently, we would have the following equalities: $$B_{\infty}(\mathcal{H})^0=R=(U(\frakl)\llbracket \hbar \rrbracket)^{\frakl} , $$ $$B_{\infty}(\mathcal{H})^1=\mathcal{H}=(U(\frakg)\otimes U(\frakl)\llbracket \hbar \rrbracket)^{\frakl} , $$ and $$B_{\infty}(\mathcal{H})^2=\mathcal{H}\otimes_{R}\mathcal{H}= (U(\frakg)\otimes U(\frakl)\llbracket \hbar \rrbracket)^{\frakl} \otimes_{(U(\frakl)\llbracket \hbar \rrbracket)^{\frakl}} (U(\frakg)\otimes U(\frakl)\llbracket \hbar \rrbracket)^{\frakl}.$$ Moreover, by the definition of $W_{(\frakg,\frakl)}$, we also have $$B_{\infty}(\mathcal{H})^2=W_{(\frakg,\frakl)}^2= (U(\frakg)\otimes U(\frakg)\otimes U(\frakl)\llbracket \hbar \rrbracket)^{\frakl}.$$ Therefore, the following equality must hold: \begin{equation}\label{Eqt:equalwhichisnottrue} (U(\frakg)\otimes U(\frakl)\llbracket \hbar \rrbracket)^{\frakl} \otimes_{(U(\frakl)\llbracket \hbar \rrbracket)^{\frakl}} (U(\frakg)\otimes U(\frakl)\llbracket \hbar \rrbracket)^{\frakl}=(U(\frakg)\otimes U(\frakg)\otimes U(\frakl)\llbracket \hbar \rrbracket)^{\frakl} .
\end{equation}
However, this equality does not generally hold for arbitrary Lie algebra pairs $(\frakg, \frakl)$. To illustrate the invalidity of relation \eqref{Eqt:equalwhichisnottrue}, consider the case where $\frakg$ is a semisimple Lie algebra and $\frakl \subset \frakg$ is a Cartan subalgebra. In this context, we have
$$(U(\frakg)\otimes U(\frakl)\llbracket \hbar \rrbracket)^{\frakl} \otimes_{(U(\frakl)\llbracket \hbar \rrbracket)^{\frakl}} (U(\frakg)\otimes U(\frakl)\llbracket \hbar \rrbracket)^{\frakl}
=\bigl(U(\frakg) \bigr)^{\frakl} \otimes \bigl( U(\frakg)\bigr)^{\frakl} \otimes U(\frakl)\llbracket \hbar \rrbracket.$$
However, this expression is strictly contained within
$$(U(\frakg)\otimes U(\frakg)\otimes U(\frakl)\llbracket \hbar \rrbracket)^{\frakl}=
\bigl( U(\frakg)\otimes U(\frakg) \bigr)^{\frakl}\otimes U(\frakl)\llbracket \hbar \rrbracket , $$
demonstrating that relation \eqref{Eqt:equalwhichisnottrue} does not hold.

This paper clarifies the precise interrelation between the brace $B_{\infty}$ algebra $\ADT$ and a specific  Hopf algebroid --- we demonstrate that $\ADT$ can be embedded into a larger brace $B_{\infty}$ algebra, denoted as $\BInfty(\mathscr{H}_{(\frakg, \frakl)})\Laurent{\hbar}$. The latter is the $\fK\Laurent{\hbar}$-linear extension of the
canonical brace $B_{\infty}$ algebra associated with a quantum groupoid $$\mathscr{H}_{(\frakg, \frakl)}:=U(\frakg)\otimes \cD\llbracket \hbar \rrbracket$$ over $R:=C^{\infty}(\frakl^{\ast})\llbracket \hbar \rrbracket$, where $\cD$ denotes the algebra of $\fK$-linear smooth differential operators on $\frakl^{\ast}$.

 To this end, we introduce two operads, denoted as $\cP_{(\frakg, \frakl)}$ and $\cP_{\mathscr{H}_{(\frakg, \frakl)}}$. These operads are designed, via the Gerstenhaber-Voronov operadic modeling of brace $B_{\infty}$ algebras   as detailed in Section \ref{Subsec:GVoperadicmodeling}, to yield brace $B_{\infty}$ algebras $W_{(\frakg,\frakl)}$ and $B_{\infty}(\mathscr{H}_{(\frakg, \frakl)})$, respectively.

Now we can state our third result --- the relationship between the brace $B_{\infty}$ algebra $\ADT$ and the quantum groupoid $\mathscr{H}_{(\frakg, \frakl)}$, which can be summarized as follows:\begin{Thm} \label{MainThmIntroduction3}\
\begin{itemize}
\item[(1)]
There is a morphism of operads
$\mathsf{c}:\cP_{(\frakg,\frakl)}\rightarrow \cP_{\mathscr{H}_{(\frakg, \frakl)}}\Laurent{\hbar}$.
which induces an injective and   strict
$B_{\infty}$ morphism
\begin{equation*}
\mathsf{c}: \ADT \rightarrow
\BInfty(\mathscr{H}_{(\frakg, \frakl)})\Laurent{\hbar}
\end{equation*}	
between brace $B_{\infty}$ algebras
via the Gerstenhaber-Voronov operadic modeling,
such that for any
$K=K_1\otimes \cdots \otimes K_n \otimes K_{n+1}\in (U(\frakg)^{\otimes n}\otimes U(\frakl)\llbracket \hbar \rrbracket)^{\frakl}$, we have
\begin{equation*}
\mathsf{c}(K)=
(K_1\otimes \cdots \otimes K_n) \cdot
\Delta^{n-1}( \varphi(K_{n+1})) \cdot
\bigl(
(\Theta_{\Gutt})_{1 \tinywedge 2\cdots \tinywedge  n-1 , n}
\boldsymbol{\cdot}
\cdots
\boldsymbol{\cdot}
(\Theta_{\Gutt})_{1 , 2}
\bigr)
\, .
\end{equation*}
Here $\Theta_{\Gutt}$ is the bi-differential operator corresponding to the \textbf{Gutt star product} on the cotangent bundle
$T^{\ast}L$ (where $L$ is a Lie group integrating $\frakl$) \cite{Gutt}.
For  $\varphi$, a $\fK\llbracket \hbar \rrbracket$-linear morphism of Hopf algebras $U(\frakl)\llbracket \hbar \rrbracket\rightarrow \cD\otimes \fK\Laurent{\hbar}$, see Section \ref{Sec:morphismofoperadsc}.
\item[(2)]
An element $K\in (U(\frakg)^{\otimes 2}\otimes U(\frakl)\llbracket \hbar \rrbracket)^{\frakl}$
satisfying   the $\hbar$-adic valuation property
is an algebraic dynamical twist
if and only if
$\mathsf{c}(K)$ is a twistor of the quantum groupoid
$\mathscr{H}_{(\frakg, \frakl)}$ over $R$.	

\item[(3)]
If $K$ is an algebraic dynamical twist, then there is an injective and strict  $B_{\infty}$ morphism
\begin{equation*}
\mathsf{c}_{K}: \ADT^{K} \rightarrow
B_{\infty}(\mathscr{H}_{(\frakg, \frakl), \mathsf{c}(K)})\Laurent{\hbar}	
\end{equation*}
from the twisted brace $B_{\infty}$ algebra  $\ADT^K$ to
the $\fK\Laurent{\hbar}$-linear extension of the
canonical brace $B_{\infty}$ algebra  of the dynamical quantum groupoid $\mathscr{H}_{(\frakg, \frakl),\mathsf{c}(K)}$, such that the following diagram commutes:
\begin{equation*}
\begin{tikzcd}
\ADT^{K} \arrow[r, "\mathsf{c}_K"] \arrow[d, "\mathsf{c}"] &
B_{\infty}(\mathscr{H}_{(\frakg, \frakl), \mathsf{c}(K)})\Laurent{\hbar}
\arrow[dl, "\bigl(\mathsf{c}(K)\bigr)^{\sharp}"] \\
\bigl(B_{\infty}(\mathscr{H}_{(\frakg, \frakl)})\Laurent{\hbar}\bigr)^{\mathsf{c}(K)}
\, . &
\end{tikzcd}	
\end{equation*}
\end{itemize}
\end{Thm}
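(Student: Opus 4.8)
The plan is to deduce all three parts from the functoriality of the Gerstenhaber--Voronov operadic modeling together with Theorem \ref{MainThmIntroduction2}, so that the only genuinely new construction is the operad morphism $\mathsf{c}$ in part (1). First I would recall from Section \ref{Subsec:GVoperadicmodeling} the two operads at issue: $\cP_{(\frakg,\frakl)}$ is the operad whose modeling reproduces $\ADT$, and $\cP_{\mathscr{H}_{(\frakg,\frakl)}}$ is the one whose modeling reproduces $\BInfty(\mathscr{H}_{(\frakg,\frakl)})$. Since the modeling is functorial, it suffices to exhibit a morphism of operads $\mathsf{c}:\cP_{(\frakg,\frakl)}\to \cP_{\mathscr{H}_{(\frakg,\frakl)}}\Laurent{\hbar}$; the induced strict $B_{\infty}$ morphism on the associated brace $B_{\infty}$ algebras is then automatic, and the stated explicit formula is precisely the arity-$n$ component of $\mathsf{c}$ written out. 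I would define that component by the given formula: on $K=K_1\otimes\cdots\otimes K_n\otimes K_{n+1}$, transport the $U(\frakg)$-legs $K_1\otimes\cdots\otimes K_n$ unchanged, convert the $U(\frakl)$-leg $K_{n+1}$ into a differential operator via the Hopf algebra morphism $\varphi$ and distribute it across the $n$ tensor factors using the iterated coproduct $\Delta^{n-1}$, and finally insert the ordering corrections $\Theta_{\Gutt}$ coming from the Gutt star product on $T^{\ast}L$.

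The main obstacle is verifying that this assignment respects operadic composition and units, i.e. that the $\Theta_{\Gutt}$-corrections are exactly what is needed to intertwine the intricate braces $\mu_k^{(\frakg,\frakl)}$ of $\ADT$ with those induced by the coproduct of $\mathscr{H}_{(\frakg,\frakl)}$. Here I would use that $\varphi$ is a morphism of Hopf algebras, hence compatible with the coproducts on both sides, and that $\Theta_{\Gutt}$ is the cocycle realizing the associativity of the Gutt star product; the compatibility of $\mathsf{c}$ with operadic composition should then reduce to the associativity (cocycle) identity for $\Theta_{\Gutt}$ together with the coassociativity of $\Delta$. Injectivity of the induced $B_{\infty}$ morphism follows because $\varphi$ is injective, the Gutt quantization realizing $U(\frakl)\Laurent{\hbar}$ faithfully inside $\cD\otimes\fK\Laurent{\hbar}$, while each $\Theta_{\Gutt}$-factor is an invertible formal bidifferential operator; hence the arity-$n$ component of $\mathsf{c}$ is injective on every graded piece.

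For part (2) I would argue at the level of the underlying differential graded (dg) Lie algebras. The algebraic dynamical twist equation is exactly the Maurer--Cartan equation in the dg Lie algebra underlying $\ADT$, whereas the twistor condition on $\mathscr{H}_{(\frakg,\frakl)}$ is exactly the Maurer--Cartan equation in the dg Lie algebra underlying $\BInfty(\mathscr{H}_{(\frakg,\frakl)})\Laurent{\hbar}$. An injective strict $B_{\infty}$ morphism restricts to an injective morphism of dg Lie algebras, which both preserves and reflects Maurer--Cartan elements; therefore $K$ solves the ADTE if and only if $\mathsf{c}(K)$ solves the twistor equation. I would then check separately that the $\hbar$-adic valuation property imposed on $K$ matches the normalization and counit requirements built into the definition of a twistor, so that the two notions correspond exactly rather than merely modulo these side conditions.

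Finally, for part (3) I would invoke Theorem \ref{MainThmIntroduction2} with the twistor $\cF=\mathsf{c}(K)$, which furnishes the strict $B_{\infty}$ isomorphism $(\mathsf{c}(K))^{\sharp}:\BInfty(\mathscr{H}_{(\frakg,\frakl),\mathsf{c}(K)})\xlongrightarrow{\simeq}\BInfty(\mathscr{H}_{(\frakg,\frakl)})^{\mathsf{c}(K)}$ between the type I and type II twisted algebras. Because $\mathsf{c}$ is a strict $B_{\infty}$ morphism carrying the Maurer--Cartan element $K$ to the Maurer--Cartan element $\mathsf{c}(K)$ by part (2), twisting the source by $K$ and the target by $\mathsf{c}(K)$ yields a strict $B_{\infty}$ morphism $\mathsf{c}:\ADT^{K}\to\bigl(\BInfty(\mathscr{H}_{(\frakg,\frakl)})\Laurent{\hbar}\bigr)^{\mathsf{c}(K)}$ into the type II algebra, the underlying graded-linear map being unchanged by twisting. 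I would then define $\mathsf{c}_K:=\bigl((\mathsf{c}(K))^{\sharp}\bigr)^{-1}\circ\mathsf{c}$; commutativity of the stated triangle holds by this very definition, and $\mathsf{c}_K$ inherits injectivity and strictness as the composite of an injective strict $B_{\infty}$ morphism with a strict $B_{\infty}$ isomorphism.
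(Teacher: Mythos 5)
Your parts (1) and (3) follow essentially the same route as the paper. For (1), you define $\mathsf{c}_n$ by the stated formula and invoke functoriality of the operadic modeling (Proposition \ref{functoriality}), which is exactly the structure of Theorem \ref{MainEmbedding}; but your claim that compatibility with operadic composition "reduces to the associativity of $\Theta_{\Gutt}$ together with coassociativity of $\Delta$" understates what is actually needed. The paper's verification rests on commutation lemmas that use the $\frakl$-invariance of the elements in an essential way (Lemmas \ref{LemmaNo3} and \ref{LemmaNo4}) and on a specific identity for the Gutt twistor, $\Theta_{\Gutt}\cdot(\lambda_{i}\star_{\PBW}\otimes 1)=(1\otimes \hbar l_i)\cdot \Theta_{\Gutt}+\Delta(\lambda_{i}\star_{\PBW})\cdot\Theta_{\Gutt}$ (Lemma \ref{LemmaNo5}); the invariance is precisely why the source operad $\cP_{(\frakg,\frakl)}$ is built only from $\frakl$-invariant elements, and a verification that ignores it does not go through. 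Part (3) is verbatim the paper's proof of Theorem \ref{TwistedEmbedding}: apply functoriality with the multiplications $K\mapsto \mathsf{c}(K)$ to get $\mathsf{c}\colon \ADT^{K}\to \bigl(B_{\infty}(\mathscr{H}_{(\frakg,\frakl)})\Laurent{\hbar}\bigr)^{\mathsf{c}(K)}$, then set $\mathsf{c}_{K}:=\bigl((\mathsf{c}(K))^{\sharp}\bigr)^{-1}\circ \mathsf{c}$ using Theorem \ref{TwoTypes}.

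Part (2) is where you have a genuine gap, and it is also where you diverge from the paper. Your premise that the twistor condition "is exactly the Maurer--Cartan equation in the dg Lie algebra underlying $\BInfty(\mathscr{H}_{(\frakg,\frakl)})\Laurent{\hbar}$" is false: by Definition \ref{TwistorDefinition}, a twistor must satisfy the cocycle equation \eqref{TwistorEquation} and the counit condition \eqref{counitEquation}, \emph{and} the associated maps $\cF^{\sharp}\colon M_1\otimes_{R_{\cF}}M_2\to M_1\otimes_{R}M_2$ must be invertible for \emph{every} pair of left modules. Your brace-preservation-plus-injectivity argument does correctly yield the equivalence of the ADTE \eqref{ADTE} with equation \eqref{TwistorEquation} for $\mathsf{c}(K)$ (this is a clean observation, and it proves the direction "twistor $\Rightarrow$ algebraic dynamical twist"), but no Maurer--Cartan or dg Lie argument can produce the invertibility of $\cF^{\sharp}$ on arbitrary modules, nor does it show that $\mathsf{c}(K)$ lies in $\mathscr{H}_{(\frakg,\frakl)}\otimes_{R}\mathscr{H}_{(\frakg,\frakl)}$ rather than merely in its $\fK\Laurent{\hbar}$-extension (this is what the $\hbar$-adic valuation property buys, beyond "normalization"). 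This is exactly the point at which the paper's proof of Theorem \ref{FormalTwistANDTwistor} does real work: it passes from $K$ to a formal dynamical twist $F$ via $\pbw_{\hbar}$, invokes the Enriquez--Etingof bijection (Proposition \ref{FormaltoAlgebraic}), and transfers Theorem $3.1$ of \cite{Cheng-Chen-Qiao-Xiang-QDYBEI} from smooth to formal dynamical twists to conclude that $\mathscr{F}=\mathsf{c}(K)$ is a twistor. Without supplying this (or an independent proof of invertibility), your converse direction is unestablished, and part (3) of your argument, which needs $\mathsf{c}(K)$ to be an honest twistor in order to invoke Theorem \ref{TwoTypes}, inherits the gap.
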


The parallel between part $(2)$ of Theorem \ref{MainThmIntroduction3} and our previous work on \textit{smooth dynamical twists} (see \cite{Cheng-Chen-Qiao-Xiang-QDYBEI}) is noteworthy. In particular, in \cite{Cheng-Chen-Qiao-Xiang-QDYBEI}, we demonstrated that an $\frakl$-invariant element $$F=1\otimes 1\otimes 1+ O(\hbar)=F_1\otimes F_2 \otimes F_3\in U(\frakg)^{\otimes 2} \otimes C^{\infty}(\frakl^{\ast})\llbracket \hbar \rrbracket$$ qualifies as a smooth dynamical twist if and only if the element $$(F_1\otimes F_2) \cdot \Delta(F_3\star_{\PBW}) \cdot \Theta_{\Gutt}\in \mathscr{H}_{(\frakg, \frakl)}\otimes_{R}\mathscr{H}_{(\frakg, \frakl)}$$ constitutes a twistor of the quantum groupoid $\mathscr{H}_{(\frakg, \frakl)}$. The subsequent Theorems \ref{MainEmbedding}, \ref{FormalTwistANDTwistor}, and \ref{TwistedEmbedding} will provide further details.

\subsection{Notations  and conventions}\

{For our notations and conventions of fields, rings, modules, coproducts, and tensor products, see below.}
\begin{itemize}
\item
We fix $\fK$ as a field of characteristic zero, and all vector spaces are assumed to be over $\fK$. The space of $\fK$-linear morphisms between $\fK$-vector spaces $V$ and $W$ is denoted by $\Hom_{\fK}(V, W)$.

\item  We denote by $\fK\llbracket \hbar \rrbracket$ the algebra of formal power series in the variable $\hbar$ over $\fK$, and by $\fK\Laurent{\hbar}$ the algebra of formal Laurent series in $\hbar$ over $\fK$, which is the quotient field of $\fK\llbracket \hbar \rrbracket$. A $\fK\llbracket \hbar \rrbracket$-module $M$ is called completed if it satisfies $M = \varprojlim_k M / \hbar^k M$.
\item
 The symbol $\cdot \otimes_R \cdot$ stands for the tensor product over an associative $\fK$-algebra $R$. When $R = \fK$, we simplify the notation to $\cdot \otimes \cdot$. If $R$ is a completed associative $\fK\llbracket \hbar \rrbracket$-algebra, then $\cdot \otimes_R \cdot$ denotes the completed tensor product. For a $\fK$-vector space $V$ and a completed $\fK\llbracket \hbar \rrbracket$-module $M$, $V \otimes M$ denotes the completed tensor product $\varprojlim_k V \otimes_{\fK} (M / \hbar^k M)$, and similarly for $M \otimes V$.

\item  Let $A$ be a coalgebra over $R$ with coproduct $\Delta: A \rightarrow A \otimes_R A$. We employ \emph{Sweedler's notation} to simplify expressions involving coproducts, suppressing summation symbols. For example, $\Delta(x) = x_1 \otimes_R x_2$, $\Delta^2(x) = x_1 \otimes_R x_2 \otimes_R x_3$.
\end{itemize}

\medskip

{For our conventions about gradings and cochain complexes, see below.}
\begin{itemize}
\item
Throughout this paper, all gradings are assumed to be
$\rZ$-gradings. For a $\fK$-vector space $W$ with a
$\rZ$-grading, we write $W=\oplus_{i \in \rZ} W^{i}$, where $W^i$ denotes the component of degree $i$.
Given such a graded vector space $W$,
the $k$-shifted graded space $W[k]$ is defined by
$(W[k])^{i} := W^{i+k}$.
A degree $k$ morphism between graded $\fK$-vector spaces $V_1$ and $V_2$ is a $\fK$-linear map from $V_1$ to $V_2[k]$ that preserves the gradings.

\item
For a homogeneous element $x \in W$, we denote its degree by $|x|$. The copy of $x$ in $W[k]$ is denoted by $x[k]$, and its degree is given by $|x[k]| = |x| - k$.

\item
We denote a cochain complex over $\fK$ by $(W, d_W)$, where $W$ is a $\mathbb{Z}$-graded $\fK$-vector space and $d_W : W^i \to W^{i+1}$ is the differential.
\end{itemize}

\subsection*{Acknowledgements.}
We are grateful to Xiao-Wu Chen, Hua-Lin Huang, Zhangju Liu, Ping Xu, Yu Ye, and Guodong Zhou for their insightful discussions and valuable feedback. This work is supported in part by the National Natural Science Foundation of China (NSFC) under Grant No. 12301050 (Cheng), Grant No. 12071241 (Chen), and Grant No. 12271323 (Qiao), as well as by the Research Fund of Nanchang Hangkong University under Grant No. EA202107232 (Cheng).

\section{Preliminaries}\label{Section:Pre}

\subsection{$B_{\infty}$ and brace $B_{\infty}$ algebras}\label{subSec:BinfandbraceBinf}
We recall the definitions of $B_{\infty}$ algebras \cites{Getzler-Jones} and brace $B_{\infty}$ algebras \cites{Getzler-Jones, Chen-Li-Wang}, the latter also being known as homotopy $G$ algebras or Gerstenhaber-Voronov algebras \cites{Gerstenhaber-Voronov, Gerstenhaber-Voronov-Moduli}.

Consider a graded $\fK$-vector space $W=\oplus_{n\in \rZ} W^n$, and its tensor coalgebra $T^c(W[1]):=\oplus_{k=0}^{\infty} \otimes^k (W[1])$. The tensor coalgebra is equipped with the standard coproduct $T^c(W[1]) \rightarrow T^c(W[1])\otimes T^c(W[1])$:
\begin{equation*}
a_1\otimes \cdots \otimes a_n
\mapsto \sum_{i=1}^{n+1} (a_1\otimes \cdots \otimes a_{i-1})
\bigotimes  (a_{i}\otimes \cdots \otimes a_{n})\, ,
\quad \forall a_1\, , \cdots\, , a_n \in W[1]\, .
\end{equation*}
\begin{definition}[\cite{Getzler-Jones}]
A $B_{\infty}$ algebra structure on $W$
is a differential graded bialgebra structure
on $T^c(W[1])$
such that the coproduct is the standard one,
and $1\in T^c(W[1])$ serves as the algebra unit.
\end{definition}

Given a $B_{\infty}$ algebra $W$, let $D$ and $m$ denote the differential and the product on $T^c(W[1])$, respectively. Since $T^c(W[1])$ is cofree, $D$ and $m$ are determined by their projections onto $W[1]$. Consequently, we can express $D$ and $m$ as $D=\sum_{k\geqslant 1} D_k$ and $m=\sum_{k\geqslant 0, l\geqslant 0} m_{kl}$, where
\begin{equation*}
\begin{split}
D_k : & \otimes^k (W[1]) \rightarrow W[2]\, ,
\quad k\geqslant 1 \, ,  \\
m_{kl}: &
\otimes^k(W[1])\bigotimes \otimes^l(W[1])
\rightarrow 	W[1]\, , \quad k\geqslant 0\, , l \geqslant 0\, ,
\end{split}
\end{equation*}
are $\fK$-linear maps. These maps satisfy specific compatibility relations inherent to the $B_{\infty}$ structure. In particular, the collection of maps $\{D_k\}_{k\geqslant 1}$ defines an $A_{\infty}$ algebra structure on $W$.
Furthermore, it is worth noting that $m_{00}=0$,
$m_{01}=m_{10}$ is the identity map,
and $m_{0l}=0$ for all $l\geqslant 2$, which follows from the fact that $1\in T^c(W[1])$ is the unit.

\begin{definition}[\cites{Getzler-Jones, Chen-Li-Wang}]
A brace $B_{\infty}$ algebra  structure on
  $W$ is
a $B_{\infty}$ algebra structure such that
$D_k =0$ for $k\geqslant 3$ and $m_{kl}=0$ for $k\geqslant 2$ and $l\geqslant 0$.
In other words, the only non-zero structure maps
are $D_1$, $D_2 $, and $m_{1k}$ ($k\geqslant 0$).
\end{definition}
We now elaborate on the definition of a brace $B_{\infty}$ algebra. By shifting degrees, the map $D_1$ becomes a map $d: W \rightarrow W[1]$, the map $D_2$ becomes a product $\cdot : W \otimes W \rightarrow W$, and the map $m_{1k}$ becomes a map $\mu_k: W \otimes (W[1])^{\otimes k} \rightarrow W$. The \textit{brace operations} (\cite[\S $5.4$]{Chen-Li-Wang}) are defined as:
\begin{equation*}
x\lb x_1\, , \cdots\, , x_k \rb:=\mu_{k}(x \otimes x_1[1]\otimes \cdots \otimes x_k[1]) \in W\, , \quad \forall x\, , x_1\, , \cdots\, , x_k\in W\, ,
\end{equation*}
for all integers $k\geqslant 0$. The quadruple $(W\, , d\, , \cdot\, , \{ \mu_k \}_{k\geqslant 0})$ constitutes a brace $B_{\infty}$ algebra if and only if the following conditions are satisfied \cite{Gerstenhaber-Voronov, Willwacher-BracesFormality}:

(1) The map $\mu_{0}: W \rightarrow  W $ is the identity map, i.e.,  $x\lb \rb =x$ for all $x\in W$.

(2) The \textbf{higher pre-Jacobi identities} hold:
\begin{equation}\label{Jacobi}
\begin{split}
& x\lb y_1\, , \cdots\, , y_n \rb
\lb z_1\, , \cdots\, , z_m \rb\\=&
\sum (\pm)
 x \lb z_1\, , \cdots\, , z_{i_1}\, ,
			y_1\lb z_{i_1+1}\, , \cdots\, , z_{i_1+r_1}
			\rb\, ,
\cdots\, , z_{i_n}\, ,
y_{n}\lb z_{i_n+1}\, , \cdots\, , z_{i_n+r_n}\rb\, , \cdots\, , z_m \rb\, , 	
\end{split}
\end{equation}
where $x$, $y_i$, and $z_j$ are homogeneous elements of $W$, the sign $\pm$ is given by
\begin{equation*}
(-1)^{\sum_{p=1}^n (|y_p|+1) \sum_{q=1}^{i_p}(|z_q|+1)}\, ,
\end{equation*}
and the summation goes over all sequences of positive integers $(i_1\, , \cdots\, , i_n\, , r_1\, , \cdots\, , r_n)$ satisfying
\begin{equation*}
i_1+r_1 \leqslant i_2\, , \,\,\,
i_2+r_2 \leqslant i_3\, , \,\,\, \cdots\, , \,\,\,
i_{n-1}+r_{n-1} \leqslant i_n \, , \,\,\,
i_n+r_n \leqslant m\, .
\end{equation*}

(3)
The triple $(W,\, d,\, \cdot)$ forms a
differential graded associative algebra (without unit).
	
(4)
The \textbf{distributivity} relations hold:
\begin{equation*}\label{Distributivity}
\begin{split}
& \qquad\quad
(x_1\cdot x_2) \lb y_1\, , \cdots\, , y_n\rb = \\
& \qquad\qquad\qquad\qquad
\sum_{k=0}^n
		(-1)^{|x_2|\cdot \sum_{p=1}^k (|y_p|+1)} x_1
		\lb y_1\, , \cdots\, , y_k \rb
		\cdot
		x_2\lb y_{k+1}\, , \cdots\, , y_n \rb\, .
		\end{split}
\end{equation*}
	
(5)
The \textbf{higher homotopies} hold:
\begin{equation*}\label{Homotopy}
\begin{split}
& d (x\lb x_1\, , \cdots\, , x_n \rb)
			- (dx) \lb x_1\, , \cdots\, , x_n \rb \\
&\qquad\quad -
			\sum_{i=1}^n
			(-1)^{|x|+1+\sum_{j=1}^{i-1}(|x_j|+1)}
			x\lb x_1\, , \cdots\, , x_{i-1}\, , dx_{i}\, , \cdots\, , x_n\rb \\
= & (-1)^{|x|(|x_1|+1)} x_1\cdot (x
\lb x_2\, , \cdots\, , x_{n} \rb) \\
			&
-\sum_{i=1}^{n-1}(-1)^{|x|+1+
\sum_{j=1}^{i}(|x_j|+1)}
x\lb x_1\, , \cdots\, , x_i\cdot x_{i+1}\, , \cdots \, , x_n \rb \\
			& \, \,
+(-1)^{|x|+1+\sum_{j=1}^{n-1}(|x_j|+1)}
(x\lb x_1\, , \cdots\, , x_{n-1} \rb)\cdot x_n\, ,
		\end{split}	
	\end{equation*}
for all homogeneous elements
$x\, , x_1\, , \cdots\, , x_n \in W$.

The notion of \textit{brace algebras}, which is a weaker structure than brace $B_{\infty}$ algebras, is introduced in \cite{Gerstenhaber-Voronov-Moduli}.

\begin{definition}[\cite{Gerstenhaber-Voronov-Moduli}]
A brace algebra $(W, \{\mu_{k}\}_{k \geqslant 0})$ consists of a graded $\fK$-vector space $W$ and a family of $\fK$-linear maps $\mu_{k}: W \otimes (W[1])^{\otimes k} \rightarrow W $ for each integer $k\geqslant 0$,
such that $\mu_{0}: W \rightarrow W $ is the identity map, and the brace operations defined by
\begin{equation*}
x\lb x_1, \cdots, x_k \rb:=\mu_{k}(x \otimes x_1[1]\otimes \cdots \otimes x_k[1]) \in W, \quad \forall x, x_1, \cdots, x_k\in W,
\end{equation*}
satisfy the higher pre-Jacobi identities \eqref{Jacobi}.
\end{definition}

Given a brace algebra $(W, \{\mu_{k}\}_{k\geqslant 0})$, one obtains the \textbf{Gerstenhaber bracket} on $W$, which is the $\fK$-bilinear operation defined by
\begin{equation*}\label{Eqt:fakegerstenhaberbraceketfrombrace}
[x, y]_{\mathrm{G}}:=x\lb y \rb -(-1)^{(|x|+1)(|y|+1)} y \lb x \rb,
\end{equation*}
for all homogeneous elements $x, y \in W$.

It is possible to derive a brace $B_{\infty}$ algebra from a brace algebra together with a certain element.

\begin{proposition}[\cite{Gerstenhaber-Voronov-Moduli}]\label{BraceToBraceBInfinity}
Let $(W, \{\mu_{k}\}_{k\geqslant 0})$ be a brace algebra, and let $\m\in W^2$ be a degree $2$ element satisfying $\m\lb \m \rb=0$.
Define the operations
\begin{equation*}\label{mCup}
\begin{split}
\cdot: W\times W  & \rightarrow W, \\
x \cdot y : & =(-1)^{|x|}\m \lb x\, ,  y \rb.
\end{split}
\end{equation*}
and
\begin{equation*}\label{mHochschild}
\begin{split}
d: W & \rightarrow W[1],  \\
dx: & = \m \lb x \rb- (-1)^{|x|+1} x \lb \m \rb=[\m, x]_{\mathrm{G}}.
\end{split}
\end{equation*}
Then the quadruple $(W, d, \cdot, \{ \mu_k \}_{k\geqslant 0})$ forms a brace $B_{\infty}$ algebra.
\end{proposition}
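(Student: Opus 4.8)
The plan is to verify directly the five axioms (1)--(5) defining a brace $B_{\infty}$ algebra. Axioms (1) and (2), namely $\mu_0=\id$ and the higher pre-Jacobi identities \eqref{Jacobi}, are already part of the given brace algebra structure and so need no proof; the entire content is therefore to show that the operations $\cdot$ and $d$ manufactured out of $\m$ satisfy the remaining three: that $(W,d,\cdot)$ is a differential graded associative algebra (3), the distributivity relation (4), and the higher homotopy relation (5). The uniform strategy is to substitute $x\cdot y=(-1)^{|x|}\m\{x,y\}$ and $dx=[\m,x]_{\mathrm{G}}=\m\{x\}+(-1)^{|x|}x\{\m\}$ into each axiom and reduce it, by repeated application of the pre-Jacobi identity \eqref{Jacobi} with $\m$ occupying the outermost slot, to a relation that is forced by the single hypothesis $\m\{\m\}=0$.

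For axiom (3) I would handle the three requirements in turn. The vanishing $d^2=0$ is the cleanest: the case $n=1$ of \eqref{Jacobi} already makes $[\cdot,\cdot]_{\mathrm{G}}$ a graded Lie bracket on $W[1]$, on which $\m$ is odd, so the graded Jacobi identity yields $d^2x=\tfrac12[[\m,\m]_{\mathrm{G}},x]_{\mathrm{G}}$, and one computes $[\m,\m]_{\mathrm{G}}=2\,\m\{\m\}=0$. Associativity of $\cdot$ and the Leibniz rule expressing that $d$ is a derivation of $\cdot$ are then obtained by expanding the relevant brace compositions (for instance $\m\{x,y\}\{z\}$, and the vanishing expression $\m\{\m\}\{x,y,z\}$) through \eqref{Jacobi} and cancelling the obstructing terms by means of $\m\{\m\}=0$, the leftover terms matching after the degree-shift (Koszul) signs are reorganized.

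Axioms (4) and (5) follow the same template but are combinatorially heavier. For distributivity I would expand the left-hand side $(x_1\cdot x_2)\{y_1,\dots,y_n\}=(-1)^{|x_1|}\m\{x_1,x_2\}\{y_1,\dots,y_n\}$ by \eqref{Jacobi}; this distributes the arguments $y_j$ among the two inner slots occupied by $x_1,x_2$ and the outer slots of $\m$, and the surviving terms are precisely the block sums $x_1\{y_1,\dots,y_k\}\cdot x_2\{y_{k+1},\dots,y_n\}$, each recognized as $(-1)^{\bullet}\m\{x_1\{\cdots\},x_2\{\cdots\}\}$. For the higher homotopy (5) I would expand $d\bigl(x\{x_1,\dots,x_n\}\bigr)=[\m,x\{x_1,\dots,x_n\}]_{\mathrm{G}}$, push $\m$ through the braces using \eqref{Jacobi}, and sort the resulting terms according to whether $\m$ is inserted inside $x$, inside one of the $x_i$, or at the top level, matching them against the three groups on the right-hand side of the relation.

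The main obstacle is the higher homotopy relation (5): it is the only axiom that simultaneously couples $d$, $\cdot$, and every brace $\mu_k$, so the two pre-Jacobi expansions must be coaxed into agreeing term by term, which demands careful bookkeeping of the Koszul and degree-shift signs through each insertion. A second point worth flagging is that, in order to land in a genuine brace $B_{\infty}$ algebra rather than merely a $B_{\infty}$ algebra, the outer-slot terms in which $\m$ is braced with three or more arguments must drop out of the distributivity and homotopy computations; in the operadic modeling of Theorem \ref{Operadtobrace}, which is the setting in which Proposition \ref{BraceToBraceBInfinity} is applied, $\m$ has arity two and hence $\m\{z_1,\dots,z_k\}=0$ for $k\geqslant 3$, which is exactly what forces the surviving terms to match the stated right-hand sides. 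One may package this conceptually on $T^c(W[1])$: the braces make it a bialgebra with the standard coproduct, $\m$ is a primitive element of odd degree, and $D:=[\m,-]$ is automatically an inner derivation and a coderivation with $D^2=[\m\cdot\m,-]=0$ because $\m\cdot\m=\m\{\m\}=0$; the vanishing of the higher components $D_k$ ($k\geqslant 3$) is once more the arity-two condition.
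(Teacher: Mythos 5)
The paper offers no proof of Proposition \ref{BraceToBraceBInfinity} at all: it is quoted from \cite{Gerstenhaber-Voronov-Moduli}, where the corresponding result is proved for an \emph{operad with multiplication}, i.e.\ precisely in the setting of Theorem \ref{Operadtobrace} in which this paper later invokes it. So there is nothing internal to compare against, and your plan --- direct verification of axioms (3)--(5), or better, the coalgebraic packaging on $T^c(W[1])$ in your final paragraph --- is the natural route. That packaging (brace structure $=$ bialgebra structure on $T^c(W[1])$ with the standard coproduct and $m_{kl}=0$ for $k\geqslant 2$; $\m[1]$ primitive and odd; $D=[\m[1],\,\cdot\,]$ an inner derivation and coderivation with $D^2=[\m[1]^2,\,\cdot\,]=0$ since the $W[1]$-component of $\m[1]^2$ is $\m\lb\m\rb[1]$ and the tensor components cancel by parity) is in fact the cleanest version of the argument.

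The point you flag at the end deserves to be the centerpiece rather than a side remark: the stated hypotheses ($W$ an abstract brace algebra, $\m\in W^2$, $\m\lb\m\rb=0$) are \emph{not} sufficient, and your verification genuinely requires the extra condition $\m\lb z_1\, , \cdots\, , z_k\rb=0$ for all $k\geqslant 3$. Moreover, this condition is needed not only for distributivity and the higher homotopies, as you say, but already for axiom (3): expanding $0=\m\lb\m\rb\lb x\, , y\, , z\rb$ by \eqref{Jacobi} produces, besides the two associativity terms $\m\lb\m\lb x\, , y\rb\, , z\rb$ and $\m\lb x\, , \m\lb y\, , z\rb\rb$, eight further terms such as $\m\lb\m\, , x\, , y\, , z\rb$ and $\m\lb\m\lb x\, , y\, , z\rb\rb$, which need not vanish in an abstract brace algebra; similarly, distributivity at $n=1$ is equivalent to $\m\lb x_1\, , x_2\, , y\rb\pm\m\lb x_1\, , y\, , x_2\rb\pm\m\lb y\, , x_1\, , x_2\rb=0$, which is not a consequence of $\m\lb\m\rb=0$. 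For the same reason, your phrase ``cancelling the obstructing terms by means of $\m\lb\m\rb=0$'' in the treatment of axiom (3) is inaccurate: those terms are killed by the arity hypothesis, not cancelled. The hypothesis is automatic in every application made in this paper, because in the operadic modeling degree equals arity, so the sum \eqref{OperadbraceI} defining the brace of a degree-$2$ element on three or more arguments is empty (likewise for $\ADT$ in Section \ref{Subsec:AgebraADT}, whose braces vanish by definition when the number of arguments exceeds the degree). With the hypothesis added --- which is how the citation to \cite{Gerstenhaber-Voronov-Moduli} should be read --- your outline is correct, and your $T^c(W[1])$ argument makes the dichotomy transparent: $\m\lb\m\rb=0$ alone already yields a $B_{\infty}$ structure, and the arity-two condition is exactly the vanishing of the components $D_k$, $k\geqslant 3$, that upgrades it to a brace $B_{\infty}$ structure.
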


A morphism of $B_{\infty}$ algebras is defined via
an equivalent formulation in terms of tensor coalgebras.
\begin{definition}[\cite{Chen-Li-Wang}]
Given two $B_{\infty}$ algebras $W_1$ and $W_2$, a $B_{\infty}$ morphism from $W_1$ to $W_2$ is defined as a dg bialgebra morphism from $T^c(W_1[1])$ to $T^c(W_2[1])$.
\end{definition}
Such a morphism $f: T^c(W_1[1])\rightarrow T^c(W_2[1])$ can be decomposed as $f=\sum_{k\geqslant 1} f_{k}$, where each component $f_k: \otimes^k (W_1[1]) \rightarrow W_2[1]$ is a $\fK$-linear map for $k\geqslant 1$. The defining characteristic of a $B_{\infty}$ morphism is that these components, $f_k$, satisfy compatibility relations with the $B_{\infty}$ structures on $W_1$ and $W_2$  \cite{Chen-Li-Wang}. It is noteworthy that a $B_{\infty}$ algebra morphism also constitutes an $A_{\infty}$ algebra morphism.

\begin{definition}[\cite{Chen-Li-Wang}]\label{Def:strictBmorphism}
A $B_{\infty}$ morphism $f$ is called
a \textbf{strict $B_{\infty}$ morphism}, if $f_{k}=0$ for all $k\geqslant 2$.
Furthermore, $f$ is called a \textbf{strict $B_{\infty}$ isomorphism}
if it is a strict $B_{\infty}$ morphism and its component
$f_1$ is an isomorphism of graded $\fK$-vector spaces.
\end{definition}

\begin{remark}\label{dgLie}
Given a brace $B_{\infty}$ algebra $(W\, , d\, , \cdot\, , \{ \mu_k \}_{k\geqslant 0})$,
the triple $(W[1]\, , -d\, , \nullbracket_{\mathrm{G}})$ constitutes a dg Lie algebra, where
$[x[1]\, , y[1]]_{\mathrm{G}}:=(-1)^{|x|}([x\, , y]_{\mathrm{G}})[1]$ for all homogeneous elements
$x\, , y\in W$.
Consequently, strict $B_{\infty}$ morphisms between brace $B_{\infty}$ algebras induce morphisms of the corresponding dg Lie algebras.
\end{remark}

\subsection{The Gerstenhaber-Voronov operadic modeling of brace $B_{\infty}$ algebras}\label{Subsec:GVoperadicmodeling}

Following Loday and Vallette \cite{Loday-Vallette}, we recall the definition of a non-symmetric operad.

\begin{definition}[{\cite[$\S 5.9.3$]{Loday-Vallette}}]
A non-symmetric operad $\cP$ consists of the following data: a collection of $\fK$-vector spaces $\cP(n)$ for each integer $n\geqslant 0$; a distinguished element $\id_{\cP}\in \cP(1)$, called the \textbf{identity}; and a family of $\fK$-linear maps, known as \textbf{composition maps}, given by
\begin{equation*}
\begin{split}
\gamma: \cP(n)\otimes \cP(k_1)\otimes\cdots\otimes\cP(k_n) & \rightarrow \cP(k_1+\cdots+k_n)\, , \\
x\otimes (x_1\otimes\cdots \otimes x_{n}) & \mapsto \gamma(x\, ; x_1\, , \cdots\, , x_n)\, .
\end{split}  	
\end{equation*}
These data are subject to the following conditions:

- \textbf{Associativity}: For any
$x\in \cP(n)$, $y_1\in \cP(k_1)\, , \cdots\, , y_n\in \cP(k_n)$,
$z_1\in \cP(m_1)$, $\cdots$, $z_{k_1}\in \cP(m_{k_1})$,
$z_{k_{1}+1}\in \cP(m_{k_1+1})$, $\cdots$,
$z_{k_1+k_2}\in \cP(m_{k_1+k_2})$,
$z_{k_1+\cdots+k_{n-1}+1}\in \cP(m_{k_1+\cdots+k_{n-1}+1})$, and
$z_{k_1+\cdots+k_{n-1}+k_n}\in \cP(m_{k_1+\cdots+k_{n-1}+k_n})$, the following associativity property holds:
\begin{equation}\label{CompositionAssociativity}
\begin{split}	
& \gamma\biggl(\gamma(x\, ; y_1\, , \cdots\, , y_n)\, ;
z_{1}\, , \cdots\, ,z_{m_{k_1+\cdots+k_n}}
\biggr) \\
= &	
\gamma\biggl(x\, ; \gamma(y_1\, ; z_1\, , \cdots\, , z_{k_1})\, ,
\gamma\bigl(y_2\, ; z_{k_1 + 1}\, , \cdots\, , z_{k_1+k_2})\, ,
\cdots\, ,
\\
& \qquad\qquad\qquad\qquad\qquad
\gamma\bigl(y_n\, ; z_{k_1+\cdots+k_{n-1}+1}\, , \cdots\, , z_{k_1+\cdots+k_{n-1}+k_n}\bigr)
\biggr)\, .
\end{split}
\end{equation}

- \textbf{Unitality}:
The identity element $\id_{\cP}\in \cP(1)$ satisfies
\begin{equation}\label{Unitality}
\gamma(\id_{\cP}\, ; x)=x\, , \qquad \mbox{and } \quad
\gamma(x\, ; \underbrace{\id_{\cP}\, , \cdots\, , \id_{\cP}}_{n})=x\, , \quad \forall x\in \cP(n).
\end{equation}
\end{definition}

Given a family of $\fK$-linear maps $\gamma: \cP(n)\otimes \cP(k_1)\otimes\cdots\otimes\cP(k_n) \rightarrow \cP(k_1+\cdots+k_n)$ and a distinguished element $\id_{\cP}\in \cP(1)$, one can define for any $1\leqslant i \leqslant n$, $x\in \cP(n)$, and $y\in \cP(m)$, the \textit{partial composition map} as:
\begin{equation}\label{PartialComposition}
x\circ_{i}y:=
\gamma(x; \underbrace{\id_{\cP}\, , \cdots\, , \id_{\cP}\, , y}_{i}
\, , \id_{\cP}\, , \cdots\, , \id_{\cP}) \in \cP(n+m-1)\, .	
\end{equation}
According to \cite[$\S 5.9.4$]{Loday-Vallette}, these maps $\gamma$ satisfy the associativity property \eqref{CompositionAssociativity} if and only if the following relations hold for all $x\in \cP(n)$, $y\in \cP(m)$, and $z\in \cP(l)$:

- \textbf{Sequential composition axiom}:
\begin{equation}\label{SequentialComposition}
(x\circ_i y) \circ_{i-1+j} z = x\circ_{i} (y \circ_{j} z)\, , \quad \text{for $1\leqslant i \leqslant n$, $1\leqslant j \leqslant m$}.
\end{equation}

- \textbf{Parallel composition axiom}:
\begin{equation}\label{ParallelComposition}
(x\circ_{i} y)\circ_{k-1+m} z =  (x\circ_{k}z)\circ_{i}y
\, , \quad \text{for $1\leqslant i< k \leqslant n$}.  	
\end{equation}
Similarly, the distinguished element $\id_{\cP}\in \cP(1)$ satisfies the unitality property \eqref{Unitality} if and only if the following relations, also known as the \textbf{unitality axiom}, hold for all $x\in \cP(n)$ and integers $1\leqslant i \leqslant n$:
 \begin{equation}\label{Unitality2}
\id_{\cP}\circ_{1} x=x\, , \qquad x\circ_{i} \id_{\cP}=x\, . 	
\end{equation}

An equivalent definition of non-symmetric operads can then be formulated as follows:
\begin{definition}[{\cite[$\S5.9.4$]{Loday-Vallette}}]\label{PartialDefinitionOperad}
A non-symmetric operad $\cP$ consists of $\fK$-vector spaces $\cP(n)$ ($n\geqslant 0$), a family of $\fK$-linear maps $\cdot \circ_i \cdot: \cP(n) \otimes \cP(m)\rightarrow \cP(n+m-1)$ referred to as the partial composition maps ($1\leqslant i \leqslant n$), and a distinguished element
$\id_{\cP}\in \cP(1)$. These data must satisfy the sequential composition axiom \eqref{SequentialComposition}, the parallel composition axiom \eqref{ParallelComposition}, and the unitality axiom \eqref{Unitality2}.
\end{definition}
It is noteworthy that the composition maps can be reconstructed from the partial composition maps via the formula:
\begin{equation*}
\gamma(x; y_1\, , \cdots\, , y_n):=
( \cdots ((x \circ_1 y_1)\circ_{k_{1} + 1} y_2 ) \circ_{k_{1} + k_{2} +1 } \cdots )\circ_{k_{1}+\cdots+k_{n-1}+1} y_n\, , 	
\end{equation*}
for all $x\in \cP(n)$, $y_1\in \cP(k_1)$, $\cdots$, $y_n\in \cP(k_n)$.

The following definition appears implicitly in
\cite[\S $5.9$]{Loday-Vallette}.

\begin{definition}[{\cite{Loday-Vallette}}]\label{MorphismOperad}
Let $\cP_1$ and $\cP_2$ be two non-symmetric operads.
A morphism of operads $f: \cP_1 \rightarrow \cP_2$ consists of
$\fK$-linear maps
$f_{n}: \cP_1(n) \rightarrow \cP_2(n)$ for all $n\geqslant 0$
such that $f_1(\id_{\cP_1})=\id_{\cP_2}$  and
  the following diagrams are commutative:
\begin{equation*}
\begin{tikzcd}
\cP_1(n)\otimes \cP_1(k_1)\otimes\cdots\otimes\cP_1(k_n) \arrow[r, "\gamma"]\arrow[d, "f_n\otimes f_{k_1}\otimes \cdots \otimes f_{k_n}"] & \cP_1(k_1+\cdots+k_n)
\arrow[d, "f_{k_1+\cdots+k_n}"]	\\
\cP_2(n)\otimes \cP_2(k_1)\otimes\cdots\otimes\cP_2(k_n) \arrow[r, "\gamma"] & \cP_2(k_1+\cdots+k_n)\, .
\end{tikzcd}
\end{equation*}
We say that $f$ is an isomorphism of operads, if
each $f_n$ is an isomorphism of $\fK$-vector spaces.
\end{definition}

Suppose that a set of $\fK$-linear maps
$\{f_n: \cP_1(n)\rightarrow \cP_{2}(n)\}_{n\geqslant 0}$
is given and $f_1(\id_{\cP_1})=\id_{\cP_2}$ is assumed.
To demonstrate that the set of $\fK$-linear maps $\{f_n\}_{n\geqslant 0}$ defines a morphism of operads, it is sufficient to verify that these maps are compatible with the partial composition maps.
More precisely, we need to show that the following diagrams commute for all $m\geqslant 0\, ,  n\geqslant 0\, , 1\leqslant i \leqslant n$:
\begin{equation}\label{PartialMorphism}
\begin{tikzcd}
\cP_1(n)\otimes \cP_1(m) \arrow[d, "f_n\otimes f_m"] \arrow[r, "\cdot\circ_{i}\cdot"] & \cP_1(n+m-1) \arrow[d, "f_{n+m-1}"]	 \\
\cP_2(n)\otimes \cP_2(m) \arrow[r, "\cdot\circ_{i}\cdot"] & \cP_2(n+m-1)\, .
\end{tikzcd}	
\end{equation}
This commutativity ensures that for any $n, m \geqslant 0$, the image of the partial composition in $\cP_1$ under the map $f_{n+m-1}$ equals the partial composition in
$\cP_2$ of their respective images under the maps
$f_{n}$ and $f_{m}$, thus confirming the operadic morphism property.

We now recall the operadic modeling of brace $B_{\infty}$ algebras, namely, a
construction of brace $B_{\infty}$ algebras from operads, as presented by Gerstenhaber and Voronov \cite{Gerstenhaber-Voronov-Moduli}. Given any  non-symmetric operad $\cP$, consider the graded $\fK$-vector space
\begin{equation*}\label{WP}
W_{\cP}:=\oplus_{n=0}^{\infty} \cP(n)[-n]\, .
\end{equation*}
Here, an element $x \in \cP(n)$ is assigned a degree of $n$ in $W_{\cP}$, denoted as $|x|=n$. Consequently, the degree of $x[1]$ in $W_{\cP}[1]$ is $|x[1]| = n-1$. For each $k\geqslant 1$, we define the brace operation
\begin{equation*}
\mu_{k}^{\cP}: W_{\cP} \bigotimes (W_{\cP}[1])^{\otimes k} \rightarrow W_{\cP}
\end{equation*}
as
\begin{equation}\label{OperadbraceI}
\begin{split}
& \mu_{k}^{\cP}( x\, ; y_1[1]\, , \cdots\, , y_k[1])=
x\{y_1\, , \cdots\, , y_k  \} \\
:= &
\sum (-1)^{\sum_{p=1}^k i_p (|y_p|-1) }
\gamma(x\, ; \id_{\cP}\, , \cdots\, , \id_{\cP}\, , y_1\, , \id_{\cP}\, , \cdots\, , \id_{\cP}\, , y_k\, , \id_{\cP}\, , \cdots\, , \id_{\cP})\, ,
\end{split}
\end{equation}
where $x, y_1, \cdots, y_k \in W_{\cP}$. In Equation \eqref{OperadbraceI}, $\gamma$ represents the composition map of the operad $\cP$, and $\id_{\cP}$ denotes the identity element. The summation goes over all possible insertions of $y_1, \dots, y_k$ into $x$, with the identity element $\id_{\cP}$ filling the remaining inputs.
In the summation, the index $i_p$ represents the count of inputs preceding $y_p$ within each term.
Specifically, each instance of $\id_{\cP}$ preceding $y_p$ contributes a count of $1$, while each $y_q$ preceding $y_p$ contributes a count equal to its degree $|y_q|$.

\begin{definition}[\cite{Gerstenhaber-Voronov-Moduli}]\label{MultiplicationDefinition}
A multiplication on a non-symmetric operad $\cP$ is an element $\m\in \cP(2)$ satisfying the associativity condition:
\begin{equation*}\label{OperadMult}
\gamma(\m; \m\, , \id_{\cP})=\gamma(\m; \id_{\cP}\, , \m)\, .
\end{equation*}
\end{definition}

Based on the work of Gerstenhaber and Voronov \cite{Gerstenhaber-Voronov-Moduli},
it is not hard to conclude the following theorem and proposition, respectively,
which describe the operadic modeling of brace $B_{\infty}$ algebras and their functoriality.

\begin{theorem} \label{Operadtobrace}
(1) The graded $\fK$-vector space $W_{\cP} = \oplus_{n=0}^{\infty}\cP(n)[-n]$, equipped with the brace operations $\{\mu_{k}^{\cP}\}_{k\geqslant 0}$, forms a brace algebra:
\begin{equation*}
(W_{\cP}=\oplus_{n=0}^{\infty}\cP(n)[-n]\, ,
\{\mu_{k}^{\cP}\}_{k\geqslant 0})\, .	
\end{equation*}

(2) An element $\m\in \cP(2)$ defines a multiplication on the operad $\cP$ if and only if $\m \{ \m \}=0$. Under this condition, we define two operations on $W_{\cP}$:
\begin{itemize}
	\item The cup product $\cup: W_{\cP} \times W_{\cP}  \rightarrow W_{\cP}  $ defined by
\begin{equation*}\label{OperadCup}
x \cup y : = (-1)^{|x|}\m\lb x\, , y \rb \, .
\end{equation*}

 \item The Hochschild differential $\delta: W_{\cP}  \rightarrow W_{\cP}[1]$ defined by
	\begin{equation*}\label{OperadHochschild}
		\begin{split}
						\delta(x): = \m\{ x \} - (-1)^{|x|+1} x \{ \m \} =[\m\, , x]_{\mathrm{G}} \, ,
		\end{split}
	\end{equation*}
	where $[\,\cdot\, ,  \cdot\,]_{\mathrm{G}}$ denotes the Gerstenhaber bracket.
\end{itemize}
Then, the quadruple
\begin{equation*}
(W_{\cP}=\oplus_{n=0}^{\infty}\cP(n)[-n]\, , \delta\, , \cup\, , \{\mu_{k}^{\cP}\}_{k\geqslant 0})	
\end{equation*}
constitutes a brace $B_{\infty}$ algebra.
\end{theorem}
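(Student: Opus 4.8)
The plan is to regard part (1) as the main content and then to derive part (2) from it together with Proposition~\ref{BraceToBraceBInfinity}. For part~(1) I would first check that $\mu_0^{\cP}$ is the identity map: taking $k=0$ in \eqref{OperadbraceI} gives $x\lb \rb = \gamma(x; \id_{\cP}, \cdots, \id_{\cP})$, which equals $x$ by the unitality axiom \eqref{Unitality2}. It then remains to establish the higher pre-Jacobi identities \eqref{Jacobi} for the braces defined in \eqref{OperadbraceI}.

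My strategy for \eqref{Jacobi} is to expand both sides as sums of iterated operadic compositions and to match them term by term. On the left-hand side, one first forms $x\lb y_1, \cdots, y_n\rb$ as a sum over all order-preserving insertions of $y_1,\cdots,y_n$ into the slots of $x$ (with the remaining slots filled by $\id_{\cP}$), and then inserts $z_1,\cdots,z_m$ into the slots of each resulting element. Each $z_j$ either lands in a slot inherited from $x$ or in a slot inherited from some $y_p$; because the insertions are order-preserving, the $z$'s falling inside a fixed $y_p$ form a contiguous block $z_{i_p+1},\cdots,z_{i_p+r_p}$, which accounts precisely for the index constraints $i_1+r_1\leqslant i_2,\cdots, i_n+r_n\leqslant m$ appearing in \eqref{Jacobi}. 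Reorganizing the double sum according to this bookkeeping, the sequential composition axiom \eqref{SequentialComposition} identifies the composite ``insert $z$'s into the already-inserted $y_p$'' with ``first insert the block into $y_p$, then insert the result $y_p\lb z_{i_p+1},\cdots,z_{i_p+r_p}\rb$ into $x$'', while the parallel composition axiom \eqref{ParallelComposition} handles the independence of insertions occurring in distinct slots. This should reproduce exactly the right-hand side of \eqref{Jacobi}.

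The step I expect to be the main obstacle is the sign bookkeeping, rather than the combinatorics of the insertions. Three sources of signs must be reconciled: the internal factor $(-1)^{\sum_p i_p(|y_p|-1)}$ from the outer brace in \eqref{OperadbraceI}, the analogous factors from the inner braces, and the Koszul sign incurred when the $z$'s landing in $x$ are commuted past the elements $y_p$ to their right. Keeping track of the shift, one has $|y_p[1]| = |y_p|-1$, so that $|y_p|+1$ and $|y_p[1]|$ agree modulo $2$; a careful graded-commutation computation should show that the accumulated sign collapses to the single factor $(-1)^{\sum_{p=1}^n(|y_p|+1)\sum_{q=1}^{i_p}(|z_q|+1)}$ prescribed in \eqref{Jacobi}. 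This is the only genuinely delicate part of the argument.

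Finally, for part~(2), I would compute $\m\lb \m\rb$ directly from \eqref{OperadbraceI} with $k=1$: the two order-preserving insertions of $\m$ into the two slots of $\m\in\cP(2)$ give $\m\lb \m\rb = \m\circ_1 \m - \m\circ_2 \m = \gamma(\m;\m,\id_{\cP}) - \gamma(\m;\id_{\cP},\m)$, the sign $-1$ on the second term coming from the factor $(-1)^{i_1(|\m|-1)}$ with $i_1=1$ and $|\m|=2$. Hence $\m$ satisfies the associativity condition of Definition~\ref{MultiplicationDefinition} if and only if $\m\lb \m\rb=0$. Under this condition, the cup product $x\cup y = (-1)^{|x|}\m\lb x,y\rb$ and the differential $\delta(x)=[\m,x]_{\mathrm{G}}$ coincide with the operations introduced in Proposition~\ref{BraceToBraceBInfinity}; applying that proposition to the brace algebra from part~(1) together with the element $\m$ then yields the asserted brace $B_{\infty}$ algebra structure, completing the proof.
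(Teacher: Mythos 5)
Your proposal is correct and takes essentially the same route as the paper, which states Theorem \ref{Operadtobrace} without proof, deferring to Gerstenhaber--Voronov: your verification of part (1) — checking $\mu_0^{\cP}$ via unitality and matching the two expansions of the pre-Jacobi identity through the sequential and parallel composition axioms — is precisely the content of that reference. Your treatment of part (2), computing $\m\lb \m \rb = \gamma(\m;\m,\id_{\cP}) - \gamma(\m;\id_{\cP},\m)$ to identify the condition $\m\lb\m\rb=0$ with Definition \ref{MultiplicationDefinition} and then invoking Proposition \ref{BraceToBraceBInfinity}, matches the paper's implicit logic exactly.
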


\begin{proposition} \label{functoriality}
Let $f: \cP_1 \rightarrow \cP_2$ be a morphism of non-symmetric operads, and let $\m_1\in \cP_1(2)$ and $\m_2 \in \cP_2(2)$ be multiplications on $\cP_1$ and $\cP_2$, respectively, such that $f_2(\m_1)=\m_2 $. Then there exists a strict $B_{\infty}$ morphism (as defined in Definition \ref{Def:strictBmorphism}) between the corresponding brace $B_{\infty}$ algebras:
\begin{equation*}
\begin{split}
(W_{\cP_1}=\oplus_{n=0}^{\infty}\cP_1(n)[-n]\, , \delta_1\, , \cup_1\, , \{\mu_{k}^{\cP_1}\}_{k\geqslant 0})
& \xrightarrow{ \oplus_{n=0}^{\infty} f_n } \\
& \quad (W_{\cP_2}=\oplus_{n=0}^{\infty}\cP_2(n)[-n]\, , \delta_2\, , \cup_2\, , \{\mu_{k}^{\cP_2}\}_{k\geqslant 0})
\, .	
\end{split}
\end{equation*}
\end{proposition}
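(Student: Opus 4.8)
The plan is to exploit the fact that we are after a \emph{strict} $B_{\infty}$ morphism, whose only possibly nonzero component is the single degree-$0$ graded map $\phi := \oplus_{n=0}^{\infty} f_n \colon W_{\cP_1} \to W_{\cP_2}$ (this is well defined and degree-preserving because each $f_n$ sends $\cP_1(n)$ to $\cP_2(n)$, and $\cP_i(n)$ sits in degree $n$ inside $W_{\cP_i}$). As recorded in Section \ref{subSec:BinfandbraceBinf}, the only nonvanishing structure maps of a brace $B_{\infty}$ algebra are $D_1$ (the differential), $D_2$ (the cup product), and the $m_{1k}$ (the braces). Unwinding the dg-bialgebra-morphism definition of a $B_{\infty}$ morphism for a coalgebra map with a single corestriction component $f_1$, the conditions collapse to exactly three requirements on $\phi$: it must commute with the differentials $\delta_i$, intertwine the cup products $\cup_i$, and preserve every brace $\mu_k^{\cP_i}$. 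Since, by Theorem \ref{Operadtobrace}(2), both $\cup_i$ and $\delta_i$ are built from the braces together with the multiplications $\m_i$, the entire proof reduces to verifying brace-compatibility and then reading off the other two.

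The central step is therefore to show $\phi\bigl(x\{y_1,\dots,y_k\}\bigr)=\phi(x)\{\phi(y_1),\dots,\phi(y_k)\}$. Expanding the definition \eqref{OperadbraceI}, the left-hand brace is a signed sum of terms $\gamma(x;\id_{\cP},\dots,y_1,\dots,y_k,\dots,\id_{\cP})$, and I would apply $\phi$ term by term. Because $f$ is an operad morphism (Definition \ref{MorphismOperad}), it intertwines the composition maps $\gamma$, and because $f_1(\id_{\cP_1})=\id_{\cP_2}$, each such term is carried to $\gamma\bigl(f(x);\id_{\cP},\dots,f(y_1),\dots,f(y_k),\dots,\id_{\cP}\bigr)$. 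The point that makes the signs survive intact is degree-preservation: since $|f_n(y)|=|y|$, the exponents $\sum_{p=1}^{k} i_p(|y_p|-1)$ attached to each summand are unchanged. Reassembling the sum gives precisely $\phi(x)\{\phi(y_1),\dots,\phi(y_k)\}$, so $\phi$ preserves all $\mu_k^{\cP}$.

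Compatibility with $\cup$ and $\delta$ then follows formally, using the hypothesis $\phi(\m_1)=f_2(\m_1)=\m_2$. For the cup product, $\phi(x\cup_1 y)=(-1)^{|x|}\phi\bigl(\m_1\{x,y\}\bigr)$, and brace-compatibility rewrites this as $(-1)^{|x|}\m_2\{\phi(x),\phi(y)\}=\phi(x)\cup_2\phi(y)$. For the differential, writing $\delta_1 x=[\m_1,x]_{\mathrm{G}}=\m_1\{x\}-(-1)^{|x|+1}x\{\m_1\}$ and applying $\phi$ gives $\m_2\{\phi(x)\}-(-1)^{|x|+1}\phi(x)\{\m_2\}=[\m_2,\phi(x)]_{\mathrm{G}}=\delta_2(\phi(x))$.

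The main obstacle is not computational but definitional: one must confirm that strictness really does collapse the general $B_{\infty}$-morphism compatibility relations to these three conditions. This requires carefully unwinding the dg bialgebra morphism $T^c(W_1[1])\to T^c(W_2[1])$ for a coalgebra map whose corestriction has only the component $f_1$, and observing that the product on $T^c(W_i[1])$ has solely the $m_{1k}$ components while the differential has solely $D_1$ and $D_2$. Once this reduction is secured, the rest is a direct consequence of $f$ being an operad morphism together with degree-preservation of the $f_n$; no operadic identities beyond those already packaged in Definition \ref{MorphismOperad} are needed, and the only genuine care is the sign bookkeeping in the brace expansion.
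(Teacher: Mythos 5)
Your proposal is correct, and it supplies precisely the verification the paper omits: the paper states Proposition \ref{functoriality} without proof, asserting only that it ``is not hard to conclude'' from the work of Gerstenhaber and Voronov. Your route --- reducing the strict $B_{\infty}$-morphism condition to compatibility of $\oplus_n f_n$ with the differentials, cup products, and braces, then proving brace-compatibility term-by-term from the operad-morphism property $f\circ\gamma = \gamma\circ(f\otimes\cdots\otimes f)$, the unit condition $f_1(\id_{\cP_1})=\id_{\cP_2}$, and degree preservation (which keeps the signs in \eqref{OperadbraceI} intact), with compatibility for $\cup_i$ and $\delta_i$ then following formally from $f_2(\m_1)=\m_2$ --- is exactly the intended argument.
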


\begin{example}
Let $A$ be an associative algebra over $\fK$.
Consider the non-symmetric endomorphism operad $\End(A)$, defined by $\End(A)(n)=\Hom_{\fK}(A^{\otimes n}, A)$
(for details, see \cite[$\S 5.9.8$]{Loday-Vallette}).
The associative product on $A$ corresponds to
a multiplication $\m$ on $\End(A)$.
Consequently, by applying Theorem \ref{Operadtobrace} to
$\End(A)$ and $\m$, we obtain the canonical brace $B_{\infty}$ algebra structure on the graded space $C(A, A)$ of Hochschild cochains of $A$, as described in \cite{Gerstenhaber-Voronov}.
\end{example}

For further development of
the Gerstenhaber-Voronov operadic modeling,
the reader may consult, for example,
\cites{Kowalzig-OperadFromBialgebroid, Kowalzig-Operad, Menichi}.

\section{From Hopf algebroids to brace $B_{\infty}$ algebras}\label{HopfOperad}

Starting from a bialgebra (or a Hopf algebra), one can derive a  brace $B_{\infty}$ algebra \cite{Kadeishvili}.
In this part, we generalize this construction to the setting of a Hopf algebroid. We begin by recalling the essential definitions and properties of Hopf algebroids, drawing primarily from
\cites{Xu-Quantum, Xu-QuantumCNRS}.

\subsection{Hopf algebroids}\label{Section1}
Let $R$ be an associative $\fK$-algebra with unit $1$.
\begin{definition}\label{Def:Hopfalgebroid}
A Hopf algebroid over $R$ is a sextuple $(\cH\, , \alpha\, , \beta\, , \cdot\, , \Delta\, , \epsilon)$ where:
\begin{enumerate}
\item $(\cH, \,\cdot\,)$ is an associative $\fK$-algebra with unit $1$.
\item $\alpha\colon R \rightarrow \cH$ is an algebra homomorphism, called the \textbf{source map}, and $\beta\colon R \rightarrow \cH$ is an algebra anti-homomorphism, called the \textbf{target map}. The images of these maps commute, i.e.,
$\alpha(a)\cdot \beta(b)=\beta(b) \cdot \alpha(a)$, for all $a, b\in R$.
\item $\Delta\colon \cH \rightarrow \cH\otimes_{R} \cH$, called the \textbf{coproduct}, is an $(R\,, R)$-bimodule morphism satisfying $\Delta(1)=1\otimes_{R} 1$ and the coassociativity condition:
\begin{equation}\label{CoAssociativity}
(\Delta\otimes_{R} \id)\circ \Delta= (\id\otimes_{R} \Delta)\circ \Delta\colon \cH \rightarrow \cH\otimes_{R} \cH\otimes_{R} \cH\, .	
\end{equation}
\item The coproduct is compatible with the source map, target map, and the product in the following sense:
\begin{equation}\label{Important1}
\Delta(x)\cdot( \beta(a)\otimes 1 -1\otimes \alpha(a) )=0\, , \mbox{ for all } x\in \cH, a\in R\, , 	
\end{equation}
and
\begin{equation}\label{CoproductProduct}
\Delta(x_1\cdot x_2)=\Delta(x_1)\cdot \Delta(x_2)
\, , \mbox{ for all } x_1\, , x_2\in \cH \, .
\end{equation}
\item $\epsilon\colon \cH \rightarrow R$, called the \textbf{counit}, is an $(R\,, R)$-bimodule map, satisfying $\epsilon(1)=1$ and the condition:
\begin{equation*}
(\epsilon\otimes_{R}\id)\circ	\Delta=(\id\otimes_{R}\epsilon)\circ \Delta=\id\, ,
\end{equation*}
where we identify $R\otimes_{R} \cH \simeq \cH\otimes_{R}R \simeq \cH$.
\item  {$\Ker(\epsilon)$ is a left ideal of $\cH$.}	
\end{enumerate}
\end{definition}

For brevity, we often denote $(\cH\, , \alpha\, , \beta\, , \cdot\, , \Delta\, , \epsilon)$ simply by $\cH$.
We list below several facts about a Hopf algebroid:
\begin{itemize}
\item
A natural $(R, R)$-bimodule structure is induced on $\cH$ by
\begin{equation*}
a \cdot x := \alpha(a)\cdot x, \quad x \cdot b:=\beta(b)\cdot x\, ,
\end{equation*}
for all $a, b\in R$ and $x\in \cH$.

\item
For any $n\geqslant 2$, the $n$-fold tensor product of $\cH$ over $R$ can be expressed as
\begin{equation*}
\underbrace{\cH\otimes_{R}\cdots \otimes_{R}\cH}_{n}	= (\otimes^n\cH) / I_n\, ,
\end{equation*}
where $\otimes^n\cH$ represents the $n$-fold tensor product of $\cH$ over $\fK$, forming a natural associative algebra, and $I_n$ is the right ideal generated by elements of the form
\begin{equation*}
\underbrace{1\otimes\cdots \otimes 1}_{i-1}\otimes
	\beta(r)\otimes	 1\otimes\cdots\otimes 1
-\underbrace{1\otimes\cdots \otimes 1}_{i-1}\otimes
	1\otimes \alpha(r)  \otimes\cdots\otimes  1\, ,
\end{equation*}
for all $r\in R$ and $1 \leqslant i \leqslant n-1$.

The tensor product $\underbrace{\cH\otimes_{R}\cdots \otimes_{R}\cH}_{n}$ admits a right action by $\otimes^n\cH$:
\begin{equation}\label{RIGHTACTION}
(h_1\otimes_{R}\cdots \otimes_{R}h_n)
\cdot
(u_1\otimes\cdots\otimes u_n):=
(h_1\cdot u_1)\otimes_{R}\cdots
\otimes_{R} (h_n \cdot u_n)\, , 	
\end{equation}
for all $h_1\, , \cdots\, , h_n\, , u_1\, , \cdots\, , u_n \in \cH$.

Note, however, that a right action of $\underbrace{\cH\otimes_{R}\cdots \otimes_{R}\cH}_{n}$ on itself cannot be defined analogously.

Furthermore, given elements
$h\in \underbrace{\cH\otimes_{R}\cdots \otimes_{R}\cH}_{n}$,
$h'\in \underbrace{\cH\otimes_{R}\cdots \otimes_{R}\cH}_{m}$,
$u\in \otimes^n \cH$,
and $u'\in \otimes^m \cH$,
the right action of $u\otimes_{\fK}u'$ on $h\otimes_{R}h'$ satisfies
\begin{equation}\label{PassThrough}
(h\otimes_{R}h')	\cdot (u\otimes_{\fK}u')= (h\cdot u)\otimes_{R} (h'\cdot u')\, .
\end{equation}

\item  Following \cite{Lu},
we include the condition that $\Ker(\epsilon)$ is a left ideal of $\cH$ in the above definition, which is not required in~\cite{Xu-Quantum}. This condition is crucial for endowing $R$ with a left $\cH$-module structure $\rhd\colon \cH\times R \rightarrow R$ defined by
\begin{equation}\label{Action}
x \rhd a := \epsilon(x\cdot \alpha(a) )= \epsilon(x\cdot \beta(a) )\, , 	
\end{equation}
for all $x\in \cH$ and $a\in R$.
\end{itemize}

\subsection{The operad associated with a Hopf algebroid}\label{OperadHopf}

We aim to construct a non-symmetric operad $\cP_{\cH}$ associated with a Hopf algebroid $\cH=(\cH, \alpha, \beta, \cdot, \Delta, \epsilon)$ over $R$. Our strategy is to explicitly define the data that constitutes $\cP_{\cH}$.

\subsubsection{The vector space $\cP_{\cH}(n)$ and iterated coproduct $\Delta^n$}

For each non-negative integer $n\geqslant 0$, we define $\cP_{\cH}(n)$ to be the $\fK$-vector space given by the $n$-fold tensor product of $\cH$ over $R$:
\begin{equation*}
\cP_{\cH}(n):=	\underbrace{\cH\otimes_{R}\cdots \otimes_{R}\cH}_{n}  \, .
\end{equation*}
Next, we define a series of $\fK$-linear maps
\begin{equation*}
\Delta^{n}: \cP_{\cH}(1)\rightarrow \cP_{\cH}(n+1)
\end{equation*}
for $n \geq 0$, iteratively, by using the coproduct $\Delta$ of the Hopf algebroid $\cH$. Explicitly, we define:
\begin{equation*}
\Delta^0:=\id\, , \qquad \Delta^1:=\Delta\, , \qquad
\Delta^{n}:=(\Delta^{n-1}\otimes_{R}\id)\circ \Delta\, .
\end{equation*}

\subsubsection{The insertion product $\overline{\mathrm{Ins}}^{\Delta}$}

We now introduce the insertion product, denoted $\overline{\mathrm{Ins}}^{\Delta}$, a crucial operation for defining the operad structure. To begin with, consider the following $\fK$-linear map:
\begin{equation*}\label{KeyMap1}
\begin{split}
\mathrm{Ins}^{\Delta}:\quad
\cH \otimes (\otimes^n\cH) & \rightarrow \cP_{\cH}(n)	\, , \\
h\otimes (h_1\otimes\cdots\otimes h_n) & \mapsto
\Delta^{n-1}(h)\cdot (h_1\otimes\cdots\otimes h_n) \, ,
\end{split}
\end{equation*}
where $n\geqslant 1$, and $\Delta^{n-1}(h)\cdot (h_1\otimes \cdots\otimes h_n)$ represents the right action of $h_1\otimes \cdots\otimes h_n\in \otimes^n\cH$ on $\Delta^{n-1}(h)\in \cP_{\cH}(n)$ (refer to Equation \eqref{RIGHTACTION}).

We can visualize this with the following commutative diagram:
\begin{equation}\label{DiagramRDelta}
\begin{tikzcd}
\cH \otimes (\otimes^n\cH) \arrow[d, "p"]
\arrow[r, "\mathrm{Ins}^{\Delta}"] & \cP_{\cH}(n) \\
\cP_{\cH}(1) \otimes \cP_{\cH}(n)\, .
\arrow[ru, dashed, "\overline{\mathrm{Ins}}^{\Delta}"'] &
\end{tikzcd}	
\end{equation}
Here, the vertical arrow $p$ represents the natural quotient map.  The existence of the dashed map $\overline{\mathrm{Ins}}^{\Delta}$ is guaranteed by the following lemma.

\begin{lemma}\label{KeyLemma}
There exists a well-defined $\fK$-linear map
\begin{equation*}
\overline{\mathrm{Ins}}^{\Delta}: ~
\cP_{\cH}(1) \otimes \cP_{\cH}(n)
\rightarrow
\cP_{\cH}(n)
\end{equation*}
such that Diagram
\eqref{DiagramRDelta} commutes.
\end{lemma}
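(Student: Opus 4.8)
The plan is to invoke the universal property of the quotient. Since $\cP_{\cH}(1)=\cH$, the map $p$ in Diagram~\eqref{DiagramRDelta} is $\id_{\cH}\otimes q$, where $q\colon\otimes^n\cH\to\cP_{\cH}(n)$ is the canonical projection with kernel the right ideal $I_n$; as $\fK$ is a field, tensoring with $\cH$ is exact, so $\ker p=\cH\otimes I_n$. Therefore the desired factorization $\overline{\mathrm{Ins}}^{\Delta}$ exists (and is then automatically unique) precisely when $\mathrm{Ins}^{\Delta}$ kills $\cH\otimes I_n$, that is, when $\Delta^{n-1}(h)\cdot w=0$ in $\cP_{\cH}(n)$ for all $h\in\cH$ and $w\in I_n$, the dot denoting the right action \eqref{RIGHTACTION}. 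For $n=1$ one has $I_1=0$, so there is nothing to prove; I would treat $n\ge 2$ below.

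I would then reduce to the generators of $I_n$. Because $I_n$ is a \emph{right} ideal, every $w\in I_n$ is a finite sum of products $g_i(r)\cdot u$ with $u\in\otimes^n\cH$, where $g_i(r):=1^{\otimes(i-1)}\otimes(\beta(r)\otimes 1-1\otimes\alpha(r))\otimes 1^{\otimes(n-1-i)}$ is a generator of $I_n$ (for $1\le i\le n-1$, $r\in R$). The right action of $\otimes^n\cH$ on $\cP_{\cH}(n)$ is componentwise right multiplication, hence associative, so $\Delta^{n-1}(h)\cdot(g_i(r)\cdot u)=\bigl(\Delta^{n-1}(h)\cdot g_i(r)\bigr)\cdot u$. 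It thus suffices to show $\Delta^{n-1}(h)\cdot g_i(r)=0$ for each $i$ and $r$.

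The crux is to localize this identity to slots $i$ and $i+1$ by coassociativity. Using \eqref{CoAssociativity} and induction, I would rewrite $\Delta^{n-1}=(\id^{\otimes(i-1)}\otimes_R\Delta\otimes_R\id^{\otimes(n-1-i)})\circ\Delta^{n-2}$; with $\Delta^{n-2}(h)=k_1\otimes_R\cdots\otimes_R k_{n-1}$ in Sweedler's notation, this writes $\Delta^{n-1}(h)=A\otimes_R\Delta(k_i)\otimes_R B$, where $A=k_1\otimes_R\cdots\otimes_R k_{i-1}$ and $B=k_{i+1}\otimes_R\cdots\otimes_R k_{n-1}$, so that the two factors of $\Delta(k_i)$ occupy exactly slots $i$ and $i+1$. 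Since $g_i(r)$ is supported there and is the identity elsewhere, Equation~\eqref{PassThrough} lets the action distribute across tensor factors, giving
\[
\Delta^{n-1}(h)\cdot g_i(r)=A\otimes_R\bigl(\Delta(k_i)\cdot(\beta(r)\otimes 1-1\otimes\alpha(r))\bigr)\otimes_R B.
\]
The middle factor vanishes by the coproduct compatibility \eqref{Important1} applied with $x=k_i$, and hence the whole expression is zero, which finishes the argument.

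I expect the main obstacle to lie in this last localization step: carefully justifying, from coassociativity \eqref{CoAssociativity}, that $\Delta^{n-1}(h)$ may be presented as a single coproduct applied at the chosen slot $i$, and checking that the componentwise action \eqref{RIGHTACTION}--\eqref{PassThrough} really confines the effect of $g_i(r)$ to slots $i,i+1$, with the flanking copies of $1$ acting trivially on $A$ and $B$. The remaining ingredients---the universal property of $p$ and the well-definedness of the right action, which rests solely on $I_n$ being a right ideal---are routine.
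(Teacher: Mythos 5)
Your proof is correct, and it organizes the argument differently from the paper's. Both proofs ultimately rest on the same two ingredients---the compatibility condition \eqref{Important1} and the factorization property \eqref{PassThrough} of the right action---but the reduction to the $n=2$ case is structured differently. The paper proves the vanishing identity \eqref{Ind} by induction on $n$: writing $\Delta(h)=h_1\otimes_R h_2$, it presents $\Delta^{n}(h)$ as $(\Delta^{n-1}\otimes_R\id)(\Delta h)$ when $1\leqslant i\leqslant n-1$ and as $(\id\otimes_R\Delta^{n-1})(\Delta h)$ when $i=n$, peels off one tensor factor via \eqref{PassThrough}, and invokes the induction hypothesis, the base case being exactly \eqref{Important1}. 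You instead localize in a single step: generalized coassociativity lets you present $\Delta^{n-1}(h)$ as $A\otimes_R\Delta(k_i)\otimes_R B$ with a coproduct applied precisely at slot $i$, so that \eqref{PassThrough} confines the generator $g_i(r)$ to the middle factor, which is killed by \eqref{Important1} directly---no induction on the vanishing statement itself, the induction being displaced into the standard derivation of generalized coassociativity from \eqref{CoAssociativity}. Your write-up also makes explicit two points the paper leaves tacit: that $\Ker p=\cH\otimes I_n$ (by exactness of $\cH\otimes_{\fK}\,\cdot\,$ over the field $\fK$), and that reducing to the generators $g_i(r)$ is legitimate because $I_n$ is a right ideal and the action \eqref{RIGHTACTION} is associative. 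The small price of your route is the need to justify that $\id^{\otimes(i-1)}\otimes_R\Delta\otimes_R\id^{\otimes(n-1-i)}$ is well defined on $\cP_{\cH}(n-1)$---which holds because $\Delta$ is an $(R,R)$-bimodule map---whereas the paper's induction only ever uses \eqref{CoAssociativity} in the exact form in which it is stated; what your route buys is a shorter, more transparent argument that pinpoints the two affected tensor slots immediately.
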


\begin{proof}
When $n=1$, $p$ becomes the identity map, and
both $\overline{\mathrm{Ins}}^{\Delta}$ and
$\mathrm{Ins}^{\Delta}$ become the multiplication map of the Hopf algebroid $\cH$. In this case Diagram \eqref{DiagramRDelta} commutes.

To prove the existence of $\overline{\mathrm{Ins}}^{\Delta}$ that makes Diagram \eqref{DiagramRDelta} commutative for $n\geqslant 2$,
we must demonstrate that $\mathrm{Ins}^{\Delta}( \Ker p )=0$.  In other words, we need to show that the following identity holds for any $n\geqslant 2$:
\begin{equation}\label{Ind}
 \Delta^{n-1}(h)\cdot \bigl(\underbrace{1\otimes\cdots \otimes 1}_{i-1}\otimes
	\beta(r)\otimes	 1\otimes\cdots\otimes 1
- \underbrace{1\otimes\cdots \otimes 1}_{i-1}\otimes
	1\otimes \alpha(r)  \otimes\cdots\otimes 1\bigr)=0
\end{equation}
for all $h\in \cH$, $r\in R$, and $1\leqslant i \leqslant n-1$.
We proceed by induction on $n$, starting from the base case $n=2$.

\textbf{-}
For the base case, $n=2$, Equation \eqref{Ind} simplifies to the compatibility condition expressed in Equation \eqref{Important1}.

\textbf{-}
Now, assume that Equation \eqref{Ind} holds for some fixed integer $n$. We will show that it also holds for $n+1$.
Let $\Delta(h)= h_{1}\otimes_{R} h_{2}$ for some
$h_{1}, h_{2} \in \cH$ (here Sweedler's notation has been adopted).
Then, for $1\leqslant i \leqslant n-1$, we have
\begin{align*}
&\Delta^{n }(h)\cdot\bigl(\underbrace{1\otimes\cdots \otimes 1}_{i-1}\otimes
	\beta(r)\otimes	 1\otimes\cdots\otimes 1
- \underbrace{1\otimes\cdots \otimes 1}_{i-1}\otimes
	1\otimes \alpha(r)  \otimes\cdots\otimes 1\bigr) \\
= &  (\Delta^{n-1}\otimes_{R}\id)(\Delta(h))
\cdot
\bigl(
(
\underbrace{1\otimes\cdots \otimes 1}_{i-1}\otimes
	\beta(r)\otimes	 1\otimes \cdots
	-\underbrace{1\otimes\cdots \otimes 1}_{i-1}\otimes
	 1 \otimes \alpha(r)	\otimes\cdots
)
\otimes 1
\bigr)\\
= &
(\Delta^{n-1}\otimes_{R}\id)(h_{1}\otimes_{R}h_{2})
\cdot
\bigl(
(
\underbrace{1\otimes\cdots \otimes 1}_{i-1}\otimes
	\beta(r)\otimes	 1\otimes\cdots
-\underbrace{1\otimes\cdots \otimes 1}_{i-1}\otimes 1
	\otimes \alpha(r)	\otimes\cdots
)
\otimes 1
\bigr) \\
= &
(\Delta^{n-1}h_{1}\otimes_{R}h_{2})
\cdot
\bigl(
(
\underbrace{1\otimes\cdots \otimes 1}_{i-1}\otimes
	\beta(r)\otimes	 1\otimes\cdots
-\underbrace{1\otimes\cdots \otimes 1}_{i-1}\otimes
	1 \otimes	\alpha(r) \otimes\cdots)
\otimes 1
\bigr)	\\
= &
\bigl(\Delta^{n-1}h_{1}\cdot
(
\underbrace{1\otimes\cdots \otimes 1}_{i-1}\otimes
	\beta(r)\otimes	 1\otimes\cdots
-\underbrace{1\otimes\cdots \otimes 1}_{i-1}\otimes
	 1 \otimes \alpha(r)	\otimes\cdots
)
\bigr)\otimes_{R}h_{2} \quad \text{(by Equation \eqref{PassThrough})}\\
= & 0. \\
\end{align*}
The last step follows from the induction hypothesis.

For the case where $i=n$, we have
\begin{align*}
& (\Delta^{n}h)\cdot
(1\otimes\cdots \otimes\beta(r)\otimes1-1\otimes\cdots \otimes1\otimes\alpha(r)) \\
= & (\id\otimes_{R}\Delta^{n-1})(\Delta h)
\cdot
(1\otimes\cdots \otimes\beta(r)\otimes1-1\otimes\cdots \otimes1\otimes\alpha(r)) \\
= &
\bigl(h_{1}\otimes_{R} \Delta^{n-1}h_{2}\bigr)
\cdot
\bigl(
1\otimes
(
1\otimes\cdots \otimes\beta(r)\otimes1-1\otimes\cdots \otimes1\otimes\alpha(r))
\bigr) \\
= &
h_{1} \otimes_{R}
\bigl(
\Delta^{n-1}h_{2}\cdot
(1\otimes\cdots \otimes\beta(r)\otimes1-1\otimes\cdots \otimes1\otimes\alpha(r))
\bigr) \qquad
\text{(by Equation \eqref{PassThrough})}\\
= & 0,
\end{align*}
The final step here also relies on the induction hypothesis.
 \end{proof}

The above lemma enables us to introduce the following notation:
\begin{equation}\label{DeltaN}
(\Delta^{n-1}h) \cdot u
:=
\begin{cases}
\overline{\mathrm{Ins}}^{\Delta}(h \otimes u)
\in \cP_{\cH}(n)\, ,
& \text{for } n\geqslant 1, \text{ where } u\in \cP_{\cH}(n), h\in \cP_{\cH}(1)=\cH, \\
& \\
h\rhd u \in \cP_{\cH}(0)=R\, ,
& \text{for } n=0, \text{ where } u\in \cP_{\cH}(0)=R, h\in \cP_{\cH}(1)=\cH.
\end{cases}
\end{equation}
(For the definition of $\rhd$, refer to Equation \eqref{Action}).  Note that for $n=1$, we have $(\Delta^{0} h)\cdot u=h\cdot u \in \cH$ for all $u, h\in \cP_{\cH}(1)=\cH$. When $n=2$, Equation \eqref{DeltaN} yields that
the well-defined term $\Delta(h_1)\cdot\Delta(h_2)$
for all $h_1, h_2\in \cH$,
which appears in Equation \eqref{CoproductProduct},
satisfies
$\Delta(h_1)\cdot\Delta(h_2)= \overline{\mathrm{Ins}}^{\Delta}(h_1 \otimes \Delta(h_2) )\in \cP_{\cH}(2)$.

\subsubsection{Operadic structures on $\cP_{\cH}$}
Now we define the operadic structure on the family of vector spaces $\{ \cP_{\cH}(n) \}$.

\textbf{-}
The \textbf{composition map}
\begin{equation*}
\gamma:
\cP_{\cH}(n)\otimes \cP_{\cH}(k_1)\otimes
\cdots\otimes\cP_{\cH}(k_n) \rightarrow
\cP_{\cH}(k_1+\cdots+k_n )
\end{equation*}
for integers $n\geqslant 0, k_1\geqslant 0, \cdots, k_n \geqslant 0$ is determined by the rule:
\begin{equation}\label{Composition}
\gamma(h\, ; u_1\, , \cdots\, , u_{n})
:=
(\Delta^{k_1 -1}h_1) \cdot u_{1}\otimes_{R}
\cdots \otimes_{R} (\Delta^{k_n -1}h_n)\cdot u_n\, ,
\end{equation}
for all $h=h_1\otimes_{R}\cdots\otimes_{R}h_n\in \cP_{\cH}(n)$ with $h_1, \cdots, h_n \in \cH$, and $u_1\in \cP_{\cH}(k_1)$, $\cdots$, $u_n\in \cP_{\cH}(k_n)$. The terms $(\Delta^{k_1 -1}h_1) \cdot u_{1}$, $\cdots$, $(\Delta^{k_n -1}h_n) \cdot u_{n}$ are as defined in Equation \eqref{DeltaN}.
The map $\gamma$ is well-defined due to Equation \eqref{PassThrough}.

\textbf{-}
The \textbf{identity}
$\id_{\cP_{\cH}}$ is simply the element $1\in \cH=\cP_{\cH}(1)$.

\begin{proposition}\label{MainTheoremOperad}
The family of $\fK$-vector spaces $\{\cP_{\cH}(n)\}_{n\geqslant 0}$, along with the composition maps $\gamma$ defined by Equation \eqref{Composition}, and the identity element $\id_{\cP_{\cH}}=1\in \cP_{\cH}(1)$, constitute a non-symmetric operad $\cP_{\cH}$.
\end{proposition}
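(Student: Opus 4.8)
The plan is to avoid checking the associativity \eqref{CompositionAssociativity} and unitality \eqref{Unitality} of $\gamma$ directly, and instead to use the equivalent partial-composition formulation from Definition \ref{PartialDefinitionOperad}: I would verify the sequential composition axiom \eqref{SequentialComposition}, the parallel composition axiom \eqref{ParallelComposition}, and the unitality axiom \eqref{Unitality2}. The first step is to record the explicit shape of the partial composition \eqref{PartialComposition} in $\cP_{\cH}$. For $x = h_1 \otimes_{R} \cdots \otimes_{R} h_n \in \cP_{\cH}(n)$ and $y \in \cP_{\cH}(m)$, inserting $\id_{\cP_{\cH}} = 1$ in every slot but the $i$-th in \eqref{Composition} and using $(\Delta^{0} h_p) \cdot 1 = h_p$ gives
\begin{equation*}
x \circ_i y = h_1 \otimes_{R} \cdots \otimes_{R} h_{i-1} \otimes_{R} \bigl( (\Delta^{m-1} h_i) \cdot y \bigr) \otimes_{R} h_{i+1} \otimes_{R} \cdots \otimes_{R} h_n \, .
\end{equation*}
Thus $\circ_i$ merely replaces the $i$-th tensor factor $h_i$ by the element $(\Delta^{m-1} h_i) \cdot y \in \cP_{\cH}(m)$, every such expression being well defined by Lemma \ref{KeyLemma}.

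Unitality \eqref{Unitality2} will then be immediate: since $\Delta(1) = 1 \otimes_{R} 1$ iterates to $\Delta^{m-1}(1) = 1 \otimes_{R} \cdots \otimes_{R} 1$, one gets $\id_{\cP_{\cH}} \circ_1 x = (\Delta^{n-1} 1) \cdot x = x$ for $x \in \cP_{\cH}(n)$, while $x \circ_i \id_{\cP_{\cH}} = x$ follows from $(\Delta^0 h_i) \cdot 1 = h_i$. The parallel composition axiom \eqref{ParallelComposition} for $i < k$ is also routine: the two insertions modify the distinct tensor factors $h_i$ and $h_k$ of $x$, so once the index shift produced by the first insertion is accounted for, the componentwise formula above shows the two operations commute; no property of $\Delta$ beyond the formula itself is needed here.

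The sequential composition axiom \eqref{SequentialComposition} will be the main obstacle, and it is the only place where the defining structure of the Hopf algebroid is used. Because every tensor factor of $x$ other than the $i$-th is untouched on both sides of \eqref{SequentialComposition}, it suffices to prove the single-factor identity
\begin{equation*}
\bigl( (\Delta^{m-1} h) \cdot y \bigr) \circ_j z = (\Delta^{m+l-2} h) \cdot (y \circ_j z)
\end{equation*}
for $h \in \cH$, $y \in \cP_{\cH}(m)$, $z \in \cP_{\cH}(l)$, and $1 \leqslant j \leqslant m$. To establish it I would expand the left-hand side with the componentwise partial composition, writing $\Delta^{m-1}(h) = h_{(1)} \otimes_{R} \cdots \otimes_{R} h_{(m)}$ and $y = y_1 \otimes_{R} \cdots \otimes_{R} y_m$ in Sweedler notation, so that the $j$-th factor of $( (\Delta^{m-1} h) \cdot y ) \circ_j z$ becomes $\bigl( \Delta^{l-1}(h_{(j)} \cdot y_j) \bigr) \cdot z$; the iterated multiplicativity \eqref{CoproductProduct} of the coproduct rewrites this as $\Delta^{l-1}(h_{(j)}) \cdot \bigl( (\Delta^{l-1} y_j) \cdot z \bigr)$. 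On the right-hand side, the iterated coassociativity \eqref{CoAssociativity} yields
\begin{equation*}
\Delta^{m+l-2}(h) = \bigl( \id^{\otimes (j-1)} \otimes_{R} \Delta^{l-1} \otimes_{R} \id^{\otimes (m-j)} \bigr)\bigl( \Delta^{m-1}(h) \bigr)\, ,
\end{equation*}
and the right action of this element on $y \circ_j z = y_1 \otimes_{R} \cdots \otimes_{R} (\Delta^{l-1} y_j) \cdot z \otimes_{R} \cdots \otimes_{R} y_m$ reproduces the left-hand side factor by factor. I expect the genuine difficulties to be essentially bookkeeping: tracking the index shifts, and ensuring that each product of the form $(\Delta^a h) \cdot u$ is read through $\overline{\mathrm{Ins}}^{\Delta}$ so that the Sweedler representatives may be manipulated legitimately, the relations \eqref{PassThrough} being invoked to pass the right action through the tensor factors and Lemma \ref{KeyLemma} guaranteeing well-definedness at every stage. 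Once this single-factor identity is in hand, reassembling it over all tensor factors of $x$ yields \eqref{SequentialComposition} and completes the verification.
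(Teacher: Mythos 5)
Your proposal is correct and follows essentially the same route as the paper: both work with the partial-composition formulation of Definition \ref{PartialDefinitionOperad}, dispose of unitality \eqref{Unitality2} and the parallel axiom \eqref{ParallelComposition} as routine, and reduce the sequential axiom \eqref{SequentialComposition} to the identity $\bigl((\Delta^{m-1}h_i)\cdot u\bigr)\circ_j v=(\Delta^{m+l-2}h_i)\cdot(u\circ_j v)$ on the affected tensor factor, proved via coassociativity \eqref{CoAssociativity} together with multiplicativity of the coproduct \eqref{CoproductProduct}, with well-definedness supplied by Lemma \ref{KeyLemma} and Equation \eqref{PassThrough}. Your explicit Sweedler-notation, factor-by-factor bookkeeping is merely a more detailed writing of the computation the paper performs in one chain of equalities.
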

\begin{proof}
The unitality property \eqref{Unitality} clearly holds for $\id_{\cP_{\cH}}=1$.
To verify that $\gamma$ defined by \eqref{Composition} satisfies the associativity property \eqref{CompositionAssociativity}, we examine the equivalent sequential composition axiom \eqref{SequentialComposition} and the parallel composition axiom \eqref{ParallelComposition}. For all $h=h_1\otimes_{R}\cdots \otimes_{R}h_n\in \cP(n)$, $u=u_1\otimes_{R} \cdots \otimes_{R} u_m\in \cP(m)$, and $v\in \cP(l)$, we have
 \begin{equation*}
\begin{split}
 & (h \circ_{i} u)\circ_{i-1+j} v \\
= &
h_1\otimes_{R} \cdots \otimes_{R}
\biggl(
(\underbrace{\id\otimes_{R} \cdots \otimes_{R}\Delta^{l-1}}_{i-1+j}\otimes_{R} \cdots\otimes_{R} \id)
\bigl((\Delta^{m-1}h_i)  \cdot u  \bigr)
\biggr)\cdot v
\otimes_{R} \cdots \otimes_{R}h_n \\
= &
h_1\otimes_{R} \cdots \otimes_{R}
(\Delta^{(m+l-1)-1}h_i)\cdot
\biggl(
u_1\otimes_{R}\cdots \otimes_{R}
(\Delta^{l-1}u_j)\cdot v
\otimes_{R} \cdots \otimes_{R} u_m
\biggr)
\otimes_{R} \cdots \otimes_{R}h_n \\
& \qquad \text{(by Equations \eqref{CoAssociativity} and \eqref{CoproductProduct})} \\
= & h\circ_{i} (u\circ_{j}v)\, ,
\end{split}	
\end{equation*}
where $1\leqslant i \leqslant n$ and $1\leqslant j \leqslant m$.

Additionally, we have
\begin{equation*}
\begin{split}
& (h\circ_{i}u)\circ_{k-1+m}v  \\
= & h_1\otimes_{R} \cdots \otimes_{R}
(\Delta^{m-1}h_i)\cdot u \otimes_{R} \cdots \otimes_{R}
(\Delta^{l-1}h_k)\cdot v \otimes_{R} \cdots \otimes_{R} h_n
\\
= & (h\circ_{k}v)\circ_{i}u\, ,
\end{split}
\end{equation*}	
where $1\leqslant i< k \leqslant n$. This completes the proof.
\end{proof}

For more on constructing cyclic operads with multiplications and cyclic modules over operads with multiplications derived from Hopf algebroids, see \cites{Kowalzig-OperadFromBialgebroid, Kowalzig-Operad}.

\subsection{The brace $B_{\infty}$ algebra  associated with a Hopf algebroid}\label{BraceHopf}

Having constructed a non-symmetric operad $\cP_{\cH}$ from a Hopf algebroid $\cH=(\cH, \alpha, \beta, \cdot, \Delta, \epsilon)$ over $R$, we now focus on a particular element:
\begin{equation*}
\m:=1\otimes_{R}1\in \cH\otimes_{R}\cH = \cP_{\cH}(2)\, .
\end{equation*}
This element $\m$ defines a multiplication on the operad $\cP_{\cH}$ (see Equation \eqref{OperadMult}).
We are able to state the following result,
which is ought to be known to experts in the field.

\begin{theorem}\label{MainTheorembrace}
To any Hopf algebroid $\cH=(\cH, \alpha, \beta, \cdot, \Delta, \epsilon)$ over $R$, we can associate a brace $B_{\infty}$ algebra
\begin{equation*}
\BInfty(\cH):=
\bigl(\oplus_{n=0}^{\infty}
(\underbrace{\cH\otimes_{R}\cdots\otimes_{R}\cH)}_{n}[-n], \delta_{\cH}, \cup,
\{\mu_{k}^{\Delta}\}_{k\geqslant 0}
\bigr)\, ,	
\end{equation*}
whose structure maps are given as follows:

\begin{itemize}
\item[\textbf{-}]
The \textbf{Hochschild differential} $\delta_{\cH}$:

For all $\underline{}{x_1\otimes_{R} \cdots\otimes_{R} x_n} \in (\underbrace{\cH\otimes_{R}\cdots\otimes_{R}\cH)}_{n}[-n]$, we define
\begin{equation}\label{deltaHochschild}
\begin{split}
\delta_{\cH}\bigl(\underline{}{x_1\otimes_{R} \cdots\otimes_{R} x_n} \bigr)	 & :=
\underline{}{x_1\otimes_{R}\cdots \otimes_{R}x_n\otimes_{R} 1} \\
& \qquad - \sum_{i=1}^n (-1)^{n-i}
\underline{}{x_1\otimes_{R} \cdots \otimes
\Delta(x_i)\otimes_{R} \cdots \otimes_{R} x_n} \\
& \qquad\quad - (-1)^{n}
\underline{}{1\otimes_{R}x_1\otimes_{R}\cdots
\otimes_{R}x_n}
\, ,
\end{split}
\end{equation}

\item[\textbf{-}]
The \textbf{cup product} $\cup$:

For elements
\begin{equation*}
\begin{split}
& x=x_1\otimes_{R} \cdots\otimes_{R} x_n \in (\underbrace{\cH\otimes_{R}\cdots\otimes_{R}\cH)}_{n}[-n]
\\
\text{and}\quad
& y=y_1\otimes_{R} \cdots\otimes_{R} y_m
\in (\underbrace{\cH\otimes_{R}\cdots\otimes_{R}\cH)}_{m}[-m]\, ,
\end{split}
\end{equation*}
we define
\begin{equation}\label{CupProduct}
\begin{split}
& x
\cup
y
:=
(-1)^{|x||y|}x\otimes_{R}y=
(-1)^{nm}
x_1\otimes_{R} \cdots\otimes_{R} x_n\otimes_{R}y_1\otimes_{R} \cdots \otimes_{R}y_m
\, .
\end{split}
\end{equation}

\item[\textbf{-}]
The maps $\mu_{k}^{\Delta}$:

Given elements
\begin{equation*}
\begin{split}
& \underline{}{x}=\underline{}{x_1 \otimes_{R}\cdots\otimes_{R} x_p} \in \underbrace{(\cH\otimes_{R}\cdots \otimes_{R}\cH)}_p[-p]\, , \\
\text{and}\quad
& \underline{}{y_{1}}\in \underbrace{(\cH\otimes_{R}\cdots \otimes_{R}\cH)}_{q_1}[-q_1]\, , \cdots\, ,
\underline{}{y_{k}}\in \underbrace{(\cH\otimes_{R}\cdots \otimes_{R}\cH)}_{q_k}[-q_k]\, ,	
\end{split}
\end{equation*}
we define $\mu_k^\Delta$ based on the relationship between $p$ and $k$:
\begin{itemize}
\item[(1)]
If $p\geqslant k$, we define
\begin{equation}\label{Brace1}
\begin{split}
& \mu_{k}^{\Delta}(\underline{}{x}\otimes \underline{}{y_1}[1]\otimes \cdots \otimes \underline{}{y_k}[1]) := \underline{}{x}\lb\underline{}{y_1}\, , \cdots\, , \underline{}{y_k}\rb
=\sum_{1 \leqslant j_1<\cdots<j_k\leqslant p}\!\!\!\!
(-1)^{\sum_{l=1}^{k}(j_{l} - l + q_{1}+\cdots + q_{l-1})(q_{l}-1)}\\
&
\biggl(\underline{}{
x_1  \otimes_{R}\cdots \otimes_{R}
\bigl((\Delta^{q_{1}-1}x_{j_1}) \cdot y_1\bigr)
\otimes_{R}
\cdots
\otimes_{R}
\bigl((\Delta^{q_{k}-1}x_{j_{k}})\cdot y_{k}\bigr)
\otimes_{R}\cdots \otimes_{R} x_{p}
} \biggr)\, ,
\end{split}
\end{equation}
where the terms
$(\Delta^{q_{1}-1}x_{j_1})	\cdot y_1$,
$\cdots$,
$(\Delta^{q_{k}-1}x_{j_k})	\cdot y_k$
are defined as in Equation \eqref{DeltaN}.

\item[(2)]
If $p<k$, we simply define
\begin{equation}\label{Brace2}
\mu_{k}^{\Delta}(\underline{}{x}\otimes \underline{}{y_1}[1]\otimes \cdots \otimes \underline{}{y_k}[1]): = \underline{}{x}\lb\underline{}{y_1}\, , \cdots\, , \underline{}{y_k}\rb =0\, .	
\end{equation}	
\end{itemize}
\end{itemize}
\end{theorem}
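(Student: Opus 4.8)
The plan is to obtain $\BInfty(\cH)$ as the output of the Gerstenhaber-Voronov operadic modeling (Theorem \ref{Operadtobrace}) applied to the non-symmetric operad $\cP_{\cH}$ constructed in Proposition \ref{MainTheoremOperad}, with the distinguished element $\m = 1 \otimes_R 1 \in \cP_{\cH}(2)$ playing the role of the multiplication. Under the degree convention that $x \in \cP_{\cH}(p)$ has $|x| = p$ in $W_{\cP_{\cH}} = \oplus_{n\geqslant 0}\cP_{\cH}(n)[-n] = \oplus_{n\geqslant 0}(\cH^{\otimes_R n})[-n]$, Theorem \ref{Operadtobrace} will immediately supply the brace $B_{\infty}$ structure once $\m$ is shown to be a multiplication. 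The only substantive work thereafter is to verify that the abstractly produced structure maps $\{\mu_k^{\cP_{\cH}}\}$, $\cup$, and $\delta$ coincide with the explicit formulas \eqref{Brace1}, \eqref{Brace2}, \eqref{CupProduct}, and \eqref{deltaHochschild}.

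First I would check that $\m = 1 \otimes_R 1$ is a multiplication in the sense of Definition \ref{MultiplicationDefinition}. Using the composition rule \eqref{Composition} together with $\Delta(1) = 1 \otimes_R 1$ (so that $\Delta^{k-1}(1) = 1 \otimes_R \cdots \otimes_R 1$), and the elementary fact that the right action \eqref{RIGHTACTION} of a string of units fixes any $u$, namely $(\Delta^{n-1}1)\cdot u = u$, one computes $\gamma(\m; u_1, u_2) = u_1 \otimes_R u_2$. In particular $\gamma(\m; \m, \id_{\cP_{\cH}}) = 1 \otimes_R 1 \otimes_R 1 = \gamma(\m; \id_{\cP_{\cH}}, \m)$, so the associativity condition holds; equivalently $\m\lb \m \rb = 0$, as required by Theorem \ref{Operadtobrace}(2).

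Next I would match the braces. Expanding \eqref{OperadbraceI} for $\cP_{\cH}$ with $\underline{}{x} = x_1 \otimes_R \cdots \otimes_R x_p$ and $\underline{}{y_l} \in \cP_{\cH}(q_l)$, each surviving term corresponds to a choice of insertion positions $1 \leqslant j_1 < \cdots < j_k \leqslant p$, the remaining slots being filled by $\id_{\cP_{\cH}} = 1$. By \eqref{Composition}, a slot filled by $\id_{\cP_{\cH}}$ at position $i$ contributes $(\Delta^0 x_i)\cdot 1 = x_i$, whereas the slot filled by $\underline{}{y_l}$ at position $j_l$ contributes $(\Delta^{q_l - 1} x_{j_l})\cdot y_l$; this reproduces exactly the tensor in \eqref{Brace1}. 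For the sign, the count $i_l$ of inputs preceding $\underline{}{y_l}$ equals the $(j_l - l)$ interspersed identities plus the degrees $q_1 + \cdots + q_{l-1}$ of the earlier insertions, giving $i_l = j_l - l + q_1 + \cdots + q_{l-1}$, so the prefactor $(-1)^{\sum_l i_l(|y_l|-1)}$ of \eqref{OperadbraceI} becomes precisely the sign in \eqref{Brace1}. When $p < k$ there is no admissible tuple $j_1 < \cdots < j_k$, so the brace vanishes, yielding \eqref{Brace2}.

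Finally I would unwind the cup product $x \cup y = (-1)^{|x|}\m\lb x\, , y\rb$ and the differential $\delta(x) = \m\lb x \rb - (-1)^{|x|+1} x\lb \m \rb = [\m, x]_{\mathrm{G}}$ of Theorem \ref{Operadtobrace}(2). The single admissible insertion gives $\m\lb x\, , y\rb = (-1)^{|x|(|y|-1)} x \otimes_R y$, so $x \cup y = (-1)^{|x||y|} x \otimes_R y$, which is \eqref{CupProduct}. For the differential, inserting $x$ into the two slots of $\m$ and using $(\Delta^{n-1}1)\cdot u = u$ gives $\m\lb x \rb = x \otimes_R 1 + (-1)^{|x|-1} 1 \otimes_R x$, while inserting $\m$ into the $i$-th slot of $x$ and using $\Delta(x_i)\cdot(1 \otimes_R 1) = \Delta(x_i)$ gives $x\lb \m \rb = \sum_{i=1}^p (-1)^{i-1} x_1 \otimes_R \cdots \otimes_R \Delta(x_i) \otimes_R \cdots \otimes_R x_p$; combining these with the factor $-(-1)^{|x|+1}$ and simplifying $(-1)^{|x|-1} = -(-1)^{|x|}$ and $-(-1)^{|x|+i} = -(-1)^{|x|-i}$ reproduces \eqref{deltaHochschild}. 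I expect the main obstacle to be exactly this sign bookkeeping --- reconciling the degree-shift conventions built into \eqref{OperadbraceI} with the signs displayed in \eqref{Brace1} and \eqref{deltaHochschild} --- since the underlying algebraic identities reduce entirely to the coassociativity \eqref{CoAssociativity} and the unitality $\Delta(1) = 1 \otimes_R 1$ already packaged into $\cP_{\cH}$.
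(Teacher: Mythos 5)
Your proposal is correct and follows essentially the same route as the paper: the paper's proof consists precisely of applying Theorem \ref{Operadtobrace} to the operad $\cP_{\cH}$ of Proposition \ref{MainTheoremOperad} with the multiplication $\m = 1\otimes_R 1$, identifying $\mu_k^{\Delta}$ with $\mu_k^{\cP_{\cH}}$. The only difference is that you write out the (correct) sign bookkeeping and the matching of the abstract structure maps with the explicit formulas \eqref{Brace1}, \eqref{CupProduct}, and \eqref{deltaHochschild}, which the paper leaves implicit.
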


\begin{proof}
These definitions of structure maps are obtained by applying Theorem \ref{Operadtobrace} to the operad $\cP_{\cH}$ and the multiplication $\m=1\otimes 1\in \cP_{\cH}(2)$.
The brace operations
$\mu_k^{\Delta}$ are taken to be $\mu_{k}^{\cP_{\cH}}$.
\end{proof}

Below are particular examples illustrating Theorem \ref{MainTheorembrace}.

\begin{example}
Given that $A$ is a finite-dimensional associative algebra over $\fK$,
it is established in \cite{Lu} that
$\cH = \Hom_{\fK}(A, A)$ possesses a Hopf algebroid structure over $A$, and moreover, for all integers $n \geqslant 0$, there exists an isomorphism
\begin{equation*}
\underbrace{\cH\otimes_{A}\cdots \otimes_{A}\cH}_n\simeq \Hom_{\fK}(A^{\otimes n}, A)\, .	
\end{equation*}
Consequently, by applying Theorem \ref{MainTheorembrace} to the Hopf algebroid $\cH$ over $A$, we deduce that the resulting brace $B_{\infty}$ algebra $B_{\infty}(\cH)$ can be identified with the canonical brace $B_{\infty}$ algebra structure defined on the graded space $C(A, A)$ of Hochschild cochains of $A$, as presented in \cite{Gerstenhaber-Voronov}.
\end{example}

\begin{example}
Let $M$ be a smooth manifold and $A \rightarrow M$ be a Lie $\fK$-algebroid over $M$
(for a precise definition, see \cite{Mackenzie-Book}).
Let $C^{\infty}(M)$ denotes the algebra of $\fK$-valued
smooth functions on $M$.
The universal enveloping algebra $U(A)$ of
the Lie-Rinehart algebra $(C^{\infty}(M), \Gamma(A))$ \cite{Rinehart} is endowed with a canonical
Hopf algebroid structure over
$C^{\infty}(M)$, the source and target maps
are both the natural inclusion $C^{\infty}(M) \hookrightarrow U(A)$
\cites{Xu-Quantum, Xu-QuantumCNRS}.
Applying Theorem \ref{MainTheorembrace} to the Hopf algebroid $U(A)$ over $C^{\infty}(M)$,
we obtain a brace $B_{\infty}$-algebra
\begin{equation*}
B_{\infty}( U(A) )=(D_{\poly}(A)\, , \delta\, , \cup\, ,
\{\mu_{k}^{\Delta}\}_{k\geqslant 0})\, ,
\end{equation*}
where $D_{\poly}(A)$ denotes the graded space
$\oplus_{n=0}^{\infty} (\otimes_{C^{\infty}(M)}^n U(A))[-n]$ \cites{Bandiera-Stienon-Xu, Stienon-Xu-Survey, Calaque-formality}.
This brace $B_{\infty}$-algebra structure plays a crucial role in proving global $G_{\infty}$ formality results for Lie algebroids \cite{Calaque-VandenBergh}.

Now, suppose that $\cA$ is a Lie groupoid integrating $A$. Then $D_{\poly}(A)$ can be identified with the space of left-invariant polydifferential operators on $\cA$.
Under this identification, $D_{\poly}(A)$ together with the canonical Hochschild differential, cup product, and brace operations inherited from the graded space of Hochschild cochains of $\cA$, forms a brace $B_{\infty}$-algebra which coincides with $B_{\infty}(U(A))$.
\footnote{This fact was communicated to us by Ping Xu.}
\end{example}

\begin{remark}
The graded space $\oplus_{n=0}^{\infty}
(\underbrace{\cH\otimes_{R}\cdots\otimes_{R}\cH)}_{n}[-n]$ associated with a Hopf algebroid $\cH$ is used in the cyclic cohomology theory of Hopf algebroids, see \cites{Kowalzig-Posthuma, Kowalzig-Krahmer}.
\end{remark}

\section{Two types of twisted brace $B_{\infty}$ algebras}
Deformation quantization serves as a source of inspiration for the higher structures investigated in this work. Specifically, let $C^{\infty}(M)$ denote the commutative algebra of $\fK$-valued smooth functions on a smooth manifold $M$. Consider its extension $C^{\infty}(M)\llbracket \hbar\rrbracket:=C^{\infty}(M)\otimes \fK\llbracket \hbar\rrbracket$, equipped with a \emph{star product} $\star$ that deforms the standard commutative multiplication on $C^{\infty}(M)$. Within this framework, two relevant brace $B_{\infty}$ algebras emerge.
First, the graded space of Hochschild cochains of the deformed algebra $(C^{\infty}(M)\llbracket \hbar\rrbracket, \star)$ possesses a canonical brace $B_{\infty}$ algebra structure \cite{Gerstenhaber-Voronov}.
Second, consider the cochain complex of polydifferential operators
$(D_{\poly}(M), \delta)$, which is a subcomplex of the Hochschild cochain complex of $C^{\infty}(M)$.
Let $\gamma\in D_{\poly}^2(M)\otimes \hbar\fK\llbracket \hbar\rrbracket$ represent the Maurer-Cartan element associated with the star product $\star$. Then, the deformed differential $\delta + [\gamma, \,\cdot\,]_{\mathrm{G}}$, coupled with Kontsevich's deformed cup product (as detailed in \cite[\S $8.1$]{Kontsevich}), naturally extends to define a brace $B_{\infty}$ structure on
$D_{\poly}(M)\llbracket \hbar \rrbracket:=D_{\poly}(M)\otimes \fK\llbracket \hbar \rrbracket$.

In this section,
we will introduce the \emph{type I twisted} and \emph{type II twisted} brace $B_{\infty}$ algebras.
These algebras are derived from twistors of Hopf algebroids and provide a systematic framework for interpreting the above
two specific brace $B_{\infty}$ algebras that arise within the context of deformation quantization.

\subsection{Twistors}\label{subSec:twistors}
As usual, consider a Hopf algebroid $\cH=(\cH\, , \alpha\, , \beta\, , \cdot\, , \Delta\, , \epsilon)$    over   $R$. For any   element
$\cF=\sum_i (\cF_{1})_i \otimes_{R} (\cF_{2})_i \in \cH\otimes_{R}\cH$,
we adopt Sweedler's abbreviated notation to
write   $\cF=\cF_1\otimes_{R}\cF_2$. From $\cF$, we derive   the following maps:
\begin{align*}
\alpha_{\cF}\colon &R \rightarrow \cH, & \alpha_{\cF}(a)&:=
\alpha(\cF_1\rhd a)\cdot \cF_2 \,\,\, \biggl(=
\sum_i \alpha((\cF_{1})_i\rhd a)\cdot (\cF_{2})_i \biggr)\, , \\
\beta_{\cF}\colon &R \rightarrow \cH, & \beta_{\cF}(a)& :=
\beta(\cF_2 \rhd a) \cdot \cF_1
\,\,\,
\biggl(=\sum_i \beta((\cF_{2})_i\rhd a)\cdot (\cF_{1})_i
\biggr)\, , \\
\star_{\cF}\colon &R \times R \rightarrow R, &
a \star_{\cF} b & :=(\cF_1\rhd a)\cdot (\cF_2 \rhd b)
\,\,\,
\biggl(=\sum_i((\cF_{1})_i\rhd a) \cdot ((\cF_{2})_i\rhd b) \biggr)\, ,
\end{align*}
for all $a, b \in R$.

\begin{theorem}[\cites{Xu-R-matrices, Xu-Quantum}] \label{Pre}
Suppose that $\cF\in \cH\otimes_{R}\cH$ satisfies
\begin{equation}\label{TwistorEquation}
(\Delta\otimes_{R} \id)\cF\cdot (\cF\otimes_{R} 1)= (\id\otimes_{R} \Delta)\cF \cdot (1\otimes_{R} \cF)	 \in \cH\otimes_{R}\cH\otimes_{R}\cH\, ,
\end{equation}
and
\begin{equation}\label{counitEquation}
(\epsilon\otimes_{R}\id)	\cF=(\id\otimes_{R}\epsilon)\cF=1\, .	
\end{equation}
Then
\begin{enumerate}
\item $R_{\cF}:=(R\, , \star_{\cF})$ is an associative $\fK$-algebra  with unit $1$.
\item $\alpha_{\cF}\colon R_{\cF}\rightarrow \cH$ is an algebra homomorphism and $\beta_{\cF}\colon R_{\cF}\rightarrow \cH$ is an algebra anti-homomorphism, satisfying $\alpha_{\cF}(a)\cdot \beta_{\cF}(b) = \beta_{\cF}(b) \cdot \alpha_{\cF}(a)$ and
\begin{equation*}
\cF \cdot (\beta_{\cF}(a)\otimes 1 - 1 \otimes \alpha_{\cF}(a))=0, 	
\end{equation*}
for all $a, b \in R$.
\end{enumerate}

As a consequence, such an element $\cF$ determines a linear map of $\fK$-vector spaces
\begin{align}\label{Eqt:Fsharp}
\cF^{\sharp}\colon M_1\otimes_{R_{\cF}}M_2 & \rightarrow M_1 \otimes_{R} M_2 \notag \\
\cF^{\sharp}(D_1 \otimes_{R_{\cF}} D_2  ) &:=
(\cF_1\cdot D_1)\otimes_{R} (\cF_2\cdot D_2 )
\,\,\,
\biggl(=\sum_i ((\cF_{1})_i\cdot D_1) \otimes_{R} ((\cF_{2})_i\cdot D_2 ) \biggr)\, ,
\end{align}
for any two left $\cH$-modules $M_1$ and $M_2$.
\end{theorem}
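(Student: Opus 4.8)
The plan is to treat the statement in three stages: first record the module-algebra structure that the counit supplies, then dispatch the two twisting claims (1) and (2), and finally deduce the well-definedness of $\cF^{\sharp}$, which I regard as the clause that matters for the later sections. To begin, I would verify that the action $\rhd$ of \eqref{Action} makes $R$ a left $\cH$-module algebra. The identities I need are $1\rhd a=a$, $x\rhd 1=\epsilon(x)$, the multiplicativity $x\rhd(ab)=(x_1\rhd a)(x_2\rhd b)$, and the module law $(x\cdot y)\rhd a=x\rhd(y\rhd a)$; the last rests on $\epsilon(x\cdot y)=\epsilon(x\cdot\alpha(\epsilon(y)))$, which is exactly the hypothesis that $\Ker(\epsilon)$ is a left ideal combined with $\epsilon\circ\alpha=\id$. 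I would also record the two normalizations coming from \eqref{counitEquation}, namely $\alpha(\epsilon(\cF_1))\cF_2=1$ and $\beta(\epsilon(\cF_2))\cF_1=1$, together with the convenient reformulations $\alpha_{\cF}(a)\rhd b=a\star_{\cF}b$ and $\beta_{\cF}(a)\rhd b=b\star_{\cF}a$, obtained by unwinding the definitions and the bimodule-map property of $\epsilon$.

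With these in hand, associativity of $\star_{\cF}$ follows by expanding $(a\star_{\cF}b)\star_{\cF}c$ and $a\star_{\cF}(b\star_{\cF}c)$ through the module-algebra identities: each side becomes the image of $a\otimes b\otimes c$ under one of the two sides of the twistor equation \eqref{TwistorEquation}, so they agree, while unitality is immediate from the normalizations above. That $\alpha_{\cF}$ is a homomorphism, $\beta_{\cF}$ an anti-homomorphism, and their images commute, is the same style of computation against \eqref{TwistorEquation} and the original relation $\alpha(r)\beta(s)=\beta(s)\alpha(r)$. The delicate clause is the twisted compatibility
\[
\cF\cdot(\beta_{\cF}(a)\otimes 1-1\otimes\alpha_{\cF}(a))=0,
\]
that is, $(\cF_1\beta_{\cF}(a))\otimes_{R}\cF_2=\cF_1\otimes_{R}(\cF_2\alpha_{\cF}(a))$ in $\cH\otimes_{R}\cH$, the action being that of $\otimes^2\cH$ from \eqref{RIGHTACTION}. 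This is the twisted analogue of \eqref{Important1}, and I would prove it by substituting the definitions of $\alpha_{\cF},\beta_{\cF}$ and specializing the twistor equation through the action $\rhd a$ in the appropriate tensor slot, using the counit normalizations to collapse the redundant coproduct legs. \textbf{I expect this identity to be the main obstacle:} unlike \eqref{Important1}, the element $\cF$ is not a coproduct, so the collapse has to be engineered out of \eqref{TwistorEquation} and \eqref{counitEquation} rather than read off, and every manipulation must respect the $(R,R)$-balancing.

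Granting the compatibility relation, the well-definedness of $\cF^{\sharp}$ is a clean two-step balancing check, and here the care is forced by the fact that $\cH\otimes_{R}\cH$ carries no honest action on $M_1\otimes_{R}M_2$ (only $\otimes^2\cH$ acts, via \eqref{RIGHTACTION}), so each descent must be checked by hand. First, for fixed $D_1,D_2$ the assignment $g_1\otimes_{R}g_2\mapsto (g_1 D_1)\otimes_{R}(g_2 D_2)$ is well defined on $\cH\otimes_{R}\cH$, since the balancing $\beta(r)g_1\otimes_{R}g_2=g_1\otimes_{R}\alpha(r)g_2$ is matched by $\beta(r)m_1\otimes_{R}m_2=m_1\otimes_{R}\alpha(r)m_2$ on $M_1\otimes_{R}M_2$ (which uses only that $\alpha,\beta$ are an (anti-)homomorphism with commuting images). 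Reading the same computation for fixed $\cF$ shows that $(D_1,D_2)\mapsto (\cF_1 D_1)\otimes_{R}(\cF_2 D_2)$ is independent of the Sweedler representative of $\cF$ and is $\fK$-bilinear, hence factors through $M_1\otimes_{\fK}M_2$. Finally, to descend to $M_1\otimes_{R_{\cF}}M_2$ I must annihilate the relation $\beta_{\cF}(r)D_1\otimes_{R_{\cF}}D_2=D_1\otimes_{R_{\cF}}\alpha_{\cF}(r)D_2$; applying the first well-defined map to the compatibility relation of part (2) yields precisely
\[
(\cF_1\beta_{\cF}(r)D_1)\otimes_{R}(\cF_2 D_2)=(\cF_1 D_1)\otimes_{R}(\cF_2\alpha_{\cF}(r)D_2),
\]
which is exactly what is required. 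Thus $\cF^{\sharp}$ is well defined; its target is an $(R,R)$-bimodule and its source an $(R_{\cF},R_{\cF})$-bimodule by parts (1)--(2), so the construction makes sense for every pair of left $\cH$-modules $M_1,M_2$.
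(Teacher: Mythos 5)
You should know at the outset that the paper itself contains no proof of Theorem \ref{Pre}: it is imported wholesale from \cites{Xu-R-matrices, Xu-Quantum}, so there is no in-paper argument to compare yours against. Judged on its own merits, your plan is correct and essentially reconstructs Xu's original argument: the module-algebra identities for $\rhd$ (with the hypothesis that $\Ker(\epsilon)$ is a left ideal entering exactly where you say), the normalizations $\alpha(\epsilon(\cF_1))\cF_2=1=\beta(\epsilon(\cF_2))\cF_1$ extracted from \eqref{counitEquation}, associativity of $\star_{\cF}$ by pushing $a\otimes b\otimes c$ through the two sides of \eqref{TwistorEquation}, and — most relevantly for the rest of the paper — the two-step descent for $\cF^{\sharp}$: first well-definedness of $g_1\otimes_{R}g_2\mapsto (g_1D_1)\otimes_{R}(g_2D_2)$ in the $\cH\otimes_{R}\cH$ variable, then annihilation of the $R_{\cF}$-balancing relations by applying that very map to the identity of part (2). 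That last step is precisely the correct mechanism, and your observation that only $\otimes^2\cH$ (not $\cH\otimes_R\cH$) acts is the right thing to keep in view throughout.

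One correction on the step you flag as the main obstacle. The collapse that proves $\cF\cdot(\beta_{\cF}(a)\otimes 1-1\otimes\alpha_{\cF}(a))=0$ is not powered by the counit normalizations \eqref{counitEquation} of $\cF$ (indeed this identity holds without assuming \eqref{counitEquation} at all), but by the counit axiom of $\cH$ itself, in the form of the two identities
\begin{equation*}
\alpha(x_1\rhd a)\,x_2=x\,\alpha(a)\,,\qquad \beta(x_2\rhd a)\,x_1=x\,\beta(a)\,,\qquad \text{for all }x\in\cH,\ a\in R\,,
\end{equation*}
which follow by applying $\epsilon\otimes_{R}\id$ and $\id\otimes_{R}\epsilon$ to $\Delta(x\alpha(a))=\Delta(x)\cdot(\alpha(a)\otimes 1)$ and $\Delta(x\beta(a))=\Delta(x)\cdot(1\otimes\beta(a))$, respectively. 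Concretely, writing a second copy of $\cF$ as $\cF'_1\otimes_{R}\cF'_2$, apply to both sides of \eqref{TwistorEquation} the map $h_1\otimes h_2\otimes h_3\mapsto\beta(h_2\rhd a)\,h_1\otimes_{R}h_3$; note that one must multiply by $\beta(h_2\rhd a)$ on the \emph{left}, since the right-multiplication variant $h_1\,\beta(h_2\rhd a)\otimes_{R}h_3$ does not respect the balancing relations of $\cH\otimes_{R}\cH\otimes_{R}\cH$ and is not well defined. The left-hand side then collapses, via the module law $(\cF_1)_2\cF'_2\rhd a=(\cF_1)_2\rhd(\cF'_2\rhd a)$ and the second identity above, to $\cF_1\beta_{\cF}(a)\otimes_{R}\cF_2$; the right-hand side, after moving $\beta(\cdots)$ across the first tensor sign as $\alpha(\cdots)$, collapses via the first identity to $\cF_1\otimes_{R}\cF_2\,\alpha_{\cF}(a)$. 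With this replacement of which counit property does the work, your outline completes to a full proof.
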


\begin{definition}[\cites{Xu-R-matrices, Xu-Quantum}]\label{TwistorDefinition}
An element  $\cF\in \cH\otimes_{R}\cH$ is called a
\emph{twistor} of the Hopf algebroid $\cH$, if it satisfies conditions \eqref{TwistorEquation} and \eqref{counitEquation},  and the associated $\fK$-linear maps $\cF^{\sharp}$ in \eqref{Eqt:Fsharp} are
\textbf{invertible} for any two
left $\cH$-modules $M_1$ and $M_2$.
\end{definition}

A twistor $\cF$ defines a twisted Hopf algebroid
\begin{equation*}
\cH_{\cF}:=(\cH, \alpha_{\cF}, \beta_{\cF}, \cdot, \Delta_{\cF}, \epsilon) 	
\end{equation*}
over $R_{\cF}=(R\, , \star_{\cF})$,
where the associative algebra product $\cdot$ and counit $\epsilon$ are identical to those of the original Hopf algebroid $\cH=(\cH\, , \alpha\, , \beta\, , \cdot\, , \Delta\, , \epsilon)$. The source and target maps are given by $\alpha_{\cF}$ and $\beta_{\cF}$, respectively, while the coproduct $\Delta_{\cF}$ is defined by
\begin{equation}\label{NewCoProduct}
	\Delta_{\cF}(x):=(\cF^{\sharp})^{-1}( \Delta(x)\cdot \cF )\, , 	
\end{equation}
for all $x \in \cH$.

\begin{remark}
A certain case is noteworthy.
When $\cH=\cD\llbracket \hbar \rrbracket:=\cD\otimes\fK\llbracket \hbar\rrbracket$, where $\cD$ is the Hopf algebroid of smooth differential operators on a manifold $M$ and $R=C^{\infty}(M)\llbracket \hbar \rrbracket$, and given a twistor $\cF\in \cH\otimes_{R}\cH=\cD \otimes_{C^{\infty}(M)} \cD \llbracket  \hbar \rrbracket$, the source and target maps $\alpha_{\cF}, \beta_{\cF} \colon R_{\cF}\rightarrow \cH$ within the twisted Hopf algebroid $\cH_{\cF}=(\cH, \alpha_{\cF}, \beta_{\cF}, \cdot, \Delta_{\cF}, \epsilon)$ are related to the product $\star_{\cF}$ via the following expressions:
\begin{align*}\label{NewSourceTarget}
\alpha_{\cF}(f)(g) = f\star_{\cF}g, \quad  \beta_{\cF}(f)(g) = g\star_{\cF}f,
\end{align*}
for all $f, g \in C^{\infty}(M)\llbracket \hbar \rrbracket$.	
\end{remark}

\subsection{Some facts about $\cup_{\cF}$ and $\cF^{\sharp}$}
Let $\cF$ be a twistor for a Hopf algebroid $\cH$ over $R$. The twisted Hopf algebroid $\cH_{\cF}$ is an $(R_{\cF}, R_{\cF})$-bimodule, allowing us to define the graded space
\[
W_{\cH_{\cF}} := \bigoplus_{n=0}^{\infty} (\underbrace{\cH_{\cF} \otimes_{R_{\cF}} \cdots \otimes_{R_{\cF}} \cH_{\cF}}_{n})[-n].
\]
We define the product on $W_{\cH_{\cF}}$ by $x \cup y := (-1)^{|x||y|} x \otimes_{R_{\cF}} y$ for homogeneous elements $x, y \in W_{\cH_{\cF}}$.  Then $(W_{\cH_{\cF}}, \cup)$ is a graded associative algebra that can be identified with the free algebra generated by $\cH_{\cF}[-1]$ over $R_{\cF}$.

Consider also the (non-twisted) graded space
\[
W_{\cH} := \bigoplus_{n=0}^{\infty} (\underbrace{\cH \otimes_{R} \cdots \otimes_{R} \cH}_{n})[-n],
\]
equipped with the twisted product defined by
\[
x \cup_{\cF} y := (-1)^{|x|}\cF\lb x, y \rb,
\]
for all $x, y \in W_{\cH}$. By Equations \eqref{TwistorEquation} and \eqref{counitEquation}, $(W_{\cH}, \cup_{\cF})$ forms a graded associative algebra with unit $1 \in W_{\cH}$.

Let $T(\cH[-1])$ denote the free tensor algebra generated by $\cH[-1]$ over $\fK$. We have two natural maps:
\begin{itemize}
    \item A quotient map
    \[
    p: T(\cH[-1]) \rightarrow W_{\cH_{\cF}}
    \]
    that maps $x_1[-1] \otimes \cdots \otimes x_n[-1]$ to $x_1 \cup \cdots \cup x_n = (-1)^{\frac{n(n-1)}{2}} x_1 \otimes_{R_{\cF}} \cdots \otimes_{R_{\cF}} x_n \in W_{\cH_{\cF}}^n$ for all integers $n \geq 0$ and all $x_1, \ldots, x_n \in \cH$.
    \item A morphism of graded associative algebras
    \[
    q: T(\cH[-1]) \rightarrow W_{\cH}
    \]
    that maps $x_1[-1] \otimes \cdots \otimes x_n[-1]$ to $x_1 \cup_{\cF} \cdots \cup_{\cF} x_n \in W_{\cH}^n$ for all integers $n \geq 0$ and all $x_1, \ldots, x_n \in \cH$.
\end{itemize}

\begin{lemma}\label{KeyM1}
The $\fK$-linear map
\begin{equation*}
\cF^{\sharp}: W_{\cH_{\cF}} \rightarrow W_{\cH}, \qquad \cF^{\sharp}(x_1 \cup \cdots \cup x_n) := x_1 \cup_{\cF} \cdots \cup_{\cF} x_n\, , 	
\end{equation*}
for all integers $n\geqslant 0$ and all $x_1\, , \cdots\, , x_n \in \cH$,
is well-defined and intertwines the products $\cup$ and $\cup_{\cF}$. Moreover, the following diagram of graded associative algebras commutes:
\begin{equation*}
\begin{tikzcd}
T(\cH[-1]) \arrow[d, "p"]
\arrow[r, "q"] & (W_{\cH}, \cup_{\cF}) \\
(W_{\cH_{\cF}}, \cup)
\arrow[ru, dashed, " \cF^{\sharp} "'] &
\end{tikzcd}
\end{equation*}
\end{lemma}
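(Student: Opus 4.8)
The plan is to treat $\cF^{\sharp}$ as a factorization forced by the universal property of the quotient $p$, thereby reducing the entire statement to a single kernel inclusion. Since $p$ is a surjective homomorphism of graded algebras, the prescription $\cF^{\sharp}(p(w)):=q(w)$ defines a map on $W_{\cH_{\cF}}$ if and only if $\ker p\subseteq\ker q$. Granting this inclusion, the commutativity of the triangle, $\cF^{\sharp}\circ p=q$, holds by construction, and the intertwining of the products is then automatic: writing $u=p(\tilde u)$ and $v=p(\tilde v)$ with $\tilde u,\tilde v\in T(\cH[-1])$ and using that both $p$ and $q$ are algebra homomorphisms (for the concatenation product on $T(\cH[-1])$), one gets $\cF^{\sharp}(u\cup v)=\cF^{\sharp}(p(\tilde u\,\tilde v))=q(\tilde u\,\tilde v)=q(\tilde u)\cup_{\cF}q(\tilde v)=\cF^{\sharp}(u)\cup_{\cF}\cF^{\sharp}(v)$. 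Thus all three assertions of the lemma follow once $\ker p\subseteq\ker q$ is established.

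To obtain this inclusion, I would first identify $\ker p$ explicitly. In degree $n$ the map $p$ is the quotient $\otimes^n\cH\to\cH_{\cF}^{\otimes_{R_{\cF}}n}$ by the right ideal $I_n^{\cF}$, and these ideals are generated, across all degrees, by the adjacent two-term balancing relations. Hence $\ker p$ is the two-sided ideal of $T(\cH[-1])$ generated by the degree-two elements $\rho_{x,y,r}:=(\beta_{\cF}(r)\cdot x)[-1]\otimes y[-1]-x[-1]\otimes(\alpha_{\cF}(r)\cdot y)[-1]$, for $x,y\in\cH$ and $r\in R$. Because $q$ is an algebra homomorphism, $\ker q$ is a two-sided ideal, so it suffices to verify $q(\rho_{x,y,r})=0$ for every such generator.

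The vanishing $q(\rho_{x,y,r})=0$ is the only place where genuine input is required, and it is exactly where the twistor axioms enter. Unwinding the cup product on degree-one elements through \eqref{Brace1} gives $u\cup_{\cF}v=-\cF\lb u,v\rb=-(\cF_1\cdot u)\otimes_R(\cF_2\cdot v)$ for $u,v\in\cH$, whence $q(\rho_{x,y,r})=-(\cF_1\cdot\beta_{\cF}(r)\cdot x)\otimes_R(\cF_2\cdot y)+(\cF_1\cdot x)\otimes_R(\cF_2\cdot\alpha_{\cF}(r)\cdot y)$. This vanishes by the compatibility relation $\cF\cdot(\beta_{\cF}(r)\otimes 1-1\otimes\alpha_{\cF}(r))=0$ furnished by Theorem \ref{Pre}(2): right-multiplying that identity by $x\otimes y\in\otimes^2\cH$ and invoking that the right action descends to $\cH\otimes_R\cH$ (Equation \eqref{PassThrough}) yields precisely $(\cF_1\cdot\beta_{\cF}(r)\cdot x)\otimes_R(\cF_2\cdot y)=(\cF_1\cdot x)\otimes_R(\cF_2\cdot\alpha_{\cF}(r)\cdot y)$. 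Equivalently, this step is nothing but the well-definedness of the module map $\cF^{\sharp}\colon M_1\otimes_{R_{\cF}}M_2\to M_1\otimes_R M_2$ of Theorem \ref{Pre}, specialized to $M_1=M_2=\cH$, together with the observation that $u\cup_{\cF}v=-\cF^{\sharp}(u\otimes_{R_{\cF}}v)$ in degree two.

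The main obstacle, such as it is, lies not in any single computation but in the bookkeeping: confirming that $\ker p$ is genuinely generated in degree two, and tracking the Koszul signs relating $p$ (which sends $x_1[-1]\otimes\cdots\otimes x_n[-1]$ to $(-1)^{n(n-1)/2}x_1\otimes_{R_{\cF}}\cdots\otimes_{R_{\cF}}x_n$), the product $\cup$, and the deformed product $\cup_{\cF}$. These signs are, however, already absorbed into the hypotheses that $p$ and $q$ are algebra homomorphisms and that $(W_{\cH},\cup_{\cF})$ is associative — the latter guaranteed by \eqref{TwistorEquation} and \eqref{counitEquation} — so nothing needs to be checked by hand beyond the degree-two identity above. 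Notably, the invertibility of $\cF^{\sharp}$ plays no role at this stage and is not required for the present lemma.
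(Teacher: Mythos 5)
Your proposal is correct and follows exactly the paper's route: the paper's entire proof is the observation that it suffices to check $q(\Ker p)=0$, which is guaranteed by statement (2) of Theorem \ref{Pre}. Your write-up simply fills in the details the paper leaves implicit --- identifying $\Ker p$ as the two-sided ideal generated by the degree-two balancing relations and killing those generators with $\cF\cdot(\beta_{\cF}(r)\otimes 1-1\otimes\alpha_{\cF}(r))=0$ --- so it is a faithful (and more explicit) version of the same argument.
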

\begin{proof}
It suffices to verify that $q(\Ker p) = 0$, which is guaranteed by statement (2) of Theorem \ref{Pre}.
\end{proof}
Since $x_1\otimes_{R_{\cF}}\cdots \otimes_{R_{\cF}} x_n =
(-1)^{\frac{n(n-1)}{2}}x_1\cup \cdots \cup x_n$
for all $x_1\, , \cdots\, , x_n \in \cH$,
we have
\begin{equation}\label{SignFixed}
\begin{split}
\cF^{\sharp}(x_1\otimes_{R_{\cF}}\cdots \otimes_{R_{\cF}} x_n) & =(-1)^{\frac{n(n-1)}{2}}
x_1\cup_{\cF} \cdots \cup_{\cF} x_n	\\
& \,\, =
\cF\lb \cF \lb  \cdots \cF\lb \cF\lb x_1 \, , x_2\rb\, , x_3 \rb\, , \cdots \rb\, , x_n \rb \\
& \,\,\,\, = ( \cF_{1 \tinywedge \cdots \tinywedge n-1, n}\boldsymbol{\cdot} \cdots  \boldsymbol{\cdot} \cF_{1 \tinywedge 2,3}\boldsymbol{\cdot} \cF_{1,2})
\cdot
(x_1 \otimes_{R} \cdots \otimes_{R} x_n)\, .
\end{split}
\end{equation}
In the above formula, we adopt the notation
$\cF_{1 \tinywedge \cdots \tinywedge k-1, k}:=
(\Delta^{k-2}\otimes_{R} \id)\cF$.
\footnote{We use the subscript sequence
$1\tinywedge 2 \tinywedge \cdots \tinywedge k-2 \tinywedge k-1$ to make the notation transparent.}
In particular, we have
$\cF^{\sharp}(x_1\otimes_{R_{\cF}}x_2)=\cF\lb x_1\, , x_2\rb= (\cF_1 \cdot x_1) \otimes_R (\cF_2 \cdot x_2)$,
which coincides with the expression in \eqref{Eqt:Fsharp}.

\begin{lemma}\label{KeyM2}
For any $y_1\, , \cdots\, , y_n\in W_{\cH_{\cF}}$
with degrees $|y_1|=k_1$, $\cdots$, $|y_n|=k_n$, we have
\begin{equation*}
\begin{split}
& \cF^{\sharp}(y_1\otimes_{R_{\cF}} \cdots \otimes_{R_{\cF}} y_n) \\
= &
(\Delta^{k_1-1}\otimes_{R}\cdots \otimes_{R} \Delta^{k_n -1})( \cF_{1 \tinywedge \cdots \tinywedge n-1, n}\boldsymbol{\cdot} \cdots  \boldsymbol{\cdot} \cF_{1 \tinywedge 2,3}\boldsymbol{\cdot} \cF_{1,2})
\cdot
(\cF^{\sharp}y_1 \otimes_{R} \cdots \otimes_{R} \cF^{\sharp}y_n)\, .
\end{split}
\end{equation*}
\end{lemma}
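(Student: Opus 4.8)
The plan is to reduce the statement to the explicit formula \eqref{SignFixed} together with the operad structure of $\cP_{\cH}$, turning it into a single multiplicativity identity for the ``connecting factors''. Write
\[
\Xi_m := \cF_{1\tinywedge\cdots\tinywedge m-1,m}\boldsymbol{\cdot}\cdots\boldsymbol{\cdot}\cF_{1,2}\in\cP_{\cH}(m)
\]
for the factor attached to $m$ degree-one inputs, and set $N:=k_1+\cdots+k_n$. By $\fK$-multilinearity it suffices to treat monomials $y_i=x_{i,1}\otimes_{R_{\cF}}\cdots\otimes_{R_{\cF}}x_{i,k_i}$ with all $x_{i,j}\in\cH$. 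Then $y_1\otimes_{R_{\cF}}\cdots\otimes_{R_{\cF}}y_n$ is the single monomial with $N$ degree-one entries $x_{1,1},\ldots,x_{n,k_n}$, so applying \eqref{SignFixed} with $N$ slots yields
\[
\cF^{\sharp}(y_1\otimes_{R_{\cF}}\cdots\otimes_{R_{\cF}}y_n)=\Xi_N\cdot(x_{1,1}\otimes_R\cdots\otimes_R x_{n,k_n})=\gamma(\Xi_N;x_{1,1},\ldots,x_{n,k_n}),
\]
the last equality being \eqref{Composition} specialized to degree-one inputs (each $\Delta^0=\id$, and the composition collapses to the right action). This is the left-hand side.

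For the right-hand side I first apply \eqref{SignFixed} to each block to get $\cF^{\sharp}y_i=\Xi_{k_i}\cdot(x_{i,1}\otimes_R\cdots\otimes_R x_{i,k_i})=\gamma(\Xi_{k_i};x_{i,1},\ldots,x_{i,k_i})$. Next, using \eqref{Composition} together with the blockwise compatibility \eqref{PassThrough} of the right action, I identify the prescription ``$(\Delta^{k_1-1}\otimes_R\cdots\otimes_R\Delta^{k_n-1})(\Xi_n)$ acting on $\cF^{\sharp}y_1\otimes_R\cdots\otimes_R\cF^{\sharp}y_n$'' with the operad composition $\gamma(\Xi_n;\cF^{\sharp}y_1,\ldots,\cF^{\sharp}y_n)$. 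Hence the right-hand side equals $\gamma\bigl(\Xi_n;\gamma(\Xi_{k_1};x_{1,\bullet}),\ldots,\gamma(\Xi_{k_n};x_{n,\bullet})\bigr)$, and the associativity of operad composition \eqref{CompositionAssociativity} rewrites this as $\gamma\bigl(\gamma(\Xi_n;\Xi_{k_1},\ldots,\Xi_{k_n});x_{1,1},\ldots,x_{n,k_n}\bigr)$. Comparing with the left-hand side, the lemma is reduced to the single identity $\gamma(\Xi_n;\Xi_{k_1},\ldots,\Xi_{k_n})=\Xi_N$ in $\cP_{\cH}(N)$.

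The main obstacle is precisely this multiplicativity identity for $\{\Xi_m\}$. I would prove it by induction on $n$: the case $n=1$ is the unitality \eqref{Unitality}, and the inductive step follows from \eqref{CompositionAssociativity} and the recursion $\Xi_m=\cF_{1\tinywedge\cdots\tinywedge m-1,m}\boldsymbol{\cdot}(\Xi_{m-1}\text{ placed in the first }m-1\text{ slots})$ once the two-block case $\gamma(\Xi_2;\Xi_p,\Xi_q)=\Xi_{p+q}$ is known. The two-block case is the heart of the matter: unwinding $\gamma(\Xi_2;\Xi_p,\Xi_q)=(\Delta^{p-1}\cF_1)\cdot\Xi_p\otimes_R(\Delta^{q-1}\cF_2)\cdot\Xi_q$ and comparing with $\Xi_{p+q}$ forces a re-association of nested coproducts of $\cF$, which is exactly the content of the twistor equation \eqref{TwistorEquation} (applied after $\Delta^{p-1}\otimes_R\Delta^{q-1}$ and simplified via coassociativity \eqref{CoAssociativity} and \eqref{CoproductProduct}). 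Equivalently, this identity is nothing but the associativity of the product $\cup_{\cF}$ recorded before Lemma \ref{KeyM1}, so it may also be obtained directly from the fact that $\cF^{\sharp}$ intertwines $\cup$ and $\cup_{\cF}$, by evaluating $\cF^{\sharp}$ on the unit monomials $1\otimes_{R_{\cF}}\cdots\otimes_{R_{\cF}}1$. Finally, I observe that no Koszul signs have to be tracked: every comparison is carried out through the sign-free formulas \eqref{SignFixed} and \eqref{Composition}, so the two sides agree on the nose.
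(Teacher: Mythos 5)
Your proposal is correct, but it is organized along a genuinely different route than the paper's own proof. The paper works with the blocks $y_i$ directly: by Lemma \ref{KeyM1} the map $\cF^{\sharp}$ intertwines $\cup$ with $\cup_{\cF}$, so the left-hand side is rewritten as the left-nested iterated brace of the elements $\cF^{\sharp}y_1,\ldots,\cF^{\sharp}y_n$, which is then expanded and re-associated using \eqref{CoAssociativity} and \eqref{CoproductProduct}; the twistor equation enters only implicitly, through the associativity of $\cup_{\cF}$ that makes Lemma \ref{KeyM1} available. You instead reduce to monomials, recast both sides as operad compositions through the sign-free formulas \eqref{SignFixed} and \eqref{Composition}, and isolate the multiplicativity $\gamma(\Xi_n;\Xi_{k_1},\ldots,\Xi_{k_n})=\Xi_N$ of the connecting factors as the crux, proved inductively from the statement that $\cF$ is a multiplication on $\cP_{\cH}$ (the operadic reading of \eqref{TwistorEquation}). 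Your organization buys manifest sign-freeness (the paper's intermediate brace identities, read literally against the sign convention of \eqref{Brace1}, carry compensating signs that are suppressed) and makes explicit exactly where the twistor equation is used; the paper's version is shorter and needs no monomial reduction. Two caveats: the two-block identity $\gamma(\cF;\Xi_p,\Xi_q)=\Xi_{p+q}$, the real content, is only sketched --- it can be completed by a second induction on $q$ using the recursion $\Xi_{q+1}=\cF\circ_{1}\Xi_{q}$, the associativity \eqref{CompositionAssociativity}, and the identity $\gamma(\cF;\cF,\id_{\cP_{\cH}})=\gamma(\cF;\id_{\cP_{\cH}},\cF)$; and your alternative derivation via unit monomials is valid but does involve Koszul signs (a factor $(-1)^{pq}$ appears on both sides and cancels), so your closing claim that no signs ever need tracking is slightly overstated for that variant.
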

\begin{proof}
By Lemma \ref{KeyM1} and Equation \eqref{SignFixed},
we have
\begin{equation*}
\begin{split}
& \cF^{\sharp}(y_1\otimes_{R_{\cF}} \cdots \otimes_{R_{\cF}} y_n) \\
= &
\cF\lb  \cdots \cF\lb \cF\lb \cF^{\sharp}y_1 \, , \cF^{\sharp}y_2\rb\, , \cF^{\sharp}y_3 \rb\, , \cdots \rb\, , \cF^{\sharp}y_n \rb	\\
= &
(\Delta^{k_1-1}\otimes_{R}\cdots \otimes_{R} \Delta^{k_n -1})\cF_{1 \tinywedge \cdots \tinywedge n-1, n}
\boldsymbol{\cdot}
\cdots
\boldsymbol{\cdot}
(\Delta^{k_1-1}\otimes_{R}\Delta^{k_2-1})\cF_{1,2}
\cdot
(\cF^{\sharp}y_1 \otimes_{R} \cdots \otimes_{R} \cF^{\sharp}y_n) \\
= &
(\Delta^{k_1-1}\otimes_{R}\cdots \otimes_{R} \Delta^{k_n -1})( \cF_{1 \tinywedge \cdots \tinywedge n-1, n}\boldsymbol{\cdot} \cdots  \boldsymbol{\cdot} \cF_{1 \tinywedge 2,3}\boldsymbol{\cdot} \cF_{1,2})
\cdot
(\cF^{\sharp}y_1 \otimes_{R} \cdots \otimes_{R} \cF^{\sharp}y_n)\, .
\end{split}
\end{equation*}
\end{proof}

\begin{lemma}\label{KeyM3}
For any $h\in \cH_{\cF}$	 and $u\in \underbrace{\cH_{\cF}\otimes_{R_{\cF}}\cdots \otimes_{R_{\cF}}\cH_{\cF}}_k$, we have
\begin{equation*}
\cF^{\sharp}\bigl( (\Delta_{\cF}^{k-1}h) \cdot u \bigr)=
(\Delta^{k-1}h) \cdot (\cF^{\sharp}u)\, .	
\end{equation*}
\end{lemma}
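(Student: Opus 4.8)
The plan is to prove the intertwining identity
$$\cF^{\sharp}\bigl( (\Delta_{\cF}^{k-1}h) \cdot u \bigr)= (\Delta^{k-1}h) \cdot (\cF^{\sharp}u)$$
by induction on $k$, peeling off one tensor factor at a time and reducing everything to the defining relation \eqref{NewCoProduct} for $\Delta_{\cF}$ together with the functoriality of $\cF^{\sharp}$ recorded in Lemma \ref{KeyM2}. The base case $k=1$ is essentially tautological: there $\Delta_{\cF}^{0}=\id$, the left-hand action $(\Delta_{\cF}^{0}h)\cdot u = h\cdot u$ is just the algebra multiplication in $\cH_{\cF}$, which coincides with that of $\cH$, and $\cF^{\sharp}$ restricted to $\cP_{\cH}(1)=\cH$ is the identity; so both sides equal $\cF^{\sharp}(h\cdot u)=h\cdot u$.

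For the inductive step the key is to rewrite $(\Delta_{\cF}^{k-1}h)\cdot u$ so that the twisted coproduct $\Delta_{\cF}$ appears only once and can be converted to $\Delta$ via \eqref{NewCoProduct}. First I would expand $u = y_1\otimes_{R_{\cF}}\cdots\otimes_{R_{\cF}} y_k$ and use the associativity of $\Delta_{\cF}$ (from Definition \ref{Def:Hopfalgebroid}(3), now applied to $\cH_{\cF}$) to split $\Delta_{\cF}^{k-1}= (\Delta_{\cF}\otimes_{R_{\cF}}\id^{\otimes(k-2)})\circ \Delta_{\cF}^{k-2}$ or, more usefully, to isolate the first leg. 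Then applying $\cF^{\sharp}$ and invoking Lemma \ref{KeyM2} converts the $\otimes_{R_{\cF}}$-product of the resulting factors into an $\otimes_R$-product dressed by the iterated twistor $\cF_{1\tinywedge\cdots\tinywedge k-1,k}\boldsymbol{\cdot}\cdots\boldsymbol{\cdot}\cF_{1,2}$ acting on $\cF^{\sharp}y_1\otimes_R\cdots\otimes_R\cF^{\sharp}y_k$. The plan is then to recognize that the cocycle relation \eqref{TwistorEquation} for $\cF$ makes this dressing factor combine with $\Delta(h)\cdot\cF$ exactly so that the twistor contributions telescope, leaving precisely $(\Delta^{k-1}h)\cdot(\cF^{\sharp}u)$.

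Concretely, the cleanest route is to feed the definition \eqref{NewCoProduct}, which reads $\cF^{\sharp}(\Delta_{\cF}(x)) = \Delta(x)\cdot\cF$, into the computation: each appearance of $\Delta_{\cF}$ inside $\Delta_{\cF}^{k-1}h$ gets traded, under $\cF^{\sharp}$, for a copy of $\Delta$ followed by right-multiplication by a leg of $\cF$. Iterating this $k-1$ times and bookkeeping the legs with the subscript notation $\cF_{1\tinywedge\cdots\tinywedge j-1,j}$ produces on the nose the product of twistors appearing in Lemma \ref{KeyM2}, so the two dressing factors cancel and the identity falls out. Because the right action commutes with $\cF^{\sharp}$ through the $\otimes_R$-factors — this is the content of \eqref{PassThrough} and of Lemma \ref{KeyM2} itself — the manipulations pass freely through tensor slots.

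The main obstacle I anticipate is purely combinatorial rather than conceptual: correctly matching the iterated twistor legs $\cF_{1\tinywedge\cdots\tinywedge k-1,k}\boldsymbol{\cdot}\cdots\boldsymbol{\cdot}\cF_{1,2}$ generated by repeated use of \eqref{NewCoProduct} against those produced by Lemma \ref{KeyM2}, and verifying that the coassociativity \eqref{CoAssociativity} together with the twistor equation \eqref{TwistorEquation} makes them agree factor by factor. In other words, the substantive input is the cocycle identity \eqref{TwistorEquation}, which is precisely what guarantees that the two ways of iterating the twistor — one coming from $\Delta_{\cF}^{k-1}$ and one from the $\cup$-to-$\cup_{\cF}$ comparison — coincide; the rest is careful Sweedler-notation bookkeeping that I would organize by the induction so as to avoid manipulating all $k$ legs simultaneously.
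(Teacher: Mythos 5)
Your proposal is correct and follows essentially the same route as the paper: the paper first proves, by induction on $k$, the identity $\cF^{\sharp}(\Delta_{\cF}^{k-1}h)=(\Delta^{k-1}h)\cdot(\cF_{1\tinywedge\cdots\tinywedge k-1,k}\boldsymbol{\cdot}\cdots\boldsymbol{\cdot}\cF_{1,2})$ (its Equation \eqref{EquationBk}), using exactly your ingredients --- Lemma \ref{KeyM2} to peel off one tensor leg and Equation \eqref{NewCoProduct} to trade $\Delta_{\cF}$ for $\Delta$ dressed by $\cF$ --- and then reassembles that dressing with $u_1\otimes_R\cdots\otimes_R u_k$ into $\cF^{\sharp}u$ by Lemma \ref{KeyM2} again. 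The only slight misattribution in your sketch is that the cocycle identity \eqref{TwistorEquation} is never invoked directly in this argument (it enters only indirectly, through the well-definedness and multiplicativity of $\cF^{\sharp}$ established in Lemma \ref{KeyM1}); the telescoping of twistor legs is carried entirely by \eqref{NewCoProduct} together with the multiplicativity of the iterated coproduct.
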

\begin{proof}
We first verify that
\begin{equation}\label{EquationBk}
  \cF^{\sharp}(\Delta_{\cF}^{k-1}h)=(\Delta^{k-1}h)\cdot( \cF_{1 \tinywedge \cdots \tinywedge k-1, k}\boldsymbol{\cdot} \cdots  \boldsymbol{\cdot} \cF_{1\tinywedge 2,3}\boldsymbol{\cdot} \cF_{1,2})
\end{equation}
holds for all $k\geqslant 2$.
It is clear the equation  holds for $k=2$ by the definition of
$\Delta_{\cF}$ (see Equation \eqref{NewCoProduct}).

Assume that \eqref{EquationBk} is verified up to $k=n-1$.
Write $\Delta_{\cF}h=h_1\otimes_{R_{\cF}}h_2$.  We have
\begin{align*}
\cF^{\sharp}(\Delta_{\cF}^{n-1}h) & =
\cF^{\sharp}\biggl(
(\Delta_{\cF}^{n-2}\otimes_{R_{\cF}} \id)(\Delta_{\cF}h)	
\biggr)	\\
& = \cF^{\sharp}( \Delta_{\cF}^{n-2}h_1 \otimes_{R_{\cF}} h_2 ) \\
& = \cF_{1 \tinywedge \cdots \tinywedge n-1, n} \cdot
\bigl(
\cF^{\sharp}( \Delta_{\cF}^{n-2}h_1)\otimes_{R}h_2
\bigr) \qquad \text{(by Lemma \ref{KeyM2})} \\
& = (\Delta^{n-2}\otimes_{R} \id)\cF \cdot
\bigl(
\Delta^{n-2}h_1 \cdot
( \cF_{1 \tinywedge \cdots \tinywedge n-2, n-1}\boldsymbol{\cdot} \cdots  \boldsymbol{\cdot} \cF_{1\tinywedge 2,3}\boldsymbol{\cdot} \cF_{1,2}
)\otimes_{R} h_2
\bigr) \\
& \qquad\qquad\qquad \text{(by the induction assumption)} \\
& = (\Delta^{n-2}\otimes_{R} \id)\bigl( \cF \cdot (h_1\otimes_{R}h_2) \bigr)
\cdot
( \cF_{1 \tinywedge \cdots \tinywedge n-2, n-1}\boldsymbol{\cdot} \cdots  \boldsymbol{\cdot} \cF_{1 \tinywedge 2,3}\boldsymbol{\cdot} \cF_{1,2}
) \\
& = (\Delta^{n-2}\otimes_{R} \id)\bigl( \Delta(h) \cdot \cF  \bigr)
\cdot
( \cF_{1 \tinywedge \cdots \tinywedge n-2, n-1}\boldsymbol{\cdot} \cdots  \boldsymbol{\cdot} \cF_{1 \tinywedge 2,3}\boldsymbol{\cdot} \cF_{1,2})
\\
& \qquad\qquad\qquad \text{(by Equation \eqref{NewCoProduct})} \\
& = (\Delta^{n-1}h) \cdot ( \cF_{1 \tinywedge \cdots \tinywedge n-1, n}\boldsymbol{\cdot} \cdots  \boldsymbol{\cdot} \cF_{1 \tinywedge 2,3}\boldsymbol{\cdot} \cF_{1,2})\, .
\end{align*}
This proves Equation \eqref{EquationBk} for all $k\geqslant 2$. Then for any $u=u_1\otimes_{R_{\cF}}\cdots \otimes_{R_{\cF}} u_k$, where $u_1\, , \cdots\, , u_k\in \cH_{\cF}$, we have
\begin{equation*}
\begin{split}
\cF^{\sharp}\bigl( (\Delta_{\cF}^{k-1}h) \cdot u \bigr) & =
\cF^{\sharp}(\Delta_{\cF}^{k-1}h)
\cdot
(u_1\otimes_{R}\cdots \otimes_{R} u_k) \\
&  \qquad \text{(by Lemma \ref{KeyM2})}\\
& = (\Delta^{k-1}h) \cdot ( \cF_{1 \tinywedge \cdots \tinywedge k-1, k}\boldsymbol{\cdot} \cdots  \boldsymbol{\cdot} \cF_{1 \tinywedge 2,3}\boldsymbol{\cdot} \cF_{1,2})
\cdot
(u_1\otimes_{R}\cdots \otimes_{R} u_k)\,  \\
& \qquad \text{(by Equation \eqref{EquationBk})} \\
& = (\Delta^{k-1}h)\cdot \cF^{\sharp}u\, \\
& \qquad \text{(by Lemma \ref{KeyM2}).}
\end{split}
\end{equation*}

This accomplishes the proof.
\end{proof}

\subsection{Isomorphism of  operads $\cP_{\cH_{\cF}}$ and $\cP_{\cH}$}

By Proposition \ref{MainTheoremOperad},
we obtain two non-symmetric operads
$\cP_{\cH_{\cF}}$ and $\cP_{\cH}$ associated with
two Hopf algebroids $\cH_{\cF}$ and $\cH$, respectively.
By Equations \eqref{Brace1} and \eqref{TwistorEquation},
the twistor $\cF$ is indeed a multiplication on the operad
$\cP_{\cH}$.
By Lemma \ref{KeyM1} and Equation \eqref{SignFixed},
the $\fK$-linear map
\begin{equation*}
\cF^{\sharp}_n: \cP_{\cH_{\cF}}(n) \rightarrow
\cP_{\cH}(n)\, , \qquad 	
\cF^{\sharp}_n(x_1\otimes_{R_{\cF}}\cdots \otimes_{R_{\cF}} x_n):=
\cF\lb \cF \lb  \cdots \cF\lb \cF\lb x_1 \, , x_2\rb\, , x_3 \rb\, , \cdots \rb\, , x_n \rb\, ,
\end{equation*}
is well-defined. An important fact is the following proposition.

\begin{proposition}\label{TwoOperads}
The $\fK$-linear maps
$\{ \cF_{n}^{\sharp} \}_{n\geqslant 0}$
constitutes an isomorphism of operads
\begin{equation*}
\cF^{\sharp}: \cP_{\cH_{\cF}} \xlongrightarrow{\simeq}
\cP_{\cH}\, . 	
\end{equation*}
Moreover, $\cF^{\sharp}$ sends the multiplication $1\otimes_{R_{\cF}}1\in \cP_{\cH_{\cF}}(2)$
to the multiplication $\cF\in \cP_{\cH}(2)$.
\end{proposition}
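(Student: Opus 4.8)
The plan is to check the three assertions bundled into the statement: that $\{\cF^{\sharp}_n\}_{n\geqslant 0}$ is compatible with units and composition (hence an operad morphism, Definition \ref{MorphismOperad}), that each $\cF^{\sharp}_n$ is bijective, and that $\cF^{\sharp}_2$ carries $1\otimes_{R_{\cF}}1$ to $\cF$. Well-definedness of $\cF^{\sharp}_n$ and the closed form $\cF^{\sharp}_n(x_1\otimes_{R_{\cF}}\cdots\otimes_{R_{\cF}}x_n)=\Theta_n\cdot(x_1\otimes_R\cdots\otimes_R x_n)$, with $\Theta_n:=\cF_{1\tinywedge\cdots\tinywedge n-1,n}\boldsymbol{\cdot}\cdots\boldsymbol{\cdot}\cF_{1,2}$, are already recorded in \eqref{SignFixed}. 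Unitality is immediate, since $\cF^{\sharp}_1$ is the identity on $\cP_{\cH_{\cF}}(1)=\cH$, whence $\cF^{\sharp}_1(\id_{\cP_{\cH_{\cF}}})=\cF^{\sharp}_1(1)=1=\id_{\cP_{\cH}}$.

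For composition I would verify the square in Definition \ref{MorphismOperad} directly (the partial-composition reduction \eqref{PartialMorphism} would serve equally well). Fix $x=x_1\otimes_{R_{\cF}}\cdots\otimes_{R_{\cF}}x_n\in\cP_{\cH_{\cF}}(n)$ and $u_j\in\cP_{\cH_{\cF}}(k_j)$. Applying $\cF^{\sharp}$ to the composite $\gamma(x;u_1,\dots,u_n)=(\Delta_{\cF}^{k_1-1}x_1)\cdot u_1\otimes_{R_{\cF}}\cdots\otimes_{R_{\cF}}(\Delta_{\cF}^{k_n-1}x_n)\cdot u_n$ computed in $\cP_{\cH_{\cF}}$ via \eqref{Composition}, I first invoke Lemma \ref{KeyM2} to move $\cF^{\sharp}$ across the $n$ blocks of degrees $k_1,\dots,k_n$, and then Lemma \ref{KeyM3} on each block $(\Delta_{\cF}^{k_j-1}x_j)\cdot u_j$, which yields
\begin{equation*}
\cF^{\sharp}\bigl(\gamma(x;u_1,\dots,u_n)\bigr)=(\Delta^{k_1-1}\otimes_R\cdots\otimes_R\Delta^{k_n-1})(\Theta_n)\cdot\bigl((\Delta^{k_1-1}x_1)\cdot\cF^{\sharp}u_1\otimes_R\cdots\otimes_R(\Delta^{k_n-1}x_n)\cdot\cF^{\sharp}u_n\bigr).
\end{equation*}
On the other side, substituting $\cF^{\sharp}_n(x)=\Theta_n\cdot(x_1\otimes_R\cdots\otimes_R x_n)$ into the composition rule \eqref{Composition} for $\cP_{\cH}$, the multiplicativity of the iterated coproduct \eqref{CoproductProduct} distributes each $\Delta^{k_j-1}$ over the relevant products, and the associativity of the right action \eqref{PassThrough} regroups the factors, so that $\gamma(\cF^{\sharp}_n x;\cF^{\sharp}u_1,\dots,\cF^{\sharp}u_n)$ is exactly the displayed element. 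This is the desired commutativity.

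Bijectivity is the step where the twistor hypothesis proper is used, rather than only \eqref{TwistorEquation}--\eqref{counitEquation}. Here I would induct on $n$ from the factorization
\begin{equation*}
\cF^{\sharp}_n(x_1\otimes_{R_{\cF}}\cdots\otimes_{R_{\cF}}x_n)=\cF^{\sharp}\bigl(\cF^{\sharp}_{n-1}(x_1\otimes_{R_{\cF}}\cdots\otimes_{R_{\cF}}x_{n-1})\otimes_{R_{\cF}}x_n\bigr),
\end{equation*}
in which the outer $\cF^{\sharp}$ is the two-module map \eqref{Eqt:Fsharp} for the left $\cH$-modules $M_1=\cP_{\cH}(n-1)$ (with its diagonal $\cH$-action, so that $\cF_1\cdot z=(\Delta^{n-2}\cF_1)\cdot z$) and $M_2=\cH$; this map is invertible by Definition \ref{TwistorDefinition}, and $\cF^{\sharp}_{n-1}$ is invertible by induction, so $\cF^{\sharp}_n$ is a linear isomorphism. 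The final claim is a one-line verification: $\cF^{\sharp}_2(1\otimes_{R_{\cF}}1)=\cF\lb 1,1\rb=(\cF_1\cdot 1)\otimes_R(\cF_2\cdot 1)=\cF$. The hard part is the middle paragraph: the real content is to confirm that the single comultiplied twistor factor $(\Delta^{k_1-1}\otimes_R\cdots\otimes_R\Delta^{k_n-1})(\Theta_n)$ produced by Lemma \ref{KeyM2} coincides with what appears on the $\gamma(\cF^{\sharp}_n x;\dots)$ side, where $\Theta_n$ sits inside $\cF^{\sharp}_n x$ and is then comultiplied by the composition while each $\cF^{\sharp}u_j$ contributes its own factor $\Theta_{k_j}$. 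Once Lemmas \ref{KeyM2} and \ref{KeyM3} are available this coherence reduces to \eqref{CoproductProduct} together with associativity of the action, but the index bookkeeping across the $n$ blocks is the delicate point to get right.
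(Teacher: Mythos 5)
Your proposal is correct and follows essentially the same route as the paper's proof: the heart of the argument — compatibility with the operadic compositions — is carried out exactly as in the paper, by applying Lemma \ref{KeyM2}, then Lemma \ref{KeyM3} blockwise, and regrouping via the multiplicativity of the coproduct \eqref{CoproductProduct} and the action property \eqref{PassThrough}, and the multiplication claim is the same one-line computation. The only difference is that you unfold bijectivity into an induction through the two-module maps of Definition \ref{TwistorDefinition} (which tacitly also needs $\cF^{\sharp}_{n-1}$ to intertwine the relevant right $R_{\cF}$-structures, e.g.\ via $\Delta(\beta_{\cF}(r))\cdot\cF=\cF\cdot(1\otimes\beta_{\cF}(r))$), whereas the paper simply cites the definition of a twistor — a harmless elaboration of the same idea.
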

\begin{proof}
By the definition of a twistor, $\cF_n^{\sharp}$ are isomorphisms of $\fK$-vector spaces.
Clearly, we have
$\cF^{\sharp}(1\otimes_{R_{\cF}}1)=\cF$ and
$\cF^{\sharp}(1)=1$. So we only need to   verify that $\cF^{\sharp}$ intertwines the relevant operadic compositions.
For any $h=h_1\otimes_{R_{\cF}}\cdots \otimes_{R_{\cF}} h_n\in \cP_{\cH_{\cF}}(n)$ and
$u_1\in \cP_{\cH_{\cF}}(k_1)$, $\cdots$,
$u_n\in \cP_{\cH_{\cF}}(k_n)$,  we have
\begin{equation*}
\begin{split}
& \cF^{\sharp}\gamma(h; u_1\, , \cdots\, , u_n) \\
= & \cF^{\sharp}\biggl(
(\Delta_{\cF}^{k_1-1}h_1\cdot u_1)\otimes_{R_{\cF}}\cdots
\otimes_{R_{\cF}} (\Delta_{\cF}^{k_n-1}h_n\cdot u_n)
\biggr) \\
= & (\Delta^{k_1-1}\otimes_{R}\cdots \otimes_{R} \Delta^{k_n -1})( \cF_{1 \tinywedge \cdots \tinywedge n-1, n}\boldsymbol{\cdot} \cdots  \boldsymbol{\cdot} \cF_{1 \tinywedge 2,3}\boldsymbol{\cdot} \cF_{1,2})
\boldsymbol{\cdot}\\
& \qquad\qquad \bigl(\cF^{\sharp}(\Delta_{\cF}^{k_1-1}h_1\cdot u_1) \otimes_{R} \cdots \otimes_{R}
\cF^{\sharp}(\Delta_{\cF}^{k_n-1}h_n\cdot u_n)
\bigr)
\qquad \text{(by Lemma \ref{KeyM2})}\\
= & (\Delta^{k_1-1}\otimes_{R}\cdots \otimes_{R} \Delta^{k_n -1})( \cF_{1 \tinywedge \cdots \tinywedge n-1, n}\boldsymbol{\cdot} \cdots  \boldsymbol{\cdot} \cF_{1 \tinywedge 2,3}\boldsymbol{\cdot} \cF_{1,2})
\boldsymbol{\cdot}\\
& \qquad\qquad \bigl(
(\Delta^{k_1-1}h_1\cdot \cF^{\sharp} u_1) \otimes_{R} \cdots \otimes_{R}
(\Delta^{k_n-1}h_n\cdot \cF^{\sharp} u_n)
\bigr)
\qquad \text{(by Lemma \ref{KeyM3})} \\
= & (\Delta^{k_1-1}\otimes_{R}\cdots \otimes_{R} \Delta^{k_n -1})
\biggl(
( \cF_{1 \tinywedge \cdots \tinywedge n-1, n}\boldsymbol{\cdot} \cdots  \boldsymbol{\cdot} \cF_{1 \tinywedge 2,3}\boldsymbol{\cdot} \cF_{1,2})
\boldsymbol{\cdot}
(h_1\otimes_{R}\cdots \otimes_{R} h_n)
\biggr)
\boldsymbol{\cdot} \\
& \qquad\qquad
(\cF^{\sharp}u_1\otimes_{R}\cdots \otimes_{R} \cF^{\sharp}u_n)	  \\
= &
(\Delta^{k_1-1}\otimes_{R}\cdots \otimes_{R} \Delta^{k_n -1})(\cF^{\sharp}h)\cdot (\cF^{\sharp}u_1\otimes_{R}\cdots \otimes_{R} \cF^{\sharp}u_n)
\qquad \text{(by Lemma \ref{KeyM2})} \\
= & \gamma(\cF^{\sharp}h; \cF^{\sharp}u_1\, , \cdots\, , \cF^{\sharp}u_n)\, .
\end{split}
\end{equation*}

This accomplishes the proof.
\end{proof}

\subsection{The strict $B_{\infty}$ isomorphism}

\begin{definition}
With notations as established previously.
Associated with the operad $\cP_{\cH_{\cF}}$, we call
	 \begin{equation*}
	 	\BInfty(\cH_{\cF}):=
	 	\bigl(\bigoplus_{n=0}^{\infty}
	 	(\underbrace{\cH_{\cF}\otimes_{R_{\cF}}\cdots\otimes_{R_{\cF}}\cH_{\cF})}_{n}[-n]\, , \delta_{\cH_{\cF}} \, , \cup\, ,
	 	\{\mu_{k}^{\Delta_{\cF}}\}_{k\geqslant 0}
	 	\bigr)\, ,
	 \end{equation*}	
the \emph{type I twisted} brace $B_{\infty}$ algebra.
Associated with the operad $\cP_{\cH}$ and
the twistor $\cF$ of $\cH$, we call
\begin{equation*}
	  	\BInfty(\cH)^{\cF}:=\bigl(\bigoplus_{n=0}^{\infty}
	  	(\underbrace{\cH\otimes_{R}\cdots\otimes_{R}\cH}_{n})[-n]\, ,
	  	[\underline{}{\cF}\, , \,\cdot\,]_{\mathrm{G}} \, , \cup_{\cF}\, , \{\mu_{k}^{\Delta}\}_{k\geqslant 0}
	  	\bigr)\, ,
	  \end{equation*}
the \emph{type II twisted} brace $B_{\infty}$ algebra.
\end{definition}

Applying Proposition \ref{functoriality} to the operad isomorphism
$\cF^{\sharp}: \cP_{\cH_{\cF}} \xrightarrow{\simeq}
\cP_{\cH}$ established in Proposition \ref{TwoOperads}, we obtain the following result:

\begin{theorem}\label{TwoTypes}
The $\fK$-linear map
\begin{equation*}
\begin{split}
\cF^{\sharp}: \BInfty{(\cH_{\cF})}
& \xrightarrow{\simeq} \BInfty{(\cH)}^{\cF}\, , \\
x_1\otimes_{R_{\cF}}\cdots \otimes_{R_{\cF}}x_n
& \mapsto
\cF\lb \cF \lb  \cdots \cF\lb \cF\lb x_1 \, , x_2\rb\, , x_3 \rb\, , \cdots \rb\, , x_n \rb\, ,
\end{split}
\end{equation*}
defined for all integers $n\geqslant 0$ and elements $x_1\, , \cdots\, , x_n\in \cH$,
is a strict $B_{\infty}$ isomorphism between the type I and type II twisted brace $B_{\infty}$ algebras.
\end{theorem}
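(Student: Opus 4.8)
The plan is to exhibit both the type I and the type II twisted brace $B_{\infty}$ algebras as outputs of the Gerstenhaber--Voronov operadic modeling of Theorem \ref{Operadtobrace}, and then to deduce the claim formally from the functoriality statement of Proposition \ref{functoriality} applied to the operad isomorphism $\cF^{\sharp}$ of Proposition \ref{TwoOperads}.

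First I would identify the source. By Theorem \ref{MainTheorembrace} applied to the twisted Hopf algebroid $\cH_{\cF}$, the type I algebra $\BInfty(\cH_{\cF})$ is exactly the brace $B_{\infty}$ algebra produced by Theorem \ref{Operadtobrace} from the operad $\cP_{\cH_{\cF}}$ equipped with the multiplication $\m_1 := 1\otimes_{R_{\cF}}1 \in \cP_{\cH_{\cF}}(2)$: its differential $\delta_{\cH_{\cF}}$ is $[\m_1, \cdot\,]_{\mathrm{G}}$, its product $\cup$ is the cup product induced by $\m_1$, and its braces are the $\mu_k^{\Delta_{\cF}}$ coming from the operad structure of $\cP_{\cH_{\cF}}$.

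Next I would identify the target. The type II algebra $\BInfty(\cH)^{\cF}$ is the brace $B_{\infty}$ algebra produced by Theorem \ref{Operadtobrace} from the same operad $\cP_{\cH}$ underlying $\BInfty(\cH)$, but with the multiplication $\m_2 := \cF \in \cP_{\cH}(2)$ replacing $1\otimes_R 1$. Here I would recall that $\cF$ is indeed a multiplication on $\cP_{\cH}$, i.e. $\cF\lb \cF \rb = 0$, a fact already noted as a consequence of the twistor equation \eqref{TwistorEquation}. Reading off the structure maps attached to $\m_2 = \cF$ via Theorem \ref{Operadtobrace}(2), the differential is $[\cF, \cdot\,]_{\mathrm{G}}$, the product is $x \cup_{\cF} y = (-1)^{|x|}\cF\lb x, y\rb$, and the braces are the unchanged $\mu_k^{\Delta}$; this is precisely the data defining $\BInfty(\cH)^{\cF}$.

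The decisive step is then to apply Proposition \ref{functoriality} to the operad isomorphism $\cF^{\sharp}: \cP_{\cH_{\cF}} \xrightarrow{\simeq} \cP_{\cH}$ furnished by Proposition \ref{TwoOperads}. That proposition records $\cF_2^{\sharp}(\m_1) = \cF^{\sharp}(1\otimes_{R_{\cF}}1) = \cF = \m_2$, which is exactly the compatibility hypothesis $f_2(\m_1)=\m_2$ required by Proposition \ref{functoriality}. Hence $\oplus_{n} \cF_n^{\sharp}$ is a strict $B_{\infty}$ morphism $\BInfty(\cH_{\cF}) \to \BInfty(\cH)^{\cF}$, and its first component $\cF_1^{\sharp}$ carries $x_1\otimes_{R_{\cF}}\cdots\otimes_{R_{\cF}}x_n$ to $\cF\lb \cF \lb \cdots \cF\lb \cF\lb x_1\, , x_2\rb\, , x_3 \rb\, , \cdots \rb\, , x_n \rb$, matching the formula in the statement. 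Since $\cF^{\sharp}$ is an isomorphism of operads, each $\cF_n^{\sharp}$---in particular $\cF_1^{\sharp}$---is an isomorphism of graded $\fK$-vector spaces, so by Definition \ref{Def:strictBmorphism} the morphism is a strict $B_{\infty}$ isomorphism. Because the essential content has been absorbed into Propositions \ref{TwoOperads} and \ref{functoriality}, I do not expect a serious obstacle; the only point demanding care is the bookkeeping of the second step, namely confirming that the $\cF$-deformed differential and cup product of the type II algebra coincide verbatim with the operadic output of Theorem \ref{Operadtobrace} for the pair $(\cP_{\cH}, \cF)$, after which the result follows at once.
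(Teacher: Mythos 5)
Your proposal is correct and follows exactly the paper's own route: the paper obtains Theorem \ref{TwoTypes} precisely by applying the functoriality statement (Proposition \ref{functoriality}) to the operad isomorphism $\cF^{\sharp}\colon \cP_{\cH_{\cF}}\xrightarrow{\simeq}\cP_{\cH}$ of Proposition \ref{TwoOperads}, which carries the multiplication $1\otimes_{R_{\cF}}1$ to $\cF$, with the type I and type II algebras identified as the operadic outputs of Theorem \ref{Operadtobrace} for $(\cP_{\cH_{\cF}}, 1\otimes_{R_{\cF}}1)$ and $(\cP_{\cH}, \cF)$ respectively. The only cosmetic remark is that the first component (in the sense of Definition \ref{Def:strictBmorphism}) of the resulting strict $B_{\infty}$ morphism is the full map $\oplus_{n}\cF_{n}^{\sharp}$ rather than the single operadic component you denote $\cF_{1}^{\sharp}$, but this does not affect the argument.
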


In the specific case where the Hopf algebroid degenerates to a bialgebra $H$ (over $\fK$), its twistors correspond precisely to Drinfeld's twists, denoted by $J\in H\otimes_{\fK} H$. Theorem \ref{TwoTypes} then provides an isomorphism of brace $B_{\infty}$ algebras, given by
$J^{\sharp}: \BInfty(H_{J})\rightarrow \BInfty(H)^{J}$.
Notably, since the map $J^{\sharp}$ induces an isomorphism of dg Lie algebras (refer to Remark \ref{dgLie}), this result recovers the finding of Esposito and de Kleijn presented in \cite[Proposition $4.9$]{Esposito-de Kleijn}.

\subsection{Brace $B_{\infty}$ algebras within deformation quantization}

Hereafter, we fix the field $\fK$ to be either $\fR$ or $\fC$. Let $M$ be a smooth manifold, and denote by $C^{\infty}(M)$ the algebra of smooth $\fK$-valued functions on $M$.

Consider a completed associative $\fK\llbracket \hbar \rrbracket$-algebra $R$ with unit, such that $R/\hbar R = C^{\infty}(M)$.
Let $\cH=(\cH\, , \alpha\, , \beta\, , \cdot_{\hbar}\, , \Delta\, , \epsilon)$ be a topological Hopf algebroid over $R$. This signifies that $\cH$ is a completed $\fK\llbracket \hbar \rrbracket$-module, and the maps $\alpha$, $\beta$, $\cdot_{\hbar}$, $\Delta$, and $\epsilon$ are all continuous with respect to the $\hbar$-adic topology.

\begin{definition}[\cites{Xu-Quantum, Xu-QuantumCNRS}]
A topological Hopf algebroid
$\cH=(\cH\, , \alpha\, , \beta\, , \cdot_{\hbar}\, , \Delta\, , \epsilon)$ is called a quantum groupoid,
if the induced Hopf algebroid $\cH/\hbar \cH$ over $C^{\infty}(M)$ is isomorphic to the universal enveloping algebra $U(A)$ of some Lie $\fK$ algebroid $A\rightarrow M$.
\end{definition}
For instance, the Hopf algebroid structure on $\cD=U(TM)$ over $C^{\infty}(M)$ extends to a quantum groupoid structure on $U(TM)\llbracket \hbar \rrbracket:= U(TM)\otimes \fK\llbracket \hbar \rrbracket$ over $C^{\infty}(M)\llbracket \hbar \rrbracket$.
Let us consider an element
\begin{equation*}
\cF=1\otimes_{R}1+ \sum_{k=1}^{\infty} \hbar^{k} B_{k} \in U(TM)\otimes_{C^{\infty}(M)}U(TM)\llbracket \hbar \rrbracket = U(TM)\llbracket \hbar \rrbracket \otimes_{C^{\infty}(M)\llbracket \hbar \rrbracket} U(TM)\llbracket \hbar \rrbracket\, , 	
\end{equation*}
where $\cF$ is a formal power series in $\hbar$ and the coefficients $B_k$ are bi-differential operators on $M$. Then, $\cF$ is a twistor of $\cH$ if and only if
the product on $R$, defined by
\begin{equation*}
f \star_{\cF} g:=\cF(f\, , g)\, , 	
\end{equation*}
for all $f\, , g\in C^{\infty}(M)\llbracket \hbar \rrbracket$,
is associative with $1$ as its unit \cite{Xu-Quantum, Xu-QuantumCNRS}.
Indeed, $\star_{\cF}$ is a star product on $M$, and conversely, every star product arises from some twistor $\cF$.

Having introduced the two brace $B_{\infty}$ algebras within the context of deformation quantization in the beginning of this section,
we now interpret them within the framework of type I and type II twisted brace $B_{\infty}$ algebras.

The type I twisted brace $B_{\infty}$ algebra $B_{\infty}(U(TM)\llbracket \hbar \rrbracket_{\cF})$ is the canonical brace $B_{\infty}$ algebra associated with the twisted quantum groupoid $U(TM)\llbracket \hbar \rrbracket_{\cF}$ over $(C^{\infty}(M)\llbracket \hbar\rrbracket\, , \star_{\cF})$.
Given the left action of $U(TM)\llbracket \hbar \rrbracket_{\cF}$ on $(C^{\infty}(M)\llbracket \hbar\rrbracket\, , \star_{\cF})$ defined in Equation \eqref{Action}, we can construct an embedding of the brace $B_{\infty}$ algebra $B_{\infty}(U(TM)\llbracket \hbar \rrbracket_{\cF})$ into the canonical brace $B_{\infty}$ algebra of Hochschild cochains of the deformation quantized algebra $(C^{\infty}(M)\llbracket \hbar\rrbracket\, , \star_{\cF})$.

The type II twisted brace $B_{\infty}$ algebra associated with $U(TM)\llbracket \hbar \rrbracket$ and its twistor $\cF$ is given by
\begin{equation*}
B_{\infty}(U(TM)\llbracket \hbar \rrbracket)^{\cF}
=\bigl(
D_{\poly}(M)\llbracket \hbar \rrbracket\, ,
[\underline{}{\cF}\, , \,\cdot\,]_{\mathrm{G}} \, , \cup_{\cF}\, , \{\mu_{k}^{\Delta}\}_{k\geqslant 0}
\bigr)\, , 	
\end{equation*}
where $D_{\poly}(M)=\oplus_{n=0}^{\infty}
(\otimes_{C^{\infty}(M)}^n U(TM))[-n]$ denotes the graded space of polydifferential operators on $M$.
We note that the deformed cup product $\cup_{\cF}$ is defined in Kontsevich's work \cite[\S $8.1$]{Kontsevich} which provides a solution to the deformation quantization problem.
Furthermore, the dg Lie algebra associated with the brace $B_{\infty}$ algebra $B_{\infty}(U(TM)\llbracket \hbar \rrbracket)^{\cF}$ governs deformations of the star product $\star_{\cF}$.

Applying Theorem \ref{TwoTypes} to the quantum groupoid $U(TM)\llbracket \hbar \rrbracket $ and the twistor $\cF$, we obtain the following result.

\begin{corollary}
Under the aforementioned assumptions, for any twistor
$\cF$ of the quantum groupoid $U(TM)\llbracket \hbar \rrbracket$, there exists a strict $B_{\infty}$ isomorphism
\begin{equation*}
\cF^{\sharp}:
\BInfty(U(TM)\llbracket \hbar \rrbracket_{\cF})
\xlongrightarrow{\simeq}
\bigl(
D_{\poly}(M)\llbracket \hbar \rrbracket\, ,
[\underline{}{\cF}\, , \,\cdot\,]_{\mathrm{G}} \, , \cup_{\cF}\, , \{\mu_{k}^{\Delta}\}_{k\geqslant 0}
\bigr)\, .	
\end{equation*}	
Notably, $\cF^{\sharp}$ induces an isomorphism of the underlying dg Lie algebras on both sides.
\end{corollary}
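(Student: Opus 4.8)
The plan is to deduce this corollary as the specialization of Theorem~\ref{TwoTypes} to the quantum groupoid $\cH=U(TM)\llbracket \hbar \rrbracket$ over $R=C^{\infty}(M)\llbracket \hbar \rrbracket$, supplemented by an explicit identification of the underlying graded spaces and structure maps. First I would note that, as recalled immediately above, $U(TM)\llbracket \hbar \rrbracket$ is a topological Hopf algebroid and that an element $\cF=1\otimes_R 1+O(\hbar)$ is a twistor of it precisely when $\star_{\cF}$ is a star product; this places the situation squarely within the hypotheses of Theorem~\ref{TwoTypes}. Applying that theorem yields a strict $B_{\infty}$ isomorphism $\cF^{\sharp}\colon \BInfty(\cH_{\cF})\xlongrightarrow{\simeq}\BInfty(\cH)^{\cF}$, given on generators by the iterated brace displayed in the statement of Theorem~\ref{TwoTypes}.

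The remaining work is to recognize the target $\BInfty(\cH)^{\cF}$ as the explicit brace $B_{\infty}$ algebra appearing in the corollary. The underlying graded space of $\BInfty(\cH)$ is $\bigoplus_{n=0}^{\infty}(\underbrace{\cH\otimes_R\cdots\otimes_R\cH}_{n})[-n]$; for $\cH=U(TM)\llbracket \hbar \rrbracket$ and $R=C^{\infty}(M)\llbracket \hbar \rrbracket$ the completed tensor product $\underbrace{\cH\otimes_R\cdots\otimes_R\cH}_{n}$ is canonically $(\otimes_{C^{\infty}(M)}^{n}U(TM))\llbracket \hbar \rrbracket$, so this space is exactly $D_{\poly}(M)\llbracket \hbar \rrbracket$. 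By the very definition of the type II twisted brace $B_{\infty}$ algebra feeding into Theorem~\ref{TwoTypes}, its differential is $[\cF\, ,\,\cdot\,]_{\mathrm{G}}$, its product is $\cup_{\cF}$, and its braces are the untwisted maps $\mu_{k}^{\Delta}$; this is precisely the quadruple in the statement. The matching of $\cup_{\cF}$ with Kontsevich's deformed cup product and of $[\cF\, ,\,\cdot\,]_{\mathrm{G}}$ with the deformed Hochschild differential has already been recorded in the preceding discussion, so no further computation is needed here.

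The step I expect to demand the most care is that Theorem~\ref{TwoTypes} was established for honest Hopf algebroids over a $\fK$-algebra $R$, whereas here everything lives in the $\hbar$-adically completed category: $\cH$ is a completed $\fK\llbracket \hbar \rrbracket$-module, every $\otimes_R$ is a completed tensor product, and the structure maps are merely continuous. I would therefore verify that each ingredient of the operadic construction survives completion, i.e.\ that Lemma~\ref{KeyLemma}, Proposition~\ref{MainTheoremOperad}, the key Lemmas~\ref{KeyM1}, \ref{KeyM2}, and \ref{KeyM3}, and Proposition~\ref{TwoOperads} hold verbatim once ordinary tensor products are replaced by completed ones and all maps are required to be $\hbar$-adically continuous. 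Since the coproduct, the insertion product, the braces, and $\cF^{\sharp}$ are all continuous, and since the completed tensor product is functorial and associative, these arguments transfer unchanged; the only genuinely new point is that $\cF^{\sharp}$ is still bijective in the completed setting, which holds because it reduces to the identity modulo $\hbar$ and is therefore invertible by the standard $\hbar$-adic successive-approximation argument.

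Finally, the dg Lie statement is immediate from Remark~\ref{dgLie}: any strict $B_{\infty}$ isomorphism between brace $B_{\infty}$ algebras induces an isomorphism of the associated dg Lie algebras on the shifted spaces, so $\cF^{\sharp}$ descends to an isomorphism of the dg Lie algebra controlling deformations of $\cH_{\cF}$ and the dg Lie algebra governing deformations of the star product $\star_{\cF}$.
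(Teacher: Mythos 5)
Your proposal is correct and follows essentially the same route as the paper, whose proof consists of a single application of Theorem \ref{TwoTypes} to the quantum groupoid $U(TM)\llbracket\hbar\rrbracket$ and the twistor $\cF$, the identification of the underlying graded space with $D_{\poly}(M)\llbracket\hbar\rrbracket$ and of the type II structure maps having been made in the discussion immediately preceding the corollary. Your extra care about the completed setting is already absorbed into the paper's standing conventions (every $\otimes_R$ over a completed $\fK\llbracket\hbar\rrbracket$-algebra denotes the completed tensor product, so the whole operadic machinery is developed in that category), and the invertibility of $\cF^{\sharp}$ requires no $\hbar$-adic successive-approximation argument since it is part of the definition of a twistor.
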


\section{Two operads $\cP_{(\frakg, \frakl)}$ and
$\cP_{\mathscr{H}_{(\frakg, \frakl)}}$}

Given a finite-dimensional Lie algebra $\frakg$ over a field $\fK$ and a Lie subalgebra $\frakl \subset \frakg$, we refer to the pair $(\frakg, \frakl)$ as a \textbf{Lie algebra pair}.
The aim of this section is to investigate
several objects associated with such a Lie algebra pair.
In \cite{Calaque-QuantizationFormal},
a brace $B_{\infty}$ algebra $\ADT$ is assigned to $(\frakg\, , \frakl)$, whose underlying dg Lie algebra governs the deformations of algebraic dynamical twists.
Meanwhile, we consider a
specific quantum groupoid $\mathscr{H}_{(\frakg, \frakl)}$ assigned to $(\frakg\, , \frakl)$.
According to Sections \ref{OperadHopf} and \ref{BraceHopf},
$\mathscr{H}_{(\frakg, \frakl)}$ gives rise to the non-symmetric operad $\cP_{\mathscr{H}_{(\frakg, \frakl)}}$ and the canonical brace $B_{\infty}$ algebra $B_{\infty}(\mathscr{H}_{(\frakg, \frakl)})$.
However, as pointed out in Section \ref{Reason},
$W_{(\frakg, \frakl)}$ cannot, in general, be expressed as the canonical brace $B_{\infty}$ algebra of any Hopf algebroid.
To address this, we construct a special non-symmetric operad
$\cP_{(\frakg, \frakl)}$ and a morphism of operads
$\mathsf{c}: \cP_{(\frakg, \frakl)}\rightarrow \cP_{\mathscr{H}_{(\frakg, \frakl)}}\Laurent{\hbar}$.
These, along with the operad $\cP_{\mathscr{H}_{(\frakg, \frakl)}}\Laurent{\hbar}$, are designed to establish an embedding of the brace $B_{\infty}$ algebra $W_{(\frakg,\frakl)}$ into the brace $B_{\infty}$ algebra
$B_{\infty}(\mathscr{H}_{(\frakg, \frakl)})\Laurent{\hbar}$,
via the Gerstenhaber-Voronov operadic modeling of brace $B_{\infty}$ algebras as detailed in Section \ref{Subsec:GVoperadicmodeling}.
Consequently, we relate the deformation theory of algebraic dynamical twists to the canonical brace $B_{\infty}$ algebra associated with the quantum groupoid $\mathscr{H}_{(\frakg, \frakl)}$.

\subsection{The brace
	$B_{\infty}$ algebra $\ADT$}\label{Subsec:AgebraADT}
 An element $A\in U(\frakg)^{\otimes n}\otimes U(\frakl)\llbracket \hbar \rrbracket$
is said to be $\frakl$-invariant, if
$\Delta^{n}(l)\cdot A=A \cdot \Delta^{n}(l)$ for all
$l\in U(\frakl)$.
The subspace of $\frakl$-invariant elements
in $U(\frakg)^{\otimes n}\otimes U(\frakl)\llbracket \hbar \rrbracket$
is denoted by
$(U(\frakg)^{\otimes n}\otimes U(\frakl)\llbracket \hbar \rrbracket)^{\frakl}$.

Consider the graded space
\begin{equation*}
\ADT:=
\oplus_{n=0}^{\infty}
\bigl( U(\frakg)^{\otimes n}\otimes
U(\frakl)\llbracket \hbar \rrbracket
\bigr)^{\frakl}[-n]\, .
\end{equation*}
A homogeneous element $A\in \ADT$ has degree $n$ if it is an honest element of $\bigl( U(\frakg)^{\otimes n}\otimes
U(\frakl)\llbracket \hbar \rrbracket
\bigr)^{\frakl}$.
 We also use $A[1]$ to denote the copy of $A$ in
$\ADT[1]=\oplus_{n}\bigl( U(\frakg)^{\otimes n}\otimes
U(\frakl)\llbracket \hbar \rrbracket
\bigr)^{\frakl}[1-n]$.

A collection of brace operations on $\ADT$ is introduced
in \cite{Calaque-QuantizationFormal}
for every integer $k\geqslant 0$:
\begin{equation*}
\mu_{k}^{(\frakg, \frakl)}: \ADT \bigotimes \otimes^k(\ADT[1])\rightarrow \ADT \, .	
\end{equation*}
Let us now recall the relevant construction.
Given an element $A\in U(\frakg)^{\otimes n}\otimes U(\frakl)\llbracket \hbar \rrbracket$,
we use the following notation\footnote{It was written as
$A^{a, a+1\, , \cdots\, ,  j_1 \cdots j_{1}+q_{1}-1 \, , \cdots\, ,  j_{k}\cdots j_{k}+q_{k}-1\, , \cdots\, , b}$ in \cite{Calaque-QuantizationFormal}.
In order to make the notation more transparent,
here we use subscript sequences like
$j_1 \tinywedge j_1 +1 \tinywedge \cdots \tinywedge j_1+q_1-2 \tinywedge j_1+q_1-1$ instead .}:
 \begin{equation}\label{ImportantConvention}
 	\begin{split}
 		& A_{a, a+1\, , \cdots\, ,  j_1 \tinywedge j_1+1  \tinywedge  \cdots  \tinywedge  j_{1}+q_{1}-1 \, , \cdots\, ,  j_{k} \tinywedge j_k+1 \tinywedge \cdots \tinywedge   j_{k}+q_{k}-1\, , \cdots\, , b} \\ &:=
        \underbrace{\id\otimes \cdots \otimes \id \otimes
 		\Delta^{q_{1}-1} \otimes \id \otimes \cdots \otimes \id
 		\otimes
 		\Delta^{q_{k}-1}
 		\otimes
 		\id \otimes \cdots \otimes \id}_{b}
 		\biggr(
 		\underbrace{1\otimes \cdots \otimes 1 }_{a-1} \otimes A
 		\biggr)\, \\
 		&\quad (\in U(\frakg)^{\otimes b}\otimes U(\frakl)\llbracket \hbar \rrbracket).
 	\end{split}	
 \end{equation}
Here $a\geqslant 1$, $q_1\geqslant 0$, $\cdots$,
$q_k\geqslant 0$, and $j_1\, , \cdots\, ,  j_k$ are integers subject to
$a\leqslant j_1$, $j_1+q_1\leqslant j_2$, $\cdots$,
$j_{k-1}+q_{k-1}\leqslant j_k$, $j_k+q_{k}-1 \leqslant b$ with  $b:=a+n+\sum_{s=1}^k (q_{s} -1)$; the operation $\Delta^{q_s-1}$ is placed at the $i_s$-th position;  when $q_s=0$, $\Delta^{-1}$ degenerates to
the counit map $\epsilon: U(\frakg)\rightarrow \fK$.

The said brace operations are given as follows. For all
$A\in \ADT^p$, $B_1\in \ADT^{q_1}$, $\cdots$, $B_k\in \ADT^{q_k}$,   $p\geqslant k$,  we set
\begin{equation*}\label{ADTBraceI}
\begin{split}
& A\lb B_1\, , \cdots\, , B_{k} \rb
=
\mu_{k}^{(\frakg, \frakl)}(A\, , B_1[1]\, , \cdots\, , B_k[1]) := \!\!\!\!\!
\sum_{ \substack{ 1\leqslant j_{1},  j_{k}+q_{k}-1\leqslant n \\ j_{s}+q_{s}\leqslant j_{s+1}}}
(-1)^{\sum_{s=1}^{k} (q_{s}-1)(j_{s} - 1)}
\\
& \qquad\qquad\quad
A_{1\, , \cdots\, ,  j_1 \tinywedge \cdots \tinywedge  j_{1}+q_{1}-1 \, , \cdots\, ,  j_{k} \tinywedge \cdots \tinywedge  j_{k}+q_{k}-1 \, , \cdots\, , n+1}
\boldsymbol{\cdot}
\Pi_{s=1}^{k}
(B_{s})_{j_{s}, \cdots, j_{s}+q_{s}-1,
j_{s}+q_{s} \tinywedge\cdots \tinywedge n+1 }\, ,
\end{split}
\end{equation*}
where $n=p+\sum_{s=1}^k (q_{s}-1)$. If it happens that $p<k$, one simply requires
\begin{equation*}\label{ADTBraceII}
A\lb B_1\, , \cdots\, , B_{k} \rb=
\mu_{k}^{(\frakg, \frakl)}(A\, , B_1[1]\, , \cdots\, , B_k[1]):=0.
\end{equation*}
\begin{proposition}[\cite{Calaque-QuantizationFormal}]
The pair $(\ADT\, , \{\mu_k^{(\frakg, \frakl)}\}_{k\geqslant 0})$
constitutes a brace algebra. 	
\end{proposition}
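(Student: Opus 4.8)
The plan is to realize $(\ADT, \{\mu_k^{(\frakg,\frakl)}\}_{k\geqslant 0})$ through the Gerstenhaber--Voronov operadic modeling of Section~\ref{Subsec:GVoperadicmodeling}: I would exhibit a non-symmetric operad $\cP_{(\frakg,\frakl)}$ with $\cP_{(\frakg,\frakl)}(n)=(U(\frakg)^{\otimes n}\otimes U(\frakl)\llbracket \hbar \rrbracket)^{\frakl}$ whose operadic brace operations $\mu_k^{\cP_{(\frakg,\frakl)}}$ of \eqref{OperadbraceI} coincide with $\mu_k^{(\frakg,\frakl)}$, and then invoke Theorem~\ref{Operadtobrace}(1), which supplies the brace algebra axioms automatically. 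Being a brace algebra amounts to two conditions: $\mu_0$ is the identity, and the higher pre-Jacobi identities \eqref{Jacobi} hold. The first is immediate, since for $k=0$ the defining sum reduces to the single term $A_{1,\cdots,p+1}=A$, with no comultiplication applied and trivial sign. A direct combinatorial verification of \eqref{Jacobi} (as in the original source) is possible, but the operadic route is the natural one within this paper's framework, so I would pursue it.

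For the operad I would take the identity to be $\id_{\cP_{(\frakg,\frakl)}}=1\otimes 1\in \cP_{(\frakg,\frakl)}(1)$ and define the composition $\gamma(A;B_1,\cdots,B_n)$ by inserting each $B_s$ into the $s$-th slot of $A$, expanding that slot by the iterated coproduct $\Delta^{q_s-1}$ on $U(\frakg)$ exactly as in the subscript convention \eqref{ImportantConvention}, while letting the $U(\frakl)$-tail of each $B_s$ propagate to the positions on its right by comultiplication. Filling the uninserted slots with $\id_{\cP_{(\frakg,\frakl)}}$ then recovers \eqref{OperadbraceI}, and comparison of the exponent $\sum_s(q_s-1)(j_s-1)$ with the operadic sign $\sum_p i_p(|y_p|-1)$---noting $|B_s|-1=q_s-1$ and that $j_s-1$ counts the expanded inputs preceding $B_s$---shows the two families of braces agree. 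The first point requiring care is well-definedness: I must check that $\gamma$ preserves $\frakl$-invariance, which follows because $U(\frakl)\subset U(\frakg)$ is a Hopf subalgebra, $\Delta$ is an algebra map, and the defining relation $\Delta^{n}(l)\cdot A=A\cdot \Delta^{n}(l)$ is compatible with slot-wise comultiplication.

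The substance is verifying operad associativity \eqref{CompositionAssociativity}, for which it suffices to check the sequential \eqref{SequentialComposition} and parallel \eqref{ParallelComposition} axioms for the partial compositions $\circ_i$; unitality \eqref{Unitality2} is a direct consequence of $\Delta^0=\id$ and the counit axiom. The sequential axiom reduces to coassociativity of $\Delta$ on $U(\frakg)$ together with multiplicativity $\Delta(xy)=\Delta(x)\Delta(y)$ applied at a single slot, in close analogy with the proof of Proposition~\ref{MainTheoremOperad}, while the parallel axiom reflects that expansions at disjoint slots commute.

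The main obstacle, and the feature distinguishing $\cP_{(\frakg,\frakl)}$ from the Hopf-algebroid operads $\cP_{\cH}$ of Section~\ref{OperadHopf}, is the bookkeeping for the $U(\frakl)$-tail. Unlike $\cP_{\cH}$, where a single coproduct governs all slots, here the tail of an inner element must be comultiplied across every position to its right (the subscript $j_s+q_s \tinywedge\cdots \tinywedge n+1$ in the definition), and one must confirm that these nested tail-expansions associate correctly when two insertions are themselves composed. I expect this to be the delicate step: it hinges on coassociativity of $\Delta$ restricted to $U(\frakl)$ together with the $\frakl$-invariance of all elements, which jointly guarantee that moving a tail past an intervening slot and then comultiplying agrees with comultiplying first and then distributing. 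Once associativity, unitality, and the identification of braces are in place, Theorem~\ref{Operadtobrace}(1) delivers the brace algebra structure on $\ADT$, completing the proof.
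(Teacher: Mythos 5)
Your proposal is correct and is essentially the paper's own route: although the proposition itself is quoted from Calaque, Section \ref{CalaqueADT} of the paper constructs precisely the operad $\cP_{(\frakg,\frakl)}$ you describe --- the same components $(U(\frakg)^{\otimes n}\otimes U(\frakl)\llbracket \hbar \rrbracket)^{\frakl}$, the identity $1\otimes 1$, and the tail-propagating partial compositions \eqref{PartialCompositionADT} built from Convention \eqref{ImportantConvention} --- and then identifies the resulting Gerstenhaber--Voronov braces with $\{\mu_k^{(\frakg,\frakl)}\}_{k\geqslant 0}$ via Theorem \ref{Operadtobrace}, exactly as you plan (your sign comparison $\sum_s(q_s-1)(j_s-1)$ versus $\sum_p i_p(|y_p|-1)$ is also the correct one). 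The delicate point you flag, a $U(\frakl)$-tail having to move past intervening slots, is handled in the paper by the same observation you make: $\frakl$-invariance forces the expanded elements $B_{i,\cdots,i+m-1,\,i+m\tinywedge\cdots\tinywedge r}$ and $C_{j,\cdots,j+l-1,\,j+l\tinywedge\cdots\tinywedge r}$ to commute, from which the sequential and parallel composition axioms follow.
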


Particularly, we have the element $\m=1\otimes 1\otimes 1 \in \ADT^2$ which satisfies
$\m\lb \m \rb=0$.
The associated differential
\begin{equation*}
\delta_{(\frakg, \frakl)}: \ADT \rightarrow \ADT[1]	
\end{equation*}
 is explicitly formulated:
\begin{equation*}\label{ADTHochschild}
\begin{split}
\delta_{(\frakg, \frakl)}(A):=[\m\, , A]_{\mathrm{G}} & =
\m\lb A \rb - (-1)^{|A|+1} A\lb \m \rb \\
& \quad =
(-1)^{n-1}A_{2,3,\cdots, n+2} +\sum_{i=1}^{n+1} (-1)^{n-1+i} A_{1, \cdots,  i \tinywedge i+1 , \cdots, n+2}\, ,
\end{split}	
\end{equation*}
for any $A\in \ADT^n$. Similarly, the multiplication
\begin{equation*}
\cup_{(\frakg, \frakl)}: \ADT \times \ADT \rightarrow \ADT	
\end{equation*}
associated with $\m$ is given by
\begin{equation*}
A \cup_{(\frakg, \frakl)} B:=(-1)^{|A|}\m \lb A\, , B \rb=
(-1)^{nm}
A_{1, \cdots, n,  n+1 \tinywedge \cdots \tinywedge  n+m+1 }\boldsymbol{\cdot}
B_{n+1, \cdots, n+m+1}\, ,
\end{equation*}
for any $A\in \ADT^n$, $B\in \ADT^m$.

By Proposition \ref{BraceToBraceBInfinity}, we obtain a brace
$B_{\infty}$ algebra
\begin{equation*}
\bigl(
\oplus_{n=0}^{\infty}
(U(\frakg)^{\otimes n}\otimes U(\frakl)\llbracket \hbar \rrbracket)^{\frakl}[-n]\, ,
\delta_{(\frakg, \frakl)}\, , \cup_{(\frakg, \frakl)}\, , \{\mu_k^{(\frakg, \frakl)}\}_{k\geqslant 0}
\bigr)
\end{equation*}	
which is still denoted by $\ADT$.

\subsection{The operad $\cP_{(\frakg, \frakl)}$}\label{CalaqueADT}

We introduce a non-symmetric operad $\cP_{(\frakg, \frakl)}$ to reconstruct
the brace $B_{\infty}$ algebra  $\ADT$
via the Gerstenhaber-Voronov operadic modeling.  We follow the format of Definition \ref{PartialDefinitionOperad} to define $\cP_{(\frakg, \frakl)}$.  First, for each integer $n\geqslant 0$, define
\begin{equation*}
\cP_{(\frakg,\frakl)}(n):=(U(\frakg)^{\otimes n}\otimes U(\frakl)\llbracket \hbar \rrbracket)^{\frakl}, 	
\end{equation*}
which consists of $\frakl$-invariant elements in  $  U(\frakg)^{\otimes n}\otimes U(\frakl)\llbracket \hbar \rrbracket$, i.e., elements $A$ satisfying  $\Delta^n (\frakl) \cdot A=A \cdot \Delta^n(\frakl)$ for all $\frakl\in \frakl$.
Second, define the partial composition maps
\begin{equation}\label{PartialCompositionADT}
\begin{split}
\cdot \circ_{i} \cdot :
\cP_{(\frakg,\frakl)}(n) \otimes \cP_{(\frakg,\frakl)}(m)	
& \rightarrow \cP_{(\frakg,\frakl)}(n+m-1)\, , \\
A \circ_i B : & =
A_{1, \cdots, i \tinywedge \cdots \tinywedge i+m-1, i+m, \cdots, n+m}
\cdot B_{i,\cdots, i+m-1, i+m \tinywedge \cdots \tinywedge n+m}
\, , 	
\end{split}
\end{equation}
for any integers $n\geqslant 0$, $m\geqslant 0$,
$1\leqslant i \leqslant n$, and
$A\in \cP(n)$, $B\in \cP(m)$.
In the above formula we have adopted Convention \eqref{ImportantConvention}.
Third, we set the identity element to be
$1\otimes 1 \in \cP_{(\frakg,\frakl)}(1)=(U(\frakg)\otimes U(\frakl)\llbracket \hbar\rrbracket)^{\frakl}$.
These data of $\cP_{(\frakg,\frakl)}$ together form
a non-symmetric operad. In fact,
For any $B\in \cP_{(\frakg,\frakl)}(m)$, $C\in \cP_{(\frakg,\frakl)}(l)$, and
 positive integers $i\, , j\, , r$
that satisfy $i+m \leqslant j$, $j+l\leqslant r$,
since $B$ and $C$ are $\frakl$-invariant,
 $B_{i,\cdots, i+m-1, i+m \tinywedge \cdots \tinywedge r}$
and $C_{j, \cdots, j+l-1, j+l \tinywedge \cdots \tinywedge r}$ commute.
Based on this fact,
it is direct to verify that
the sequential composition axiom
\eqref{SequentialComposition},
the parallel composition axiom \eqref{ParallelComposition},
and the unitality property \eqref{Unitality2} are all satisfied.

The following fact follows directly from Theorem \ref{Operadtobrace}
and the construction of the brace $B_{\infty}$ algebra
$\ADT$.

\begin{proposition}
 The element $\m=1\otimes 1\otimes 1\in (U(\frakg)^{\otimes 2}\otimes U(\frakl)\llbracket \hbar \rrbracket)^{\frakl}$ is a multiplication on the operad $\cP_{(\frakg,\frakl)}$.
The brace $B_{\infty}$ algebra  associated with
$\cP_{(\frakg,\frakl)}$ and $\m$ via the Gerstenhaber-Voronov operadic modeling coincides with the brace $B_{\infty}$ algebra  $\ADT$.
 \end{proposition}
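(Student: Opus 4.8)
The plan is to treat the two assertions separately, reducing the coincidence of brace $B_{\infty}$ algebras to a single term-by-term comparison of brace operations.

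For the claim that $\m=1\otimes 1\otimes 1$ is a multiplication, I would verify the associativity condition of Definition \ref{MultiplicationDefinition} directly. Expanding $\gamma(\m; \m, \id_{\cP_{(\frakg,\frakl)}})$ and $\gamma(\m; \id_{\cP_{(\frakg,\frakl)}}, \m)$ through the partial composition \eqref{PartialCompositionADT}, and using $\Delta(1)=1\otimes_{R}1$ together with coassociativity \eqref{CoAssociativity}, both sides collapse to $1\otimes 1\otimes 1\otimes 1\in \cP_{(\frakg,\frakl)}(3)$. Alternatively, by Theorem \ref{Operadtobrace}(2) it is enough to note $\m\{\m\}=0$, which is the identity already recorded for $\ADT$ in Section \ref{Subsec:AgebraADT} and which will in any case drop out of the brace comparison below.

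For the coincidence, the heart of the matter is to show that the operadic braces $\mu_{k}^{\cP_{(\frakg,\frakl)}}$ defined by \eqref{OperadbraceI} agree with the braces $\mu_{k}^{(\frakg,\frakl)}$ of Section \ref{Subsec:AgebraADT}. I would expand the composition $\gamma(A; \id, \dots, B_1, \dots, B_k, \dots, \id)$, with $B_s$ placed in the $j_s$-th slot, by reconstructing $\gamma$ from iterated partial compositions. Each partial composition \eqref{PartialCompositionADT} applies a coproduct $\Delta^{q_s-1}$ to the $j_s$-th tensor factor of $A$ and spreads the $U(\frakl)$-tail of $B_s$ over the trailing slots; reorganizing the nested expression via the parallel composition axiom \eqref{ParallelComposition} yields exactly the multi-coproduct notation \eqref{ImportantConvention}, and the range of summation matches the prescribed index set $1\leqslant j_1$, $j_s+q_s\leqslant j_{s+1}$, $j_k+q_k-1\leqslant n$. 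The $\frakl$-invariance of the $B_s$ is what guarantees that the spread-out $U(\frakl)$-tails commute, so that the product $\boldsymbol{\cdot}$ is well-defined and the reconstruction is independent of the order of the partial compositions.

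The main obstacle will be matching the signs. The operadic weight in \eqref{OperadbraceI} is $(-1)^{\sum_p i_p(|y_p|-1)}$ with $i_p=(j_p-p)+\sum_{q<p}q_q$ the number of inputs preceding the $p$-th insertion, whereas the $\ADT$ weight is $(-1)^{\sum_s (q_s-1)(j_s-1)}$; a direct computation shows the two exponents differ by $\sum_{q<p}(q_q-1)(q_p-1)$. I expect this to be precisely the Koszul sign produced by the degree-shift conventions, namely the one incurred when commuting the suspensions in $B_1[1]\otimes\cdots\otimes B_k[1]$ past the homogeneous factors, and the delicate step is to check that this d\'ecalage sign reconciles the two presentations exactly. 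Once the braces are shown to coincide, the remaining structure maps follow at no cost: the cup product $x\cup y=(-1)^{|x|}\m\{x,y\}$ and the differential $\delta(x)=[\m, x]_{\mathrm{G}}$ supplied by Theorem \ref{Operadtobrace}(2) are given by the same formulas that define $\cup_{(\frakg,\frakl)}$ and $\delta_{(\frakg,\frakl)}$ in Section \ref{Subsec:AgebraADT}. Hence the two brace $B_{\infty}$ algebras coincide.
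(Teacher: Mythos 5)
Your overall strategy is the same as the paper's: the paper gives no written proof at all, asserting only that the statement ``follows directly from Theorem \ref{Operadtobrace} and the construction of the brace $B_{\infty}$ algebra $\ADT$,'' i.e.\ from exactly the term-by-term comparison you set out to perform. Your verification that $\m=1\otimes 1\otimes 1$ is a multiplication, your reconstruction of $\gamma$ from the partial compositions \eqref{PartialCompositionADT}, your use of $\frakl$-invariance for the commutation of the spread-out tails, and your observation that the cup product and differential then match automatically are all sound. However, the sign analysis --- which you yourself identify as the heart of the matter --- contains a genuine error that would derail the proof.

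The discrepancy $\sum_{q<p}(q_q-1)(q_p-1)$ you found is an artifact of conflating two different index conventions, not a real sign in need of reconciliation. In the operadic formula \eqref{OperadbraceI} (equivalently \eqref{Brace1}) the insertion slots are indexed \emph{inside} $x$: the sum runs over $1\leqslant j_1<\cdots<j_k\leqslant p$ and $i_l=j_l-l+q_1+\cdots+q_{l-1}$. In the $\ADT$ formula of Section \ref{Subsec:AgebraADT}, by contrast, $j_s$ denotes the \emph{starting position of the block occupied by $B_s$ in the expanded composite} of length $n=p+\sum_s(q_s-1)$; that is precisely why the constraints there read $j_s+q_s\leqslant j_{s+1}$ and $j_k+q_k-1\leqslant n$, rather than $j_s<j_{s+1}\leqslant p$. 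The bijection between the two index sets is $j_s^{\mathrm{ADT}}=j_s^{\mathrm{op}}+\sum_{q<s}(q_q-1)$ (one checks that it carries one system of constraints exactly onto the other), and under it
\begin{equation*}
j_s^{\mathrm{ADT}}-1 \;=\; j_s^{\mathrm{op}}-s+\sum_{q<s}q_q \;=\; i_s\, ,
\end{equation*}
so the exponent $\sum_s (q_s-1)(j_s^{\mathrm{ADT}}-1)$ equals the operadic exponent $\sum_s i_s(q_s-1)$ on the nose, term by term. There is nothing left for a d\'ecalage sign to absorb --- and nothing that could absorb it: both formulas already define $\mu_k$ on the same shifted inputs $B_1[1]\otimes\cdots\otimes B_k[1]$, so no further suspension sign is available in either presentation. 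Had the discrepancy been real, the two brace structures would simply differ and the proposition would be false. To repair your proof, delete the ``reconciliation'' step and replace it with the change of variables above; the rest of your argument then goes through.
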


\subsection{The quantum groupoid $\mathscr{H}_{(\frakg, \frakl)}$ and operad $\cP_{\mathscr{H}_{(\frakg, \frakl)}}$}

Fix a vector basis $\{\l_{i}\}$ of $\frakl$, and let $\{ \lambda_{i} \}$ denote the corresponding coordinate functions on $\frakl^{\ast}$. The standard PBW map is then defined as
\begin{equation*}
\begin{split}
\pbw: S(\frakl) & \rightarrow U(\frakl)\, ,  \\
\lambda_{i_1}\cdots \lambda_{i_k}
& \mapsto
\frac{1}{k!}\sum_{\sigma\in \mathrm{S}_k}
  l_{i_{\sigma(1)}} \cdots   l_{i_{\sigma(k)}} \, ,
\end{split}		
\end{equation*}
where $\mathrm{S}_{k}$ denotes the permutation group of the set $\{1\, , 2\, , \cdots\, , k\}$.
We further define the map
\begin{equation}\label{hbarPBWmap}
\begin{split}
\pbw_{\hbar}: \hat{S}(\frakl)\llbracket \hbar\rrbracket & \hookrightarrow U(\frakl)\llbracket \hbar \rrbracket \, , \\
\lambda_{i_1}\cdots \lambda_{i_k}
& \mapsto
\frac{\hbar^k}{k!}\sum_{\sigma\in \mathrm{S}_k}
  l_{i_{\sigma(1)}} \cdots   l_{i_{\sigma(k)}} \, .
\end{split}	
\end{equation}
Equivalently, for any formal power series $f=f(\lambda)\in \hat{S}(\frakl)$, we have $\pbw_{\hbar}f=\pbw( f(\hbar \lambda) )$.
This map $\pbw_{\hbar}$ is an injective morphism of
$\fK\llbracket \hbar \rrbracket$-modules.
Consequently, we can define the PBW star product $\star_{\PBW}$ on $\hat{S}(\frakl)\llbracket \hbar \rrbracket$ \cite{Xu-Nonabelian}, which is associative and satisfies
\begin{equation}\label{PBWStarProduct}
 \pbw_{\hbar}( f\star_{\PBW} g )=(\pbw_{\hbar}f) \cdot (\pbw_{\hbar}g)\, ,
\end{equation}
for all $f\, , g\in \hat{S}(\frakl)\llbracket \hbar\rrbracket$.

Let $G$ be a Lie group integrating $\frakg$, and $L\subset G$ be a Lie subgroup integrating $\frakl$.
Let $\cD$ denote the algebra of smooth differential operators on $\frakl^{\ast}$,
and $R:=C^{\infty}(\frakl^{\ast})\llbracket \hbar \rrbracket$
equipped with the usual multiplication.
Identifying $U(\frakg)$ (resp. $U(\frakl)$) with the space of left-invariant differential operators on $G$ (resp. $L$), we define
\begin{equation*}
\mathscr{H}_{(\frakg, \frakl)}:=U(\frakg)\otimes
\cD\llbracket \hbar \rrbracket.
\end{equation*}
This space can be identified with the $\fK\llbracket \hbar \rrbracket$-linear and left $G$-invariant differential operators on $G \times \frakl^{\ast}$. Furthermore, since $U(\frakg)$ and $\cD$ possess natural Hopf algebra structures, $\mathscr{H}_{(\frakg, \frakl)}$ naturally becomes a quantum groupoid over $R$.

The PBW star product $\star_{\PBW}$ can be extended from $S(\frakl)\llbracket \hbar \rrbracket$ to $C^{\infty}(\frakl^{\ast})\llbracket \hbar \rrbracket$. Viewing $\cdot \star_{\PBW} \cdot$ as a bi-differential operator, we associate it with a twistor $\Theta_{\PBW}$ of $\mathscr{H}_{(\frakg, \frakl)}$ \cite{Cheng-Chen-Qiao-Xiang-QDYBEI}:
\begin{equation*}
\Theta_{\PBW}\in \cD\otimes_{R} \cD \llbracket \hbar \rrbracket
\subset
\mathscr{H}_{(\frakg, \frakl)}\otimes_{R}\mathscr{H}_{(\frakg, \frakl)}\, ,
\end{equation*}
such that $f\star_{\PBW}g=\Theta_{\PBW}(f\, , g)$ for any $f\, , g\in C^{\infty}(\frakl^{\ast})\llbracket \hbar \rrbracket$.

The Gutt star product on $C^{\infty}(G)\otimes C^{\infty}(\frakl^{\ast})\llbracket \hbar \rrbracket$
is a Weyl ordering star product that
extends the PBW star product \cite{Gutt}.
It arises from a deformation quantization of the symplectic manifold $T^{\ast}L$.
In \cite{Xu-Nonabelian}, Xu obtained a normal ordering version of Gutt's original star product, which we denote by $\star_{\Gutt}$.
More specifically, $\star_{\Gutt}$ is given by the formula:
\begin{equation}\label{GuttStarProduct}
f\star_{\Gutt}g:=\sum_{k=0}^{\infty}\frac{\hbar^k}{ k! } \sum_{i_1, \cdots, i_k}
(\frac{\partial}{\partial \lambda_{i_1}}\cdots \frac{\partial}{\partial\lambda_{i_k}}f)\star_{\PBW}
(\overset{\rightarrow}{l_{i_1}}\cdots \overset{\rightarrow}{l_{i_k}}g)\, , 	 	
\end{equation}
for all $f\, , g\in C^{\infty}(G)\otimes C^{\infty}(\frakl^{\ast})\llbracket \hbar \rrbracket$.
Similarly, regarding $\cdot\star_{\Gutt}\cdot$ as a $\fK\llbracket \hbar\rrbracket$-linear and left $G$-invariant bi-differential operator, we associate it with a twistor $\Theta_{\Gutt}$ of $\mathscr{H}_{(\frakg, \frakl)}$  \cite{Cheng-Chen-Qiao-Xiang-QDYBEI}:
\begin{equation*}
\Theta_{\Gutt}\in \mathscr{H}_{(\frakg, \frakl)}\otimes_{R}\mathscr{H}_{(\frakg, \frakl)}\, ,
\end{equation*}
satisfying $f\star_{\Gutt}g=\Theta_{\Gutt}(f\, , g)$ for any $f\, , g\in C^{\infty}(G)\otimes C^{\infty}(\frakl^{\ast}) \llbracket \hbar \rrbracket$.

For any $f\in \hat{S}(\frakl)\llbracket \hbar \rrbracket$, the operation $f \star_{\PBW}:= f \star_{\PBW} \cdot$ of left PBW multiplication by $f$ can be viewed as
a $\fK\llbracket \hbar\rrbracket$-linear differential operator on $R$, i.e., $f \star_{\PBW}\in \cD\llbracket \hbar \rrbracket$. This satisfies
\begin{equation}\label{PBWProperty}
(f\star_{\PBW}g)\star_{\PBW}=(f\star_{\PBW})\cdot (g\star_{\PBW})\, ,	
\end{equation}
for any $f\, , g\in \hat{S}(\frakl)\llbracket \hbar \rrbracket$. Analogously, we consider $f\star_{\Gutt}:=f\star_{\Gutt}\cdot$ as a $\fK\llbracket \hbar\rrbracket$-linear and left $G$-invariant differential operator on $G\times \frakl^{\ast}$, implying $f\star_{\Gutt}\in \mathscr{H}_{(\frakg, \frakl)}$.

Finally, given that $\mathscr{H}_{(\frakg, \frakl)}$ is a quantum groupoid, we can consider its associated non-symmetric operad $\cP_{\mathscr{H}_{(\frakg, \frakl)}}$ (as detailed in Section \ref{HopfOperad}).

\subsection{The morphism of operads
$\mathsf{c}:\cP_{(\frakg, \frakl)}\rightarrow
\cP_{\mathscr{H}_{(\frakg, \frakl)}}\Laurent{\hbar}$}
 \label{Sec:morphismofoperadsc}
By $\fK\Laurent{\hbar}$-linearly
extending the operad structure of
$\cP_{\mathscr{H}_{(\frakg, \frakl)}}$
to the family of vector spaces
$\{ \cP_{\mathscr{H}_{(\frakg, \frakl)}}(n)\Laurent{\hbar}:=\cP_{\mathscr{H}_{(\frakg, \frakl)}}(n)\otimes_{\fK\llbracket \hbar \rrbracket}\fK\Laurent{\hbar}\}_{n\geqslant 0}$,
we obtain a new operad denoted by
$\cP_{\mathscr{H}_{(\frakg, \frakl)}}\Laurent{\hbar}$.
Consequently, the brace $B_{\infty}$ algebra associated with $\cP_{\mathscr{H}_{(\frakg, \frakl)}}\Laurent{\hbar}$, denoted by $B_{\infty}(\mathscr{H}_{(\frakg, \frakl)})\Laurent{\hbar}$, has underlying graded spaces and algebraic structures that are $\fK\Laurent{\hbar}$-linear extensions of those of
$B_{\infty}(\mathscr{H}_{(\frakg, \frakl)})$.

Consider the injective $\fK\llbracket \hbar\rrbracket$-linear morphism of Hopf algebras:
\begin{equation}\label{varphiMAP}
\begin{split}
\varphi: U(\frakl)\llbracket \hbar \rrbracket
& \rightarrow \cD\otimes \fK\Laurent{\hbar}\, , 	 \\
\varphi(l_i) & =\frac{1}{\hbar}\lambda_{i}\star_{\PBW}\, .
\end{split}
\end{equation}

For each integer $n\geqslant 0$, we define a $\fK$-linear map
\begin{equation}\label{CnMap}
\begin{split}
& \mathsf{c}_n: \cP_{(\frakg,\frakl)}(n) \rightarrow
\cP_{\mathscr{H}_{(\frakg, \frakl)}}(n)\Laurent{\hbar}\, ,
\\
\mathsf{c}_n(K) :=
(K_1\otimes \cdots \otimes K_n)
\cdot
& \Delta^{n-1}( \varphi(K_{n+1}))
\cdot \\
& \bigl(
(\Theta_{\Gutt})_{1 \tinywedge 2 \tinywedge \cdots \tinywedge n-1, n}
\boldsymbol{\cdot}
(\Theta_{\Gutt})_{1 \tinywedge 2 \tinywedge \cdots \tinywedge n-2, n-1}
\boldsymbol{\cdot}
\cdots
\boldsymbol{\cdot}
(\Theta_{\Gutt})_{1,2}
\bigr)\, ,
\end{split}
\end{equation}
for any $K=K_1\otimes \cdots \otimes K_n \otimes K_{n+1}\in \cP_{(\frakg,\frakl)}(n)$. Note that when $n=0$, $\Delta^{-1}$ reduces to the counit map $\epsilon: \cD=U(T\frakl^{\ast})\rightarrow C^{\infty}(\frakl^{\ast})$. In particular, the map
\begin{equation*}
\mathsf{c}_0: \cP_{(\frakg,\frakl)}(0)=
(U(\frakl)\llbracket \hbar \rrbracket)^{\frakl}
\rightarrow \cP_{\mathscr{H}_{(\frakg, \frakl)}}(0)\Laurent{\hbar}=C^{\infty}(\frakl^{\ast})\otimes\fK\Laurent{\hbar}	
\end{equation*}
is given by $\mathsf{c}_0(K)=\epsilon(\varphi(K))$ for any $K\in \cP_{(\frakg,\frakl)}(0)$.

Our main result establishes a relationship between the operads $\cP_{(\frakg, \frakl)}$ and $\cP_{\mathscr{H}_{(\frakg, \frakl)}}\Laurent{\hbar}$:
\begin{theorem}\label{MainEmbedding}
The family of $\fK$-linear maps $\{\mathsf{c}_n\}_{n\geqslant 0}$ constitutes a morphism of operads
\begin{equation*}
\cP_{(\frakg,\frakl)}\rightarrow \cP_{\mathscr{H}_{(\frakg, \frakl)}}\Laurent{\hbar}\, .	
\end{equation*}	
This morphism of operads
induces an injective strict $B_{\infty}$ morphism between the brace $B_{\infty}$ algebras:
\begin{equation*}
\mathsf{c}=\oplus_{n=0}^{\infty}\mathsf{c}_n
:\ADT \rightarrow B_{\infty}(\mathscr{H}_{(\frakg, \frakl)})\Laurent{\hbar}\, .	
\end{equation*}
\end{theorem}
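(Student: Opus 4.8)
The plan is to prove Part~1 first --- that $\{\mathsf{c}_n\}_{n\geqslant 0}$ is a morphism of operads --- and then obtain the $B_{\infty}$ statement as a formal consequence of Proposition~\ref{functoriality}. Since $\mathsf{c}_1(1\otimes 1)=1\cdot\varphi(1)=1=\id_{\cP_{\mathscr{H}_{(\frakg, \frakl)}}}$, by the criterion recorded around Diagram~\eqref{PartialMorphism} it suffices to check compatibility with the partial compositions, namely
\[
\mathsf{c}_{n+m-1}(A\circ_i B)=\mathsf{c}_n(A)\circ_i\mathsf{c}_m(B)
\]
for all $A\in\cP_{(\frakg,\frakl)}(n)$, $B\in\cP_{(\frakg,\frakl)}(m)$ and $1\leqslant i\leqslant n$. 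Before doing so I would record the well-definedness of each $\mathsf{c}_n$: the right-hand side of \eqref{CnMap} must descend to the balanced tensor product $\cP_{\mathscr{H}_{(\frakg, \frakl)}}(n)\Laurent{\hbar}$, and this is exactly where the $\frakl$-invariance defining $\cP_{(\frakg,\frakl)}(n)$ is used.

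First I would isolate the three factors of $\mathsf{c}_n(K)$ in \eqref{CnMap}: the group part $K_1\otimes\cdots\otimes K_n$, the fibre part $\Delta^{n-1}(\varphi(K_{n+1}))$, and the Gutt factor $\Theta^{(n)}:=(\Theta_{\Gutt})_{1\tinywedge\cdots\tinywedge n-1,n}\boldsymbol{\cdot}\cdots\boldsymbol{\cdot}(\Theta_{\Gutt})_{1,2}$. The computation rests on three structural inputs: (i) the map $\varphi$ of \eqref{varphiMAP} is a morphism of Hopf algebras, so it intertwines iterated coproducts, $\Delta^{n-1}(\varphi(x))=(\varphi^{\otimes n})(\Delta^{n-1}x)$; (ii) the coproduct of $\mathscr{H}_{(\frakg, \frakl)}$ is coassociative and multiplicative, by \eqref{CoAssociativity} and \eqref{CoproductProduct}; (iii) the element $\Theta_{\Gutt}$ satisfies the twistor equation \eqref{TwistorEquation}, which is the precise cocycle identity allowing the factors $\Theta^{(\bullet)}$ to be reassembled after an iterated coproduct is applied.

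With these in hand I would expand both sides of the partial-composition identity. On the source, $A\circ_i B$ is given by \eqref{PartialCompositionADT}, which inserts $B$ into the $i$-th slot of $A$ after applying $\Delta^{m-1}$ at that position; I then apply $\mathsf{c}_{n+m-1}$. On the target, $\mathsf{c}_n(A)\circ_i\mathsf{c}_m(B)$ is the operadic partial composition in $\cP_{\mathscr{H}_{(\frakg, \frakl)}}\Laurent{\hbar}$ obtained from \eqref{Composition} via \eqref{PartialComposition}, which applies $\Delta^{m-1}$ to the $i$-th $\mathscr{H}_{(\frakg, \frakl)}$-tensor factor of $\mathsf{c}_n(A)$ and inserts $\mathsf{c}_m(B)$ there. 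Matching proceeds factorwise: the group parts agree because the insertion positions coincide and $U(\frakg)$ is a Hopf subalgebra; the fibre parts agree by (i) together with the coassociativity in (ii); and the Gutt factors must recombine into $\Theta^{(n+m-1)}$ at the correct positions. This last recombination is precisely the content of the auxiliary identities proved for twistors in Lemmas~\ref{KeyM2} and~\ref{KeyM3} and used in Proposition~\ref{TwoOperads}, and I would model the argument on those computations, carrying the signs and the shifted indices through Convention~\eqref{ImportantConvention}. I expect the main obstacle to be exactly this Gutt bookkeeping: when the target composition applies $\Delta^{m-1}$ to the $i$-th factor of $\mathsf{c}_n(A)$ --- which already carries the tail of $\Theta^{(n)}$ --- and then multiplies by the factor $\Theta^{(m)}$ coming from $\mathsf{c}_m(B)$, one must show that the outcome equals $\Theta^{(n+m-1)}$ with the inserted block occupying slots $i,\dots,i+m-1$. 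Establishing this requires iterating the twistor equation \eqref{TwistorEquation} across the relevant tensor positions, i.e. the higher associativity of the Gutt star product, and this is the genuinely technical heart of the proof.

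Granting the operad morphism, the $B_{\infty}$ statement follows from Proposition~\ref{functoriality}. Here I would compute the image of the multiplication $\m=1\otimes 1\otimes 1\in\cP_{(\frakg,\frakl)}(2)$ defining $\ADT$: by \eqref{CnMap},
\[
\mathsf{c}_2(1\otimes 1\otimes 1)=(1\otimes 1)\cdot\Delta(\varphi(1))\cdot(\Theta_{\Gutt})_{1,2}=\Theta_{\Gutt},
\]
which is a multiplication on $\cP_{\mathscr{H}_{(\frakg, \frakl)}}\Laurent{\hbar}$ since $\Theta_{\Gutt}$ is a twistor. Proposition~\ref{functoriality} then yields that $\mathsf{c}=\oplus_{n}\mathsf{c}_n$ is a strict $B_{\infty}$ morphism onto the brace $B_{\infty}$ algebra $\BInfty(\mathscr{H}_{(\frakg, \frakl)})\Laurent{\hbar}$ carrying the structure determined by $\Theta_{\Gutt}$. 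Finally, for injectivity I would argue that right multiplication by the invertible factor $\Theta^{(n)}$ (invertible because $\Theta_{\Gutt}$ is a twistor) together with the injectivity of $\varphi$ forces $\mathsf{c}_n(K)=0$ to imply $K=0$; equivalently, the lowest-order term in $\hbar$ of $\mathsf{c}_n(K)$ recovers $K_1\otimes\cdots\otimes K_n\otimes K_{n+1}$, so each $\mathsf{c}_n$ is injective.
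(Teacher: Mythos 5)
Your overall architecture matches the paper's: reduce to compatibility with partial compositions via Diagram \eqref{PartialMorphism}, feed the operad morphism into Proposition \ref{functoriality}, and check injectivity at the end. Your computation $\mathsf{c}_2(1\otimes 1\otimes 1)=\Theta_{\Gutt}$ and your remark that the target therefore carries the brace $B_{\infty}$ structure \emph{determined by} $\Theta_{\Gutt}$ (i.e.\ the type II twisted structure, not the one with multiplication $1\otimes_R 1$) is correct and in fact more careful than the paper's own wording. The injectivity argument via the lowest-order term in $\hbar$ is also fine. However, the heart of your argument --- the verification of $\mathsf{c}_{n+m-1}(A\circ_i B)=\mathsf{c}_n(A)\circ_i\mathsf{c}_m(B)$ --- contains a genuine gap: the claimed ``factorwise matching'' is structurally false. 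By \eqref{PartialCompositionADT}, the composite $A\circ_i B$ contains the factor $B_{i,\cdots,i+m-1,\,i+m\tinywedge\cdots\tinywedge n+m}$, i.e.\ the $U(\frakl)$-leg $B_{m+1}$ is spread by $\Delta^{n-i}$ over \emph{all} later slots, so $\mathsf{c}_{n+m-1}(A\circ_i B)$ contains the mixed products $A_{i+1}\cdot(B_{m+1})_1,\ \dots,\ A_n\cdot(B_{m+1})_{n-i}$ in its group part and $\varphi\bigl(A_{n+1}\cdot(B_{m+1})_{n-i+1}\bigr)$ in its fibre part. On the other side, $\mathsf{c}_n(A)\circ_i\mathsf{c}_m(B)$ has all of $\mathsf{c}_m(B)$ sitting in slot $i$, to the \emph{right} of the Gutt tail of $\mathsf{c}_n(A)$. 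No factor-by-factor comparison can produce these mixing terms; they must be generated dynamically by commuting the inserted block leftward through the Gutt factors.

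This is exactly what the twistor equation \eqref{TwistorEquation} cannot do. Associativity of $\star_{\Gutt}$ (equivalently, the twistor property of $\Theta_{\Gutt}$) is used in the paper only for the \emph{final} step, recombining the pure Gutt factors into $(\Theta_{\Gutt})_{1\tinywedge\cdots\tinywedge n+m-2,n+m-1}\boldsymbol{\cdot}\cdots\boldsymbol{\cdot}(\Theta_{\Gutt})_{1,2}$. The passage of the $B$-block through the tail requires two inputs absent from your plan: (a) the $\frakl$-invariance of $B$, which via Lemma \ref{LemmaNo3} yields the exchange identity of Lemma \ref{LemmaNo4} (note that you invoke invariance only for ``well-definedness'' of $\mathsf{c}_n$, but there is no descent issue --- $\cP_{(\frakg,\frakl)}(n)$ is a subspace, not a quotient --- so invariance is needed precisely here instead); and (b) the normal-ordering identity
$\Theta_{\Gutt}\cdot(\lambda_{i}\star_{\PBW}\otimes 1)=(1\otimes \hbar l_i)\cdot \Theta_{\Gutt}+\Delta(\lambda_{i}\star_{\PBW})\cdot\Theta_{\Gutt}$
of Equation \eqref{Hard3}, proved from the explicit formula \eqref{GuttStarProduct} and not from the twistor axioms, which when iterated $n-i$ times (Lemma \ref{LemmaNo5}) splits off precisely the coproduct legs $(B_{m+1})_j$ onto the later slots of $A$. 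Your proposed models, Lemmas \ref{KeyM2} and \ref{KeyM3}, do not supply these: they concern the map $\cF^{\sharp}$ acting on modules over a single Hopf algebroid and its twisted coproduct $\Delta_{\cF}$, whereas the present exchange is a cross-structure statement specific to the pair $(\frakg,\frakl)$, the embedding $\varphi$, and the normal-ordered Gutt product. Without analogues of Lemmas \ref{LemmaNo3}--\ref{LemmaNo5}, the ``Gutt bookkeeping'' you correctly identify as the technical heart cannot be carried out.
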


Before proceeding with the proof, we first present several key facts of the Gutt twistor
$\Theta_{\Gutt}$ in the subsequent subsection.

\subsection{Facts about the Gutt twistor}

Recall that we have fixed a vector basis
$\{\l_{i}\}$ of $\frakl$ and the corresponded
coordinates $\{ \lambda_{i} \}$ of $\frakl^{\ast}$.

\begin{lemma}\label{LemmaNo1}\

\begin{itemize}
\item[(1)]
For all $f \in \hat{S}(\frakl)\llbracket \hbar \rrbracket$,
we have
\begin{equation}\label{LemmaNo1Equation1}
(\lambda_i\star_{\PBW})\cdot (f\star_{\PBW})- (f\star_{\PBW})\cdot (\lambda_i\star_{\PBW})
= \hbar \bigl( ( \ad^{\ast}_{l_i}f)\star_{\PBW} \bigr) \quad \text{in $\cD\llbracket \hbar \rrbracket$}\, ,
\end{equation}
for all $l_i$ and the corresponded $\lambda_{i}$.
Here $\ad^{\ast}_{l_i}$ denotes the action of
$l_i$ on $\hat{S}(\frakl)\llbracket \hbar \rrbracket$
induced by the coadjoint action of $\frakl$ on
$\frakl^{\ast}$.	

\item[(2)]
The PBW star product is $\frakl$-invariant.	
\end{itemize}
\end{lemma}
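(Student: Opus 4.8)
The plan is to reduce the whole lemma to a single structural fact: the symmetrization map $\pbw_{\hbar}$ is $\frakl$-equivariant, intertwining the inner derivation $[l_i,\,\cdot\,]$ on $U(\frakl)\llbracket \hbar \rrbracket$ with the coadjoint-induced derivation $\ad^{\ast}_{l_i}$ on $\hat{S}(\frakl)\llbracket \hbar \rrbracket$. First I would record that $\ad^{\ast}_{l_i}$ is the unique $\fK\llbracket \hbar \rrbracket$-linear derivation of the commutative product on $\hat{S}(\frakl)\llbracket \hbar \rrbracket$ extending $\lambda_j\mapsto [l_i,l_j]$ on linear functions (equivalently, the Hamiltonian vector field $\{\lambda_i,\,\cdot\,\}$ of the linear Poisson structure on $\frakl^{\ast}$). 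The equivariance $[l_i,\pbw_{\hbar}f]=\pbw_{\hbar}(\ad^{\ast}_{l_i}f)$ is then proved by a direct computation on a monomial $\lambda_{j_1}\cdots\lambda_{j_k}$: since $[l_i,\,\cdot\,]$ is a derivation of the associative product on $U(\frakl)$ and $\ad^{\ast}_{l_i}$ is a derivation of the symmetric product, applying each side and using the formula $\pbw_{\hbar}(\lambda_{j_1}\cdots\lambda_{j_k})=\tfrac{\hbar^{k}}{k!}\sum_{\sigma}l_{j_{\sigma(1)}}\cdots l_{j_{\sigma(k)}}$ from \eqref{hbarPBWmap} yields the same symmetrized sum $\tfrac{\hbar^{k}}{k!}\sum_{\sigma}\sum_{m}l_{j_{\sigma(1)}}\cdots[l_i,l_{j_{\sigma(m)}}]\cdots l_{j_{\sigma(k)}}$ on both sides, the extra factor $\hbar$ matching because $\pbw_{\hbar}\lambda_i=\hbar l_i$.

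For part (1), I would use that left $\star_{\PBW}$-multiplication $g\mapsto (g\star_{\PBW})$ is an injective algebra homomorphism from $(\hat{S}(\frakl)\llbracket \hbar \rrbracket,\star_{\PBW})$ into $\cD\llbracket \hbar \rrbracket$: it is a homomorphism by \eqref{PBWProperty}, and injective because the operator $(g\star_{\PBW})$ applied to the unit $1$ returns $g$. Hence the operator commutator rewrites as $(\lambda_i\star_{\PBW})\cdot(f\star_{\PBW})-(f\star_{\PBW})\cdot(\lambda_i\star_{\PBW})=\bigl((\lambda_i\star_{\PBW}f-f\star_{\PBW}\lambda_i)\star_{\PBW}\bigr)$, so it suffices to establish the elementwise identity $\lambda_i\star_{\PBW}f-f\star_{\PBW}\lambda_i=\hbar\,\ad^{\ast}_{l_i}f$ in $\hat{S}(\frakl)\llbracket \hbar \rrbracket$. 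Applying the injective map $\pbw_{\hbar}$ and invoking \eqref{PBWStarProduct} together with $\pbw_{\hbar}\lambda_i=\hbar l_i$ turns the left side into $[\hbar l_i,\pbw_{\hbar}f]=\hbar[l_i,\pbw_{\hbar}f]$ and the right side into $\hbar\,\pbw_{\hbar}(\ad^{\ast}_{l_i}f)$; these agree by the equivariance above, which proves \eqref{LemmaNo1Equation1}.

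For part (2), the $\frakl$-invariance of $\star_{\PBW}$ means precisely that each $\ad^{\ast}_{l_i}$ is a derivation of $\star_{\PBW}$, i.e. $\ad^{\ast}_{l_i}(f\star_{\PBW}g)=(\ad^{\ast}_{l_i}f)\star_{\PBW}g+f\star_{\PBW}(\ad^{\ast}_{l_i}g)$. I would prove this by transporting through $\pbw_{\hbar}$: applying $\pbw_{\hbar}$ to the left side and using equivariance and \eqref{PBWStarProduct} gives $[l_i,\pbw_{\hbar}f\cdot\pbw_{\hbar}g]$, which expands via the Leibniz rule for the inner derivation $[l_i,\,\cdot\,]$ on $U(\frakl)$ into $[l_i,\pbw_{\hbar}f]\,\pbw_{\hbar}g+\pbw_{\hbar}f\,[l_i,\pbw_{\hbar}g]$; reapplying equivariance and \eqref{PBWStarProduct} in reverse identifies this with $\pbw_{\hbar}$ of the right side, and injectivity of $\pbw_{\hbar}$ concludes. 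The one step demanding real care is the equivariance, in particular pinning the sign and normalization convention so that $\ad^{\ast}_{l_i}$ acts as $\lambda_j\mapsto[l_i,l_j]$ rather than its negative; once that is fixed, everything else is formal manipulation of the already-established identities \eqref{hbarPBWmap}, \eqref{PBWStarProduct}, and \eqref{PBWProperty}.
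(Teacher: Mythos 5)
Your proof is correct, and it uses the same basic ingredients as the paper --- the multiplicativity of $\pbw_{\hbar}$ (Equations \eqref{PBWStarProduct} and \eqref{PBWProperty}) plus a Leibniz-rule computation --- but organizes them differently. The paper disposes of part (1) in one line (``it follows by the construction of the PBW star product'') and then deduces part (2) \emph{from} part (1): it computes $\bigl(\ad^{\ast}_{l_i}(f\star_{\PBW}g)\bigr)\star_{\PBW}$ inside $\cD\llbracket \hbar \rrbracket$, rewriting $\ad^{\ast}_{l_i}$ as $\tfrac{1}{\hbar}[\lambda_i\star_{\PBW},\,\cdot\,]$ via (1), applying the Leibniz rule for the operator commutator together with \eqref{PBWProperty}, and finally cancelling the (implicitly injective) left-regular representation $g\mapsto g\star_{\PBW}$. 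You instead isolate the equivariance $[l_i,\pbw_{\hbar}f]=\pbw_{\hbar}(\ad^{\ast}_{l_i}f)$ as a single bridging lemma, prove it on monomials, and derive \emph{both} parts from it by transporting everything through the injective map $\pbw_{\hbar}$ into $U(\frakl)\llbracket \hbar \rrbracket$, where the Leibniz rule is applied to the inner derivation $[l_i,\,\cdot\,]$. The two part-(2) computations are mirror images of one another under the two faithful ``representations'' ($\pbw_{\hbar}$ versus left $\star_{\PBW}$-multiplication), so neither is more general; what your version buys is that the content hidden behind the paper's one-line proof of (1) --- precisely the equivariance of symmetrization, with the sign convention $\ad^{\ast}_{l_i}\lambda_j=\lambda_{[l_i,l_j]}$ pinned down, and the factor $\hbar$ traced to $\pbw_{\hbar}\lambda_i=\hbar l_i$ --- is made explicit and proved, so the lemma becomes self-contained rather than resting on an appeal to ``the construction.''
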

\begin{proof}
(1) It follows by the construction of the PBW star product
\eqref{PBWStarProduct}.	

(2)
For each element $l_i$ of the vector basis of $\frakl$ and any $f\, , g\in \hat{S}(\frakl)\llbracket \hbar \rrbracket$,
we have
\begin{align*}
& \bigl(\ad_{l_i}^{\ast}(f\star_{\PBW}g) \bigr)\star_{\PBW} \\
= & \frac{1}{\hbar}[\lambda_{i}\star_{\PBW}\, , (f\star_{\PBW}g)\star_{\PBW}]
\qquad \text{(by Equation \eqref{LemmaNo1Equation1})}	 \\
= &  \frac{1}{\hbar}[\lambda_{i}\star_{\PBW}\, , (f\star_{\PBW})\cdot (g\star_{\PBW})]
\qquad \text{(by Equation \eqref{PBWProperty})} \\
= & \frac{1}{\hbar}[\lambda_{i}\star_{\PBW}\, , f\star_{\PBW}]\cdot (g\star_{\PBW}) +
(f\star_{\PBW})\cdot \frac{1}{\hbar}[\lambda_{i}\star_{\PBW}\, , g\star_{\PBW}] \\
= & \bigl((\ad_{l_i}^{\ast}f)\star_{\PBW} \bigr)\cdot (g\star_{\PBW})+
(f\star_{\PBW})\cdot \bigl((\ad_{l_i}^{\ast}g)\star_{\PBW}\bigr) \qquad \text{(by Equation \eqref{LemmaNo1Equation1})} \\
= & \bigl( (\ad_{l_i}^{\ast}f)\star_{\PBW} g + f \star_{\PBW} (\ad_{l_i}^{\ast}g) \bigr)\star_{\PBW}\, .
\qquad \text{(by Equation \eqref{PBWProperty})}
\end{align*}

Therefore
$\ad_{l_i}^{\ast}(f\star_{\PBW}g)=(\ad_{l_i}^{\ast}f)\star_{\PBW}g+ f \star_{\PBW} (\ad_{l_i}^{\ast}g)$, i.e.,
the PBW star product is $\frakl$-invariant.
\end{proof}

\begin{lemma}\label{LemmaPBW}
For any $f\in \hat{S}(\frakl)\llbracket \hbar\rrbracket$,
we have
\begin{equation}\label{varphihbarPBW}
\varphi(\pbw_{\hbar} f )=f\star_{\PBW}\, .	
\end{equation}
\end{lemma}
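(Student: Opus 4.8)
The plan is to reduce the identity \eqref{varphihbarPBW} to the monomial generators $\lambda_{i_1}\cdots\lambda_{i_k}\in S(\frakl)$ and verify it there directly, exploiting that $\varphi$ is an algebra homomorphism and that relation \eqref{PBWProperty} converts a composition of left-$\star_{\PBW}$-multiplication operators into a single such operator. Both $f\mapsto \varphi(\pbw_{\hbar}f)$ and $f\mapsto f\star_{\PBW}$ are $\fK\llbracket \hbar\rrbracket$-linear and $\hbar$-adically continuous, and the monomials $\lambda_{i_1}\cdots\lambda_{i_k}$ span $S(\frakl)$ densely in $\hat{S}(\frakl)\llbracket \hbar\rrbracket$; hence it suffices to establish \eqref{varphihbarPBW} for $f=\lambda_{i_1}\cdots\lambda_{i_k}$.

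First I would unfold the definition \eqref{hbarPBWmap} of $\pbw_{\hbar}$ on such a monomial and push $\varphi$ through the product. Using $\varphi(l_i)=\tfrac{1}{\hbar}\lambda_i\star_{\PBW}$, the multiplicativity of $\varphi$, and relation \eqref{PBWProperty}, I obtain
\begin{equation*}
\varphi\bigl(\pbw_{\hbar}(\lambda_{i_1}\cdots\lambda_{i_k})\bigr)
=\Bigl(\tfrac{1}{k!}\sum_{\sigma\in \mathrm{S}_k}\lambda_{i_{\sigma(1)}}\star_{\PBW}\cdots\star_{\PBW}\lambda_{i_{\sigma(k)}}\Bigr)\star_{\PBW}\, .
\end{equation*}
Here the factor $\hbar^{k}$ produced by $\pbw_{\hbar}$ cancels exactly the $\hbar^{-k}$ arising from the $k$ copies of $\varphi(l_i)$, which is precisely why the resulting expression is $\hbar$-independent.

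It then remains to establish the symmetrization identity $\tfrac{1}{k!}\sum_{\sigma}\lambda_{i_{\sigma(1)}}\star_{\PBW}\cdots\star_{\PBW}\lambda_{i_{\sigma(k)}}=\lambda_{i_1}\cdots\lambda_{i_k}$ in $\hat{S}(\frakl)\llbracket \hbar\rrbracket$, where the right-hand side is the ordinary commutative product. I would prove this by applying the injective map $\pbw_{\hbar}$ to both sides: iterating the intertwining property \eqref{PBWStarProduct} together with $\pbw_{\hbar}(\lambda_i)=\hbar l_i$, the left-hand side maps to $\tfrac{\hbar^{k}}{k!}\sum_{\sigma}l_{i_{\sigma(1)}}\cdots l_{i_{\sigma(k)}}$, which is exactly $\pbw_{\hbar}(\lambda_{i_1}\cdots\lambda_{i_k})$ by \eqref{hbarPBWmap}; injectivity of $\pbw_{\hbar}$ then yields the claim. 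Substituting back into the displayed equation gives $\varphi(\pbw_{\hbar}(\lambda_{i_1}\cdots\lambda_{i_k}))=(\lambda_{i_1}\cdots\lambda_{i_k})\star_{\PBW}$.

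The only delicate point is the passage from monomials to an arbitrary $f$: one must confirm that both sides genuinely define continuous $\fK\llbracket \hbar\rrbracket$-linear maps, so that the identity, verified on the dense subalgebra $S(\frakl)$, propagates to the completion $\hat{S}(\frakl)\llbracket \hbar\rrbracket$. This is where the $\hbar$-adic grading carried by $\pbw_{\hbar}$ (a degree-$k$ monomial acquires a factor $\hbar^{k}$) is essential, since it guarantees the required convergence; by contrast, the algebraic heart of the argument is just the short symmetrization computation above.
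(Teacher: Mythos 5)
Your proof is correct and follows essentially the same route as the paper's: reduce to monomials $\lambda_{i_1}\cdots\lambda_{i_k}$, unfold $\pbw_{\hbar}$ and $\varphi(l_i)=\tfrac{1}{\hbar}\lambda_i\star_{\PBW}$ so that the powers of $\hbar$ cancel, and identify the resulting symmetrized expression with $(\lambda_{i_1}\cdots\lambda_{i_k})\star_{\PBW}$ using \eqref{PBWProperty} and \eqref{PBWStarProduct}. The only difference is that you spell out two points the paper leaves implicit, namely the injectivity-of-$\pbw_{\hbar}$ argument behind the symmetrization identity and the $\fK\llbracket \hbar\rrbracket$-linearity/continuity step justifying the reduction to monomials.
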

\begin{proof}
We only need to prove the statement for
all monomials of the form $\lambda_{i_1}\cdots \lambda_{i_k}$.
By the construction of $\pbw_{\hbar}$ \eqref{hbarPBWmap},
we have
\begin{equation*}\label{MonoidalPBW}
(\lambda_{i_1}\cdots \lambda_{i_k})\star_{\PBW} = \frac{1}{k!}\sum_{\sigma\in \mathrm{S}_{k}}
  (\lambda_{i_{\sigma(1)}}\star_{\PBW})\cdot (\lambda_{i_{\sigma(2)}}\star_{\PBW}) \cdot  \,\, \cdots \,\,
\cdot(\lambda_{i_{\sigma(k)}}\star_{\PBW}),
\end{equation*}
Here $\cdot$ denotes the product in $\cD\llbracket \hbar\rrbracket$.
Then by the definition of $\varphi$ \eqref{varphiMAP}, we have
\begin{equation*}
\begin{split}
\varphi(\pbw_{\hbar}(\lambda_{i_1}\cdots \lambda_{i_k}))
& =
\frac{1}{k!}\sum_{\sigma\in \mathrm{S}_{k}}
\hbar^k\varphi(l_{i_{\sigma_{1}}} \cdots l_{i_{\sigma_{k}}})  \\
& =	\frac{1}{k!}\sum_{\sigma\in \mathrm{S}_{k}}
(\lambda_{i_{\sigma(1)}}\star_{\PBW})\cdot (\lambda_{i_{\sigma(2)}}\star_{\PBW}) \cdot  \,\, \cdots \,\,
\cdot(\lambda_{i_{\sigma(k)}}\star_{\PBW}) \\
& = (\lambda_{i_1}\cdots \lambda_{i_k})\star_{\PBW}\, .
\end{split}
\end{equation*}

Thus the statement is proved.
\end{proof}

\begin{lemma}\label{LemmaNo2}
The morphism of Hopf algebras
$\varphi: U(\frakl)\llbracket \hbar \rrbracket \rightarrow \cD\otimes\fK\Laurent{\hbar}$
satisfies
\begin{equation*}
\varphi ( [l\, , A] )=\mathrm{ad}_{l}^{\ast}\bigl( \varphi(A) \bigr)
\end{equation*}
for all $A\in U(\frakl)\llbracket \hbar \rrbracket$
and $l\in \frakl$.
Here $\ad^{\ast}_{l_i}$ denotes the action of
$l_i$ on $\cD\otimes \fK\Laurent{\hbar}$
induced by the coadjoint action of $\frakl$ on
$\frakl^{\ast}$.	
\end{lemma}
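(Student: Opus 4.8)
The plan is to reduce the identity to the defining generators of $U(\frakl)$ and then invoke the two parts of Lemma \ref{LemmaNo1}. Since $\varphi$ is a morphism of algebras, for any $l\in \frakl$ and $A\in U(\frakl)\llbracket \hbar \rrbracket$ we have $\varphi([l,A])=[\varphi(l),\varphi(A)]$, so the claim is equivalent to the assertion that the inner derivation $[\varphi(l),\,\cdot\,]$ of $\cD\otimes\fK\Laurent{\hbar}$ agrees with $\ad^{\ast}_{l}$ on the image of $\varphi$. Here $\ad^{\ast}_{l}$ is realized as $[X_{l},\,\cdot\,]$, where $X_{l}\in \cD$ is the linear vector field on $\frakl^{\ast}$ generated by the coadjoint action; being a commutator with a fixed element, $\ad^{\ast}_{l}$ is a derivation of $\cD\otimes\fK\Laurent{\hbar}$, and on functions it restricts to the coadjoint action $h\mapsto \ad^{\ast}_{l}h$ used in Lemma \ref{LemmaNo1}.

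First I would observe that both maps $A\mapsto \varphi([l,A])$ and $A\mapsto \ad^{\ast}_{l}(\varphi(A))$ are $\fK\llbracket \hbar \rrbracket$-linear, $\hbar$-adically continuous, and satisfy the same twisted Leibniz rule $\Phi(A_1 A_2)=\Phi(A_1)\varphi(A_2)+\varphi(A_1)\Phi(A_2)$: for the first this follows from $\varphi$ being multiplicative and $[l,\,\cdot\,]$ being a derivation of $U(\frakl)$, and for the second from $\varphi$ being multiplicative and $\ad^{\ast}_{l}$ being a derivation of $\cD\otimes\fK\Laurent{\hbar}$. Both maps also annihilate $1$. Consequently their difference is again such a twisted derivation, and since $U(\frakl)$ is generated as an algebra by $\frakl$ while $U(\frakl)\llbracket \hbar \rrbracket$ is its $\hbar$-adic completion, it suffices—using linearity in both arguments—to verify the identity for $l=l_i$ and $A=l_j$ ranging over the chosen basis of $\frakl$.

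For the base case I would compute both sides on $A=l_j$. On the left, using $\varphi(l_i)=\tfrac{1}{\hbar}\lambda_i\star_{\PBW}$ and part (1) of Lemma \ref{LemmaNo1} with $f=\lambda_j$,
\begin{equation*}
\varphi([l_i,l_j])=[\varphi(l_i),\varphi(l_j)]
=\frac{1}{\hbar^{2}}\bigl[\lambda_i\star_{\PBW}\, ,\, \lambda_j\star_{\PBW}\bigr]
=\frac{1}{\hbar}\bigl((\ad^{\ast}_{l_i}\lambda_j)\star_{\PBW}\bigr)\, .
\end{equation*}
On the right, $\ad^{\ast}_{l_i}(\varphi(l_j))=\tfrac{1}{\hbar}\,[X_{l_i}, \lambda_j\star_{\PBW}]$, and evaluating on a test function $h\in \hat{S}(\frakl)\llbracket \hbar \rrbracket$ and using the $\frakl$-invariance of $\star_{\PBW}$ from part (2) of Lemma \ref{LemmaNo1},
\begin{equation*}
[X_{l_i}, \lambda_j\star_{\PBW}](h)
=\ad^{\ast}_{l_i}(\lambda_j\star_{\PBW}h)-\lambda_j\star_{\PBW}(\ad^{\ast}_{l_i}h)
=(\ad^{\ast}_{l_i}\lambda_j)\star_{\PBW}h\, ,
\end{equation*}
so that $\ad^{\ast}_{l_i}(\varphi(l_j))=\tfrac{1}{\hbar}\bigl((\ad^{\ast}_{l_i}\lambda_j)\star_{\PBW}\bigr)$, matching the left-hand side.

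The step I expect to be the main obstacle is the bookkeeping that identifies the geometric action $\ad^{\ast}_{l}=[X_{l},\,\cdot\,]$ on $\cD$ with the $\star_{\PBW}$-commutator appearing in Lemma \ref{LemmaNo1}, together with the verification that this action is a genuine derivation on all of $\cD\otimes\fK\Laurent{\hbar}$ — this is precisely what legitimizes the reduction to the generators $l_j$ (and sidesteps the fact that $\varphi^{-1}$ generally requires negative powers of $\hbar$, so individual elements of $U(\frakl)\llbracket \hbar \rrbracket$ need not lie in $\pbw_{\hbar}(\hat{S}(\frakl)\llbracket \hbar \rrbracket)$). Once parts (1) and (2) of Lemma \ref{LemmaNo1} are in place, the remaining manipulations — linearity over $\fK\llbracket \hbar \rrbracket$ and $\hbar$-adic continuity to pass from $U(\frakl)$ to $U(\frakl)\llbracket \hbar \rrbracket$ — are routine.
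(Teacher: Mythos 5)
Your argument is correct and rests on the same two computational ingredients as the paper's proof, namely parts (1) and (2) of Lemma \ref{LemmaNo1}, but your reduction step is genuinely different. The paper does not argue via generators and a Leibniz rule: it first claims that $\mathrm{Im}(\pbw_{\hbar})\otimes_{\fK\llbracket \hbar \rrbracket}\fK\Laurent{\hbar}$ exhausts $U(\frakl)\llbracket\hbar\rrbracket\otimes_{\fK\llbracket \hbar \rrbracket}\fK\Laurent{\hbar}$, so that one may assume $A=\pbw_{\hbar}f$ for some $f\in \hat{S}(\frakl)\llbracket\hbar\rrbracket$; it then uses Lemma \ref{LemmaPBW} to write $\varphi(A)=f\star_{\PBW}$ and runs the commutator computation for this general $f$: $\varphi([l_i,A])=\frac{1}{\hbar}[\lambda_i\star_{\PBW},f\star_{\PBW}]=(\ad^{\ast}_{l_i}f)\star_{\PBW}=\ad^{\ast}_{l_i}(f\star_{\PBW})$, the last step being the same identification via $\frakl$-invariance of $\star_{\PBW}$ that you carry out on test functions. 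Your route instead reduces to the generators $l_j$ by observing that both sides are twisted derivations along $\varphi$, and then only needs the defining formula $\varphi(l_i)=\frac{1}{\hbar}\lambda_i\star_{\PBW}$ from \eqref{varphiMAP}; Lemma \ref{LemmaPBW} never enters. What each approach buys: yours is more elementary and makes explicit the structural reason the identity propagates (a derivation identity need only be checked on algebra generators), and it requires the small extra verification — which you supply — that $\ad^{\ast}_{l}$, realized as $[X_l,\cdot\,]$, is a derivation of $\cD\otimes\fK\Laurent{\hbar}$; the paper's parametrization by $\pbw_{\hbar}$ handles an entire series at once, making the passage to $\hbar$-adic completions implicit in the choice of representative. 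One caveat: the advantage you claim in your final paragraph is smaller than you suggest. Your appeal to ``$\hbar$-adic continuity'' of $A\mapsto\varphi(A)$ suffers from exactly the same defect as the paper's spanning claim, since $\varphi$ sends PBW monomials of degree $d$ into $\hbar^{-d}\cD\llbracket\hbar\rrbracket$, so neither continuity of $\varphi$ (your step) nor the assertion $\mathrm{Im}(\pbw_{\hbar})\otimes_{\fK\llbracket \hbar \rrbracket}\fK\Laurent{\hbar}=U(\frakl)\llbracket\hbar\rrbracket\otimes_{\fK\llbracket \hbar \rrbracket}\fK\Laurent{\hbar}$ (the paper's step) holds for arbitrary power series $A=\sum_k\hbar^k a_k$ whose coefficients have unbounded PBW degree relative to $k$; both arguments are rigorous precisely on the subspace where $\varphi(A)$ converges to a Laurent series of finite pole order, which is the implicit domain throughout this section of the paper.
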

\begin{proof}
The map $\pbw_{\hbar}$ \eqref{hbarPBWmap} satisfies
$\mathrm{Im}(\pbw_{\hbar})\otimes_{\fK\llbracket \hbar \rrbracket}\fK\Laurent{\hbar}= U(\frakl)\otimes_{\fK\llbracket \hbar \rrbracket} \fK\Laurent{\hbar}$, i.e.,
any $A\in U(\frakl)\llbracket \hbar \rrbracket$ can be expressed as a $\fK\Laurent{\hbar}$-linear combination
of elements in $\mathrm{Im}(\pbw_{\hbar})$.
We can assume that $A=\pbw_{\hbar}f=\pbw(f(\hbar \lambda))$
for some $f\in \hat{S}(\frakl)\llbracket \hbar \rrbracket$.
By Equation \eqref{varphihbarPBW},
$\varphi(A)=f(\lambda)\star_{\PBW}$.
Then we have
\begin{align*}
\varphi ( [l\, , A] )=[\varphi(l)\, , \varphi(A)]= &
\frac{1}{\hbar}[\lambda_i \star_{\PBW}\, , \varphi(A)] \\
= & \frac{1}{\hbar}[\lambda_{i}\star_{\PBW}\, , f(\lambda)\star_{\PBW}]	 \\
= & (\ad_{l_i}^{\ast} f(\lambda))\star_{\PBW}
\qquad \text{(by part $(1)$ of Lemma \ref{LemmaNo1})} \\
= & \ad_{l_i}^{\ast} \bigl(f(\lambda)\star_{\PBW} \bigr) \qquad\,\, \text{(by part $(2)$ of Lemma \ref{LemmaNo1})} \\
= & \ad_{l_i}^{\ast} \bigl( \varphi(A) \bigr)\, .
\end{align*}
\end{proof}

\begin{lemma}\label{LemmaNo3}
Given any $A=A_1\otimes \cdots A_n \otimes A_{n+1}\in U(\frakg)^{\otimes n}\otimes U(\frakl)\llbracket \hbar \rrbracket$,
then $A$ is $\frakl$-invariant, i.e.,
$\Delta^n(l)\cdot A = A \cdot \Delta^n(l)$ for all $l\in \frakl$, if and only if
\begin{equation*}
\begin{split}
& \Delta^{n-1}(f\star_{\Gutt})\cdot	
(A_1\otimes \cdots \otimes A_n) \cdot
\Delta^{n-1}(\varphi(A_{n+1})) = \\
& \qquad\qquad\qquad\,\,\, (A_1\otimes \cdots \otimes A_n) \cdot
\Delta^{n-1}(\varphi(A_{n+1}))\cdot
\Delta^{n-1}(f\star_{\Gutt})
\end{split}
\end{equation*}
in $\underbrace{\mathscr{H}_{(\frakg, \frakl)}\otimes_{R} \cdots \otimes_{R} \mathscr{H}_{(\frakg, \frakl)}}_{n}\otimes_{\fK\llbracket \hbar \rrbracket}\fK\Laurent{\hbar}$, for any $f\in S(\frakl)\llbracket \hbar \rrbracket$.
\end{lemma}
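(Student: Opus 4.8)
The plan is to reduce the asserted equivalence to the case where $f$ is a linear coordinate $\lambda_i$, to convert the required commutation into the vanishing of an \emph{injective} image of the $\frakl$-invariance defect of $A$, and to conclude. The computational heart is an explicit identity in $\mathscr{H}_{(\frakg,\frakl)}\otimes_{\fK\llbracket\hbar\rrbracket}\fK\Laurent{\hbar}$: reading off the $k=0,1$ terms of the Gutt formula \eqref{GuttStarProduct} for $f=\lambda_i$ (higher derivatives of $\lambda_i$ vanish) gives $\lambda_i\star_{\Gutt}=\lambda_i\star_{\PBW}+\hbar\, l_i$, where $l_i\in U(\frakg)$ is the left-invariant vector field on $G$. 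Since $\varphi(l_i)=\tfrac1\hbar\,\lambda_i\star_{\PBW}$ by \eqref{varphiMAP}, this reads $\lambda_i\star_{\Gutt}=\hbar\bigl(l_i+\varphi(l_i)\bigr)$. Here $l_i$ is primitive in $U(\frakg)\subset\mathscr{H}_{(\frakg,\frakl)}$ and $\varphi(l_i)$ is primitive in $\cD$ (as $\varphi$ is a morphism of Hopf algebras and $l_i$ is primitive), so $l_i+\varphi(l_i)$ is primitive and hence $\Delta^{n-1}(\lambda_i\star_{\Gutt})=\hbar\sum_{j=1}^{n}1\otimes_R\cdots\otimes_R(l_i+\varphi(l_i))\otimes_R\cdots\otimes_R 1$, the summand being placed in the $j$-th slot.

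Next I would justify the reduction to linear $f$. Restricted to $S(\frakl)\llbracket\hbar\rrbracket$ the Gutt and PBW products agree (only the $k=0$ term of \eqref{GuttStarProduct} survives when the second argument is constant along $G$), and associativity of $\star_{\Gutt}$ together with \eqref{PBWProperty} shows that $f\mapsto f\star_{\Gutt}$ is an algebra morphism from $(S(\frakl)\llbracket\hbar\rrbracket,\star_{\PBW})$ into $\mathscr{H}_{(\frakg,\frakl)}$. Since the leading symbol of a $\star_{\PBW}$-product of linear functions is their commutative product, an induction on polynomial degree expresses every monomial through $\star_{\PBW}$-products of the $\lambda_i$; hence every $f\star_{\Gutt}$ lies in the subalgebra of $\mathscr{H}_{(\frakg,\frakl)}$ generated by $\{\lambda_i\star_{\Gutt}\}$, and $\Delta^{n-1}(f\star_{\Gutt})$ lies in the subalgebra generated by $\{\Delta^{n-1}(\lambda_i\star_{\Gutt})\}$ ($\Delta^{n-1}$ being multiplicative). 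Because the centralizer of a fixed element is a subalgebra, it suffices to treat $f=\lambda_i$ for every $i$.

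Then I would carry out the split commutator computation. Since $R=C^{\infty}(\frakl^{\ast})\llbracket\hbar\rrbracket$ enters $\mathscr{H}_{(\frakg,\frakl)}=U(\frakg)\otimes\cD\llbracket\hbar\rrbracket$ only through the $\cD$-factor, there is a decomposition $\mathscr{H}_{(\frakg,\frakl)}^{\otimes_R n}=U(\frakg)^{\otimes n}\otimes\cD^{\otimes_R n}$ in which the $U(\frakg)^{\otimes n}$-half is a genuine algebra and every product occurring below on the $\cD^{\otimes_R n}$-half is a product of $\Delta^{n-1}$-images, hence well-defined by the iterate of \eqref{CoproductProduct}. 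Writing $P:=(A_1\otimes\cdots\otimes A_n)\cdot\Delta^{n-1}(\varphi(A_{n+1}))$, the factor $A_1\otimes\cdots\otimes A_n$ sits in the $U(\frakg)$-half and $\Delta^{n-1}(\varphi(A_{n+1}))$ in the $\cD$-half, while $\Delta^{n-1}(\lambda_i\star_{\Gutt})=\hbar\bigl(\Delta^{n-1}(l_i)+\Delta^{n-1}(\varphi(l_i))\bigr)$ splits accordingly, so that
\begin{align*}
\Delta^{n-1}(\lambda_i\star_{\Gutt})\cdot P-P\cdot\Delta^{n-1}(\lambda_i\star_{\Gutt})
&=\hbar\,[\Delta^{n-1}(l_i),\,A_1\otimes\cdots\otimes A_n]\cdot\Delta^{n-1}(\varphi(A_{n+1}))\\
&\quad+\hbar\,(A_1\otimes\cdots\otimes A_n)\cdot[\Delta^{n-1}(\varphi(l_i)),\,\Delta^{n-1}(\varphi(A_{n+1}))].
\end{align*}
Using primitivity of $l_i$ in the first bracket and $[\Delta^{n-1}(\varphi(l_i)),\Delta^{n-1}(\varphi(A_{n+1}))]=\Delta^{n-1}(\varphi([l_i,A_{n+1}]))$ (multiplicativity of $\Delta^{n-1}$ and $\varphi$) in the second, this equals $\hbar\,\Phi_n\bigl(\sum_{j=1}^{n+1}A_1\otimes\cdots\otimes[l_i,A_j]\otimes\cdots\otimes A_{n+1}\bigr)$, where $\Phi_n(C):=(C_1\otimes\cdots\otimes C_n)\cdot\Delta^{n-1}(\varphi(C_{n+1}))$. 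Now $A$ is $\frakl$-invariant iff $\sum_{j=1}^{n+1}A_1\otimes\cdots\otimes[l_i,A_j]\otimes\cdots\otimes A_{n+1}=\Delta^{n}(l_i)\cdot A-A\cdot\Delta^{n}(l_i)$ vanishes for every basis vector $l_i$; and under the decomposition above $\Phi_n=\id_{U(\frakg)^{\otimes n}}\otimes(\Delta^{n-1}\circ\varphi)$ is injective, since $\varphi$ is injective and $\Delta^{n-1}$ is split injective via the counit. As $\hbar$ is invertible in $\fK\Laurent{\hbar}$, the commutation for $f=\lambda_i$ is equivalent to $\Phi_n(\cdots)=0$, i.e.\ to $\frakl$-invariance; combined with the reduction of the previous paragraph this yields both implications.

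The main obstacle, and the point demanding care, is that $\mathscr{H}_{(\frakg,\frakl)}^{\otimes_R n}$ is \emph{not} an algebra, so one must verify that each product written above genuinely makes sense: left multiplication by the $\Delta^{n-1}$-images is exactly the well-defined map $\overline{\mathrm{Ins}}^{\Delta}$ of Lemma \ref{KeyLemma}, while the remaining products live either in the $U(\frakg)^{\otimes n}$-algebra half or among $\Delta^{n-1}$-images in the $\cD^{\otimes_R n}$-half (so that both orders are defined and agree via \eqref{CoproductProduct}). Establishing the identity $\lambda_i\star_{\Gutt}=\hbar(l_i+\varphi(l_i))$ with the primitivity of $l_i+\varphi(l_i)$, and keeping the slot-by-slot bookkeeping of the split commutator consistent, is where the genuine content lies; the injectivity of $\Phi_n$ and the reduction to linear $f$ are then routine.
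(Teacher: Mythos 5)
Your proof has the same skeleton as the paper's own: reduce to the generators $\lambda_i\star_{\Gutt}=\lambda_i\star_{\PBW}+\hbar l_i$, split the commutator along the factorization of $\mathscr{H}_{(\frakg,\frakl)}^{\otimes_R n}$ into a $U(\frakg)^{\otimes n}$-half and a $\cD^{\otimes_R n}$-half, and conclude from injectivity of your map $\Phi_n$ together with invertibility of $\hbar$. The only genuine difference of route is the reduction step: the paper quotes from \cite{Cheng-Chen-Qiao-Xiang-QDYBEI} an expansion writing any $f\star_{\Gutt}$ as a Taylor series in the elements $\lambda_i\star_{\PBW}+\hbar l_i$, whereas you give a self-contained generation/centralizer argument. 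That substitution works, with one caveat worth stating: for $f\in S(\frakl)\llbracket\hbar\rrbracket$ whose coefficients have unbounded polynomial degree, $f\star_{\Gutt}$ lies only in the $\hbar$-adic closure of the subalgebra generated by the $\lambda_i\star_{\Gutt}$; this is harmless because centralizers are closed under $\hbar$-adic limits. Your explicit factorization $\Phi_n=\id_{U(\frakg)^{\otimes n}}\otimes(\Delta^{n-1}\circ\varphi)$ with its split injectivity also usefully expands what the paper compresses into ``since $\varphi$ is injective.''

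However, one assertion that you single out as part of ``where the genuine content lies'' is false: $\varphi(l_i)=\frac{1}{\hbar}\lambda_i\star_{\PBW}$ is \emph{not} primitive in $\cD$, and consequently the displayed formula $\Delta^{n-1}(\lambda_i\star_{\Gutt})=\hbar\sum_{j}1\otimes_R\cdots\otimes_R(l_i+\varphi(l_i))\otimes_R\cdots\otimes_R 1$ is wrong. In the Hopf algebroid of differential operators the coproduct is characterized by $\Delta(D)(f,g)=D(fg)$, so primitive elements are exactly derivations of the commutative product, and any primitive $D$ satisfies $\epsilon(D)=D(1)=0$; but $(\lambda_i\star_{\PBW})(1)=\lambda_i\neq 0$. (Already for abelian $\frakl$ one has $\varphi(l_i)=\frac{1}{\hbar}\lambda_i$, a multiplication operator, whose coproduct is $\varphi(l_i)\otimes_R 1$, not $\varphi(l_i)\otimes_R 1+1\otimes_R\varphi(l_i)$.) In particular the paper's phrase ``morphism of Hopf algebras'' for $\varphi$ cannot be read as intertwining the coproduct of the Hopf algebra $U(\frakl)\llbracket\hbar\rrbracket$ with the Hopf algebroid coproduct of $\cD$; your inference from that phrase is invalid. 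Fortunately the claim is not load-bearing: your commutator computation uses only (i) linearity of $\Delta^{n-1}$ to split $\Delta^{n-1}(\lambda_i\star_{\Gutt})=\hbar\bigl(\Delta^{n-1}(l_i)+\Delta^{n-1}(\varphi(l_i))\bigr)$; (ii) the fact that $\Delta^{n-1}(\varphi(l_i))$ lies in the $\cD^{\otimes_R n}$-half, which holds because $\cD\llbracket\hbar\rrbracket$ is closed under the coproduct, not because of primitivity; (iii) primitivity of $l_i$ itself, which is true since $l_i$ is a vector field; and (iv) multiplicativity of $\Delta^{n-1}$ and of $\varphi$ in $[\Delta^{n-1}(\varphi(l_i)),\Delta^{n-1}(\varphi(A_{n+1}))]=\Delta^{n-1}(\varphi([l_i,A_{n+1}]))$. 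These are exactly the facts the paper's proof uses. Delete the primitivity claim and the slot-by-slot formula, and your argument is correct.
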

\begin{proof}
The differential operation
$f\star_{\Gutt}\in \mathscr{H}_{(\frakg, \frakl)}$
can be written as
\begin{equation*}\label{Expansion}
f\star_{\Gutt} = \sum_{k=0}^{\infty} \frac{1}{k!} \sum_{i_1, \cdots, i_k}\frac{\partial^k f}{\partial \lambda_{i_1} \cdots \partial \lambda_{i_k}} \bigg |_{\lambda_{i_1}=\cdots=\lambda_{i_k}=0} (\lambda_{i_1}\star_{\PBW} + \hbar l_{i_1} )\cdots (\lambda_{i_k}\star_{\PBW} + \hbar l_{i_k})\, .
\end{equation*}	
The verification of the above expression
can be founded in the proof of
\cite[Lemma $3.4$]{Cheng-Chen-Qiao-Xiang-QDYBEI}.
Therefore, we only need to verify
that $A$ is $\frakl$-invariant if and only if
\begin{equation*}
\begin{split}
& \Delta^{n-1}(\lambda_{i}\star_{\PBW}+\hbar l_i)\cdot	
(A_1\otimes \cdots \otimes A_n) \cdot
\Delta^{n-1}(\varphi(A_{n+1}))= \\
&\qquad\qquad\qquad\qquad\quad\,\,\,(A_1\otimes \cdots \otimes A_n) \cdot
\Delta^{n-1}(\varphi(A_{n+1}))\cdot
\Delta^{n-1}(\lambda_{i}\star_{\PBW}+\hbar l_i)\, ,
\end{split}
\end{equation*}
for all $l_i$ and the corresponded $\lambda_{i}$.
Since $\lambda_{i}\star_{\PBW}$ belongs to $\cD\llbracket \hbar \rrbracket \subset \mathscr{H}_{(\frakg,\frakl)}$,
it commutes with elements in $U(\frakg) \subset \mathscr{H}_{(\frakg,\frakl)}$.
Since $l_i$ belongs to $U(\frakl)\subset U(\frakg)$, it commutes with elements in
$\cD\otimes \fK\Laurent{\hbar}$.
Therefore we have
\begin{equation*}
\begin{split}
& \Delta^{n-1}(\lambda_{i}\star_{\PBW}+\hbar l_i)\cdot	
(A_1\otimes \cdots \otimes A_n) \cdot
\Delta^{n-1}(\varphi(A_{n+1})) \, - \\
& \qquad\qquad(A_1\otimes \cdots \otimes A_n) \cdot
\Delta^{n-1}(\varphi(A_{n+1}))\cdot
\Delta^{n-1}(\lambda_{i}\star_{\PBW}+\hbar l_i) \\
= & \hbar \biggl( [\Delta^{n-1}(l_i)\, , A_1\otimes \cdots \otimes A_n]\cdot \Delta^{n-1}(\varphi(A_{n+1})) +
(A_1\otimes \cdots \otimes A_n) \cdot \Delta^{n-1}(\varphi([l_i\, , A_{n+1}])) \biggr)\, .\\
\end{split}
\end{equation*}
Since $\varphi$ is injective,
the above expression vanishes if and only if
\begin{equation*}
\Delta^{n}(l_i)\cdot A - A \cdot \Delta^{n}(l_i)=
[\Delta^{n-1}(l_i)\, , A_1\otimes \cdots \otimes A_n]\otimes A_{n+1}+
A_1\otimes \cdots \otimes A_n \otimes [l_i\, , A_{n+1}]
=0\, , 	
\end{equation*}	
for all $l_i$.

Thus we complete the proof.	
\end{proof}

\begin{lemma}\label{LemmaNo4}
For any integers $i\geqslant 2$ and $m\geqslant 0$,
any $B=B_1\otimes \cdots \otimes B_m\otimes B_{m+1}\in U(\frakg)^{\otimes m}\otimes U(\frakl)\llbracket \hbar \rrbracket$, we have the
following equality holds in $\underbrace{\mathscr{H}_{(\frakg, \frakl)}\otimes_{R} \cdots \otimes_{R} \mathscr{H}_{(\frakg, \frakl)}}_{m} \otimes_{\fK\llbracket \hbar \rrbracket}\fK\Laurent{\hbar}$:
\begin{equation*}
\begin{split}
& (\Theta_{\Gutt})_{1\tinywedge\cdots \tinywedge i-1, i \tinywedge\cdots \tinywedge i+m-1}\cdot
\biggl( \underbrace{1\otimes \cdots \otimes 1}_{i-1} \otimes \bigl( (B_1\otimes \cdots \otimes B_m) \cdot
\Delta^{m-1}(\varphi(B_{m+1})) \bigr) \biggr) = \\	
& \qquad\qquad\qquad
( \underbrace{1\otimes \cdots \otimes 1}_{i-1}\otimes B_1\otimes \cdots \otimes B_m)\cdot \Delta^{i+m-2}(\varphi(B_{m+1}))\cdot
(\Theta_{\Gutt})_{1 \tinywedge\cdots \tinywedge i-1, i \tinywedge \cdots \tinywedge i+m-1}\, .
\end{split}
\end{equation*}	
\end{lemma}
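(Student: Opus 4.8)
\emph{Strategy.} I will strip the two coproduct fans carried by $(\Theta_{\Gutt})_{1\tinywedge\cdots\tinywedge i-1,\,i\tinywedge\cdots\tinywedge i+m-1}=(\Delta^{i-2}\otimes_R\Delta^{m-1})\Theta_{\Gutt}$ one at a time. First I reduce the index $i$ to the base value $i=2$; then the surviving case becomes a commutation identity that is resolved from the explicit shape of $\Theta_{\Gutt}$ together with the star-product lemmas proved above.

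\emph{Reduction to $i=2$.} Denote the asserted identity by $(E_i)$ and write $B_\bullet:=B_1\otimes\cdots\otimes B_m$. The inductive step is to apply the $\fK$-linear map $\Delta\otimes_R\id^{\otimes(i+m-2)}$, the coproduct on the first tensor slot, to $(E_i)$. Because $\Delta$ is multiplicative (Equation \eqref{CoproductProduct}), this map is an algebra homomorphism for the slotwise product; using $\Delta(1)=1\otimes_R 1$ it sends each prefactor $1^{\otimes(i-1)}$ to $1^{\otimes i}$, and by coassociativity (Equation \eqref{CoAssociativity}) it turns $\Delta^{i+m-2}(\varphi(B_{m+1}))$ into $\Delta^{i+m-1}(\varphi(B_{m+1}))$ and $(\Delta^{i-2}\otimes_R\Delta^{m-1})\Theta_{\Gutt}$ into $(\Delta^{i-1}\otimes_R\Delta^{m-1})\Theta_{\Gutt}$. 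Matching the resulting terms shows the transformed identity is precisely $(E_{i+1})$, so it suffices to treat the base case $i=2$.

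\emph{Base case.} For $i=2$ the identity reads
\begin{equation*}
(\id\otimes_R\Delta^{m-1})\Theta_{\Gutt}\cdot\big(1\otimes N\big)=\big(1\otimes B_\bullet\big)\cdot\Delta^{m}(\varphi(B_{m+1}))\cdot(\id\otimes_R\Delta^{m-1})\Theta_{\Gutt},\qquad N:=B_\bullet\cdot\Delta^{m-1}(\varphi(B_{m+1})).
\end{equation*}
By $\fK\Laurent{\hbar}$-linearity and the fact, used in the proof of Lemma \ref{LemmaNo2}, that $\mathrm{Im}(\pbw_\hbar)$ spans $U(\frakl)\llbracket\hbar\rrbracket$ over $\fK\Laurent{\hbar}$, I may assume $B_{m+1}=\pbw_\hbar f$, so that $\varphi(B_{m+1})=f\star_{\PBW}$ by Lemma \ref{LemmaPBW}. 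I then use the factorization $\Theta_{\Gutt}=\Theta_{\PBW}\cdot\exp\!\big(\hbar\sum_j\partial_{\lambda_j}\otimes_R\overrightarrow{l_j}\big)$, read off from Equation \eqref{GuttStarProduct}. Since $\cD$ and $U(\frakg)$ commute inside $\mathscr{H}_{(\frakg,\frakl)}$, the $\cD$-valued entries of $\Theta_{\PBW}$ commute slotwise with the $U(\frakg)$-valued $B_1,\dots,B_m$, and the second-leg factors $\overrightarrow{l_j}$ of the coupling exponential commute with the $\cD$-valued operator $\varphi(B_{m+1})=f\star_{\PBW}$. The only genuinely noncommuting interactions are therefore $f\star_{\PBW}$ against $\Theta_{\PBW}$ and each $\overrightarrow{l_j}$ against the $B_k$.

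\emph{Core commutation and the main obstacle.} Moving $f\star_{\PBW}$ across $\Theta_{\PBW}$ is controlled, for a single coordinate, by the associativity of $\star_{\PBW}$ together with Equation \eqref{LemmaNo1Equation1}: pushing $\lambda_j\star_{\PBW}$ through produces the coproduct-spread term plus a coadjoint correction $\big((\ad^{\ast}_{l_j}\,\cdot\,)\star_{\PBW}\big)$, and these pass to all of $\varphi(U(\frakl))$ by the multiplicativity of $\star_{\PBW}$ (Equation \eqref{PBWProperty}). Simultaneously, commuting each $\overrightarrow{l_j}$ of the coupling exponential past $B_\bullet$ generates the derivative commutators $[\overrightarrow{l_j},B_k]$. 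The heart of the proof, and its main difficulty, is that these two independent families of corrections are linked through the shared summation index $j$ of $\partial_{\lambda_j}\otimes_R\overrightarrow{l_j}$, and must be recombined, via the $\frakl$-invariance of $\star_{\PBW}$ (part (2) of Lemma \ref{LemmaNo1}) and the intertwining relation of Lemma \ref{LemmaNo2}, into exactly the extra tier of the coproduct fan distinguishing $\Delta^{m}(\varphi(B_{m+1}))$ on the right from $\Delta^{m-1}(\varphi(B_{m+1}))$ on the left. Because the claim is asserted for an arbitrary, not necessarily $\frakl$-invariant, element $B$, none of these corrections vanishes on its own; the content of the lemma is precisely that they reassemble term by term, and tracking this bookkeeping—conveniently by induction on the polynomial degree of $f$, which introduces one coadjoint correction at a time—is where the real work lies.
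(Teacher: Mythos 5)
Your proposal has a genuine gap, and in fact its central premise is untenable. You read the hypothesis literally (``any $B$, not necessarily $\frakl$-invariant'') and design the argument so that invariance of $B$ is never used: you expect the coadjoint corrections coming from pushing $f\star_{\PBW}$ through $\Theta_{\PBW}$ and the commutators $[\overset{\rightarrow}{l_j},B_k]$ coming from the coupling exponential to ``reassemble term by term'' for arbitrary $B$. They do not. Take $m=1$, $i=2$, $B=B_1\otimes 1$ with $[l_{j_0},B_1]\neq 0$ for some basis vector $l_{j_0}$ of $\frakl$ (e.g. $\frakg=\mathfrak{sl}_2$, $\frakl=\fK h$, $B_1=e$). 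Since $\varphi(1)=1$ and $\Delta(1)=1\otimes_R 1$, the asserted identity collapses to $\Theta_{\Gutt}\cdot(1\otimes B_1)=(1\otimes B_1)\cdot\Theta_{\Gutt}$; using the expansion of $\Theta_{\Gutt}$ recalled in the proof of Lemma \ref{LemmaNo5}, and the fact that $U(\frakg)$ commutes with $\cD$ inside $\mathscr{H}_{(\frakg,\frakl)}$, the difference of the two sides equals $\hbar\sum_j \partial_{\lambda_j}\otimes_R[l_j,B_1]+O(\hbar^2)$, which is nonzero as a bidifferential operator on $G\times\frakl^{\ast}$. So the lemma is only correct for $\frakl$-invariant $B$ --- which is how it is applied in the proof of Theorem \ref{MainEmbedding}, where $B\in\cP_{(\frakg,\frakl)}(m)$. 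The word ``any'' in the statement is loose; the paper's own proof silently imposes invariance, because its key step is an application of Lemma \ref{LemmaNo3}, which is an if-and-only-if characterization of $\frakl$-invariance as commutation with $\Delta^{m-1}(f\star_{\Gutt})$. A proof that avoids invariance of $B$, as yours is engineered to do, cannot exist; the bookkeeping you defer would never close up.

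Beyond this, even granting the invariance hypothesis, your text is a strategy outline rather than a proof: the decisive computation --- showing that the two linked families of corrections recombine into the extra tier of the coproduct fan --- is exactly what you postpone as ``where the real work lies,'' so nothing is actually established. For comparison, the paper's proof needs none of this expansion: it regards both sides as polydifferential operators, evaluates them on test functions $f_1,\ldots,f_{i+m-1}\in C^{\infty}(G)\otimes S(\frakl)\llbracket\hbar\rrbracket$, identifies the left-hand side with $\bigl(\Delta^{m-1}((f_1\cdots f_{i-1})\star_{\Gutt})\cdot(B_1\otimes\cdots\otimes B_m)\cdot\Delta^{m-1}(\varphi(B_{m+1}))\bigr)(f_i,\ldots,f_{i+m-1})$, commutes the two factors by Lemma \ref{LemmaNo3} (this is precisely where invariance of $B$ enters), and reads off the right-hand side. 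Your reduction of the index $i$ to $i=2$ by applying $\Delta\otimes_R\id^{\otimes(i+m-2)}$ is plausible in itself (it mirrors the step ``apply $\Delta^{k-2}\otimes\id$'' in the proof of Lemma \ref{LemmaNo5}), but it is unnecessary in the paper's scheme, since Lemma \ref{LemmaNo3} handles all $i$ simultaneously.
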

\begin{proof}
We treat both sides of the above formula as
$\fK\llbracket \hbar\rrbracket$-linear polydifferential operators from
$\bigotimes_{\fK\llbracket \hbar \rrbracket}^{i+m-1}
C^{\infty}(G)\otimes C^{\infty}(\frakl^{\ast})\llbracket \hbar \rrbracket$
to $C^{\infty}(G)\otimes C^{\infty}(\frakl^{\ast})\llbracket \hbar \rrbracket$.
In order to verify that
the polydifferential operators on both sides are the same,
we only need to check that
their actions on
$\bigotimes_{\fK\llbracket \hbar \rrbracket}^{i+m-1}
C^{\infty}(G)\otimes S(\frakl)\llbracket \hbar \rrbracket$ coincide.
Given any $f_1\, , \cdots\, , f_{i+m-1}\in C^{\infty}(G)\otimes S(\frakl)\llbracket \hbar \rrbracket$, we have
\begin{align*}
& \biggl( \!
(\Theta_{\Gutt})_{1\tinywedge \cdots \tinywedge i-1, i \tinywedge \cdots \tinywedge i+m-1}
\! \cdot \!
\bigl( \underbrace{1\otimes \cdots \otimes 1}_{i-1} \otimes \bigl( (B_1\otimes \cdots \otimes B_m)
\!\cdot \!
\Delta^{m-1}(\varphi(B_{m+1})) \bigr) \bigr)
\! \biggr)
(f_1 , \cdots , f_{i+m-1})
 \\
& =
\Delta^{m-1}\bigl( (f_1\cdots f_{i-1})\star_{\Gutt} \bigr)
\biggl(
(B_1\otimes \cdots \otimes B_m)\cdot \Delta^{m-1}(\varphi(B_{m+1}))(f_{i}\, ,\cdots\, , f_{i+m-1})
\biggr) \\
& =
\biggl(
(B_1\otimes \cdots \otimes B_m)\cdot \Delta^{m-1}(\varphi(B_{m+1}))\cdot
\Delta^{m-1}\bigl( (f_1\cdots f_{i-1})\star_{\Gutt}
\bigr)
\biggr)(f_{i}\, , \cdots\, , f_{i+m-1})
\\
&  \qquad\qquad \text{(by Lemma\eqref{LemmaNo3})}\\
& =\!\!
\biggl(\!\!
(\underbrace{1\otimes \cdots \otimes 1}_{i-1}\otimes B_1\otimes \cdots \otimes B_m)
\!\cdot\!
\Delta^{i+m-2}(\varphi(B_{m+1}))
\!\cdot\!
(\Theta_{\Gutt})_{1 \tinywedge\cdots \tinywedge i-1, i \tinywedge \cdots \tinywedge i+m-1}
\!\! \biggr)
(f_1\, , \cdots\, , f_{i+m-1}).
\end{align*}

Thus the statement is proved.	
\end{proof}

\begin{lemma}\label{LemmaNo5}
For any integer $k\geqslant 2$ and any $B=B_1\otimes \cdots \otimes B_{k-1}\otimes B_{k}\in U(\frakg)^{\otimes (k-1)}\otimes U(\frakl)\llbracket \hbar \rrbracket$, the following equality holds in $\underbrace{\mathscr{H}_{(\frakg, \frakl)}\otimes_{R} \cdots \otimes_{R} \mathscr{H}_{(\frakg, \frakl)}}_{k-1} \otimes_{\fK\llbracket \hbar\rrbracket}\fK\Laurent{\hbar}$:
\begin{equation}\label{Hard}
\begin{split}
& (\Theta_{\Gutt})_{1\tinywedge \cdots \tinywedge k-1, k}\cdot
\bigl(
((B_1\otimes \cdots \otimes B_{k-1})\cdot \Delta^{k-2}(\varphi(B_{k}))) \otimes 1 \bigr) =  \\
& \qquad\qquad(B_1\otimes \cdots \otimes B_{k-1}\otimes
(B_{k})_{1})
\cdot \Delta^{k-1}( \varphi((B_{k})_{2}) ) \cdot
(\Theta_{\Gutt})_{1 \tinywedge \cdots \tinywedge k-1, k}.
\end{split}
\end{equation}	
Here we use Sweedler's notation $\Delta(B_{k})=(B_{k})_1 \otimes (B_{k})_2$ to denote the coproduct of $B_{k}$.
\end{lemma}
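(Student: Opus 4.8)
The plan is to mimic the proof of Lemma \ref{LemmaNo4}: regard both sides of \eqref{Hard} as $\fK\llbracket\hbar\rrbracket$-linear, left $G$-invariant polydifferential operators from $\bigotimes_{\fK\llbracket\hbar\rrbracket}^{k}\bigl(C^{\infty}(G)\otimes C^{\infty}(\frakl^{\ast})\llbracket\hbar\rrbracket\bigr)$ to $C^{\infty}(G)\otimes C^{\infty}(\frakl^{\ast})\llbracket\hbar\rrbracket$, and prove their equality by evaluating on test arguments $f_1\, , \cdots\, , f_k\in C^{\infty}(G)\otimes S(\frakl)\llbracket\hbar\rrbracket$. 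This restriction suffices because, order by order in $\hbar$, all operators in sight have finite order and are therefore determined by their action on polynomial functions. Moreover, both sides are $\fK\Laurent{\hbar}$-linear in $B_k$, and, as in the proof of Lemma \ref{LemmaNo2}, every $B_k\in U(\frakl)\llbracket\hbar\rrbracket$ is a $\fK\Laurent{\hbar}$-combination of elements of $\mathrm{Im}(\pbw_{\hbar})$; hence I may assume $B_k=\pbw_{\hbar}f$ for some $f\in\hat{S}(\frakl)\llbracket\hbar\rrbracket$, so that $\varphi(B_k)=f\star_{\PBW}$ by Lemma \ref{LemmaPBW}.

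Unwinding the operadic products slot by slot, and using that $\Delta$ is multiplicative and that $B_1\, , \cdots\, , B_{k-1}\in U(\frakg)$ act on the $G$-factors while $f\star_{\PBW}$ acts on the $\frakl^{\ast}$-factor, so the two kinds of operators commute, the left-hand side of \eqref{Hard} evaluates on $(f_1\, , \cdots\, , f_k)$ to
\[
\Bigl((f\star_{\PBW})\bigl(B_1(f_1)\cdots B_{k-1}(f_{k-1})\bigr)\Bigr)\star_{\Gutt}f_k ,
\]
whereas the analogous slot-by-slot expansion of the right-hand side collects into the action of $(B_k)_1$ on the last $G$-slot together with that of $\varphi((B_k)_2)$, spread through $\Delta^{k-1}$ over all $k$ slots. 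The bridge between the two is the relation $\lambda_i\star_{\Gutt}=\lambda_i\star_{\PBW}+\hbar\, l_i$ (the linear case of the Gutt expansion \eqref{GuttStarProduct} recorded in the proof of Lemma \ref{LemmaNo3}), which exhibits left Gutt-multiplication by the coordinate $\lambda_i$ as the $\frakl^{\ast}$-operator $\lambda_i\star_{\PBW}\in\cD\llbracket\hbar\rrbracket$ corrected by the left invariant operator $\hbar\, l_i\in\hbar\,U(\frakl)\subset\hbar\,U(\frakg)$ on $G$. When $\varphi(B_k)=f\star_{\PBW}$ is transported across the merge $(\Theta_{\Gutt})_{1\tinywedge\cdots\tinywedge k-1,k}$, each occurrence of $\lambda_i\star_{\PBW}$ thereby acquires the $G$-directional correction $\hbar\, l_i$: the purely $\frakl^{\ast}$-directional pieces reassemble into $\varphi((B_k)_2)$, while the correction terms collect into the factor $(B_k)_1$ occupying the last slot, exactly reproducing the splitting $\Delta(\varphi(B_k))=\varphi((B_k)_1)\otimes\varphi((B_k)_2)$ furnished by the Hopf-morphism property of $\varphi$ and the inclusion $U(\frakl)\subset U(\frakg)$.

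The step I expect to be the crux is making this transport identity precise, i.e.\ proving rigorously that the informal replacement $\lambda_i\star_{\PBW}\rightsquigarrow\lambda_i\star_{\PBW}+\hbar\, l_i$ reproduces \emph{exactly} the coproduct splitting on the right-hand side, with the correct iterated coproducts $\Delta^{k-2}$ and $\Delta^{k-1}$ and with no spurious terms. To keep the Sweedler bookkeeping manageable, I would first settle the generator case $B_k=l_i$, where $\Delta(l_i)=l_i\otimes 1+1\otimes l_i$ makes the splitting completely explicit and the identity reduces, via the commutator relation \eqref{LemmaNo1Equation1} and the Gutt formula \eqref{GuttStarProduct}, to a finite computation at each order in $\hbar$. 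The general case then follows either by an induction on $k$ that peels one factor off $(\Theta_{\Gutt})_{1\tinywedge\cdots\tinywedge k-1,k}=(\Delta^{k-2}\otimes_{R}\id)\Theta_{\Gutt}$ using coassociativity \eqref{CoAssociativity}, or by expanding a general $B_k=\pbw_{\hbar}f$ through the Gutt formula and matching both evaluated expressions term by term. Once the transport identity is established, comparing the two evaluations completes the proof.
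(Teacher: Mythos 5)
Your overall skeleton is sound and is essentially the paper's own: evaluation on test arguments in $C^{\infty}(G)\otimes S(\frakl)\llbracket \hbar \rrbracket$, reduction of $B_k$ via $\fK\Laurent{\hbar}$-linearity to $\mathrm{Im}(\pbw_{\hbar})$ and then to basis elements $l_i$ (where $\Delta(l_i)=l_i\otimes 1+1\otimes l_i$ and $\varphi(l_i)=\frac{1}{\hbar}\lambda_i\star_{\PBW}$), and promotion from the $k=2$ case to general $k$ by applying $\Delta^{k-2}\otimes_R\id$ (the paper does exactly this rather than a genuine induction). Your computation of the left-hand evaluation as $\bigl((f\star_{\PBW})(B_1(f_1)\cdots B_{k-1}(f_{k-1}))\bigr)\star_{\Gutt}f_k$ is also correct, and rests on the same commutation fact the paper exploits, namely that the first leg of $\Theta_{\Gutt}$ lies in $\cD$, so the $U(\frakg)$-factors $B_1,\dots,B_{k-1}$ can be cancelled from both sides of \eqref{Hard}.

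The genuine gap is that the step you yourself flag as the crux --- the ``transport identity'' --- is where the entire content of the lemma sits, and your proposal does not prove it; moreover the tools you earmark for it are not the ones that close it. After your reductions, what must be shown is precisely
\begin{equation*}
\Theta_{\Gutt}\cdot(\lambda_{i}\star_{\PBW}\otimes 1)
=(1\otimes \hbar l_i)\cdot \Theta_{\Gutt}
+\Delta(\lambda_{i}\star_{\PBW})\cdot\Theta_{\Gutt}\, ,
\end{equation*}
which is the paper's Equation \eqref{Hard3}, and the decisive ingredient --- absent from your proposal --- is the \emph{associativity of the Gutt star product}. The paper's verification is three lines: for $f\in S(\frakl)\llbracket \hbar \rrbracket$ and $g\in C^{\infty}(G\times\frakl^{\ast})\llbracket \hbar \rrbracket$,
\begin{equation*}
(\lambda_{i}\star_{\PBW}f)\star_{\Gutt}g
=(\lambda_{i}\star_{\Gutt}f)\star_{\Gutt}g
=\lambda_{i}\star_{\Gutt}(f\star_{\Gutt}g)
=\lambda_{i}\star_{\PBW}(f\star_{\Gutt}g)
+\hbar\,\overset{\rightarrow}{l_{i}}(f\star_{\Gutt}g)\, ,
\end{equation*}
where the first and last equalities use \eqref{GuttStarProduct} (together with $G$-independence of $f$), and the middle one is associativity of $\star_{\Gutt}$. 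Note that your bridge relation $\lambda_i\star_{\Gutt}=\lambda_i\star_{\PBW}+\hbar\, l_i$ only identifies the \emph{left-multiplication operator}; the claim that this operator can be moved across the merge $\Theta_{\Gutt}$, i.e.\ that $(\lambda_i\star_{\Gutt}f)\star_{\Gutt}g=\lambda_i\star_{\Gutt}(f\star_{\Gutt}g)$, \emph{is} associativity, and nothing in your plan supplies it. Your proposed alternative --- an order-by-order computation in $\hbar$ via the commutator relation \eqref{LemmaNo1Equation1} --- would require explicit control of every $\hbar$-coefficient of $\star_{\PBW}$ and $\Theta_{\Gutt}$ and would in effect re-derive this consequence of associativity by hand; as stated it is a hope, not a proof. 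Once the displayed identity is proved this way, your reductions assemble into a complete argument along essentially the paper's lines.
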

\begin{proof}
Suppose that
$\Theta_{\PBW}=(\Theta_{\PBW})_1\otimes_{R}(\Theta_{\PBW})_2\in \cD\llbracket \hbar \rrbracket\otimes_{R} \cD \llbracket \hbar \rrbracket$.
Then the Gutt star product has the form
\begin{align*}
f\star_{\mathrm{Gutt}}g & =  \Theta_{\mathrm{Gutt}}(f\, , g) \\
& = \sum_k \sum_{i_1, \cdots, i_k} \frac{\hbar^{k}}{k!}
\biggl(
(\Theta_{\PBW})_1\frac{\partial^k}{\partial \lambda_{i_1}\cdots \partial\lambda_{i_k}}
\biggr)\otimes_{R}\biggl(
(\Theta_{\PBW})_2 \overset{\rightarrow}{l_{i_1}}\cdots
\overset{\rightarrow}{l_{i_k}} \biggr) (f\, , g)\, ,
\end{align*}
for all $f\, , g\in C^{\infty}(G)\otimes C^{\infty}(\frakl^{\ast})\llbracket \hbar\rrbracket$.
Thus the Gutt twistor has the form
\begin{align*}
\Theta_{\Gutt}
=(\Theta_{\Gutt})_1\otimes_{R} (\Theta_{\Gutt})_2\, ,
\,\,\,
\text{where $(\Theta_{\Gutt})_1\in \cD$ and
$(\Theta_{\Gutt})_2\in \mathscr{H}_{(\frakg, \frakl)}$}\, .
\end{align*}
Moreover, we have
$(\Theta_{\Gutt})_{1\tinywedge \cdots \tinywedge k-1, k}= \Delta^{k-2}(\Theta_{\Gutt})_1
\otimes_{R}(\Theta_{\Gutt})_2$.

Since $B_1\otimes \cdots \otimes B_{k-1}\in U(\frakg)^{\otimes (k-1)}$ and
$\Delta^{k-2}((\Theta_{\Gutt})_1)\in
\underbrace{\cD\llbracket \hbar \rrbracket \otimes_{R} \cdots \otimes_{R} \cD\llbracket \hbar \rrbracket}_{k-1}$ commutes in $\underbrace{\mathscr{H}_{(\frakg, \frakl)}\otimes_{R} \cdots \otimes_{R} \mathscr{H}_{(\frakg, \frakl)}}_{k-1}$,
Equation \eqref{Hard} is equivalent to
\begin{equation*}
(\Theta_{\Gutt})_{1\tinywedge \cdots \tinywedge k-1,k}\cdot
(\Delta^{k-2}(\varphi(B_k))\otimes 1)=
(\underbrace{1\otimes \cdots \otimes 1}_{k-1}
\otimes (B_{k})_1)\cdot
\Delta^{k-1}(\varphi((B_{k})_2)) \cdot
(\Theta_{\Gutt})_{1\tinywedge \cdots \tinywedge k-1,k}\, .
\end{equation*}

Observe that the above formula holds for any
$B_{k}\in U(\frakl)\llbracket \hbar \rrbracket$
if and only if it holds for $B_{k}=l_i$ where
$l_i$ goes through the set of the given vector basis of
$\frakl$, and note that $\Delta(l_i)=l_i\otimes 1 + 1\otimes l_i$, $\varphi(l_i)=\frac{1}{\hbar}\lambda_{i}\star_{\PBW}$,
we find that Equation \eqref{Hard} is equivalent to
\begin{equation}\label{Hard2}
\begin{split}
(\Theta_{\Gutt})_{1\tinywedge \cdots \tinywedge k-1,k}\cdot
(\Delta^{k-2}(\lambda_{i}\star_{\PBW})\otimes 1) & =
(\underbrace{1\otimes \cdots \otimes 1}_{k-1}
\otimes \hbar l_i)\cdot
(\Theta_{\Gutt})_{1\tinywedge \cdots \tinywedge k-1,k} \,\,  + \\
& \qquad\qquad\qquad
\Delta^{k-1}(\lambda_{i}\star_{\PBW}) \cdot
(\Theta_{\Gutt})_{1\tinywedge \cdots \tinywedge k-1,k}\, .
\end{split}
\end{equation}

We further notice that if the formula
\begin{equation}\label{Hard3}
\Theta_{\Gutt}\cdot	(\lambda_{i}\star_{\PBW}\otimes 1)=
(1\otimes \hbar l_i)\cdot \Theta_{\Gutt}+
\Delta(\lambda_{i}\star_{\PBW})\cdot\Theta_{\Gutt}
\end{equation}
holds, then applying $\Delta^{k-2}\otimes \id$ to the both sides, we can verify Equation \eqref{Hard2}.

Thus, we have seen that Equation \eqref{Hard} is equivalent to Equation \eqref{Hard3}.
Now we verify Equation \eqref{Hard3}.
Regard both sides of Equation \eqref{Hard3} as bi-differential operators which acts on
$C^{\infty}(\frakl^{\ast})\llbracket \hbar\rrbracket \otimes_{\fK\llbracket \hbar\rrbracket}
C^{\infty}(G \times \frakl^{\ast})\llbracket \hbar\rrbracket$,
we only need to check their actions on
$S(\frakl)\llbracket \hbar \rrbracket \otimes_{\fK\llbracket \hbar\rrbracket}
C^{\infty}(G \times \frakl^{\ast})\llbracket \hbar \rrbracket$
coincide.

Given any $f\in S(\frakl)\llbracket \hbar \rrbracket$
and $g\in C^{\infty}(G \times \frakl^{\ast})\llbracket \hbar \rrbracket$, we have
\begin{align*}
\bigl(\Theta_{\Gutt}\cdot (\lambda_{i}\star_{\PBW}\otimes 1) \bigr)(f\, , g) & =
(\lambda_{i}\star_{\PBW}f)\star_{\Gutt}g \\
& =	(\lambda_{i}\star_{\Gutt}f)\star_{\Gutt}g
\qquad \text{(by Equation \eqref{GuttStarProduct})} \\
& = \lambda_{i}\star_{\Gutt}(f\star_{\Gutt}g) \\
& = \lambda_{i}\star_{\PBW}(f\star_{\Gutt}g)+
\sum_{j}\frac{\partial \lambda_{i}}{\partial \lambda_{j}}
\hbar \overset{\rightarrow}{l_{j}}(f\star_{\Gutt}g) \quad \text{(by Equation \eqref{GuttStarProduct})}\\
& = \bigl(
(1\otimes \hbar l_i)\cdot \Theta_{\Gutt}
+
\Delta(\lambda_{i}\star_{\PBW})\cdot \Theta_{\Gutt}
\bigr)(f\, , g)\, .
\end{align*}

Therefore the statement is proved.
\end{proof}

\subsection{Proof of Theorem \ref{MainEmbedding}}
According to Diagram \eqref{PartialMorphism},
it is sufficient to verify that the following diagrams
\begin{equation*}
\begin{tikzcd}[row sep=4 em, column sep=2.5em]
\cP_{(\frakg,\frakl)}(n)\otimes \cP_{(\frakg,\frakl)}(m) \arrow[d, "\mathsf{c}_n\otimes \mathsf{c}_m"] \arrow[r, "\cdot\circ_{i}\cdot"] & \cP_{(\frakg,\frakl)}(n+m-1) \arrow[d, "\mathsf{c}_{n+m-1}"]	 \\
\cP_{\mathscr{H}_{(\frakg, \frakl)}}(n)\Laurent{\hbar}\otimes \cP_{\mathscr{H}_{(\frakg, \frakl)}}(m)\Laurent{\hbar} \arrow[r, "\cdot\circ_{i}\cdot"] & \cP_{\mathscr{H}_{(\frakg, \frakl)}}(n+m-1)\Laurent{\hbar}
\end{tikzcd}	
\end{equation*}
commute for any integers $n\geqslant 0$, $m\geqslant 0$,
and $1\leqslant i \leqslant n$.	

Given any $A=A_1\otimes \cdots \otimes A_n \otimes A_{n+1}\in \cP_{(\frakg,\frakl)}(n)$,
$B=B_1\otimes \cdots \otimes B_m \otimes B_{m+1}\in \cP_{(\frakg,\frakl)}(m)$,
where $A_{n+1}\, , B_{m+1}\in U(\frakl)\llbracket \hbar \rrbracket$.
By Equations \eqref{PartialComposition}, \eqref{Composition}, \eqref{CnMap}, we have
\begin{equation*}
\begin{split}	
& \mathsf{c}_n(A) \circ_{i} \mathsf{c}_m(B) \\	
= & (A_1\otimes \cdots \otimes \Delta^{m-1} A_i \otimes
\cdots \otimes A_n)\cdot
\Delta^{n+m-2}(\varphi(A_{n+1}))
\cdot \\
&
\biggl(
(\Theta_{\Gutt})_{1\tinywedge \cdots \tinywedge n+m-2, n+m-1}
\boldsymbol{\cdot}
\cdots
\boldsymbol{\cdot}
(\Theta_{\Gutt})_{1\tinywedge \cdots \tinywedge i-1+m, i+m}
\boldsymbol{\cdot}
(\Theta_{\Gutt})_{1 \tinywedge \cdots \tinywedge i-1, i \tinywedge \cdots \tinywedge i+m-1}
\boldsymbol{\cdot} \\
& \qquad\qquad\qquad\qquad\qquad\qquad\qquad\qquad\qquad\qquad\qquad
(\Theta_{\Gutt})_{1 \tinywedge \cdots \tinywedge i-2, i-1}
\boldsymbol{\cdot}
\cdots
\boldsymbol{\cdot}
(\Theta_{\Gutt})_{1,2}
\biggr) \cdot
\\
&
\small{
\biggl( \underbrace{1\otimes \cdots \otimes 1}_{i-1}
\otimes
\biggl( (B_1\otimes \cdots \otimes B_m)\cdot \Delta^{m-1}(\varphi(B_{m+1}))
\cdot
\bigl(
(\Theta_{\Gutt})_{1 \tinywedge \cdots \tinywedge m-1, m}
\boldsymbol{\cdot}
\cdots
\boldsymbol{\cdot}
(\Theta_{\Gutt})_{1,2}
\bigr )
\biggr )
\otimes \cdots \otimes 1
\biggr)
} \\
= & (A_1\otimes \cdots \otimes \Delta^{m-1} A_i \otimes
\cdots \otimes A_n)\cdot
\Delta^{n+m-2}(\varphi(A_{n+1}))
\cdot \\
& \biggl(
(\Theta_{\Gutt})_{1\tinywedge \cdots \tinywedge n+m-2, n+m-1}
\boldsymbol{\cdot}
\cdots
\boldsymbol{\cdot}
(\Theta_{\Gutt})_{1\tinywedge \cdots \tinywedge i-1+m, i+m}
\boldsymbol{\cdot}
(\Theta_{\Gutt})_{1 \tinywedge \cdots \tinywedge i-1, i \tinywedge \cdots \tinywedge i+m-1}
\boldsymbol{\cdot} \\
& \qquad\qquad\qquad\qquad\qquad\qquad\qquad\qquad\qquad\qquad\qquad
(\Theta_{\Gutt})_{1 \tinywedge \cdots \tinywedge i-2, i-1}
\boldsymbol{\cdot}
\cdots
\boldsymbol{\cdot}
(\Theta_{\Gutt})_{1,2}
\biggr) \cdot
\\
&
\qquad
\biggl( \underbrace{1\otimes \cdots \otimes 1}_{i-1} \otimes
\biggl( (B_1\otimes \cdots \otimes B_m)\cdot \Delta^{m-1}(\varphi(B_{m+1})) \biggr)
\otimes \cdots \otimes 1
\biggr) \cdot \\
&
\qquad\qquad
\biggl(
\underbrace{1\otimes \cdots \otimes 1}_{i-1} \otimes
\biggl(
(\Theta_{\Gutt})_{1 \tinywedge \cdots \tinywedge m-1, m}
\boldsymbol{\cdot}
\cdots
\boldsymbol{\cdot}
(\Theta_{\Gutt})_{1,2}
\biggr)
\otimes \cdots \otimes 1
\biggr)\, . \\
\end{split}
\end{equation*}

By Lemma \ref{LemmaNo4},
we can exchange
the positions of
$(\underbrace{1\otimes \cdots \otimes 1}_{i-1} \otimes
B_1\otimes \cdots \otimes B_m) \cdot \Delta^{m-1}(\varphi(B_{m+1}))$
and
$(\Theta_{\Gutt})_{1\tinywedge \cdots \tinywedge i-1, i\tinywedge \cdots \tinywedge i+m-1}
\boldsymbol{\cdot}
(\Theta_{\Gutt})_{1 \tinywedge \cdots \tinywedge i-2, i-1}
\boldsymbol{\cdot}
\cdots
\boldsymbol{\cdot}
(\Theta_{\Gutt})_{1,2}$ in the above formula, and obtain
\begin{equation*}
\begin{split}
& \mathsf{c}_{n}(A)\circ_i \mathsf{c}_{m}(B) \\
= &
(A_1\otimes \cdots \otimes \Delta^{m-1} A_i \otimes
\cdots \otimes A_n)\cdot
\Delta^{n+m-2}(\varphi(A_{n+1}))
\cdot \\
& \quad \bigl(
(\Theta_{\Gutt})_{1 \tinywedge \cdots \tinywedge n+m-2, n+m-1}
\boldsymbol{\cdot}
\cdots
\boldsymbol{\cdot}
(\Theta_{\Gutt})_{1 \tinywedge \cdots \tinywedge i+m-1, i+m}
\bigr) \cdot \\
&
\quad\,\,
\biggl(\biggl(
(\underbrace{1\otimes \cdots \otimes 1}_{i-1} \otimes
B_1\otimes \cdots \otimes B_m) \cdot \Delta^{i+m-2}(\varphi(B_{m+1})) \biggr)
\otimes \cdots \otimes 1
\biggr) \cdot \\
&
\quad\,\,\,\,\,
\bigl(
(\Theta_{\Gutt})_{1\tinywedge \cdots \tinywedge i-1, i\tinywedge \cdots \tinywedge i+m-1}
\boldsymbol{\cdot}
(\Theta_{\Gutt})_{1 \tinywedge \cdots \tinywedge i-2, i-1}
\boldsymbol{\cdot}
\cdots
\boldsymbol{\cdot}
(\Theta_{\Gutt})_{1,2}
\bigr)
\cdot
\\
&
\quad\,\,\,\,\,\,\,\,\,
\biggl(
\underbrace{1\otimes \cdots \otimes 1}_{i-1} \otimes
\biggl(
(\Theta_{\Gutt})_{1\tinywedge \cdots \tinywedge m-1, m}
\boldsymbol{\cdot}
\cdots
\boldsymbol{\cdot}
(\Theta_{\Gutt})_{1,2}
\biggr)
\otimes \cdots \otimes 1
\biggr)\, . \\
\end{split}
\end{equation*}

By using Lemma \ref{LemmaNo5} \,$n-i$\,
times to move
$(\underbrace{1\otimes \cdots \otimes 1}_{i-1} \otimes
B_1\otimes \cdots \otimes B_m) \cdot \Delta^{i+m-2}(\varphi(B_{m+1}))$
to pass through
$(\Theta_{\Gutt})_{1 \tinywedge \cdots \tinywedge n+m-2, n+m-1}
\boldsymbol{\cdot}
\cdots
\boldsymbol{\cdot}
(\Theta_{\Gutt})_{1 \tinywedge \cdots \tinywedge i+m-1, i+m}$
in the above formula, we obtain
\begin{align}
& \mathsf{c}_{n}(A)\circ_{i} \mathsf{c}_{m}(B) \notag \\
= &
\biggl(
A_1 \otimes \cdots \otimes
\bigl((\Delta^{m-1}A_i)\cdot (B_1\otimes \cdots \otimes B_m) \bigr)
\otimes
A_{i+1}\cdot (B_{m+1})_1 \otimes \cdots \otimes A_{n}\cdot (B_{m+1})_{n-i}
\biggr) \cdot \notag\\
&
\Delta^{n+m-2}(\varphi(A_{n+1}\cdot(B_{m+1})_{n-i+1} ))
\cdot  \notag \\
&
\biggl(
(\Theta_{\Gutt})_{1\tinywedge \cdots \tinywedge n+m-2, n+m-1}
\boldsymbol{\cdot}
\cdots
\boldsymbol{\cdot}
(\Theta_{\Gutt})_{1\tinywedge \cdots \tinywedge i-1+m, i+m}
\boldsymbol{\cdot}
(\Theta_{\Gutt})_{1 \tinywedge \cdots \tinywedge i-1, i \tinywedge \cdots \tinywedge i+m-1}
\boldsymbol{\cdot} \notag \\
& \qquad\qquad\qquad\qquad\qquad\qquad\qquad\qquad\qquad\qquad\qquad
(\Theta_{\Gutt})_{1 \tinywedge \cdots \tinywedge i-2, i-1}
\boldsymbol{\cdot}
\cdots
\boldsymbol{\cdot}
(\Theta_{\Gutt})_{1,2}
\biggr) \cdot \notag
\\
&
\biggl(
\underbrace{1\otimes \cdots \otimes 1}_{i-1} \otimes
\biggl(
(\Theta_{\Gutt})_{1\tinywedge \cdots \tinywedge m-1, m}
\boldsymbol{\cdot}
\cdots
\boldsymbol{\cdot}
(\Theta_{\Gutt})_{1,2}
\biggr)
\otimes \cdots \otimes 1
\biggr)\, . \label{EquationFinal}
\end{align}
Here the terms $(B_{m+1})_{1}$, $\cdots$, $(B_{m+1})_{n-i}$, $(B_{m+1})_{n-i+1}$ come from
the coproduct
$\Delta^{n-i}(B_{m+1})=(B_{m+1})_{1}\otimes \cdots \otimes (B_{m+1})_{n-i} \otimes (B_{m+1})_{n-i+1}$.

In the above formula, the polydifferential operator
\begin{equation*}
\begin{split}	
& \biggl(
(\Theta_{\Gutt})_{1\tinywedge \cdots \tinywedge n+m-2, n+m-1}
\boldsymbol{\cdot}
\cdots
\boldsymbol{\cdot}
(\Theta_{\Gutt})_{1\tinywedge \cdots \tinywedge i-1+m, i+m}
\boldsymbol{\cdot}
(\Theta_{\Gutt})_{1 \tinywedge \cdots \tinywedge i-1, i \tinywedge \cdots \tinywedge i+m-1}
\boldsymbol{\cdot} \\
& \qquad\qquad\qquad\qquad\qquad\qquad\qquad\qquad\qquad\qquad\qquad
(\Theta_{\Gutt})_{1 \tinywedge \cdots \tinywedge i-2, i-1}
\boldsymbol{\cdot}
\cdots
\boldsymbol{\cdot}
(\Theta_{\Gutt})_{1,2}
\biggr) \cdot
\\
&
\biggl(
\underbrace{1\otimes \cdots \otimes 1}_{i-1} \otimes
\biggl(
(\Theta_{\Gutt})_{1 \tinywedge \cdots \tinywedge m-1, m}
\boldsymbol{\cdot}
\cdots
\boldsymbol{\cdot}
(\Theta_{\Gutt})_{1,2}
\biggr)
\otimes \cdots \otimes 1
\biggr)	\\
= &
(\underbrace{\id \otimes \cdots \id}_{i-1}
\otimes \Delta^{m-1} \otimes \cdots \otimes \id)
\bigl(
(\Theta_{\Gutt})_{1 \tinywedge \cdots \tinywedge n-1, n}
\boldsymbol{\cdot}
\cdots
\boldsymbol{\cdot}
(\Theta_{\Gutt})_{1,2}
\bigr)\cdot \\
&
\biggl(
\underbrace{1\otimes \cdots \otimes 1}_{i-1} \otimes
\biggl(
(\Theta_{\Gutt})_{1 \tinywedge \cdots \tinywedge m-1, m}
\boldsymbol{\cdot}
\cdots
\boldsymbol{\cdot}
(\Theta_{\Gutt})_{1,2}
\biggr)
\otimes \cdots \otimes 1
\biggr)
\end{split}
\end{equation*}
corresponds to the operation
\begin{equation*}
(
\cdots(y_1\star_{\Gutt}y_2)	\star_{\Gutt} \cdots )\star_{\Gutt} y_{i-1})\star_{\Gutt}
\biggl(
\cdots (y_i\star_{\Gutt}y_{i+1}) \star_{\Gutt}
\cdots )\star_{\Gutt} y_{i+m-1}
\biggr)\star_{\Gutt}  \cdots
)\star_{\Gutt}y_{n+m-1}\, ,
\end{equation*}
for any $y_1\, , \cdots\, , y_{n+m-1}\in C^{\infty}(G)\otimes C^{\infty}(\frakl^{\ast})\llbracket \hbar \rrbracket$.
Since $\star_{\Gutt}$ is an associative product, an alternative form of the above operation is
\begin{equation*}
(\cdots(y_1\star_{\Gutt}y_2)	\star_{\Gutt} \cdots )\star_{\Gutt}y_{i-1})\star_{\Gutt} y_{i}) \star_{\Gutt} y_{i+1}) \star_{\Gutt} \cdots )\star_{\Gutt} y_{n+m-1}\, ,
\end{equation*}
for any $y_1\, , \cdots\, , y_{n+m-1}\in C^{\infty}(G)\otimes C^{\infty}(\frakl^{\ast})\llbracket \hbar \rrbracket$,
which corresponds to the polydifferential operator
\begin{equation*}
(\Theta_{\Gutt})_{1 \tinywedge \cdots \tinywedge n+m-2, n+m-1}
\boldsymbol{\cdot}
\cdots
\boldsymbol{\cdot}
(\Theta_{\Gutt})_{1,2}\, .	
\end{equation*}

Inserting
\begin{equation*}
\begin{split}
& \biggl(
(\Theta_{\Gutt})_{1\tinywedge \cdots \tinywedge n+m-2, n+m-1}
\boldsymbol{\cdot}
\cdots
\boldsymbol{\cdot}
(\Theta_{\Gutt})_{1\tinywedge \cdots \tinywedge i-1+m, i+m}
\boldsymbol{\cdot}
(\Theta_{\Gutt})_{1 \tinywedge \cdots \tinywedge i-1, i \tinywedge \cdots \tinywedge i+m-1}
\boldsymbol{\cdot} \\
& \qquad\qquad\qquad\qquad\qquad\qquad\qquad\qquad\qquad\qquad\qquad
(\Theta_{\Gutt})_{1 \tinywedge \cdots \tinywedge i-2, i-1}
\boldsymbol{\cdot}
\cdots
\boldsymbol{\cdot}
(\Theta_{\Gutt})_{1,2}
\biggr) \cdot
\\
&
\biggl(
\underbrace{1\otimes \cdots \otimes 1}_{i-1} \otimes
\biggl(
(\Theta_{\Gutt})_{1 \tinywedge \cdots \tinywedge m-1, m}
\boldsymbol{\cdot}
\cdots
\boldsymbol{\cdot}
(\Theta_{\Gutt})_{1,2}
\biggr)
\otimes \cdots \otimes 1
\biggr)	\\= & (\Theta_{\Gutt})_{1 \tinywedge \cdots \tinywedge n+m-2, n+m-1}
\boldsymbol{\cdot}
\cdots
\boldsymbol{\cdot}
(\Theta_{\Gutt})_{1,2}
\end{split}
\end{equation*}
to Equation \eqref{EquationFinal},
we finally verify that
\begin{align*}
& \mathsf{c}_{n}(A)\circ_{i} \mathsf{c}_{m}(B) \\
= &
\biggl(
A_1 \otimes \cdots \otimes
\bigl((\Delta^{m-1}A_i)\cdot (B_1\otimes \cdots \otimes B_m) \bigr)
\otimes
A_{i+1}\cdot (B_{m+1})_1 \otimes \cdots \otimes A_{n}\cdot (B_{m+1})_{n-i}
\biggr) \cdot \\
& \,\,\,\,\,\,\,\,
\Delta^{n+m-2}(\varphi(A_{n+1}\cdot(B_{m+1})_{n-i+1} ))
\cdot
\bigl(
(\Theta_{\Gutt})_{1 \tinywedge \cdots \tinywedge  n+m-2, n+m-1}
\boldsymbol{\cdot}
\cdots
\boldsymbol{\cdot}
(\Theta_{\Gutt})_{1,2}
\bigr) \\
= & \mathsf{c}_{n+m-1}(A\circ_{i} B)\, .	
\qquad\quad
\text{(by Equations \eqref{PartialCompositionADT} and \eqref{CnMap})}
\end{align*}

It is clear that the maps $\mathsf{c}_{n}$ are injective, therefore
$\mathsf{c}=\oplus_{n=0}^{\infty}\mathsf{c}_{n}$
is an injective $B_{\infty}$-strict morphism.
Hence the proof of Theorem \ref{MainEmbedding} is finished.

\subsection{Formal dynamical twists and algebraic dynamical twists}

The Lie algebra $\frakl$ acts on the spaces
$U(\frakg)^{\otimes 2}\otimes \hat{S}(\frakl)\llbracket \hbar\rrbracket$ by adjoint action.
The space of $\frakl$-invariant element in
$U(\frakg)^{\otimes 2}\otimes \hat{S}(\frakl)\llbracket \hbar\rrbracket$ is denoted by
$(U(\frakg)^{\otimes 2}\otimes \hat{S}(\frakl)\llbracket \hbar\rrbracket)^{\frakl}$.

\begin{definition}[\cites{Calaque-QuantizationFormal, Enriquez-Etingof1}]
An element
$F\in (U(\frakg)^{\otimes 2}\otimes \hat{S}(\frakl)\llbracket \hbar\rrbracket)^{\frakl}$
is called a \emph{formal dynamical twist}
if it satisfies the \emph{dynamical twist equation (DTE)}
\begin{equation*}
F_{1\tinywedge 2,3}(\lambda)\star_{\PBW} F_{1,2}(\lambda+\hbar h_3)=
F_{1,2 \tinywedge 3}(\lambda)\star_{\PBW} F_{2,3}(\lambda)\, ,	
\end{equation*}
and the condition $F=1\otimes 1\otimes 1 + O(\hbar)$.
Here
\begin{equation*}
F_{1,2}(\lambda+\hbar h_3):=\sum_{k=0}^{\infty}\frac{\hbar^k}{ k! } \sum_{i_1, \cdots, i_k}
(\frac{\partial}{\partial \lambda_{i_1}}\cdots \frac{\partial}{\partial\lambda_{i_k}}F)(\lambda)\otimes (h_{i_1}\cdots h_{i_k})	 \in  U(\frakg)^{\otimes 2} \otimes U(\frakl)
\otimes \hat{S}(\frakl)\llbracket \hbar \rrbracket\, .
\end{equation*}
\end{definition}

\begin{definition}[\cites{Calaque-QuantizationFormal, Enriquez-Etingof1}]\label{Def:algebraicDT}
An element
$K\in (
U(\frakg)^{\otimes 2} \otimes U(\frakl) \llbracket \hbar \rrbracket)^{\frakl}$ is called
an \emph{algebraic dynamical twist} if
satisfies the \emph{algebraic dynamical twist equation (ADTE)}
\begin{equation}\label{ADTE}
K_{1 \tinywedge 2,3,4}\cdot	K_{1,2,3 \tinywedge 4} = K_{1,2 \tinywedge 3,4}\cdot K_{2,3,4}\, ,
\end{equation}
and it has the \textbf{$\hbar$-adic valuation property} \cite{Calaque-QuantizationFormal},
namely
$K=1+\sum_{n\geqslant 1} \hbar^n K_n$ satisfies
$K_n\in U(\frakg)^{\otimes 2}\otimes (U(\frakl))_{\leqslant n}$.
Here $(U(\frakl))_{\leqslant n}:=
\Ker(\id - \eta\circ\epsilon)^{n+1}\circ \Delta^n $,
where
$\eta: \fK\rightarrow U(\frakl)$
and $\epsilon: U(\frakl)\rightarrow \fK$ denote
the unit and counit maps of $U(\frakl)$.
\end{definition}

Observe that the map
\begin{equation}\label{Correspondence}
\begin{split}
& (U(\frakg)^{\otimes 2}\otimes \hat{S}(\frakl)\llbracket \hbar\rrbracket)^{\frakl}
\rightarrow 	
(U(\frakg)^{\otimes 2} \otimes U(\frakl) \bigl \llbracket \hbar \rrbracket)^{\frakl} \\
& \qquad\qquad\qquad\quad\,\,\,
F \mapsto K:=(\id\otimes\id\otimes \pbw_{\hbar})F=(\id\otimes\id\otimes\pbw)F(\hbar \lambda)
\end{split}
\end{equation}
is a bijection
between the set of $F$ which satisfies
$F=1\otimes 1\otimes 1 + O(\hbar)$
and the set of $K$ which has the $\hbar$-adic valuation property.
Furthermore, this correspondence can be strengthened:
\begin{proposition}[\cite{Enriquez-Etingof1}]\label{FormaltoAlgebraic}
The map \eqref{Correspondence}
is a bijection
between
the set of formal dynamical twists and the set of
algebraic dynamical twists.	
\end{proposition}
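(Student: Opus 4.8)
The plan is to upgrade the bijection \eqref{Correspondence} between the elements $F=1\otimes 1\otimes 1+O(\hbar)$ and the elements $K$ with the $\hbar$-adic valuation property by showing that under this map the dynamical twist equation (DTE) is carried \emph{exactly} onto the algebraic dynamical twist equation (ADTE) \eqref{ADTE}. Since $\pbw_{\hbar}$ is injective, so is $\id^{\otimes 3}\otimes\pbw_{\hbar}$; hence it suffices to prove that applying this map (with $\pbw_{\hbar}$ on the function slot) sends the DTE, an identity in $U(\frakg)^{\otimes 3}\otimes\hat{S}(\frakl)\llbracket\hbar\rrbracket$, to the ADTE, an identity in $U(\frakg)^{\otimes 3}\otimes U(\frakl)\llbracket\hbar\rrbracket$, and conversely. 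Two structural facts drive the translation. First, Equation \eqref{PBWStarProduct} says that $\pbw_{\hbar}$ converts $\star_{\PBW}$ into the ordinary product of $U(\frakl)$, so that $\id^{\otimes 3}\otimes\pbw_{\hbar}$ is a homomorphism from the tensor-product algebra $U(\frakg)^{\otimes 3}\otimes(\hat{S}(\frakl),\star_{\PBW})$ to $U(\frakg)^{\otimes 3}\otimes U(\frakl)$. Second, the symmetrization $\pbw\colon S(\frakl)\to U(\frakl)$ is an isomorphism of cocommutative coalgebras, intertwining the coproduct of $U(\frakl)$ with the comultiplication $g(\lambda)\mapsto g(\lambda+\lambda')$ on $S(\frakl)$.

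The crux is a single lemma identifying the dynamical shift with the coproduct of $U(\frakl)$: writing $K=(\id\otimes\id\otimes\pbw_{\hbar})F$, I claim
\begin{equation*}
(\id^{\otimes 3}\otimes\pbw_{\hbar})\bigl(F_{1,2}(\lambda+\hbar h_3)\bigr)=K_{1,2,3\tinywedge 4}\, .
\end{equation*}
To prove it I would expand the left-hand side by the defining series of $F_{1,2}(\lambda+\hbar h_3)$; since mixed partials commute, the product $h_{i_1}\cdots h_{i_k}$ sitting in slot $3$ may be symmetrized, turning it into $\pbw(\lambda_{i_1}\cdots\lambda_{i_k})$, and the series becomes $(\pbw\otimes\pbw)$ applied to the Taylor expansion of $F_3(\hbar\lambda+\hbar\lambda')$. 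The latter is exactly the $S(\frakl)$-comultiplication of $F_3(\hbar\,\cdot\,)$, so the coalgebra property of $\pbw$ turns it into $\Delta(\pbw_{\hbar}F_3)$; cocommutativity of $U(\frakl)$ then matches the two factors with the slots $3\tinywedge 4$ in the correct order, and the powers of $\hbar$ agree by the very definition $\pbw_{\hbar}f=\pbw(f(\hbar\lambda))$. The remaining terms require no shift: applying $\id^{\otimes 3}\otimes\pbw_{\hbar}$ to $F_{1\tinywedge 2,3}$, $F_{1,2\tinywedge 3}$ and $F_{2,3}$ simply moves the function factor into the new $U(\frakl)$ slot, producing $K_{1\tinywedge 2,3,4}$, $K_{1,2\tinywedge 3,4}$ and $K_{2,3,4}$ respectively.

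With these translations recorded, I would apply $\id^{\otimes 3}\otimes\pbw_{\hbar}$ to both sides of the DTE. On the left, the first structural fact turns $\star_{\PBW}$ into the product of $U(\frakl)$ while the three $U(\frakg)$ slots multiply componentwise, giving $K_{1\tinywedge 2,3,4}\cdot K_{1,2,3\tinywedge 4}$; on the right it gives $K_{1,2\tinywedge 3,4}\cdot K_{2,3,4}$. Hence the DTE is sent to the ADTE \eqref{ADTE}, and, running the argument backwards, the ADTE forces the DTE; by injectivity of $\id^{\otimes 3}\otimes\pbw_{\hbar}$ the two are equivalent, which together with the set-level bijection \eqref{Correspondence} yields the desired correspondence between formal and algebraic dynamical twists. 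I expect the main obstacle to be the key lemma above: the careful bookkeeping of tensor positions, the replacement of the ordered product $h_{i_1}\cdots h_{i_k}$ by its symmetrization, and the appeal to cocommutativity to align the two coproduct slots, all while tracking the powers of $\hbar$ introduced by $\pbw_{\hbar}$ against those in the shift series.
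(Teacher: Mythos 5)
Your proof is correct, but there is nothing in the paper to compare it against: Proposition \ref{FormaltoAlgebraic} is stated without proof and attributed to \cite{Enriquez-Etingof1}; the only ingredient the paper itself supplies is the observation, recorded just before the statement, that \eqref{Correspondence} is a set-level bijection between elements $F=1\otimes 1\otimes 1+O(\hbar)$ and elements $K$ with the $\hbar$-adic valuation property. Your argument fills in exactly what is omitted, and it is the expected one. The reduction is right: by \eqref{PBWStarProduct}, the map $\id^{\otimes 3}\otimes\pbw_{\hbar}$ is an injective algebra morphism from $U(\frakg)^{\otimes 3}\otimes\bigl(\hat{S}(\frakl)\llbracket\hbar\rrbracket,\star_{\PBW}\bigr)$ (products taken componentwise in the $U(\frakg)$ slots) to $U(\frakg)^{\otimes 3}\otimes U(\frakl)\llbracket\hbar\rrbracket$, and it visibly sends $F_{1\tinywedge 2,3}$, $F_{1,2\tinywedge 3}$, $F_{2,3}$ to $K_{1\tinywedge 2,3,4}$, $K_{1,2\tinywedge 3,4}$, $K_{2,3,4}$, so everything rests on your lemma
\begin{equation*}
(\id^{\otimes 3}\otimes\pbw_{\hbar})\bigl(F_{1,2}(\lambda+\hbar h_3)\bigr)=K_{1,2,3\tinywedge 4}\, .
\end{equation*}
That lemma is true, and your sketch identifies the correct mechanism: since $\partial_{i_1}\cdots\partial_{i_k}\bigl(F_3(\hbar\lambda)\bigr)=\hbar^{k}(\partial_{i_1}\cdots\partial_{i_k}F_3)(\hbar\lambda)$, the $\hbar^{k}/k!$ weights of the shift series are exactly those of the Taylor expansion of $F_3(\hbar\lambda+\hbar\lambda')$, i.e.\ of the $S(\frakl)$-coproduct of $F_3(\hbar\,\cdot\,)$, and the coalgebra-isomorphism property of $\pbw$ converts that series into $\Delta(\pbw_{\hbar}F_3)=\Delta(K_3)$. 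The two details you flag are genuinely the delicate ones, and you resolve both correctly: summing over ordered tuples $(i_1,\ldots,i_k)$ against the symmetric coefficients $\partial_{i_1}\cdots\partial_{i_k}F_3$ is what permits replacing $h_{i_1}\cdots h_{i_k}$ by the symmetrized element $\pbw(\lambda_{i_1}\cdots\lambda_{i_k})$, and cocommutativity of $U(\frakl)$ is needed because the raw Taylor expansion places the derivative factor in the first coproduct slot and the monomial factor in the second, whereas the convention $K_{1,2,3\tinywedge 4}$ requires the opposite order (the $h$'s in slot $3$, the derivatives in slot $4$). With the lemma in hand, injectivity of $\id^{\otimes 3}\otimes\pbw_{\hbar}$ gives the equivalence of the DTE with the ADTE \eqref{ADTE}, and combined with the set-level bijection this proves the proposition.
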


Given any $K\in (U(\frakg)^{\otimes 2}\otimes U(\frakl)\llbracket \hbar\rrbracket)^{\frakl}$, write it as
$K=K_1\otimes K_2 \otimes K_3$
where
$K_1\, , K_2\in U(\frakg)$ and $K_3\in U(\frakl)\llbracket \hbar \rrbracket$.
By Equation \eqref{CnMap}, in this case we have
\begin{equation*}\label{CK}
\mathsf{c}(K)=(K_1\otimes K_2)\cdot \Delta(\varphi(K_3))
\cdot \Theta_{\Gutt}\in
(\mathscr{H}_{(\frakg, \frakl)}\otimes_{R} \mathscr{H}_{(\frakg, \frakl)})\otimes_{\fK\llbracket \hbar \rrbracket}\fK\Laurent{\hbar}\, .
\end{equation*}

Given any $F\in (U(\frakg)^{\otimes 2}\otimes \hat{S}(\frakl)\llbracket \hbar\rrbracket)^{\frakl}$, write it as
$F=F_1\otimes F_2 \otimes F_3$
where $F_1\, , F_2\in U(\frakg)$ and $F_3\in \hat{S}(\frakl)\llbracket \hbar \rrbracket$.
Define
\begin{equation}\label{CurlyF}
\mathscr{F}:=
(F_1\otimes F_2)\cdot \Delta(F_3 \star_{\PBW}) \cdot \Theta_{\Gutt}\in \mathscr{H}_{(\frakg, \frakl)}\otimes_{R} \mathscr{H}_{(\frakg, \frakl)}\, .
\end{equation}

\begin{lemma}
For any $F\in (U(\frakg)^{\otimes 2}\otimes \hat{S}(\frakl)\llbracket \hbar\rrbracket)^{\frakl}$, we have
\begin{equation}\label{FandK}
\mathsf{c}\bigl(\id\otimes\id\otimes\pbw_{\hbar}(F)\bigr)
=\mathscr{F}	\, .
\end{equation}	
\end{lemma}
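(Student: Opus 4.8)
The plan is to observe that both sides of \eqref{FandK} have already been put in nearly identical closed form, so the identity reduces to a single substitution justified by Lemma \ref{LemmaPBW}. First I would unwind the left-hand side. Writing $F=F_1\otimes F_2\otimes F_3$ with $F_1, F_2\in U(\frakg)$ and $F_3\in \hat{S}(\frakl)\llbracket \hbar \rrbracket$, the element $K:=(\id\otimes\id\otimes\pbw_{\hbar})(F)$ is $K=F_1\otimes F_2\otimes \pbw_{\hbar}(F_3)$, which lies in $(U(\frakg)^{\otimes 2}\otimes U(\frakl)\llbracket \hbar \rrbracket)^{\frakl}$ since $\pbw_{\hbar}$ is $\fK\llbracket \hbar \rrbracket$-linear and $F$ is $\frakl$-invariant. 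In the notation $K=K_1\otimes K_2\otimes K_3$ this means $K_1=F_1$, $K_2=F_2$, and $K_3=\pbw_{\hbar}(F_3)$.

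Next I would apply the $n=2$ specialization of the defining formula \eqref{CnMap}, recorded as \eqref{CK}, to $K$. This yields
\begin{equation*}
\mathsf{c}(K)=(F_1\otimes F_2)\cdot \Delta\bigl(\varphi(\pbw_{\hbar}(F_3))\bigr)\cdot \Theta_{\Gutt}\, ,
\end{equation*}
in $(\mathscr{H}_{(\frakg, \frakl)}\otimes_{R} \mathscr{H}_{(\frakg, \frakl)})\otimes_{\fK\llbracket \hbar \rrbracket}\fK\Laurent{\hbar}$. Comparing this with the definition \eqref{CurlyF} of $\mathscr{F}$, the only discrepancy is the inner factor: the left-hand side carries $\varphi(\pbw_{\hbar}(F_3))$ while $\mathscr{F}$ carries $F_3\star_{\PBW}$.

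The final step is to invoke Lemma \ref{LemmaPBW}, whose content is precisely the identity $\varphi(\pbw_{\hbar}f)=f\star_{\PBW}$ in $\cD\llbracket \hbar \rrbracket$ for every $f\in \hat{S}(\frakl)\llbracket \hbar \rrbracket$. Taking $f=F_3$ and substituting into the expression above gives $\mathsf{c}(K)=(F_1\otimes F_2)\cdot \Delta(F_3\star_{\PBW})\cdot \Theta_{\Gutt}=\mathscr{F}$, as desired. I do not anticipate any genuine obstacle here, as this is a direct bookkeeping consequence of the previously established Lemma \ref{LemmaPBW}; the only point demanding a word of care is confirming that applying the $R$-bimodule map $\Delta$ commutes with this substitution, which is immediate since $\varphi(\pbw_{\hbar}(F_3))$ and $F_3\star_{\PBW}$ are literally equal elements of $\cD\llbracket \hbar \rrbracket\subset \mathscr{H}_{(\frakg, \frakl)}$ before $\Delta$ is applied.
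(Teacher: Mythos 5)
Your proposal is correct and follows essentially the same route as the paper's own proof: expand $F=F_1\otimes F_2\otimes F_3$, apply the $n=2$ case of the defining formula \eqref{CnMap} (i.e., \eqref{CK}), substitute $\varphi(\pbw_{\hbar}F_3)=F_3\star_{\PBW}$ via Lemma \ref{LemmaPBW} (Equation \eqref{varphihbarPBW}), and identify the result with $\mathscr{F}$ by \eqref{CurlyF}. Your extra remarks on the $\frakl$-invariance of $K$ and on the substitution commuting with $\Delta$ are harmless bookkeeping that the paper leaves implicit.
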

\begin{proof}
Let $F=F_1\otimes F_2\otimes F_3$,
where $F_1\, , F_2\in U(\frakg)$ and $F_3\in \hat{S}(\frakl)\llbracket \hbar \rrbracket$.
Then we have
\begin{equation*}
\begin{split}
\mathsf{c}\bigl(\id\otimes \id \otimes \pbw_{\hbar}(F)\bigr)
& =(F_1\otimes F_2)\cdot \Delta( \varphi (\pbw_{\hbar}F_3))
\cdot \Theta_{\Gutt}	 \\
& =(F_1\otimes F_2)\cdot \Delta( F_3\star_{\PBW} )
\cdot \Theta_{\Gutt} \quad\quad \text{(by Equation \eqref{varphihbarPBW})} \\
& = \mathscr{F} \qquad \text{(by Equation \eqref{CurlyF})}\, .
\end{split}
\end{equation*}
\end{proof}

\begin{theorem}\label{FormalTwistANDTwistor}
Suppose that
$K\in (U(\frakg)^{\otimes 2}\otimes U(\frakl)\llbracket \hbar\rrbracket)^{\frakl}$
is an element that has
the $\hbar$-adic valuation property.
Then $K$
is an algebraic dynamical twist if and only if
$\mathsf{c}(K)=(K_1\otimes K_2)\cdot \Delta(\varphi(K_3))
\cdot \Theta_{\Gutt}$
is a twistor of the quantum groupoid
$\mathscr{H}_{(\frakg, \frakl)}$ over $R$.	
\end{theorem}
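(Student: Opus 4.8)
The plan is to recognize both the algebraic dynamical twist equation and Xu's twistor equation as the single operadic statement that a degree-$2$ element is a \emph{multiplication} in the sense of Definition \ref{MultiplicationDefinition}, and then to transport this condition across the injective morphism of operads $\mathsf c$ furnished by Theorem \ref{MainEmbedding}. The two remaining twistor axioms of Definition \ref{TwistorDefinition}, namely the counit condition \eqref{counitEquation} and the invertibility of $\mathsf c(K)^{\sharp}$, will be obtained from the $\hbar$-adic valuation property; this hypothesis is in force throughout and forces $\mathsf c(K)$ to be a genuine element of $\mathscr H_{(\frakg,\frakl)}\otimes_R\mathscr H_{(\frakg,\frakl)}$ with leading term $1\otimes_R 1$.

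First I would read off the two defining identities as partial compositions. Using Convention \eqref{ImportantConvention} in the partial-composition formula \eqref{PartialCompositionADT}, one computes directly, for $K\in\cP_{(\frakg,\frakl)}(2)$, that $K\circ_1 K=K_{1\tinywedge 2,3,4}\cdot K_{1,2,3\tinywedge 4}$ and $K\circ_2 K=K_{1,2\tinywedge 3,4}\cdot K_{2,3,4}$; hence the ADTE \eqref{ADTE} is exactly the condition $K\circ_1 K=K\circ_2 K$, i.e.\ that $K$ is a multiplication on $\cP_{(\frakg,\frakl)}$. Symmetrically, the composition map \eqref{Composition} of the operad $\cP_{\mathscr H_{(\frakg,\frakl)}}$ yields $\mathsf c(K)\circ_1 \mathsf c(K)=(\Delta\otimes_R\id)\mathsf c(K)\cdot(\mathsf c(K)\otimes_R 1)$ and $\mathsf c(K)\circ_2 \mathsf c(K)=(\id\otimes_R\Delta)\mathsf c(K)\cdot(1\otimes_R \mathsf c(K))$, so that the twistor equation \eqref{TwistorEquation} for $\mathsf c(K)$ is precisely $\mathsf c(K)\circ_1 \mathsf c(K)=\mathsf c(K)\circ_2 \mathsf c(K)$.

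I would then transport one condition to the other. Since $\mathsf c$ is a morphism of operads, the commuting square \eqref{PartialMorphism} with $n=m=2$ and $i\in\{1,2\}$ gives $\mathsf c_3(K\circ_i K)=\mathsf c_2(K)\circ_i \mathsf c_2(K)=\mathsf c(K)\circ_i \mathsf c(K)$. Because each component $\mathsf c_n$ is injective (recorded in Theorem \ref{MainEmbedding}), the equality $K\circ_1 K=K\circ_2 K$ in $\cP_{(\frakg,\frakl)}(3)$ holds if and only if its image $\mathsf c(K)\circ_1 \mathsf c(K)=\mathsf c(K)\circ_2 \mathsf c(K)$ holds in $\cP_{\mathscr H_{(\frakg,\frakl)}}(3)\Laurent{\hbar}$. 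Together with the preceding paragraph, this proves that $K$ satisfies the ADTE if and only if $\mathsf c(K)$ satisfies \eqref{TwistorEquation}.

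It remains to account for the counit condition and invertibility under the standing valuation hypothesis $K=1+\sum_{n\geqslant 1}\hbar^n K_n$ with $K_n\in U(\frakg)^{\otimes 2}\otimes(U(\frakl))_{\leqslant n}$. Here the $\hbar^{-1}$ appearing in the definition \eqref{varphiMAP} of $\varphi$ is compensated by the $\hbar^n$ weights, so $\Delta^{n-1}(\varphi(K_3))$ carries no negative powers of $\hbar$ and $\mathsf c(K)$ indeed lies in $\mathscr H_{(\frakg,\frakl)}\otimes_R\mathscr H_{(\frakg,\frakl)}$; since $\Theta_{\Gutt}=1\otimes_R 1+O(\hbar)$ and $\varphi(1)=1$, one finds $\mathsf c(K)=1\otimes_R 1+O(\hbar)$. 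This normalization makes $\mathsf c(K)^{\sharp}$ reduce to the identity modulo $\hbar$, hence invertible on the $\hbar$-adically complete modules by a Neumann-series argument, giving the invertibility required in Definition \ref{TwistorDefinition}; the counit condition \eqref{counitEquation} then follows from a direct computation on the explicit form $\mathsf c(K)=(K_1\otimes K_2)\,\Delta(\varphi(K_3))\,\Theta_{\Gutt}$, using the counit axioms of $\mathscr H_{(\frakg,\frakl)}$ and the fact that $\Theta_{\Gutt}$ is itself a twistor. The main obstacle is the bookkeeping in the two preliminary identities—matching the index conventions of \eqref{ImportantConvention}, \eqref{PartialCompositionADT} and \eqref{Composition} so that the ADTE and the twistor equation each become a single partial-composition identity—after which the operadic transport is purely formal and the valuation estimates are routine.
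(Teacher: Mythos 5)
Your operadic-transport argument for the equation part is correct, and it is genuinely different from the paper's proof: the paper never transports the two equations along $\mathsf{c}$, but instead uses the valuation hypothesis to write $K=(\id\otimes\id\otimes\pbw_{\hbar})F$ via the Enriquez--Etingof bijection (Proposition \ref{FormaltoAlgebraic}), identifies $\mathsf{c}(K)=\mathscr{F}$ (Equation \eqref{FandK}), and then transfers \cite[Theorem $3.1$]{Cheng-Chen-Qiao-Xiang-QDYBEI} word by word from smooth to formal dynamical twists. For the equivalence between the ADTE \eqref{ADTE} for $K$ and equation \eqref{TwistorEquation} for $\mathsf{c}(K)$, your route --- reading both as the multiplication condition $x\circ_1 x=x\circ_2 x$ and invoking the square \eqref{PartialMorphism} together with the injectivity of $\mathsf{c}_3$ recorded in Theorem \ref{MainEmbedding} --- is valid, and it has the merit of staying entirely inside the machinery of the present paper rather than importing an external theorem.

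The genuine gap is your treatment of the counit condition \eqref{counitEquation}. Definition \ref{TwistorDefinition} requires \eqref{TwistorEquation}, \eqref{counitEquation} \emph{and} invertibility; your transport produces only the first, and your claim that \eqref{counitEquation} ``follows from a direct computation'' out of the $\hbar$-adic valuation property, the counit axioms, and the twistor property of $\Theta_{\Gutt}$ is not merely unproved but unprovable from those hypotheses. Indeed, each side of the ADTE is a product of two factors linear in $K$, so if $K$ satisfies it then so does $cK$ for any scalar $c=1+O(\hbar)\in\fK\llbracket \hbar \rrbracket$; moreover $cK$ retains the valuation property, since its $\hbar^n$-coefficient is a combination of the $K_k$ with $k\leqslant n$ and the filtration $(U(\frakl))_{\leqslant k}$ is increasing with $\fK\subset (U(\frakl))_{\leqslant 0}$. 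Taking $K=1\otimes 1\otimes 1$ and $c=1+\hbar$ gives an element satisfying all of your hypotheses, yet $\mathsf{c}(cK)=(1+\hbar)\Theta_{\Gutt}$ and $(\epsilon\otimes_R\id)\bigl((1+\hbar)\Theta_{\Gutt}\bigr)=1+\hbar\neq 1$, so $\mathsf{c}(cK)$ is not a twistor. The missing ingredient is the normalization $(\epsilon\otimes\id\otimes\id)K=(\id\otimes\epsilon\otimes\id)K=1\otimes 1$, which is built into the notion of dynamical twist in the cited literature and is what the paper silently inherits by invoking \cite[Theorem $3.1$]{Cheng-Chen-Qiao-Xiang-QDYBEI}. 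Notably, this normalization is itself operadic --- it reads $\gamma(K;1_{\cP(0)},\id)=\gamma(K;\id,1_{\cP(0)})=\id$, while \eqref{counitEquation} reads $\gamma(\cF;1_R,\id)=\gamma(\cF;\id,1_R)=\id$ --- so your transport scheme would handle it, but only if it is added to the hypotheses; it cannot be derived from the ADTE plus valuation. Secondarily, your invertibility step is only a sketch: $\mathsf{c}(K)^{\sharp}$ maps $M_1\otimes_{R_{\mathsf{c}(K)}}M_2$ to the \emph{different} space $M_1\otimes_{R}M_2$, so ``identity modulo $\hbar$ plus Neumann series'' requires identifying the two quotients modulo $\hbar$ and completeness of arbitrary pairs of modules, as Definition \ref{TwistorDefinition} demands; this is standard in the topological setting, but it is again part of the content of the transferred theorem rather than a routine remark.
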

\begin{proof}
Since $K$ has the $\hbar$-adic valuation property,
there is an $F\in (U(\frakg)^{\otimes 2}\otimes \hat{S}(\frakl)\llbracket \hbar\rrbracket)^{\frakl}$ which satisfies $F=1\otimes 1\otimes 1+ O(\hbar)$, such that
$K=(\id\otimes \id \otimes \pbw_{\hbar})F$.
By Equation \eqref{FandK},
we have $\mathsf{c}(K)=\mathscr{F}\in \mathscr{H}_{(\frakg, \frakl)}\otimes_{R} \mathscr{H}_{(\frakg, \frakl)}$, where $\mathscr{F}$ is defined as in Equation \eqref{CurlyF}.

In our previous work
\cite[Theorem $3.1$]{Cheng-Chen-Qiao-Xiang-QDYBEI},
we proved that any $J=J_1\otimes J_2\otimes J_3\in (U(\frakg)\otimes U(\frakg)\otimes C^{\infty}(\frakl^{\ast})\llbracket \hbar \rrbracket)^{\frakl}$
which satisfies
$J=1\otimes 1\otimes 1 +O(\hbar)$ is a smooth dynamical twist if and only if
$(J_1\otimes J_2)$$\cdot$$\Delta(J_3 \star_{\PBW})$$\cdot$$\Theta_{\Gutt}$$\in$$\mathscr{H}_{(\frakg, \frakl)}\otimes_{R} \mathscr{H}_{(\frakg, \frakl)}$
is a twistor of $\mathscr{H}_{(\frakg, \frakl)}$.
Transferring the proof of \cite[Theorem $3.1$]{Cheng-Chen-Qiao-Xiang-QDYBEI} word by word from
the setting of smooth dynamical twists to the setting of formal dynamical twists,
we find that $F$ is a formal dynamical twist if and only if
$\mathscr{F}=\mathsf{c}(K)$ is a twistor of
$\mathscr{H}_{(\frakg, \frakl)}$.
By Proposition \ref{FormaltoAlgebraic},
$F$ is a formal dynamical twist if and only if $K$ is an algebraic dynamical twist.
Thus $K$ is an  algebraic dynamical twist if and only if
$\mathsf{c}(K)$ is a twistor of $\mathscr{H}_{(\frakg, \frakl)}$.	
\end{proof}

The \textbf{dynamical quantum groupoid} of an algebraic dynamical twist $K$ is
the quantum groupoid $\mathscr{H}_{(\frakg, \frakl),\mathsf{c}(K)}$ obtained from twisting $\mathscr{H}_{(\frakg, \frakl)}$ by $\mathsf{c}(K)$.

\subsection{The twisted strict $B_{\infty}$ morphism
$\mathsf{c}_{K}:\ADT^{K}\rightarrow B_{\infty}(\mathscr{H}_{(\frakg, \frakl), \mathsf{c}(K)})\Laurent{\hbar}$}

Given an algebraic dynamical twist
$K\in (U(\frakg)^{\otimes 2}\otimes U(\frakl)\llbracket \hbar\rrbracket)^{\frakl}$.
Since the ADTE \eqref{ADTE} is equivalent to
the fact that $K$ is a multiplication on the operad
$\cP_{(\frakg,\frakl)}$,
by part $(2)$ of Theorem \ref{Operadtobrace},
one obtains a twisted brace $B_{\infty}$ algebra :
\begin{equation*}
\ADT^K:=
\bigl(
\oplus_{n=0}^{\infty}
(U(\frakg)^{\otimes n}\otimes U(\frakl)\llbracket \hbar \rrbracket)^{\frakl}[-n]\, ,
\delta_{(\frakg, \frakl),K}\, , \cup_{(\frakg, \frakl), K}\, , \{\mu_k^{(\frakg, \frakl)}\}_{k\geqslant 0}
\bigr)\, ,
\end{equation*}	
where
\begin{equation*}
\delta_{(\frakg, \frakl),K}(A):=[K\, , A]_{\mathrm{G}} =
K\lb A \rb - (-1)^{|A|+1} A\lb K \rb\, , 	
\end{equation*}
and
\begin{equation*}
A\cup_{(\frakg, \frakl), K}B:=(-1)^{|A|}K\lb A\, , B \rb\, ,
\end{equation*}
for any homogeneous elements $A\, , B\in \ADT$.
The brace operations on $\ADT^{K}$ and $\ADT$ are the same.

The morphism of operad $\mathsf{c}: \cP_{(\frakg,\frakl)}\rightarrow \cP_{\mathscr{H}_{(\frakg, \frakl)}}\Laurent{\hbar}$
sends $K$ to the twistor $\mathsf{c}(K)$ of the quantum groupoid $\mathscr{H}_{(\frakg, \frakl)}$
(Theorem \ref{FormalTwistANDTwistor}).
The twistor $\mathsf{c}(K)$ can be regarded as a multiplication of the operad $\cP_{\mathscr{H}_{(\frakg, \frakl)}}\Laurent{\hbar}$.
By functoriality of the Gerstenhaber-Voronov operadic modeling of brace $B_{\infty}$ algebras (Proposition \ref{functoriality}),
the map $\mathsf{c}$ in Theorem \ref{MainEmbedding} is also a strict $B_{\infty}$-morphism
between twisted brace $B_{\infty}$ algebras:
\begin{equation*}
\mathsf{c}:
\ADT^{K} \rightarrow
\bigl(B_{\infty}(\mathscr{H}_{(\frakg, \frakl)})\Laurent{\hbar} \bigr)^{\mathsf{c}(K)}\, .	
\end{equation*}
Here $\bigl(B_{\infty}(\mathscr{H}_{(\frakg, \frakl)})\Laurent{\hbar} \bigr)^{\mathsf{c}(K)}$ is the brace $B_{\infty}$ algebra  whose underling graded spaces and algebraic structures are $\fK\Laurent{\hbar}$-linear extensions of those of the type II twisted brace $B_{\infty}$ algebra  $B_{\infty}(\mathscr{H}_{(\frakg, \frakl)})^{\mathsf{c}(K)}$.

Finally, we provide an alternative approach to the
twisted brace $B_{\infty}$ algebra $\ADT^{K}$:

\begin{theorem}\label{TwistedEmbedding}
If $K$ is an algebraic dynamical twist, then there is an injective strict $B_{\infty}$ morphism
\begin{equation*}
\mathsf{c}_{K}: \ADT^{K} \rightarrow
B_{\infty}(\mathscr{H}_{(\frakg, \frakl),\mathsf{c}(K)})\Laurent{\hbar}	
\end{equation*}
from the twisted brace $B_{\infty}$ algebra  $\ADT^K$ to
the $\fK\Laurent{\hbar}$-linear extension of the
canonical brace $B_{\infty}$ algebra  of the dynamical quantum groupoid $\mathscr{H}_{(\frakg, \frakl),\mathsf{c}(K)}$, such that the following diagram commutes:
\begin{equation}\label{DiagramLast}
\begin{tikzcd}
\ADT^{K} \arrow[r, "\mathsf{c}_K"] \arrow[d, "\mathsf{c}"] &
B_{\infty}(\mathscr{H}_{(\frakg, \frakl),\mathsf{c}(K)})\Laurent{\hbar}
\arrow[dl, "\bigl(\mathsf{c}(K)\bigr)^{\sharp}"] \\
\bigl(B_{\infty}(\mathscr{H}_{(\frakg, \frakl)})\Laurent{\hbar}\bigr)^{\mathsf{c}(K)}
\, . &
\end{tikzcd}	
\end{equation}	
\end{theorem}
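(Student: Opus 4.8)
The plan is to realize $\mathsf{c}_K$ as a composition and to deduce all of its properties formally from results already established, rather than constructing it by hand. Recall that the discussion preceding the theorem shows that the operad morphism $\mathsf{c}\colon \cP_{(\frakg,\frakl)}\to \cP_{\mathscr{H}_{(\frakg,\frakl)}}\Laurent{\hbar}$ of Theorem \ref{MainEmbedding} carries the multiplication $K$ on $\cP_{(\frakg,\frakl)}$ to the element $\mathsf{c}(K)$, which by Theorem \ref{FormalTwistANDTwistor} is a twistor of $\mathscr{H}_{(\frakg,\frakl)}$ and hence, via Equation \eqref{TwistorEquation}, a multiplication on $\cP_{\mathscr{H}_{(\frakg,\frakl)}}\Laurent{\hbar}$. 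Applying the functoriality of the Gerstenhaber-Voronov modeling (Proposition \ref{functoriality}) to this matched pair of multiplications therefore upgrades $\mathsf{c}$ to an injective strict $B_{\infty}$ morphism $\mathsf{c}\colon \ADT^{K}\to \bigl(B_{\infty}(\mathscr{H}_{(\frakg,\frakl)})\Laurent{\hbar}\bigr)^{\mathsf{c}(K)}$; this is precisely the left vertical arrow of Diagram \eqref{DiagramLast}.

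Next I would apply Theorem \ref{TwoTypes}, in its $\fK\Laurent{\hbar}$-linearly extended form, to the quantum groupoid $\mathscr{H}_{(\frakg,\frakl)}$ equipped with the twistor $\mathsf{c}(K)$. This produces the diagonal arrow, a strict $B_{\infty}$ \emph{isomorphism}
\begin{equation*}
\bigl(\mathsf{c}(K)\bigr)^{\sharp}\colon B_{\infty}(\mathscr{H}_{(\frakg,\frakl),\mathsf{c}(K)})\Laurent{\hbar}\xlongrightarrow{\simeq}\bigl(B_{\infty}(\mathscr{H}_{(\frakg,\frakl)})\Laurent{\hbar}\bigr)^{\mathsf{c}(K)}
\end{equation*}
from the type I algebra to the type II algebra. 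The only point requiring a word of justification here is that the strict $B_{\infty}$ isomorphism of Theorem \ref{TwoTypes} survives the $\fK\Laurent{\hbar}$-linear extension of all the operadic data; since every structure map on $\cP_{\mathscr{H}_{(\frakg,\frakl)}}\Laurent{\hbar}$ and on both twisted brace $B_{\infty}$ algebras is obtained by $\fK\Laurent{\hbar}$-linear extension, and the operad isomorphism of Proposition \ref{TwoOperads} is an identity of $\fK$-linear (hence $\fK\Laurent{\hbar}$-linear) maps, this is immediate.

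I would then define $\mathsf{c}_K := \bigl(\bigl(\mathsf{c}(K)\bigr)^{\sharp}\bigr)^{-1}\circ \mathsf{c}$. To see that this makes sense and has the claimed properties, I would record the elementary fact that a strict $B_{\infty}$ isomorphism admits a strict $B_{\infty}$ inverse: by Definition \ref{Def:strictBmorphism} its only nonzero component is the graded vector space isomorphism $\bigl(\mathsf{c}(K)\bigr)^{\sharp}_1$, the associated dg bialgebra map on the tensor coalgebras is the invertible map $\bigoplus_n \otimes^n\bigl(\mathsf{c}(K)\bigr)^{\sharp}_1$, and its inverse $\bigoplus_n \otimes^n\bigl(\bigl(\mathsf{c}(K)\bigr)^{\sharp}_1\bigr)^{-1}$ is again a dg bialgebra morphism, i.e.\ a strict $B_{\infty}$ isomorphism. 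Consequently $\mathsf{c}_K$ is a composite of an injective strict $B_{\infty}$ morphism with a strict $B_{\infty}$ isomorphism, hence an injective strict $B_{\infty}$ morphism, and $\bigl(\mathsf{c}(K)\bigr)^{\sharp}\circ \mathsf{c}_K=\mathsf{c}$ holds by construction, so Diagram \eqref{DiagramLast} commutes.

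Because the construction is purely formal, I do not expect a genuine computational obstacle; the subtlety on which I would spend the most care is the invertibility claim for strict $B_{\infty}$ isomorphisms, together with the verification that Theorem \ref{TwoTypes} and Proposition \ref{functoriality} remain valid after extending scalars from $\fK\llbracket\hbar\rrbracket$ to $\fK\Laurent{\hbar}$.
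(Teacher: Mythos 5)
Your proposal is correct and follows essentially the same route as the paper: the paper likewise obtains the vertical arrow from Proposition \ref{functoriality} applied to the matched multiplications $K$ and $\mathsf{c}(K)$, obtains the diagonal arrow from Theorem \ref{TwoTypes}, and then defines $\mathsf{c}_{K}:=\bigl(\bigl(\mathsf{c}(K)\bigr)^{\sharp}\bigr)^{-1}\circ\mathsf{c}$. Your extra justifications (that a strict $B_{\infty}$ isomorphism has a strict $B_{\infty}$ inverse, and that everything survives the $\fK\Laurent{\hbar}$-linear extension) are points the paper leaves implicit, and they are sound.
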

\begin{proof}
By Theorem \ref{TwoTypes}, we have a strict $B_{\infty}$ isomorphism of brace $B_{\infty}$ algebras
\begin{equation*}
\bigl( \mathsf{c}(K) \bigr)^{\sharp}: 	
B_{\infty}(\mathscr{H}_{(\frakg, \frakl),\mathsf{c}(K)})\Laurent{\hbar}
\xlongrightarrow{\simeq}
\bigl(B_{\infty}(\mathscr{H}_{(\frakg, \frakl)})\Laurent{\hbar}\bigr)^{\mathsf{c}(K)}\, .
\end{equation*}
Take
$\mathsf{c}_{K}:=\biggl( \bigl( \mathsf{c}(K) \bigr)^{\sharp} \biggr)^{-1} \circ \mathsf{c}$,
then it is also an injective strict $B_{\infty}$ morphism and makes Diagram \eqref{DiagramLast} commutes.
\end{proof}

\medskip
{\bf Data Availability} \, There is no data associated with this paper.

\medskip
{\bf Declarations}

\medskip
{\bf Conflict of interest} \, The authors declare no Conflict of interest in this paper.

\begin{bibdiv}
\begin{biblist}

\bib{Bandiera-Stienon-Xu}{article}{
      author={Bandiera, R.},
      author={Sti\'{e}non, M.},
      author={Xu, P.},
       title={Polyvector fields and polydifferential operators associated with
  {L}ie pairs},
        date={2021},
        ISSN={1661-6952},
     journal={J. Noncommut. Geom.},
      volume={15},
      number={2},
       pages={643\ndash 711},
}

\bib{Baues}{article}{
      author={Baues, H.J.},
       title={The double bar and cobar constructions},
        date={1981},
        ISSN={0010-437X,1570-5846},
     journal={Compositio Math.},
      volume={43},
      number={3},
       pages={331\ndash 341},
}

\bib{Bohm}{article}{
      author={B{\"o}hm, G.},
       title={Hopf algebroids},
        date={2009},
     journal={Handbook of algebra. {V}olume 6. Amsterdam:
  Elsevier/North-Holland},
      volume={7},
       pages={173\ndash 235},
}

\bib{Calaque-formality}{article}{
      author={Calaque, D.},
       title={Formality for {L}ie algebroids},
        date={2005},
        ISSN={0010-3616,1432-0916},
     journal={Comm. Math. Phys.},
      volume={257},
      number={3},
       pages={563\ndash 578},
}

\bib{Calaque-QuantizationFormal}{article}{
      author={Calaque, D.},
       title={Quantization of formal classical dynamical $r$-matrices: the
  reductive case},
        date={2006},
        ISSN={0001-8708},
     journal={Adv. Math.},
      volume={204},
      number={1},
       pages={84\ndash 100},
}

\bib{Calaque-VandenBergh}{article}{
 author={Calaque, D.},
 author={Van den Bergh, M.},
 issn={1609-3321},
 issn={1609-4514},
 title={Global formality at the {{\(G_{\infty }\)}}-level},
 journal={Moscow Mathematical Journal},
 volume={10},
 number={1},
 pages={31--64},
 date={2010},
 publisher={Independent University of Moscow, Moscow; American Mathematical Society (AMS), Providence, RI},
}

\bib{Chen-Li-Wang}{article}{
      author={Chen, X.W.},
      author={Li, H.},
      author={Wang, Z.},
       title={Leavitt path algebras, {$B_{\infty}$}-algebras and {K}eller's
  conjecture for singular {H}ochschild cohomology},
        date={2020},
       pages={available at https://arxiv.org/abs/2007.06895},
}

\bib{Cheng-Chen-Qiao-Xiang-QDYBEI}{article}{
      author={Cheng, J.},
      author={Chen, Z.},
      author={Qiao, Y.},
      author={Xiang, M.},
       title={From smooth dynamical twists to twistors of quantum groupoids},
        date={2024},
       pages={available at https://arxiv.org/abs/2412.09039},
}

\bib{Drinfeld-QuantumGroup}{article}{
      author={Drinfeld, V.},
       title={Quantum groups},
        date={1987},
     journal={Proc. Int. Congr. Math., Berkeley/Calif. 1986, Vol. 1},
       pages={798\ndash 820},
}

\bib{Enriquez-Etingof1}{article}{
      author={Enriquez, B.},
      author={Etingof, P.},
       title={Quantization of {Alekseev}-{Meinrenken} dynamical $r$-matrices},
        date={2003},
     journal={Lie groups and symmetric spaces. In memory of {F}. {I}.
  {K}arpelevich. Providence, RI: American Mathematical Society (AMS)},
       pages={81\ndash 98},
}

\bib{Esposito-deKleijn}{article}{
      author={Esposito, C.},
      author={de~Kleijn, N.},
       title={Universal deformation formula, formality and actions},
        date={2022},
        ISSN={1661-6952,1661-6960},
     journal={J. Noncommut. Geom.},
      volume={16},
      number={2},
       pages={423\ndash 453},
}

\bib{Gerstenhaber-Voronov-Moduli}{article}{
      author={Gerstenhaber, M.},
      author={Voronov, A.},
       title={Homotopy {$G$}-algebras and moduli space operad},
        date={1995},
        ISSN={1073-7928},
     journal={Internat. Math. Res. Notices},
      number={3},
       pages={141\ndash 153},
}

\bib{Getzler}{article}{
      author={Getzler, E.},
       title={Cartan homotopy formulas and the {G}auss-{M}anin connection in
  cyclic homology},
        date={1993},
     journal={Quantum deformations of algebras and their representations
  ({R}amat-{G}an, 1991/1992; {R}ehovot, 1991/1992), Israel Math. Conf. Proc.},
      volume={7},
       pages={65\ndash 78},
}

\bib{Getzler-Jones}{article}{
      author={Getzler, E.},
      author={Jones, J.},
       title={Operads, homotopy algebra and iterated integrals for double loop
  spaces},
        date={1994},
       pages={available at https://arxiv.org/abs/hep\ndash th/9403055},
}

\bib{Gutt}{article}{
      author={Gutt, S.},
       title={An explicit $\ast$-product on the cotangent bundle of a {Lie}
  group},
        date={1983},
        ISSN={0377-9017},
     journal={Lett. Math. Phys.},
      volume={7},
       pages={249\ndash 258},
}

\bib{Kadeishvili-first}{article}{
      author={Kadeishvili, T.},
       title={The structure of the {$A(\infty)$}-algebra, and the {H}ochschild
  and {H}arrison cohomologies},
        date={1988},
        ISSN={0234-5838},
     journal={Trudy Tbiliss. Mat. Inst. Razmadze Akad. Nauk Gruzin. SSR},
      volume={91},
       pages={19\ndash 27},
}

\bib{Kadeishvili}{article}{
      author={Kadeishvili, T.},
       title={On the cobar construction of a bialgebra},
        date={2005},
        ISSN={1532-0081,1532-0073},
     journal={Homology Homotopy Appl.},
      volume={7},
      number={2},
       pages={109\ndash 122},
}

\bib{Kontsevich}{article}{
      author={Kontsevich, M.},
       title={Deformation quantization of {P}oisson manifolds},
        date={2003},
        ISSN={0377-9017},
     journal={Lett. Math. Phys.},
      volume={66},
      number={3},
       pages={157\ndash 216},
}

\bib{Kontsevich-Soibelman-DeligneConjecture}{article}{
      author={Kontsevich, M.},
      author={Soibelman, Y.},
       title={Deformations of algebras over operads and the {D}eligne
  conjecture},
        date={2000},
     journal={Conf\'{e}rence {M}osh\'{e} {F}lato 1999, {V}ol. {I} ({D}ijon),
  {M}ath. {P}hys. {S}tud.},
      volume={21},
       pages={255\ndash 307},
}

\bib{Kowalzig-Posthuma}{article}{
 author={Kowalzig, N.},
 author={Posthuma, H.},
 issn={1661-6952},
 issn={1661-6960},
 title={The cyclic theory of Hopf algebroids},
 journal={Journal of Noncommutative Geometry},
 volume={5},
 number={3},
 pages={423--476},
 date={2011},
 publisher={EMS Press, Berlin},
}

\bib{Kowalzig-Krahmer}{article}{
 author={Kowalzig, N.},
 author={Kr{\"a}hmer, U.},
 issn={1532-0073},
 issn={1532-0081},
 title={Cyclic structures in algebraic (co)homology theories},
 journal={Homology, Homotopy and Applications},
 volume={13},
 number={1},
 pages={297--318},
 date={2011},
 publisher={International Press of Boston, Somerville, MA},
}

\bib{Kowalzig-OperadFromBialgebroid}{article}{
 author={Kowalzig, N.},
 issn={0022-4049},
 issn={1873-1376},
 title={Batalin-Vilkovisky algebra structures on (Co)Tor and Poisson bialgebroids},
 journal={Journal of Pure and Applied Algebra},
 volume={219},
 number={9},
 pages={3781--3822},
 date={2015},
 publisher={Elsevier (North-Holland), Amsterdam},
}

\bib{Kowalzig-Operad}{article}{
      author={Kowalzig, N.},
       title={Gerstenhaber and {B}atalin-{V}ilkovisky structures on modules
  over operads},
        date={2015},
        ISSN={1073-7928},
     journal={Int. Math. Res. Not. IMRN},
      number={22},
       pages={11694\ndash 11744},
}

\bib{Loday-Vallette}{book}{
      author={Loday, J.},
      author={Vallette, B.},
       title={Algebraic operads},
      series={Grundlehren der mathematischen Wissenschaften [Fundamental
  Principles of Mathematical Sciences]},
   publisher={Springer, Heidelberg},
        date={2012},
      volume={346},
        ISBN={978-3-642-30361-6},
}

\bib{Lu}{article}{
      author={Lu, J.},
       title={Hopf algebroids and quantum groupoids},
        date={1996},
        ISSN={0129-167X},
     journal={Internat. J. Math.},
      volume={7},
      number={1},
       pages={47\ndash 70},
}

\bib{Mackenzie-Book}{book}{
      author={Mackenzie, K.},
       title={The general theory of {Lie} groupoids and {Lie} algebroids},
      series={Lond. Math. Soc. Lect. Note Ser.},
   publisher={Cambridge: Cambridge University Press},
        date={2005},
      volume={213},
        ISBN={0-521-49928-3},
}

\bib{Menichi}{article}{
 author={Menichi, L.},
 issn={0920-3036},
 issn={1573-0514},
 title={Batalin-{V}ilkovisky algebras and cyclic cohomology of {H}opf algebras},
 journal={$K$-Theory},
 volume={32},
 number={3},
 pages={231\ndash 251},
 date={2004},
 publisher={Springer, Dordrecht},
}

\bib{McClure-Smith}{article}{
      author={McClure, J.},
      author={Smith, J.},
       title={A solution of {D}eligne's {H}ochschild cohomology conjecture},
        date={2002},
     journal={Recent progress in homotopy theory. Proceedings of a conference, Baltimore, MD, USA, March 17--27, 2000. Providence, RI: American Mathematical Society (AMS)},
       pages={153\ndash 193},
}

\bib{Rinehart}{article}{
      author={Rinehart, G.},
       title={Differential forms on general commutative algebras},
        date={1963},
        ISSN={0002-9947},
     journal={Trans. Amer. Math. Soc.},
      volume={108},
       pages={195\ndash 222},
}

\bib{Stienon-Xu-Survey}{article}{
      author={Sti\'{e}non, M.},
      author={Xu, P.},
       title={Atiyah classes and {K}ontsevich-{D}uflo type theorem for dg
  manifolds},
        date={2021},
     journal={Homotopy algebras, deformation theory and quantization, Banach
  Center Publ.},
      volume={123},
       pages={63\ndash 110},
}

\bib{Tamarkin-DeligneConjecture}{article}{
      author={Tamarkin, D.},
       title={Formality of chain operad of little discs},
        date={2003},
        ISSN={0377-9017},
     journal={Lett. Math. Phys.},
      volume={66},
      number={1-2},
       pages={65\ndash 72},
}

\bib{Voronov}{article}{
      author={Voronov, A.},
       title={Homotopy {G}erstenhaber algebras},
        date={2000},
     journal={Conf\'{e}rence {M}osh\'{e} {F}lato 1999, {V}ol. {II} ({D}ijon),
  Math. Phys. Stud.},
      volume={22},
       pages={307\ndash 331},
}

\bib{Gerstenhaber-Voronov}{article}{
 author={Voronov, A.},
 author={Gerstenhaber, M.},
 issn={0016-2663},
 issn={1573-8485},
 title={Higher operations on the Hochschild complex},
 journal={Functional Analysis and its Applications},
 volume={29},
 number={1},
 pages={1--5; translation from Funkts. Anal. Prilozh. 29, No. 1, 1-6 (1995). },
 date={1995},
 publisher={Springer US, New York, NY},
}

\bib{Willwacher-BracesFormality}{article}{
      author={Willwacher, T.},
       title={The homotopy braces formality morphism},
        date={2016},
        ISSN={0012-7094},
     journal={Duke Math. J.},
      volume={165},
      number={10},
       pages={1815\ndash 1964},
}

\bib{Xu-QuantumCNRS}{article}{
      author={Xu, P.},
       title={Quantum groupoids and deformation quantization},
        date={1998},
        ISSN={0764-4442},
     journal={C. R. Acad. Sci. Paris S\'{e}r. I Math.},
      volume={326},
      number={3},
       pages={289\ndash 294},
}

\bib{Xu-R-matrices}{article}{
      author={Xu, P.},
       title={Quantum groupoids associated to universal dynamical
  {$R$}-matrices},
        date={1999},
        ISSN={0764-4442},
     journal={C. R. Acad. Sci., Paris, S{\'e}r. I, Math.},
      volume={328},
      number={4},
       pages={327\ndash 332},
}

\bib{Xu-Quantum}{article}{
      author={Xu, P.},
       title={Quantum groupoids},
        date={2001},
        ISSN={0010-3616},
     journal={Comm. Math. Phys.},
      volume={216},
      number={3},
       pages={539\ndash 581},
}

\bib{Xu-Nonabelian}{article}{
      author={Xu, P.},
       title={Quantum dynamical {Y}ang-{B}axter equation over a nonabelian
  base},
        date={2002},
        ISSN={0010-3616,1432-0916},
     journal={Comm. Math. Phys.},
      volume={226},
      number={3},
       pages={475\ndash 495},
         url={https://doi.org/10.1007/s002200200621},
}

\end{biblist}
\end{bibdiv}

\end{document}